\documentclass[letterpaper,11pt]{amsart}

\usepackage{amsmath}
\usepackage{amssymb}
\usepackage{amsfonts}
\usepackage{amsthm}
\usepackage{amsopn}
\usepackage{enumerate}
\usepackage[all]{xy}
\usepackage{array}
\usepackage{mathrsfs}
\usepackage{tikz}
\usepackage{dsfont}
\usepackage{caption}
\usepackage{fullpage}
\usepackage{hyperref}
\hypersetup{
 colorlinks  = true,  
 urlcolor   = blue,  
 linkcolor  = blue,  
 citecolor  = blue   
}

\mathchardef\mhyphen="2D
\newcommand{\m}{\alpha}


\newcommand{\kk}{\mathds{k}}
\newcommand{\NN}{\ensuremath{\mathbb{N}}}
\newcommand{\NNp}{\ensuremath{\mathbb{N}^+}}

\newcommand{\ZZ}{\ensuremath{\mathbb{Z}}}

\newcommand{\FFF}{\ensuremath{\mathcal{F}}}
\newcommand{\GGG}{\ensuremath{\mathcal{G}}}

\newcommand{\SSS}{\ensuremath{\mathcal{S}}}
\newcommand{\III}{\ensuremath{\mathcal{I}}}
\newcommand{\PPPP}{\ensuremath{\mathcal{P}}}


\newcommand{\CCC}{\ensuremath{\mathsf{C}}}
\newcommand{\DDD}{\ensuremath{\mathsf{D}}}
\newcommand{\PPP}{\ensuremath{\mathsf{P}}}
\newcommand{\RRR}{\ensuremath{\mathsf{R}}}



\newcommand{\ra}{\ensuremath{\longrightarrow}}
\newcommand{\sra}{\ensuremath{\rightarrow}}

\newcommand{\Ext}{\operatorname{Ext}}
\newcommand{\uExt}{\operatorname{\underline{Ext}}}
\newcommand{\Hom}{\operatorname{Hom}}
\newcommand{\uHom}{\operatorname{\underline{Hom}}}


\newcommand{\pd}{\operatorname{pd}}
\newcommand{\gldim}{\operatorname{gl dim}}

\newcommand{\ol}{\overline}

\newcommand{\Id}{\operatorname{Id}}
\newcommand{\im}{\operatorname{im}}
\newcommand{\coker}{\operatorname{coker}}

\def\lcm{\mathop{\operatorname{lcm}}}

\newcommand{\gr}{\operatorname{gr}\mhyphen}

\newcommand{\qgr}{\operatorname{qgr}\mhyphen}

\newcommand{\QgrA}{Q_{\mathrm{gr}}(A)}
\newcommand{\QgrAf}{Q_{\mathrm{gr}}\left(A(f)\right)}
\newcommand{\tor}{\operatorname{tor}}
\renewcommand{\mod}{\mathrm{mod\mhyphen}}

\newcommand{\fin}{\mathrm{fin}}
\newcommand{\Aut}{\operatorname{Aut}}
\newcommand{\Pic}{\operatorname{Pic}}
\newcommand{\s}[1]{\ensuremath{\langle #1 \rangle}}
\DeclareMathOperator{\Spec}{Spec}

\DeclareMathOperator{\Supp}{Supp}

\numberwithin{equation}{section}

\newtheorem{theorem}[equation]{Theorem}
\newtheorem{corollary}[equation]{Corollary}
\newtheorem{lemma}[equation]{Lemma}
\newtheorem{proposition}[equation]{Proposition}

\theoremstyle{remark}
\newtheorem{remark}[equation]{Remark}

\theoremstyle{definition}
\newtheorem{definition}[equation]{Definition}

\newtheorem{example}[equation]{Example}

\newtheorem*{acknowledgments}{Acknowledgments}

\begin{document}
\title{The Picard group of the graded module category of a generalized Weyl algebra}
\author{Robert Won}
\address{Department of Mathematics, UCSD, La Jolla, CA 92093-0112, USA.}
\email{rwon@math.ucsd.edu}

\thanks{The author was partially supported by NSF grant DMS-1201572.}
\date{}
\subjclass[2010]{16W50, 16D90}
\keywords{Generalized Weyl algebra, graded module category, category equivalence}

\maketitle

\begin{abstract}
The first Weyl algebra, $A_1 = \kk \langle x, y\rangle/(xy-yx - 1)$ is naturally $\ZZ$-graded by letting $\deg x = 1$ and $\deg y = -1$. Sue Sierra studied $\gr A_1$, category of graded right $A_1$-modules, computing its Picard group and classifying all rings graded equivalent to $A_1$. In this paper, we generalize these results by studying the graded module category of certain generalized Weyl algebras. We show that for a generalized Weyl algebra $A(f)$ with base ring $\kk[z]$ defined by a quadratic polynomial $f$, the Picard group of $\gr A(f)$ is isomorphic to the Picard group of $\gr A_1$. In a companion paper, we use these results to construct commutative rings which are graded equivalent to generalized Weyl algebras.
\end{abstract}

\section{Introduction}

Throughout this paper, fix an algebraically closed field $\kk$ of characteristic zero. All vector spaces and algebras are taken over $\kk$ and all categories and equivalences of categories are $\kk$-linear.

The first Weyl algebra, $A_1 = \kk \langle x, y\rangle/(xy-yx - 1)$ is a fundamental example in non-commutative ring theory. There is a natural $\ZZ$-grading on $A_1$ given by letting $\deg x = 1$ and $\deg y = -1$. In \cite{sierra}, Sierra studied $\gr A_1$, the category of finitely generated graded right $A_1$-modules. She determined the group of autoequivalences of $\gr A_1$ modulo natural isomorphism (called the Picard group of $\gr A_1$) and classified all rings graded equivalent to $A_1$. The simple modules of $\gr A_1$ can be pictured as follows. For each $\lambda \in \kk \setminus \ZZ$, there is a simple module $M_\lambda = A_1/(xy+\lambda)A_1$, while for each $n \in \ZZ$, the module $A_1/(xy+n)A_1$ is the nonsplit extension of two simple modules $X\s{n}$ and $Y\s{n}$. We can therefore represent the simple modules as the affine line with integer points doubled. Sierra gives the following picture:

\centerline{
	\begin{tikzpicture}[ scale=1.6]	
			\node at (-1.7,.1)[label = above: {}]{};
		\draw[thick,<-](-3.9,0)--(-3.1,0);
		\fill[black](-3,-.1) circle (1pt);
		\fill[black](-3,.1) circle (1pt);
		\node at (-3,-.1)[label = below: $-3$]{};
		\draw[thick](-2.9,0)--(-2.1,0);
		\fill[black](-2,-.1) circle (1pt);
		\fill[black](-2,.1) circle (1pt);
		\node at (-2,-.1)[label = below: $-2$]{};
		\draw[thick](-1.9,0)--(-1.1,0);
		\fill[black](-1,-.1) circle (1pt);
		\fill[black](-1,.1) circle (1pt);
		\node at (-1,-.1)[label = below:$-1$]{};
		\draw[thick](-.9,0)--(-.1,0);
		\fill[black](0,-.1) circle (1pt);
		\fill[black](0,.1) circle (1pt);
		\node at (0,-.1)[label = below: $0$]{};
		\draw[thick](.1,0)--(.9,0);
		\fill[black](1,-.1) circle (1pt);
		\fill[black](1,.1) circle (1pt);
		\node at (1,-.1)[label = below: $1$]{};
		\draw[thick](1.1,0)--(1.9,0);		
		\fill[black](2,-.1) circle (1pt);
		\fill[black](2,.1) circle (1pt);
		\node at (2,-.1)[label = below: $2$]{};
		\draw[thick,](2.1,0)--(2.9,0);		
		\fill[black](3,-.1) circle (1pt);
		\fill[black](3,.1) circle (1pt);
		\node at (3,-.1)[label = below: $3$]{};
		\draw[thick,->](3.1,0)--(3.9,0);		
	\end{tikzpicture}
	}

In determining the Picard group of $\gr A_1$, Sierra showed that there are many symmetries of this picture. There is an autoequivalence of $\gr A_1$ induced by a graded automorphism of $A_1$ which reflects the picture, sending $A_1/(xy+n)A_1$ to $A_1/(xy - (n+1))A_1$ for all $n \in \ZZ$. The shift functor $\SSS_{A_1}$ is an autoequivalence of the category which translates the picture, sending $A_1/(xy+n)A_1$ to $A_1/(xy + n+1)A_1$ for all $n \in \ZZ$. Hence, $\Pic(\gr A_1)$ has a subgroup isomorphic to $D_\infty$. Further, Sierra constructed autoequivalences $\iota_n$ of $\gr A_1$, which permute $X\s{n}$ and $Y\s{n}$ and fix all other simple modules. Let $\ZZ_\fin$ denote the group of finite subsets of the integers, with operation exclusive or. The subgroup of $\Pic(\gr A_1)$ generated by the $\iota_n$ is isomorphic to $\ZZ_\fin$. 

\begin{theorem}[Sierra {\cite[Corollary 5.11]{sierra}}] The group $\Pic(\gr A_1)$ is isomorphic to $\ZZ_\fin \rtimes D_{\infty}$.
\end{theorem}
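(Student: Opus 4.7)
The plan is to prove the two containments $\ZZ_\fin \rtimes D_\infty \subseteq \Pic(\gr A_1)$ and $\Pic(\gr A_1) \subseteq \ZZ_\fin \rtimes D_\infty$ separately. For the first, the shift $\SSS_{A_1}$ (which has infinite order, since its iterates translate integer labels indefinitely) and a reflection autoequivalence $\omega$ arising from a graded involution of $A_1$ satisfy the dihedral relation $\omega \SSS_{A_1} \omega \cong \SSS_{A_1}^{-1}$, and together generate a copy of $D_\infty$ inside $\Pic(\gr A_1)$. The $\iota_n$ are pairwise commuting involutions acting on disjoint supports (one pair of simples each), so they generate a copy of $\ZZ_\fin$. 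Conjugating $\iota_n$ by $\SSS_{A_1}$ and by $\omega$ yields $\iota_{n+1}$ and $\iota_{-n-1}$ respectively, realizing the natural action of $D_\infty$ on $\ZZ_\fin$ and producing the semidirect product.

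The reverse inclusion is the substantive part. Any autoequivalence $F \in \Pic(\gr A_1)$ permutes isomorphism classes of simple modules. I would first distinguish the doubled simples $X\s{n}, Y\s{n}$ from the $M_\lambda$ intrinsically: they are precisely the simples occurring as composition factors of a length-$2$, non-semisimple module (the quotients $A_1/(xy+n)A_1$). Hence $F$ preserves the set of doubled simples and permutes the pairs $\{X\s{n}, Y\s{n}\}$. Using the pattern of nonvanishing $\Ext^1$'s between doubled simples at different integers (which records the linear order on $\ZZ$ as a nearest-neighbor graph), the induced permutation of $\ZZ$ must be an isometry and therefore lies in $D_\infty$. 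After composing $F$ with the appropriate element of $D_\infty$, we reduce to the case where $F$ fixes every pair $\{X\s{n}, Y\s{n}\}$ setwise.

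In this reduced situation, $F$ acts on each pair either trivially or by the swap $\iota_n$; let $S \subseteq \ZZ$ record the indices of swaps. The main obstacle has two parts. First, one must show $S$ is finite, which I would attack using the fact that $F$ preserves finite generation: applying $F$ to a finitely generated module such as $A_1$ itself forces only finitely many of the $\iota_n$ to act nontrivially. Second, and more delicate, is the rigidity claim that if an autoequivalence fixes every simple then it is naturally isomorphic to the identity. This requires reconstructing the functor from its action on simples, which I expect uses the explicit structure of $\gr A_1$ via distinguished generators (for example $A_1$ itself or its graded shifts) to track how $F$ must act on their composition series and extension data. Granting these two steps, the composition of $F$ with the finite product of $\iota_n$ for $n \in S$ is naturally isomorphic to the identity, so $F$ lies in $\ZZ_\fin \rtimes D_\infty$ as claimed.
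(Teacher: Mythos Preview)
Your outline has the right shape, but the key step --- showing that the induced permutation of $\ZZ$ lies in $D_\infty$ --- rests on a false claim. You assert that the nonvanishing $\Ext^1$'s between doubled simples at different integers form a nearest-neighbor graph recording the linear order on $\ZZ$. In fact, for $A_1$ one has $\uExt^1_{A_1}(S\s{m}, S'\s{n}) = 0$ for all $S, S' \in \{X, Y\}$ whenever $m \neq n$; the only nontrivial extensions among integrally supported simples occur \emph{within} a single integer point (between $X\s{n}$ and $Y\s{n}$). So the Ext data sees each integer as isolated and carries no information about the order on $\ZZ$, and your argument cannot constrain the permutation at all.

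Sierra's actual argument (mirrored in this paper's proof of Theorem~\ref{cautoequiv}) uses a quite different mechanism: from $\FFF$ one builds the functor $\FFF_0 = \FFF(- \otimes_{\kk[z]} A_1)_0$ on $\mod \kk[z]$, which induces a $\kk$-algebra endomorphism of $\kk[z] \cong \Hom_{\kk[z]}(\kk[z], \kk[z])$. Tracking the image of multiplication by $z + \lambda$ through the short exact sequence for $\kk[z]/(z+\lambda)$ shows that the bijection $g$ on $\kk$ is affine, $g(\lambda) = a\lambda + b$; then $g(\ZZ) = \ZZ$ forces $a = \pm 1$ and $b \in \ZZ$. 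Your final rigidity step (an autoequivalence fixing every simple is the identity) is also handled differently than you suggest: rather than reconstructing $\FFF$ from composition series, one observes that such an $\FFF$ fixes every $A_1\s{n}$ (since rank one projectives are determined by their simple factors), hence is a twist functor, and then one invokes that all degree-$0$ automorphisms of the associated $\ZZ$-algebra $\ol{A_1}$ are inner, which forces twist functors to be trivial (cf.\ Lemma~\ref{inneraut} and Theorem~\ref{twistfunctors} in the present paper).
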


In this paper, we study the graded module category over certain generalized Weyl algebras (GWAs). We study GWAs $A(f)$ with base ring $\kk[z]$ defined a quadratic polynomial $f \in \kk[z]$ and automorphism $\sigma: \kk[z] \sra \kk[z]$ mapping $z$ to $z+1$. The GWA $A(f)$ has presentation 
\[A(f) = \frac{\kk[z] \langle x,y \rangle}{\begin{pmatrix}xz = \sigma(z)x & yz = \sigma^{-1}(z)y \\ xy = f & yx = \sigma^{-1}(f)\end{pmatrix}}.
\]
We assume, without loss of generality, that $f = z(z+\m )$ for some $\m \in \kk$. The properties of $A(f)$ are determined by the roots of $f$. When $\m = 0$, we say that $f$ has a multiple root, when $\m \in \NNp$, we say that $f$ has congruent roots, and when $\m \in \kk \setminus \ZZ$, we say that $f$ has non-congruent roots.
In the non-congruent root case, the picture of $\gr A(f)$ can be thought of as a ``doubled'' version of the picture of $\gr A_1$.

\centerline{
	\begin{tikzpicture}[ scale=1.6]	
		\draw[thick,<-](-3.9,0)--(-3.1,0);
		\fill[black](-3,-.1) circle (1pt);
		\fill[black](-3,.1) circle (1pt);
		\node at (-3,-.1)[label = below: $-3$]{};
		\draw[thick](-2.9,0)--(-2.8,0);
				\fill[black](-2.7,-.1) circle (1pt);
		\fill[black](-2.7,.1) circle (1pt);
		\node at (-2.7,.1)[label = above: $\m -3$]{};
		\draw[thick](-2.6,0)--(-2.1,0);
		\fill[black](-2,-.1) circle (1pt);
		\fill[black](-2,.1) circle (1pt);
		\node at (-2,-.1)[label = below: $-2$]{};
		\draw[thick](-1.9,0)--(-1.8,0);
				\fill[black](-1.7,-.1) circle (1pt);
		\fill[black](-1.7,.1) circle (1pt);
		\node at (-1.7,.1)[label = above: $\m -2$]{};
		\draw[thick](-1.6,0)--(-1.1,0);
		\fill[black](-1,-.1) circle (1pt);
		\fill[black](-1,.1) circle (1pt);
		\node at (-1,-.1)[label = below:$-1$]{};
		\draw[thick](-.9,0)--(-.8,0);
			\fill[black](-.7,-.1) circle (1pt);
		\fill[black](-.7,.1) circle (1pt);
		\node at (-.7,.1)[label = above:$\m -1$]{};
		\draw[thick](-.6,0)--(-.1,0);
		\fill[black](0,-.1) circle (1pt);
		\fill[black](0,.1) circle (1pt);
		\node at (0,-.1)[label = below: $0$]{};
		\draw[thick](.1,0)--(.2,0);
			\fill[black](.3,-.1) circle (1pt);
		\fill[black](0.3,.1) circle (1pt);
		\node at (0.3,.1)[label = above: $\m $]{};
		\draw[thick](.4,0)--(.9,0);
		\fill[black](1,-.1) circle (1pt);
		\fill[black](1,.1) circle (1pt);
		\node at (1,-.1)[label = below: $1$]{};
			\draw[thick](1.1,0)--(1.2,0);		
		\fill[black](1.3,-.1) circle (1pt);
		\fill[black](1.3,.1) circle (1pt);
		\node at (1.3,.1)[label = above: $\m + 1$]{};
		\draw[thick](1.4,0)--(1.9,0);		
		\fill[black](2,-.1) circle (1pt);
		\fill[black](2,.1) circle (1pt);
		\node at (2,-.1)[label = below: $2$]{};
		\draw[thick,](2.1,0)--(2.2,0);	
			\fill[black](2.3,-.1) circle (1pt);
		\fill[black](2.3,.1) circle (1pt);
		\node at (2.3,.1)[label = above: $\m + 2$]{};
		\draw[thick,](2.4,0)--(2.9,0);		
		\fill[black](3,-.1) circle (1pt);
		\fill[black](3,.1) circle (1pt);
		\node at (3,-.1)[label = below: $3$]{};
			\draw[thick,](3.1,0)--(3.2,0);	
		\fill[black](3.3,-.1) circle (1pt);
		\fill[black](3.3,.1) circle (1pt);
		\node at (3.3,.1)[label = above: $\m + 3$]{};
		\draw[thick,->](3.4,0)--(3.9,0);		
	\end{tikzpicture}
	}
\noindent In this case, for each integer $n$, $A/(z+n)A$ is a non-split extension of two simple modules which we call $X^f_0\s{n}$ and $Y^f_0\s{n}$ and additionally, $A/(z+n+\m )$ is a non-split extensions of two simple modules which we call $X^f_{\m } \s{n}$ and $Y^f_{\m }\s{n}$. Each pair of these simple modules behaves in the same way as the pair $X \s{n}$ and $Y \s{n}$ in $\gr A_1$.

In the multiple root case, the picture of $\gr A(f)$ is the same as for $\gr A_1$.

 \centerline{
	\begin{tikzpicture}[ scale=1.6]	
			\node at (-1.7,.1)[label = above: {}]{};
		\draw[thick,<-](-3.9,0)--(-3.1,0);
		\fill[black](-3,-.1) circle (1pt);
		\fill[black](-3,.1) circle (1pt);
		\node at (-3,-.1)[label = below: $-3$]{};
		\draw[thick](-2.9,0)--(-2.1,0);
		\fill[black](-2,-.1) circle (1pt);
		\fill[black](-2,.1) circle (1pt);
		\node at (-2,-.1)[label = below: $-2$]{};
		\draw[thick](-1.9,0)--(-1.1,0);
		\fill[black](-1,-.1) circle (1pt);
		\fill[black](-1,.1) circle (1pt);
		\node at (-1,-.1)[label = below:$-1$]{};
		\draw[thick](-.9,0)--(-.1,0);
		\fill[black](0,-.1) circle (1pt);
		\fill[black](0,.1) circle (1pt);
		\node at (0,-.1)[label = below: $0$]{};
		\draw[thick](.1,0)--(.9,0);
		\fill[black](1,-.1) circle (1pt);
		\fill[black](1,.1) circle (1pt);
		\node at (1,-.1)[label = below: $1$]{};
		\draw[thick](1.1,0)--(1.9,0);		
		\fill[black](2,-.1) circle (1pt);
		\fill[black](2,.1) circle (1pt);
		\node at (2,-.1)[label = below: $2$]{};
		\draw[thick,](2.1,0)--(2.9,0);		
		\fill[black](3,-.1) circle (1pt);
		\fill[black](3,.1) circle (1pt);
		\node at (3,-.1)[label = below: $3$]{};
		\draw[thick,->](3.1,0)--(3.9,0);		
	\end{tikzpicture}
	}
	
\noindent For every integer $n$, $A/(z+n)A$ is a nonsplit extension of two simple modules, $X^f\s{n}$ and $Y^f\s{n}$. However, although the picture is the same, the category is not---the simple modules $X^f\s{n}$ and $Y^f\s{n}$ also have non-split self-extensions.

Finally, in the congruent root case, there exist finite-dimensional simple modules: the shifts of a module we call $Z^f$. For each integer $n$, $A/(z+n)A$ has a composition series consisting of $X^f\s{n}$, $Y^f\s{n}$, and $Z^f\s{n}$.

\centerline{
	\begin{tikzpicture}[ scale=1.6]	
			\node at (-1.7,.1)[label = above: {}]{};
		\draw[thick,<-](-3.9,0)--(-3.1,0);
		\fill[black](-3,-.1) circle (1pt);
		\fill[black](-3,0) circle (1pt);
		\fill[black](-3,.1) circle (1pt);
		\node at (-3,-.1)[label = below: $-3$]{};
		\draw[thick](-2.9,0)--(-2.1,0);
		\fill[black](-2,-.1) circle (1pt);
		\fill[black](-2,-0) circle (1pt);
		\fill[black](-2,.1) circle (1pt);
		\node at (-2,-.1)[label = below: $-2$]{};
		\draw[thick](-1.9,0)--(-1.1,0);
		\fill[black](-1,-.1) circle (1pt);
		\fill[black](-1,0) circle (1pt);
		\fill[black](-1,.1) circle (1pt);
		\node at (-1,-.1)[label = below:$-1$]{};
		\draw[thick](-.9,0)--(-.1,0);
		\fill[black](0,-.1) circle (1pt);
		\fill[black](0,0) circle (1pt);
		\fill[black](0,.1) circle (1pt);
		\node at (0,-.1)[label = below: $0$]{};
		\draw[thick](.1,0)--(.9,0);
		\fill[black](1,-.1) circle (1pt);
		\fill[black](1,0) circle (1pt);
		\fill[black](1,.1) circle (1pt);
		\node at (1,-.1)[label = below: $1$]{};
		\draw[thick](1.1,0)--(1.9,0);		
		\fill[black](2,-.1) circle (1pt);
		\fill[black](2,0) circle (1pt);
		\fill[black](2,.1) circle (1pt);
		\node at (2,-.1)[label = below: $2$]{};
		\draw[thick,](2.1,0)--(2.9,0);		
		\fill[black](3,-.1) circle (1pt);
		\fill[black](3,0) circle (1pt);
		\fill[black](3,.1) circle (1pt);
		\node at (3,-.1)[label = below: $3$]{};
		\draw[thick,->](3.1,0)--(3.9,0);		
	\end{tikzpicture}
	}

For every quadratic polynomial $f$, we construct autoequivalences of $\gr A(f)$ which are analogous to Sierra's involutions $\iota_n$. We determine the Picard group of $A(f)$, showing that $\Pic\left(\gr A(f) \right) \cong \Pic \left(\gr A_1 \right)$.

\begin{theorem}[Theorem~\ref{picthm}] For any quadratic polynomial $f \in \kk[z]$,
\[ \Pic(\gr A(f)) \cong \ZZ_{\fin} \rtimes D_{\infty}. 
\]
\end{theorem}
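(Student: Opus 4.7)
The plan is to mirror Sierra's strategy from \cite{sierra} in each of the three cases (non-congruent roots, multiple root, congruent roots): first produce a concrete copy of $\ZZ_{\fin} \rtimes D_\infty$ inside $\Pic(\gr A(f))$, and then show that no other autoequivalences exist up to natural isomorphism.

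The $D_\infty$ subgroup comes from two autoequivalences. The shift $\SSS_{A(f)}$ has infinite order and translates the picture of simples. A graded anti-automorphism of $A(f)$ that reflects the roots of $f$ about their midpoint (and exchanges $x$ and $y$ up to scalars) induces an order-two autoequivalence that reflects the picture. Next I would construct involutions $\iota_n$, each of which swaps a single pair of simple modules appearing as composition factors of some $A(f)/(z+c)A(f)$, by imitating Sierra's Fourier--Mukai-type twist. In the non-congruent case there are two rows of such pairs---one at each integer point and one at each $\m$-shifted integer point---and the corresponding $\iota$'s are built separately. In the multiple-root case the swapped pairs carry extra self-extensions that must be tracked. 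In the congruent-root case $A(f)/(z+n)A(f)$ has a three-step composition series including the finite-dimensional $Z^f\s{n}$, so the swap must be arranged to commute with $Z^f$. One then verifies that the $\iota_n$ commute with each other, square to the identity, and generate $\ZZ_{\fin}$, and that the $D_\infty$-subgroup permutes them in the expected way, yielding the full semidirect product inside $\Pic(\gr A(f))$.

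For the converse---that every autoequivalence $F$ of $\gr A(f)$ is naturally isomorphic to an element of this subgroup---any $F$ permutes isomorphism classes of simples preserving $\kk$-dimension, the composition lengths of $A(f)/(z+c)A(f)$, and the graded $\Ext^1$-structure. These invariants force the permutation to preserve the two-row picture in the non-congruent case, the extra point $Z^f\s{n}$ in the congruent case, and the self-$\Ext$ pattern in the multiple-root case. After composing $F$ with an appropriate element of the $D_\infty$-subgroup we may assume $F$ fixes each local family of simples setwise; after further composing with a finite product of $\iota_n$'s we may assume $F$ fixes every simple up to isomorphism.

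The main obstacle is the rigidity step that finishes the argument: any autoequivalence fixing every simple module must be naturally isomorphic to the identity. In the multiple-root and congruent-root cases the self-extensions and the extra finite-dimensional simple $Z^f$ complicate a direct adaptation of Sierra's projective-cover argument, and I expect this step to require explicit computation of graded $\Hom$ and $\Ext$ among simples using the GWA presentation of $A(f)$ to pin $F$ down. Once the rigidity step is carried out in each of the three cases, the decomposition $\Pic(\gr A(f)) \cong \ZZ_{\fin} \rtimes D_\infty$ follows.
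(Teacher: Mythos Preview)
Your outline is broadly the paper's, but two of the steps you flag as ``to be worked out'' are precisely where the paper's argument has content you have not anticipated, and your proposed method for the rigidity step would not suffice.

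First, the step where you pass from ``$F$ permutes the local families of simples'' to ``after composing with a power of $\SSS$ and $\omega$, $F$ fixes each family setwise'': the Ext-pattern alone does not do this. For every $n$ the pair $\{X\s{n},Y\s{n}\}$ has the same Hom/Ext profile, so categorical invariants only tell you $F$ permutes these pairs, not that the permutation is affine $n\mapsto an+b$. The paper extracts this by considering the functor $\FFF_0=\FFF(-\otimes_{\kk[z]}A)_0:\mod\kk[z]\to\mod\kk[z]$ and reading off that $\FFF_0$ induces a $\kk$-algebra automorphism of $\kk[z]$, hence $z\mapsto \gamma z+\delta$; bijectivity on $\ZZ$ (or $\ZZ\cup(\ZZ+\m)$) then forces $\gamma=\pm1$ and $\delta$ integral. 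Without this trick you do not get the $D_\infty$ reduction.

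Second, and more seriously, the final rigidity step---an autoequivalence fixing every simple is naturally isomorphic to $\Id$---is \emph{not} a matter of computing graded $\Hom$ and $\Ext$ among simples. Those computations only constrain how $F$ moves objects; they do not show that $F$ is determined up to natural isomorphism by its effect on simples. The paper's mechanism is entirely different: one shows that every degree-$0$ automorphism of the associated $\ZZ$-algebra $\overline{A}$ is inner (a short direct computation with the generators $m_{i,j}$), and then invokes Sierra's general result that in this situation any twist functor on $\gr A$ is naturally isomorphic to $\Id$. Since rank one projectives are determined by their simple factors (Corollary~\ref{projfactors}), an $F$ fixing all simples satisfies $F(A\s{n})\cong A\s{n}$ for all $n$, hence is a twist functor, hence is $\Id$. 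Your proposal does not contain any substitute for this $\ZZ$-algebra argument, and the Ext computations you describe would not close the gap.

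Finally, the paper's construction of the $\iota_j$ is not a Fourier--Mukai twist: it defines $\iota_0$ on the subcategory of canonical rank one projectives by sending $P$ to the smallest submodule $N\subseteq P$ with $P/N$ lying in an explicit finite subcategory $\DDD$ (built from $X,Y,Z$ and a few small indecomposables), then extends to $\gr A$ via a general lemma on functors defined on enough projectives. In the congruent-root case $\DDD$ must include length-three modules such as $E_{X,Z,X}$, and the verification that $\iota_0^2\cong\Id$ is a structure-constant computation; this is where the interaction with $Z$ you worry about is handled.
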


In \cite{smith}, Paul Smith took an interesting point of view on the first Weyl algebra $A_1$. Using Sierra's involutions, Smith constructed a commutative $\ZZ_\fin$-graded ring $C$ and showed that $\gr A_1 \equiv \gr (C, \ZZ_\fin)$. As a corollary, he showed that $\gr A_1$ is equivalent to the coherent sheaves on a certain quotient stack. This result suggests that it may be interesting to consider the geometry of noncommutative $\ZZ$-graded rings.

In a companion paper \cite{woncomm}, we use the autoequivalences of $\gr A(f)$ constructed in this paper to construct $\ZZ_\fin$-graded commutative rings $C(f)$ which are similar to Smith's ring $C$. Let $\qgr A(f)$ be the quotient category $\gr A(f)$ modulo its full subcategory of finite-dimensional modules.
\begin{theorem}[{\cite[Theorems~4.2, 4.10, 4.19]{woncomm}}] Let $f \in \kk[z]$ be quadratic. There is a commutative $\ZZ_\fin$-graded ring $C(f)$ such that
\[ \qgr A(f) \equiv \gr\left (C(f), \ZZ_\fin\right).
\]
\end{theorem}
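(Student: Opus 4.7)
The plan is to follow the template set by Smith in \cite{smith} for $A_1$. The autoequivalences of $\gr A(f)$ constructed in this paper (the analogues of Sierra's $\iota_n$) preserve the subcategory of finite-dimensional modules, so they descend to a $\ZZ_\fin$-action on $\qgr A(f)$. Writing $\mathcal{O}$ for the image of $A(f)$ in $\qgr A(f)$ and $\mathcal{O}_\alpha := \iota_\alpha \mathcal{O}$ for $\alpha \in \ZZ_\fin$, I would define
\[ C(f) := \bigoplus_{\alpha \in \ZZ_\fin} \Hom_{\qgr A(f)}(\mathcal{O}, \mathcal{O}_\alpha),
\]
with multiplication coming from composition (after using the $\ZZ_\fin$-action to align sources and targets). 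This is $\ZZ_\fin$-graded by construction; the substance of the theorem is then that $C(f)$ is commutative and that $\qgr A(f) \equiv \gr(C(f), \ZZ_\fin)$.

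The core technical step is to present each $\mathcal{O}_\alpha$ explicitly, concretely as the image in $\qgr A(f)$ of a rank-one fractional-ideal-like submodule of the graded quotient ring $\QgrA$, and to identify every $\Hom_{\qgr A(f)}(\mathcal{O}, \mathcal{O}_\alpha)$ with a specific $\kk$-subspace of this same graded quotient ring. Since all graded pieces then sit inside a common commutative ambient ring, the composition law of $C(f)$ is just multiplication there, and commutativity follows. The explicit description of $\mathcal{O}_\alpha$ depends on whether $f$ has non-congruent, congruent, or multiple roots, so this step naturally splits into three cases, but the strategy is uniform.

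To upgrade the ring $C(f)$ to a categorical equivalence, I would apply a $\ZZ_\fin$-graded Gabriel--Popescu / Morita-style theorem. The hypotheses to verify are that the family $\{\mathcal{O}_\alpha\}_{\alpha \in \ZZ_\fin}$ generates $\qgr A(f)$, that each $\mathcal{O}_\alpha$ is small, and that Hom spaces between them are captured by the graded pieces of $C(f)$; these reduce to the fact that $\mathcal{O}$ is a small generator of $\qgr A(f)$ together with the fact that each $\iota_\alpha$ is an autoequivalence.

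The principal obstacle is the commutativity of $C(f)$. In the multiple and congruent root cases, the extra self-extensions and the presence of finite-dimensional simples change the structure of the $\iota_\alpha \mathcal{O}$, and verifying that composition remains symmetric requires careful case-by-case bookkeeping of how the involutions $\iota_\alpha$ interact with the rank-one modules being assembled. Once commutativity is established, both the $\ZZ_\fin$-grading and the categorical equivalence are relatively formal.
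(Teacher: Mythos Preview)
The theorem is not proved in this paper; it is quoted from the companion paper \cite{woncomm}, so there is no proof here to compare against. Your outline does match the strategy the introduction announces---follow Smith's template from \cite{smith}, using the involutions $\iota_J$ built in section~\ref{sec:iotas} to define $C(f)$ as the $\ZZ_\fin$-graded Hom-algebra of the orbit $\{\iota_J A\}$---and that is indeed the approach of \cite{woncomm}.

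One point your sketch glosses over and which the present paper makes explicit: by Lemma~\ref{iotasgen}, in the congruent root case the family $\{\iota_J A \mid J \in \ZZ_\fin\}$ does \emph{not} generate $\gr A(f)$, precisely because no $\iota_J A$ surjects onto $Z\langle n\rangle$ for $n<0$ or $n\geq \alpha$. Your claim that ``$\mathcal{O}$ is a small generator of $\qgr A(f)$ together with the fact that each $\iota_\alpha$ is an autoequivalence'' suffices for generation is therefore too quick at the level of $\gr A(f)$; the obstruction is carried entirely by the finite-dimensional simples $Z\langle n\rangle$, and it is exactly their vanishing in $\qgr A(f)$ that rescues the argument. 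This is the reason the equivalence is stated for $\qgr A(f)$ rather than $\gr A(f)$, and the passage from $\gr$ to $\qgr$ deserves to be argued, not assumed.
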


This generalization of Smith's result provides an additional example of $\ZZ$-graded geometry. It would be interesting to consider other noncommutative $\ZZ$-graded rings. When is there a commutative $\Gamma$-graded ring with an equivalent graded module category? One could attempt the construction used in \cite{smith} and \cite{woncomm} to study other GWAs $R$. To do this, one first needs to understand the autoequivalences of $\gr R$. Therefore, one might generalize the results of this paper to determine $\Pic(\gr R)$. To start, one could consider GWAs with the same base ring $\kk[z]$ and automorphism $\sigma$ as studied in this paper. Many of the techniques should generalize to higher degree polynomials $f$. 

To close, we briefly summarize the contents of this paper. In section~\ref{sec:bg}, we establish notation and give background on GWAs and abelian categories. In sections~\ref{sec:simple} and \ref{sec:finlen}, we study the finite length modules of $\gr A(f)$. In section~\ref{sec:ideals}, we study the graded submodules of $\QgrAf$, the graded quotient ring of $A(f)$. We use these results in section~\ref{sec:proj}, to investigate the full subcategory of $\gr A(f)$ consisting of the rank one projective modules. In section~\ref{sec:func}, we develop some technical tools to allow us to define a functor on $\gr A$ by first defining it on a full subcategory of projective modules. Finally, in section~\ref{sec:pic}, we construct the autoequivalences of $\gr A(f)$ which are analogous to the $\iota_J$ of \cite{sierra} and compute the Picard group of $A(f)$. 

\begin{acknowledgments}The author was partially supported by NSF grant DMS-1201572. The research in this paper was completed while the author was a PhD student at the University of California, San Diego. In some cases, extra details may be found in the author's PhD thesis \cite{won}. The author is grateful to Daniel Rogalski for carefully reading drafts of this paper and suggesting many improvements, and to Sue Sierra and Paul Smith for helpful conversations.
\end{acknowledgments}

\section{Preliminaries}
\label{sec:bg}

\subsection{Graded rings and modules}
\label{sec:graded}
We begin by fixing basic definitions, terminology, and notation. We follow the convention that 0 is a natural number so $\NN = \ZZ_{\geq 0}$. We use the notation $\NN^+$ to denote the positive natural numbers.

In this paper, a \emph{graded ring} is a $\ZZ$-graded $\kk$-algebra. For a $\ZZ$-graded Ore domain $R$, we write $Q_{\mathrm{gr}}(R)$ for the graded quotient ring of $R$, the localization of $R$ at all nonzero homogeneous elements. The category of finitely generated $\ZZ$-graded right $R$-modules is denoted $\gr R$, while the category of all finitely generated right $R$-modules is denoted $\mod R$. 

For $\ZZ$-graded right $R$-modules $M$ and $N$, let $\uHom_R(M,N)_d$ denote the graded $R$-module homomorphisms of degree $d$ from $M$ to $N$ and define
\[ \uHom_R(M,N) = \bigoplus_{d \in \ZZ} \uHom_R(M,N)_d.
\] 
The morphisms in $\gr R$ are the graded homomorphisms of degree $0$. We denote these homomorphisms by 
\[ \Hom_{\gr R}(M,N) = \uHom_R(M,N)_0.
\]
We write $\uExt_R$ and $\Ext_{\gr R}$ for the derived functors of $\uHom_R$ and $\Hom_{\gr R}$, respectively.

For a graded $\kk$-algebra $R$, the \emph{shift functor} on $\gr R$ sends a graded right module $M$ to the new module $M \langle 1 \rangle = \bigoplus_{j \in \ZZ} M \langle 1 \rangle _j$, defined by $M \langle 1 \rangle _j = M_{j-1}$. We write this functor as $\SSS_R: M \mapsto M \s{1}$. Similarly, $M\s{i}_j = M_{j-i}$. This is in keeping with the convention of Sierra in \cite{sierra}, although we warn that this is the opposite of the standard convention. For a graded $\kk$-vector space $V$, we use the same notation to refer to the shift of grading on $V$: $V \langle i \rangle_j = V_{j-i}$. For right $R$-modules $M$ and $M'$, note that as graded vector spaces, 
\[\uHom_R(M\s{d}, M' \s{d'}) \cong \uHom_R(M,M') \s{d' - d}.\]

Given two categories $\CCC$ and $\DDD$, a covariant functor $\FFF: \CCC \sra \DDD$ is called an \emph{equivalence of categories} if there is a covariant functor $\GGG: \DDD \sra \CCC$ such that $\GGG \circ \FFF \cong \Id_{\CCC}$ and $\FFF \circ \GGG \cong \Id_{\DDD}$. We say that $\CCC$ and $\DDD$ are \emph{equivalent} and write $\CCC \equiv \DDD$. An equivalence of categories $\FFF: \CCC \sra \CCC$ is called an \emph{autoequivalence of $\CCC$}. Given a category of $\Gamma$-graded modules, $\gr (R, \Gamma)$, let $\Aut(\gr (R,\Gamma))$ be the group of autoequivalences of $\gr (R,\Gamma)$ with operation composition. Denote by $\sim$ the equivalence relation on $\Aut(\gr (R, \Gamma))$ given by natural isomorphism. We define the \emph{Picard group of $\gr (R, \Gamma)$}
\[\Pic(\gr (R, \Gamma)) = \Aut(\gr(R,\Gamma)) / \sim. \]

Following \cite{sierra2}, we define a \textit{$\ZZ$-algebra} $R$ to be a $\kk$-algebra without 1, with a $(\ZZ \times \ZZ)$-graded $\kk$-vector space decomposition
\[ R = \bigoplus_{i, j \in \ZZ} R_{i,j}
\]
such that for any $i,j,k \in \ZZ$, $R_{i,j} R_{j,k} \subseteq R_{i,k}$ and if $j \neq j'$ then $R_{i,j } R_{j', k} = 0$. Additionally, we require that each of the subrings $R_{i,i}$ has a unit $1_{i}$ which acts as a left identity on all $R_{i,j}$ and right identity on all $R_{j,i}$. If $R$ and $S$ are $\ZZ$-algebras, we say that a $\kk$-algebra homomorphism $\varphi: R \sra S$ is \textit{graded of degree $d$} if $\varphi(R_{i,j})\subseteq S_{i+d, j+d}$. A \textit{$\ZZ$-algebra isomorphism} is a degree $0$ $\kk$-algebra isomorphism $\varphi: R \sra S$ such that $\varphi(1_i) = 1_i$ for all $i \in \ZZ$. 

For a $\ZZ$-graded ring $R$, we define the \textit{$\ZZ$-algebra associated to $R$}
\[ \ol{R} = \bigoplus_{i,j \in \ZZ} \ol{R}_{i,j}
\]
where $\ol{R}_{i,j} = R_{j-i}$. A degree $0$ automorphism of $\ol{R}$ is called \textit{inner} (see \cite[Theorem 3.10]{sierra}) if for all $m,n \in \ZZ$, there exist $g_m \in \ol{R}_{m,m}$ and $h_n \in \ol{R}_{n,n}$ such that for all $w \in \ol{R}_{m,n}$, 
\[ \gamma(w) = g_m w h_n.
\]

In \cite{sierra} and \cite{sierra2}, Sierra studies the relationship between $\ol{R}$ and $\gr R$. In particular, Sierra proves the following:

\begin{theorem}[Sierra, {\cite[Theorem 3.6]{sierra2}}] \label{zalgebra}Let $R$ and $S$ be $\ZZ$-graded $\kk$-algebras. The following are equivalent:
\begin{enumerate}
\item The $\ZZ$-algebras $\ol{R}$ and $\ol{S}$ are isomorphic via a degree-preserving map.
\item \label{twist} There is an equivalence of categories $\Phi: \gr R \sra \gr S$ such that $\Phi(R \s{n}) \cong S \s{n}$ for all $n \in \ZZ$.
\end{enumerate}
\end{theorem}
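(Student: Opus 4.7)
The plan is to prove the two implications separately, with the substantial work being the reverse direction.

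For (2) $\Rightarrow$ (1), I would work directly from the equivalence. Given $\Phi$ with $\Phi(R\s{n}) \cong S\s{n}$, choose, once and for all, isomorphisms $\theta_n : \Phi(R\s{n}) \xrightarrow{\sim} S\s{n}$ for each $n \in \ZZ$. Conjugating $\Phi$ by the $\theta_n$ yields a collection of abelian-group isomorphisms
\[
\varphi_{i,j} : \Hom_{\gr R}(R\s{-i}, R\s{-j}) \xrightarrow{\sim} \Hom_{\gr S}(S\s{-i}, S\s{-j}),\qquad f \mapsto \theta_{-j}\circ\Phi(f)\circ \theta_{-i}^{-1}.
\]
Using the paper's shift convention one verifies $\Hom_{\gr R}(R\s{-i}, R\s{-j}) \cong R_{j-i} = \ol{R}_{i,j}$ (and similarly for $S$). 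Because any functor preserves composition and identities, the maps $\varphi_{i,j}$ are compatible with the $\ZZ$-algebra multiplication and send $1_i$ to $1_i$, so they assemble into a degree-$0$ $\ZZ$-algebra isomorphism $\ol{R} \to \ol{S}$. The ambiguity in the choice of the $\theta_n$ only changes this isomorphism by an inner one, which is irrelevant for the statement.

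For (1) $\Rightarrow$ (2), the strategy is to reconstruct $\gr R$ from $\ol{R}$ alone. I would introduce the category $\Gr\ol{R}$ of graded right $\ol{R}$-modules: objects are $\ZZ$-indexed families $M = \{M_n\}_{n \in \ZZ}$ of $\kk$-vector spaces equipped with $\kk$-linear maps $M_i \otimes \ol{R}_{i,j} \to M_j$ satisfying the usual associativity and unit axioms, and morphisms are degree-$0$ families compatible with the action. There is a natural functor $\FFF : \gr R \to \Gr\ol{R}$ sending $M$ to the family $\{M_n\}$ with the action induced by ring multiplication (equivalently $\FFF(M)_n = \Hom_{\gr R}(R\s{-n}, M)$ with the action by pre-composition). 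One checks that $\FFF(R\s{n})$ is the principal right $\ol{R}$-module $\ol{R}_{n,-}$, an object that depends only on $\ol{R}$ as a $\ZZ$-algebra. Given an isomorphism $\varphi : \ol{R} \to \ol{S}$, restriction of scalars along $\varphi$ gives an equivalence $\varphi_\ast : \Gr\ol{S} \to \Gr\ol{R}$ that carries principal modules to principal modules and respects any reasonable finite-generation subcategory. Composing $\FFF_S^{-1} \circ \varphi_\ast^{-1} \circ \FFF_R$ then yields the desired $\Phi : \gr R \to \gr S$ with $\Phi(R\s{n}) \cong S\s{n}$ for every $n$.

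The main obstacle is the reconstruction step: showing that $\FFF_R$ is fully faithful and identifying its essential image. Full faithfulness amounts to the observation that a graded module homomorphism is the same thing as a degree-$0$ family of linear maps commuting with the $\ol{R}$-action, which follows directly from the definitions once indices are tracked carefully. Essential surjectivity is the subtler point: an arbitrary object of $\Gr\ol{R}$ need not come from a finitely generated graded $R$-module, so one must cut down to the full subcategory of $\ol{R}$-modules that admit finite generating sets of "homogeneous" elements in the $\ZZ$-algebra sense, and argue that this condition is intrinsic to $\ol{R}$ and therefore preserved by $\varphi_\ast$. Aside from this, one has to be careful with the two different sign conventions (since $\ol{R}_{i,j} = R_{j-i}$ while $\Hom_{\gr R}(R\s{i}, R\s{j}) = R_{i-j}$) and verify that the unit elements $1_n \in \ol{R}_{n,n}$ are preserved; these are bookkeeping matters once the conceptual framework is in place.
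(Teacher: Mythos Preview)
The paper does not prove this theorem. It is quoted as Sierra's result \cite[Theorem 3.6]{sierra2} and is stated without argument; the paper only uses it (via the companion Theorem~\ref{twistfunctors}) in the proof of Lemma~\ref{autosimple}. So there is no ``paper's own proof'' to compare against.

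That said, your outline is the standard one and is essentially how Sierra's proof goes. For $(2)\Rightarrow(1)$ you are exactly right: fixing isomorphisms $\theta_n$ and transporting composition through $\Phi$ gives the degree-$0$ $\ZZ$-algebra isomorphism. For $(1)\Rightarrow(2)$ your reconstruction via $\Gr\ol{R}$ is the right idea; the only point to be a little more careful about is the finite-generation bookkeeping you flag yourself. One clean way to handle it is to first establish the equivalence $\Gr R \simeq \Gr\ol{R}$ at the level of all graded modules (where essential surjectivity is immediate), observe that a degree-$0$ $\ZZ$-algebra isomorphism $\ol{R}\cong\ol{S}$ induces an equivalence $\Gr\ol{R}\simeq\Gr\ol{S}$, and then check that finitely generated graded $R$-modules correspond exactly to $\ol{R}$-modules generated by finitely many homogeneous elements, a condition manifestly preserved by the isomorphism. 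Your remark about the sign convention ($\ol{R}_{i,j}=R_{j-i}$ versus $\Hom_{\gr R}(R\s{i},R\s{j})=R_{i-j}$) is on point and is the only place one can trip up.
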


A functor $\Phi$ satisfying condition \ref{twist} in Theorem~\ref{zalgebra} above is called a \textit{twist functor}. Sierra further gives the following result on twist functors.

\begin{theorem}[Sierra, {\cite[Corollary 3.11]{sierra}}] \label{twistfunctors} Let $R$ be a $\ZZ$-graded ring and suppose that all automorphisms of $\ol{R}$ of degree $0$ are inner. If $\Phi: \gr R \sra \gr R$ is a twist functor, then $\Phi$ is naturally isomorphic to $\Id_{\gr R}$.
\end{theorem}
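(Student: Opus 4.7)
The plan is to leverage the correspondence of Theorem~\ref{zalgebra}: a twist functor $\Phi: \gr R \sra \gr R$, together with a choice of isomorphisms $\phi_n: R\s{n} \sra \Phi(R\s{n})$ for each $n \in \ZZ$, determines a degree-$0$ $\ZZ$-algebra automorphism $\gamma$ of $\ol{R}$ given on morphisms by $\gamma(f) = \phi_n^{-1} \Phi(f) \phi_m$ for $f \in \Hom_{\gr R}(R\s{m}, R\s{n})$ under the natural identification with $\ol{R}_{m,n}$. By hypothesis $\gamma$ is inner, so there exist $g_m \in \ol{R}_{m,m}$ and $h_n \in \ol{R}_{n,n}$ with $\gamma(w) = g_m w h_n$ for every $w \in \ol{R}_{m,n}$. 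My strategy is to replace each $\phi_n$ by a modified isomorphism $\tilde\phi_n$ whose induced automorphism of $\ol{R}$ is trivial; the resulting $\tilde\phi$ will then be the desired natural isomorphism $\Id_{\gr R} \Rightarrow \Phi$.

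First I would confirm that $g_m$ and $h_m$ are mutually inverse units of the endomorphism ring $\ol{R}_{m,m}$. Applying $\gamma$ to $1_m \in \ol{R}_{m,m}$ and using that $\gamma$ preserves left/right identities yields $g_m h_m = 1_m$. The reverse identity $h_m g_m = 1_m$ follows because $\gamma$ is invertible: for the left- and right-multiplication operations by $g_m$ and $h_n$ to implement an automorphism of $\ol{R}$, each must be bijective and hence a unit, and the same construction applied to $\gamma^{-1}$ produces the opposite factorization.

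With mutual inverses in hand, I would set $\tilde\phi_n := \phi_n \circ h_n : R\s{n} \sra \Phi(R\s{n})$. The key calculation is the naturality square $\Phi(f) \circ \tilde\phi_m = \tilde\phi_n \circ f$ for every $f: R\s{m} \sra R\s{n}$ between shifts. Expanding $\Phi(f) = \phi_n \circ \gamma(f) \circ \phi_m^{-1}$, interpreting $\gamma(f) = g_m f h_n$ as a composition of morphisms (the precise order being dictated by the convention relating $\ol{R}$-multiplication to morphism composition), and applying $g_m h_m = 1_m$ collapses the identity to a tautology. Thus $\tilde\phi$ defines a natural isomorphism between $\Id$ and $\Phi$ on the full subcategory of $\gr R$ spanned by $\{R\s{n}\}_{n \in \ZZ}$.

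The remaining step is to extend $\tilde\phi$ to all of $\gr R$. Every module $M \in \gr R$ fits in a presentation $P_1 \sra P_0 \sra M \sra 0$ in which $P_0$ and $P_1$ are direct sums of shifts of $R$; since $\Phi$ is exact (being an equivalence of abelian categories), applying $\Phi$ produces a parallel right-exact sequence. The naturality of $\tilde\phi$ on $P_0$ and $P_1$, combined with the universal property of cokernels, yields a unique $\tilde\phi_M: M \sra \Phi(M)$. I expect this extension step to be the main technical point: one must verify that $\tilde\phi_M$ is independent of the chosen presentation and natural in $M$, which is the standard bookkeeping argument for lifting a natural transformation defined on a set of projective generators to the full abelian category.
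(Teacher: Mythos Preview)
The paper does not give its own proof of this statement; it is quoted verbatim as a result of Sierra \cite[Corollary 3.11]{sierra} and used as a black box. So there is no in-paper argument to compare against.

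That said, your outline is precisely the standard argument behind Sierra's result and is correct. A twist functor together with a choice of isomorphisms $\phi_n: R\s{n} \to \Phi(R\s{n})$ determines a degree-$0$ automorphism $\gamma$ of $\ol{R}$ (this is the content of Theorem~\ref{zalgebra}); if $\gamma$ is inner via $g_m, h_n$, then correcting each $\phi_n$ by the unit $h_n$ produces isomorphisms that are natural on the full subcategory spanned by the shifts $\{R\s{n}\}$, and this extends to all of $\gr R$ since these form a set of projective generators. Your verification that $g_m$ and $h_m$ are two-sided inverses can be sharpened: from $g_m h_m = 1_m$ one computes $\gamma(h_m g_m) = g_m h_m g_m h_m = 1_m = \gamma(1_m)$, and injectivity of $\gamma$ gives $h_m g_m = 1_m$ directly. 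The only place requiring real care, as you note, is matching the convention relating multiplication in $\ol{R}$ to composition of morphisms in $\gr R$; once the identification $\ol{R}_{m,n} \cong \Hom_{\gr R}(R\s{?}, R\s{?})$ is fixed (with the correct variance), the naturality calculation is a one-line collapse.
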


\subsection{Generalized Weyl algebras}
Fix $f \in \kk[z]$, let $\sigma: \kk[z] \sra \kk[z]$ be the automorphism given by $\sigma(z) = z+1$, and let 
\[A(f) = \frac{\kk[z] \langle x, y \rangle}{\begin{pmatrix}xz = \sigma(z)x & yz = \sigma^{-1}(z)y \\ xy = f & yx = \sigma^{-1}(f)\end{pmatrix}}.
\]
Then $A(f) = \kk[z]\left(\sigma, f\right)$ is a \emph{generalized Weyl algebra of degree 1} with base ring $\kk[z]$, defining element $f$ and defining automorphism $\sigma$. Generalized Weyl algebras were introduced by Vladimir Bavula, who studied rings of the form $A(f)$ for $f$ of arbitrary degree \cite{bav, bavjordan}. Timothy Hodges studied the same rings under the name noncommutative deformations of type-A Kleinian singularities \cite{hodges}. By results in \cite{bav}, for all $f$, $A(f)$ is a noncommutative noetherian domain of Krull dimension $1$. 

By a theorem of Bavula and Jordan, $A(f) \cong A(g)$ if and only if $f(z) = \eta g( \tau \pm z)$ for some $\eta, \tau \in \kk$ with $\eta \neq 0$ \cite[Theorem 3.28]{bavjordan}. Hence, by adjusting $\eta$ we may assume $f$ is a monic polynomial and by adjusting $\tau$ we may assume that $0$ is a root of $f$. We may also assume that $0$ is the largest integer root of $f$. Results of Bavula and Hodges show that the properties of $A(f)$ are determined by the distance between the roots of $f$. In general we say that two distinct roots, $\lambda$ and $\mu$ are \emph{congruent} if $\lambda - \mu \in \ZZ$. The global dimension of $A(f)$ depends only on whether $f$ has multiple or congruent roots, as follows.
\begin{theorem}[Bavula {\cite[Theorem 5]{bav}} and Hodges {\cite[Theorem 4.4]{hodges}}] \label{gldimA} The global dimension of $A$ is equal to
\[ \gldim A = \begin{cases}\infty, & \text{if $f$ has at least one multiple root} \\ 2, & \text{if $f$ has no multiple roots but has congruent roots}; \\ 1, & \text{if $f$ has neither multiple nor congruent roots.} \end{cases}
\]
\end{theorem}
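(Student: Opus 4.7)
The plan is to compute $\gldim A(f)$ as the supremum of projective dimensions of simple graded right $A(f)$-modules, then carry out a case analysis on the root structure of $f$. Since $A(f)$ is a noetherian $\ZZ$-graded domain of Krull dimension $1$, and since every finitely generated graded module admits a filtration with simple subquotients drawn from the pictured families, it suffices to produce a minimal projective resolution of each such simple and read off the maximum length.

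For a simple $M_\lambda$ whose $\sigma$-orbit avoids the roots of $f$ (i.e.\ the points off the integer line, together with any integer points in a root-free $\sigma$-orbit), the element $z - \lambda$ is a non-zero-divisor in $A(f)$, and I would use the length-$1$ resolution $0 \to A(f) \xrightarrow{\cdot (z-\lambda)} A(f) \to M_\lambda \to 0$. When $f$ has no multiple and no congruent roots, I would verify that the two simples $X^f\s{n}$ and $Y^f\s{n}$ at each integer point admit analogous length-$1$ resolutions via right multiplication by $x$ or $y$: the kernel in each case is principal precisely because the unique root of $f$ in the relevant $\sigma$-orbit has no congruent partner, so no second relation appears. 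This forces $\gldim A(f) = 1$ in this case.

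When $f = z(z+\m)$ with $\m \in \NNp$, the same first step for $X^f\s{n}$ no longer suffices: the second, $\m$-translated root produces an additional relation, and the first syzygy fails to be projective. I would construct an explicit length-$2$ minimal projective resolution by computing a generator for this second relation, and verify minimality by exhibiting a nonzero class in an appropriate $\Ext^2$. The upper bound $\gldim A(f) \le 2$ reduces to checking that the second syzygy is projective, which is a finite local computation at the two congruent roots.

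The main obstacle is the multiple root case $f = z^2$, where the target answer is $\gldim A(f) = \infty$. The strategy is to show that the resolution of $X^f\s{n}$ built by alternately right-multiplying by $x$ and $y$ produces at each stage a kernel isomorphic, up to shift, to $X^f\s{n}$ again, rather than to a projective. The reason is that $xy = z^2$ and $yx = (z-1)^2$ each vanish to second order at the relevant points, so the relations do not ``split off'' a projective summand as they did in the simple-root case. Iterating yields a periodic, non-terminating projective resolution, and the nontrivial self-extension $\Ext^1(X^f\s{n}, X^f\s{n})$ flagged after the multiple-root picture then propagates through the periodicity to give $\Ext^k(X^f\s{n}, X^f\s{n}) \neq 0$ for every $k \ge 0$. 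The delicate point is verifying that the resolution really is periodic and does not accidentally collapse, which reduces to a direct computation in $A(f)$ using only the GWA relations near the multiple root.
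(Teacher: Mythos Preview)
The paper does not prove Theorem~\ref{gldimA}: it is cited from Bavula and Hodges and used as background throughout, with no argument given. There is therefore nothing in the paper to compare your proposal against.

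Your outline is broadly in the spirit of Bavula's original approach, which also proceeds by computing the projective dimension of each simple. The sketch is plausible but several steps are imprecise. In the non-congruent case the simples $X_0,Y_0,X_\m,Y_\m$ are not of the form $A/xA$ or $A/yA$, so ``right multiplication by $x$ or $y$ with principal kernel'' is not the right picture; the actual check is that the two-generated defining ideals such as $(x,z)A$ and $(x,z+\m)A$ are themselves projective, which is where the absence of a second congruent root enters. In the multiple-root case your periodicity heuristic is morally correct, but the mechanism is not literally ``alternating $x$ and $y$''; a cleaner route (consistent with facts the paper does record) is to use the nonsplit self-extension $0\to X\to A/xA\to X\to 0$ of Lemma~\ref{mlengthtwo} together with $\pd(A/xA)=1$, after which dimension-shifting gives $\uExt^n_A(X,-)\cong\uExt^{n+1}_A(X,-)$ for all $n\ge 2$, reducing the claim $\pd X=\infty$ to a single base computation.
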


In this paper, we study generalized Weyl algebras $A(f)$ for quadratic polynomials $f$. Without loss of generality, $f = z(z+\m)$. When $\m = 0$, since $f$ has a multiple root, we say we are in the \emph{multiple root case}. When $\m \in \NNp $, we say we are in the \emph{congruent root case}. Finally, when $\m \in \kk \setminus \ZZ$, we say that $f$ has distinct non-congruent roots and refer to this case as the \emph{non-congruent root case}.

Like the first Weyl algebra, the rings $A(f)$ are naturally $\ZZ$-graded by letting $\deg x = 1$, $\deg y = -1$, $\deg z = 0$. Note that every graded right $A(f)$-module is actually a graded $(\kk[z],A(f))$-bimodule; for any $\varphi \in \kk[z]$, we define its left action on a right $A(f)$-module $M$ by
\[ \varphi \cdot m = m\sigma^{-i}(\varphi)
\]
for any $m \in M_i$. This gives $M$ a bimodule structure by the relations $xz = \sigma(z)x$ and $yz = \sigma^{-1}(z)y$. 

Bavula and Jordan \cite{bavjordan} call a polynomial $g(z) \in \kk[z]$ \emph{reflective} if there exists some $\beta \in \kk$ such that $g(\beta - z) = g(z)$. They observe that every quadratic polynomial is reflective. Indeed, if $f$ is quadratic, there exists an outer automorphism $\omega$ of $A(f)$ such that $\omega(x) = y$, $\omega(y) = x$, and $\omega(z) = 1 - \m - z$ which reverses the grading on $A$. More specifically, there is a group automorphism of $\ZZ$, and the $\kk$-algebra automorphism $\omega$ respects this automorphism. The group automorphism, which we denote $\bar{\omega}$, is given by negation and $\omega(A_n) = A_{\bar{\omega}(n)}$. Together, we call $(\omega, \bar{\omega})$ a \emph{$\bar{\omega}$-twisted graded ring automorphism} of $A$.

For any group automorphism $\bar{\theta}$ of $\ZZ$, any $\bar{\theta}$-twisted graded ring automorphism $(\theta, \bar{\theta})$ of $A$ induces an autoequivalence of $\gr A$, denoted $\theta_*$. Given a module $M \in \gr A$, $\theta_* M$ is defined to be $M$ with the grading $(\theta_* M)_n = M_{\bar{\theta}(n)}$. We write $\theta_* m$ to regard the element $m \in M$ as an element of $\theta_* M$. The action of an element $a \in A$ on an element $\theta_* m \in \theta_* M$ is given by 
\[ \theta_* m \cdot a = \theta_*( m \theta(a)).
\]

We suppress the asterisk in the subscript and simply refer to the autoequivalence induced by $(\omega, \bar{\omega})$ as $\omega$. We also define $\omega$ on a graded $\kk$-vector space as the functor that reverses grading, i.e. $(\omega V)_n = V_{-n}$. Then, as Sierra notes in \cite[(4.2)]{sierra}, if $M$ and $N$ are right $A$-modules, then $\omega$ gives isomorphisms of graded $\kk$-vector spaces
\begin{align} \begin{split} \uHom_A(\omega M, \omega N) \cong \omega \uHom_A(M, N) \\
\uExt_A^1(\omega M , \omega N) \cong \omega \uExt_A^1(M, N). \label{omegaext}
\end{split}
\end{align}

\section{The graded module category $\gr A$}
\subsection{The simple modules of $\gr A$}
\label{sec:simple}
We first describe the simple modules of $\gr A(f)$.

\begin{lemma}
\label{simples} Let $f = z(z+\m )$. 
\begin{enumerate}
\item If $\m = 0$, then up to graded isomorphism the graded simple $A(f)$-modules are:
\begin{itemize}
\item $X^f = A/(x,z)A$ and its shifts $X^f \s{n}$ for each $n \in \ZZ$;
\item $Y^f = \left(A/(y,z-1)A\right) \s{1}$ and its shifts $Y^f \s{n}$ for each $n \in \ZZ$;
\item $M^f_\lambda = A/(z+\lambda)A$ for each $\lambda \in \kk \setminus \ZZ $.
\end{itemize}

\item If $\m \in \NNp$, then up to graded isomorphism the graded simple $A(f)$-modules are:
\begin{itemize}
\item $X^f = \left(A/(x,z+\m )A\right) \s{-\m }$ and its shifts $X^f \s{n}$ for each $n \in \ZZ$;
\item $Y^f = \left(A/(y,z-1)A\right) \s{1}$ and its shifts $Y^f \s{n}$ for each $n \in \ZZ$;
\item $Z^f = A/(y^\m , x, z)A$ and its shift $Z^f \s{n}$ for each $n \in \ZZ$;
\item $M^f_\lambda = A/(z+\lambda)A$ for each $\lambda \in \kk \setminus \ZZ $.
\end{itemize}

\item If $\m \in \kk \setminus \ZZ$, then up to graded isomorphism the graded simple $A(f)$-modules are:
\begin{itemize}
\item $X^f_0 = A/(x,z)A$ and its shifts $X^f_0\s{n}$ for each $n \in \ZZ$;
\item $Y^f_0 = \left(A/(y,z-1)A \right) \s{1}$ and its shifts $Y^f_0\s{n}$ for each $n \in \ZZ$;
\item $X^f_\m = A/(x,z+\m )A$ and its shifts $X^f_\m \s{n}$ for each $n \in \ZZ$;
\item $Y^f_\m = \left(A/(y,z+\m -1)A\right) \s{1}$ and its shifts $Y^f_\m \s{n}$ for each $n \in \ZZ$;
\item $M^f_\lambda = A/(z+\lambda)A$ for each $\lambda \in \kk \setminus (\ZZ \cup \ZZ +\m )$.
\end{itemize}
\end{enumerate}
\end{lemma}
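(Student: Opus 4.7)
The plan is to prove the lemma in two halves: (i) verify directly that each module in the lists is a graded simple $A$-module with the stated description, and (ii) show that no other graded simples exist by analyzing the action of $z$, $x$, $y$ on an arbitrary graded simple module.

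For (i), I would use the fact that $A_n$ is a free rank-$1$ right $\kk[z]$-module (spanned by $x^n$ if $n\geq 0$ or $y^{-n}$ if $n\leq 0$) to write down explicit $\kk$-bases and action matrices. For $M^f_\lambda = A/(z+\lambda)A$, a short computation using $xz = (z+1)x$ and $yz = (z-1)y$ shows each graded piece is $1$-dimensional with $z$ acting as the scalar $n-\lambda$ on the degree-$n$ piece, and with $\cdot x$, $\cdot y$ multiplying by scalars whose composition on $M_n$ is $f(n-\lambda)$. Thus $M^f_\lambda$ is simple iff $f(n-\lambda)\neq 0$ for all $n\in\ZZ$, which gives exactly the stated condition on $\lambda$. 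For the one-directional modules $X^f, Y^f$ (and their variants $X^f_0, Y^f_0, X^f_\m, Y^f_\m$) and the finite-dimensional $Z^f = A/(y^\m, x, z)A$, one checks from the defining relations that the support and the graded-piece dimensions are as claimed and that the induced actions of $x$ and $y$ are isomorphisms between consecutive pieces except at the endpoints of the support, so no proper submodule exists.

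For (ii), take an arbitrary graded simple $M$ and shift so that $M_0 \neq 0$. Since objects of $\gr A$ are finitely generated, $M$ is cyclic, so $M_0$ is a cyclic right $\kk[z]$-module. Decompose $M_0$ into generalized $z$-eigenspaces; using that $m\cdot x\cdot z = (c+1)(m\cdot x)$ whenever $m\cdot z = cm$, each generalized eigenspace generates a graded $A$-submodule contained in the ``orbit summand'' whose degree-$n$ piece is the generalized $(c+n)$-eigenspace of $M_n$. Simplicity forces $M$ to be concentrated in a single orbit $c+\ZZ\subseteq\kk$. The same kind of argument applied to the ordinary $(z-(c+n))$-kernel $W_n\subseteq M_n$ (closed under $x$ and $y$ and nonzero whenever $M_n\neq 0$) shows $z$ acts as the honest scalar $c+n$ on each $M_n$. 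Finally, on $M_n$ the compositions $\cdot x\cdot y$ and $\cdot y\cdot x$ are the scalars $f(c+n)$ and $f(c+n-1)$, so any line $L\subseteq M_n$ generates an $A$-submodule with at most $1$-dimensional pieces in every degree; by simplicity this submodule is all of $M$, whence each $M_n$ is $0$ or $1$-dimensional.

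At this point $M$ is determined by its support $S=\{n:M_n\neq 0\}\subseteq\ZZ$ together with the scalar $c$, where $S$ is closed under $n\mapsto n+1$ unless $f(c+n)=0$ and under $n\mapsto n-1$ unless $f(c+n-1)=0$. Splitting by whether the orbit $c+\ZZ$ meets the root set $\{0,-\m\}$ of $f$, the three cases of the lemma correspond to: the generic orbit ($S=\ZZ$, giving $M^f_{-c}$); an orbit through $0$ or $-\m$ producing the two half-line supports $(-\infty,0]$ and $[1,\infty)$, which give $X^f$ and $Y^f$ in the multiple-root case (coincident roots) and give the doubled families $X^f_0,Y^f_0,X^f_\m,Y^f_\m$ in the non-congruent case (two disjoint orbits); and in the congruent-root case the additional breakpoints at $n=-\m$ (right) and $n=1-\m$ (left) yielding the extra finite-length support $[-\m+1,0]$, which gives $Z^f$. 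Matching each support/eigenvalue pattern to a concrete presentation $A/J$ by reading off the relations annihilating a chosen generator gives the identifications in the lists.

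The main obstacle I expect is the bookkeeping in step (ii) that ensures no spurious simples appear: one must check that inside each half-line the scalars $f(c+n), f(c+n-1)$ are nonzero so that $\cdot x, \cdot y$ really give isomorphisms between consecutive $1$-dimensional pieces (hence no further sub-support is possible), and that in the congruent-root case the middle interval $[-\m+1,0]$ is closed on both ends because $f(0)=0$ and $f(-\m)=0$ simultaneously. Once this combinatorial analysis is complete, matching the three resulting support types to the listed quotient presentations is a direct computation.
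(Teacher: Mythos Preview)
Your approach is correct and more self-contained than the paper's. The paper does not carry out the classification from scratch: it observes that every graded simple is $\kk[z]$-torsion (since any homogeneous element of $A/I$ is annihilated by some polynomial in $z$), invokes Bavula's ungraded classification of simple $\kk[z]$-torsion $A$-modules, and then sorts out which graded isomorphism classes arise (using $M^f_{\lambda+1}\cong M^f_\lambda\s{1}$ and the degree-support data to separate the remaining modules). Your argument instead reproves the classification directly via the eigenvalue/support analysis, which is essentially what underlies Bavula's result in this special case; the payoff is that your proof is independent of the cited reference, at the cost of redoing a known computation.

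One small gap to fill: before decomposing $M_0$ into generalized $z$-eigenspaces you need $M_0$ to be finite-dimensional over $\kk$, i.e.\ $\kk[z]$-torsion. This follows from simplicity (if $0\neq m\in M_0$ then $M_0=m\kk[z]$, and if $m$ were not torsion then $mzA=M$ would force $m=mzp$ for some $p\in\kk[z]$, a contradiction), but it should be stated explicitly. After that your orbit and support bookkeeping goes through as described.
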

\begin{proof}We do, as an example, the case that $\m = 0$. The other cases follow similarly from \cite[\S 3]{bav}. In \cite[\S 3]{bav}, Bavula studies the simple \emph{$\kk[z]$-torsion} $A$-modules, that is, modules for which $\tor (M) := \{ m\in M \mid m\cdot g = 0 \mbox{ for some } 0 \neq g \in \kk[z]\}$ is equal to $M$. Every graded simple right $A$- module is isomorphic to $A/I$ for some homogeneous right ideal $I$ of $A$. Further, every homogeneous element of $A$ can be written as $g x^i$ or $g y^i$ for some $g \in \kk[z]$ and $i \in \NN$. Hence, for every element $a$ of $A/I$, there exists $h \in \kk[z]$ such that $a \cdot h = 0$, so $A/I$ is $\kk[z]$-torsion. By \cite[Theorem 3.2]{bav}, up to ungraded isomorphism, the simple $\kk[z]$-torsion $A$-modules are
\begin{itemize} 
\item $A/(x,z)A$
\item $A/(y,z-1)A$
\item One module $A/(z+\lambda)A$ for each coset of $\kk/ \ZZ$.
\end{itemize}
For each $\lambda \in \kk \setminus \ZZ$, observe that $M^f_{\lambda + 1} \cong M^f_\lambda \s{1}$ via the isomorphism mapping $\bar{1}$ to $\bar{y}$. Further, by Bavula's theorem, if $\lambda - \mu \notin \ZZ$, then $M^f_\lambda$ and $M^f_\mu$ are not even ungraded isomorphic. Hence, if $\lambda \neq \mu$ then $M^f_\lambda \not \cong M^f_\mu$ in $\gr A$. Finally, we see that there are no other graded isomorphisms between any shift of $X^f$, $Y^f$, or $M^f_\lambda$ simply by looking at the degrees in which these modules are nonzero (see Remark~\ref{degzero} below). Hence, we conclude that the graded isomorphism classes of graded simples correspond to the shifts $X^f\s{n}$, $Y^f \s{n}$, and one module $M^f_\lambda$ for each element of $\kk \setminus \ZZ$.
\end{proof}

When there is no danger of confusion, we will make two changes in notation for convenience. When it is clear which case we are in (multiple, congruent, or distinct roots), we will suppress the superscript on graded simple modules and refer to them as $X$, $Y$, $Z$, and $M_\lambda$. Also, for a right ideal $I$, we will often refer to the element $a + I \in A/I$ simply as $a$.

\begin{remark}\label{degzero} We also remark that for each integer $n$ and each simple module $S$, $\dim_{\kk} S \leq 1$. We explicitly give the degrees in which each simple module is nonzero. Additionally, by using the explicit description of the simple modules as quotients of $A$, we can also determine the action of the autoequivalence $\omega$ on the graded simple modules. In all cases, for $\lambda \in \kk \setminus (\ZZ \cup \ZZ + \m )$, $\dim_{\kk} (M_\lambda)_n = 1$ for all $n$ and $\omega(M_\lambda) = M_\mu$ for some $\mu \in \kk \setminus (\ZZ \cup \ZZ + \m )$.

\begin{enumerate}
\item If $\m = 0$, then
\begin{itemize}
\item $\dim_{\kk} X_n = 1$ if and only if $n \leq 0$,
\item $\dim_{\kk} Y_n = 1$ if and only if $n > 0$,
\item $\omega(X \s{n}) \cong Y \s{-n-1}$ and $\omega(Y \s{n}) \cong X \s{-n-1}$.
\end{itemize}

\item If $\m \in \NNp$, then
\begin{itemize}
\item $\dim_{\kk} X_n = 1$ if and only if $n \leq -\m $,
\item $\dim_{\kk} Y_n = 1$ if and only if $n > 0$,
\item $\dim_{\kk} Z_n = 1$ if and only if $-\m < n \leq 0$,
\item $\omega(X \s{n}) \cong Y \s{\m -n-1}$, $\omega(Y \s{n}) \cong Y \s{\m -n-1}$, and $\omega(Z \s{n}) \cong Z \s{\m -n -1}$.
\end{itemize}

\item If $\m \in \kk \setminus \ZZ$, then
\begin{itemize}
\item $\dim_{\kk} (X_0)_n = \dim_{\kk}(X_{\m })_n = 1$ if and only if $n \leq 0$,
\item $\dim_{\kk} (Y_0)_n = \dim_{\kk}(Y_{\m })_n = 1$ if and only if $n > 0$,
\item $\omega(X_0 \s{n}) \cong Y_{\m } \s{ -n-1}$, $\omega(Y_0 \s{n}) \cong Y_{\m } \s{-n-1}$, 
\item $\omega(X_{\m } \s{n}) \cong Y_{0} \s{ -n-1}$, and $\omega(Y_{\m } \s{n}) \cong Y_{0} \s{-n-1}$.
\end{itemize}
\end{enumerate}
\end{remark}

\begin{lemma} \label{csupport} Let $\m \in \NN$ and let $n \in \ZZ$. Then $(z+n)(X\s{n}) = (z+n)(Y \s{n}) = 0$. If $\m \neq 0$, then $(z+n)(Z \s{n}) = 0$. As graded left $\kk[z]$-modules, we have
\[ (A/zA)\s{n} \cong (A/(z-1)A)\s{n+1}\cong \bigoplus_{j \in \ZZ} \frac{\kk[z]}{(z+n)}.
\]
\end{lemma}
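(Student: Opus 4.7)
The plan is to extract both statements from a single observation about the bimodule structure and the shift convention. For $m \in M_i$, the left action is $z \cdot m = m\sigma^{-i}(z) = m(z-i)$, and $M\s{n}_j = M_{j-n}$. Comparing the two actions on the same underlying element, one finds that if $(z-\beta)\cdot M = 0$ for some scalar $\beta \in \kk$, then $(z-\beta+n)\cdot M\s{n} = 0$: for $m \in M\s{n}_j = M_{j-n}$ the new left action of $z$ is $m(z-j)$, while the old left action of $z$ is $m(z-j+n)$, so the two differ by the scalar $n$.

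For the first assertion, I would identify each of $X$, $Y$, and (in the congruent-root case) $Z$ from Lemma~\ref{simples} as $(A/J)\s{s}$ where $J$ is a right ideal of $A$ containing one of $z$, $z-1$, or $z+\m$ and $s$ is the explicit shift given there. On the cyclic generator $\bar{1}$ of $A/J$, the left action of $z$ is a scalar (namely $0$, $1$, or $-\m$ respectively), and this scalar action extends to all of $A/J$ because right multiplication by any element of $A$ commutes past left multiplication by $z$ via exactly the relations $xz=\sigma(z)x$ and $yz=\sigma^{-1}(z)y$ built into the bimodule structure. Applying the shift rule once to pass from $A/J$ to $X$, $Y$, or $Z$ in its defining form, and a second time to shift by $n$, gives $(z+n)\cdot X\s{n} = (z+n)\cdot Y\s{n} = 0$ and, when $\m\in\NNp$, $(z+n)\cdot Z\s{n} = 0$.

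For the second assertion, the same shift rule immediately shows that $(z+n)$ annihilates both $(A/zA)\s{n}$ and $(A/(z-1)A)\s{n+1}$. It then suffices to check that each graded piece of either module is one-dimensional over $\kk$. For this I would use the standard description that $A_j$ is free of rank one as a left $\kk[z]$-module on $x^j$ for $j \geq 0$ and on $y^{-j}$ for $j < 0$; this gives $(A/zA)_j \cong \kk[z]/(z)$ and $(A/(z-1)A)_j \cong \kk[z]/(z-1)$, each of dimension one over $\kk$. After shifting, each graded piece of either module is a one-dimensional $\kk[z]$-module killed by $(z+n)$, hence is isomorphic to $\kk[z]/(z+n)$. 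Summing over $j \in \ZZ$ produces both the stated direct sum decomposition and the asserted isomorphism between the two modules.

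The main obstacle is purely bookkeeping: keeping the paper's non-standard shift convention $M\s{1}_j = M_{j-1}$ straight while correctly composing the shift rule with the pre-existing shifts $\s{-\m}$ hidden in $X$ (congruent-root case) and $\s{1}$ hidden in $Y$. There is no deeper conceptual difficulty once the single-step reshuffling formula is in hand.
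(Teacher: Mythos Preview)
Your proposal is correct and follows essentially the same approach as the paper: both compute the left $\kk[z]$-action on the cyclic generator and use the bimodule structure to propagate it, then check each graded piece is rank one over $\kk[z]/(z+n)$. The only difference is organizational---you isolate the shift rule $(z-\beta)\cdot M = 0 \Rightarrow (z-\beta+n)\cdot M\s{n} = 0$ once and apply it, whereas the paper performs the direct computation $\sigma^{-\deg 1}(z+n)$ separately for each of $X\s{n}$, $Y\s{n}$, $Z\s{n}$.
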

\begin{proof} Recall that there is a left action of $\kk[z]$ on an $A$-module. If $\deg m = i$ then for $p \in \kk[z]$, $p \cdot m = m \sigma^{-i}(p)$. This result follows from the fact that in $X \s{n} = \left(A/(x, z+\m)A\right) \s{-\m+n}$, $\deg 1 = n-\m$ and
\[ (z+n) \cdot 1 = 1\sigma^{\m-n}(z+n) = z+\m = 0.
\]
Similarly, in $Y \s{n}$, $\deg 1 = n+1$ so
\[ (z+n) \cdot 1 = 1 \sigma^{-n-1}(z+n) = z-1 = 0,
\]
and in $Z \s{n}$, $\deg 1 = 0$ and
\[ (z+n) \cdot 1 = z = 0.
\]

For each $j\geq n$, $\left((A/zA) \s{n}\right)_j$ is generated as a left $\kk[z]$ module by $x^{j-n}$. Further, for all $g \in \kk[z]$, $g \cdot x^{j-n} = x^{j-n} \sigma^{n-j}(g) = \sigma^{n}(g)x^{j-n}$. Hence, as a left $\kk[z]$-module the annihilator of $((A/zA)\s{n})_j$ is given exactly by the ideal $(z+n)$. For $j < n$, $\left((A/zA) \s{n}\right)_j$ is generated as a left $\kk[z]$ module by $y^{n-j}$. By a similar argument, the annihilator is given exactly by the ideal $(z+n)$ so 
\[ (A/zA)\s{n} \cong \bigoplus_{j \in \ZZ} \frac{\kk[z]}{(z+n)}.
\]
The same proof shows that
\[(A/(z-1)A)\s{n+1}\cong \bigoplus_{j \in \ZZ} \frac{\kk[z]}{(z+n)}. \qedhere
\] \end{proof}

\begin{lemma} \label{dsupport} Let $\m \in \kk \setminus \ZZ$ and let $n \in \ZZ$. Then $(z+n)(X_0\s{n}) = 0 = (z+n)(Y_0 \s{n})$ and $(z + \m + n)(X_\m \s{n}) = 0 = (z + \m + n)(Y_\m \s{n})$. As graded left $\kk[z]$-modules, we have
\[ (A/zA)\s{n} \cong (A/(z-1)A)\s{n+1} \cong \bigoplus_{j \in \ZZ} \frac{\kk[z]}{(z+n)} \mbox{ and}
\]
\[ (A/(z+\m )A)\s{n} \cong (A/(z+\m -1)A)\s{n+1} \cong \bigoplus_{j \in \ZZ} \frac{\kk[z]}{(z+\m +n)}.
\]
\end{lemma}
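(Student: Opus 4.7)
The plan is to mirror the proof of Lemma~\ref{csupport} almost verbatim, with the new roots of $f = z(z+\m)$ requiring two parallel computations (one centered at $0$, one at $-\m$). The only tools needed are the bimodule formula $\varphi\cdot m = m\sigma^{-i}(\varphi)$ for $m\in M_i$ and the commutation relations, which in $A$ read $x^k p(z) = \sigma^k(p)x^k = p(z+k)x^k$ and $y^k p(z) = \sigma^{-k}(p)y^k = p(z-k)y^k$.

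First I would handle the annihilator statements by evaluating the action of $z+n$ (respectively $z+\m+n$) on the image of $1$ in each cyclic presentation from Lemma~\ref{simples}. In $X_0\s{n} = (A/(x,z)A)\s{n}$ the generator lies in degree $n$, so $(z+n)\cdot 1 = 1\cdot \sigma^{-n}(z+n) = z = 0$. In $Y_0\s{n} = (A/(y,z-1)A)\s{n+1}$ the generator lies in degree $n+1$, so $(z+n)\cdot 1 = 1\cdot\sigma^{-n-1}(z+n) = z-1 = 0$. Entirely analogously, the generator of $X_\m\s{n}$ lives in degree $n$ and $(z+\m+n)\cdot 1 = \sigma^{-n}(z+\m+n) = z+\m = 0$, and the generator of $Y_\m\s{n}$ lives in degree $n+1$ with $(z+\m+n)\cdot 1 = \sigma^{-n-1}(z+\m+n) = z+\m-1 = 0$.

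Next I would prove the two direct-sum decompositions by computing the annihilator of each graded component, as in Lemma~\ref{csupport}. For $j\geq n$ the $\kk$-space $((A/zA)\s{n})_j$ is one-dimensional, generated as a left $\kk[z]$-module by $x^{j-n}$; the bimodule formula combined with $x^{j-n}\sigma^{-j}(p) = \sigma^{-n}(p)x^{j-n}$ shows $p\cdot x^{j-n} = \sigma^{-n}(p)x^{j-n}$, which vanishes in $A/zA$ exactly when $\sigma^{-n}(p)(0)=0$, i.e.\ when $(z+n)\mid p$. For $j<n$, the same computation with the generator $y^{n-j}$ and the relation $y^k p(z)=\sigma^{-k}(p)y^k$ again produces annihilator $(z+n)$. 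Thus every graded piece is the cyclic module $\kk[z]/(z+n)$, giving the stated decomposition for $(A/zA)\s{n}$. The isomorphism with $(A/(z-1)A)\s{n+1}$ is obtained by running the same argument after translating both generator and shift index by one (this is exactly parallel to the $\kk[z]$-module identification in Lemma~\ref{csupport}). For the second decomposition, I would repeat the computation verbatim with $z$ replaced by $z+\m$: the annihilator at the $j$-th graded piece becomes $\{p : \sigma^{-n}(p)(-\m)=0\} = (z+\m+n)$, and the $(z+\m-1)$ case translates by one more unit in the shift.

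There is no real obstacle: the entire lemma is a computational consequence of the formulas just cited, and the pattern of Lemma~\ref{csupport} carries over without modification. The only point requiring any care is the bookkeeping of degree shifts, since the second family of quotients is translated by $\m$ rather than by an integer; one has to check that the formulas $\sigma^{-n}(z+\m+n) = z+\m$ and $\sigma^{-n-1}(z+\m+n) = z+\m-1$ still yield elements that reduce to $0$ in the appropriate quotient rings, which they do regardless of whether $\m$ is an integer.
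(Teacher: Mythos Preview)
Your proposal is correct and follows exactly the approach the paper intends: the paper's proof of this lemma is the single sentence ``This follows from the same proof as that of Lemma~\ref{csupport},'' and you have faithfully carried out that computation for the non-congruent root case. In fact your formula $p\cdot x^{j-n}=\sigma^{-n}(p)x^{j-n}$ is the correct version of the corresponding line in the proof of Lemma~\ref{csupport} (where the paper writes $\sigma^{n}(g)$ but should have $\sigma^{-n}(g)$ to match its stated annihilator $(z+n)$).
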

\begin{proof}This follows from the same proof as that of Lemma~\ref{csupport}.
\end{proof}

Since $M_\lambda \cong \bigoplus_{j\in \ZZ} \kk[z]/(z+\lambda)$ as a left $\kk[z]$-module, when combined with the previous lemmas, any finite length graded $A$-module, when considered as a left $\kk[z]$-module, is supported at finitely many $\kk$-points of $\Spec \kk[z]$. We restate \cite[Definition 4.9]{sierra}. 

\begin{definition} If $M$ is a graded $A$-module of finite length, define the \textit{support} of $M$, $\Supp M$, to be the support of $M$ as a left $\kk[z]$-module. We are interested in the cases when $\Supp M \subset \ZZ$ or $\Supp M \subset \ZZ - \m $. When $\Supp M \subset \ZZ$, we say that $M$ is \emph{integrally supported}. We are also interested in cases when $\Supp M = \{ n \}$ or $\Supp M = \{n - \m\}$ for some $n \in \ZZ$ (we say $M$ is \emph{simply supported at $n$ or $n - \m$}). 
\end{definition}

Lemma~\ref{csupport} shows that in the case that $\m \in \NN$, $X\s{n}$ and $Y\s{n}$ are the unique simples supported at $-n$. In the case that $\m \notin \ZZ$, Lemma~\ref{dsupport} shows that $X_0\s{n}$ and $Y_0\s{n}$ are the unique simples supported at $-n$ and $X_\m \s{n}$ and $Y_\m \s{n}$ are the unique simples supported at $-(n+\m )$. For $\lambda \in \kk \setminus ( \ZZ \cup \ZZ+ \m )$, the simple module $M_\lambda$ is the unique simple supported at $-\lambda$.

\subsection{The structure constants of a graded submodule of the graded quotient ring of $A$}
\label{sec:ideals}

We seek to understand the rank one projective modules of $\gr A(f)$. Since $A(f)$ is noetherian, $A(f)$ is an Ore domain, so we can construct $\QgrAf$, the graded quotient ring of $A(f)$. Every homogeneous element of $A(f)$ can be written as $x^i g(z)$ or $y^i g(z)$ for some $i \geq 0$ and some $g(z) \in \kk[z]$. Since in the graded quotient ring $y = x^{-1} f$, we see that $\QgrAf$ embeds in the skew Laurent polynomial ring $\kk(z)[x,x^{-1}; \sigma]$. Finally, since every element of $\kk(z)[x,x^{-1}; \sigma]$ can be written as a quotient of elements of $A(f)$, therefore $\QgrAf = \kk(z)[x,x^{-1};\sigma]$.

To understand the rank one projective modules, we begin by considering all submodules of $\QgrAf$. Let $I$ be a finitely generated graded right $A$-submodule of $\QgrAf$. Recall that $\sigma(z) = z+1$ and every graded right $A$-module is a graded left $\kk[z]$-module with
\[ \varphi \cdot m = m \cdot \sigma^{-i}(\varphi)
\]
for $\varphi \in \kk[z]$ and $\deg m = i$. We will examine $I$ as a graded left $\kk[z]$-submodule of $\QgrA$. As a left $\kk[z]$-module,
\[ \QgrA \cong \bigoplus_{i \in \ZZ} \kk(z) x^i.
\]

Suppose $I$ is generated as an $A$-module by the homogeneous generators $m_1, \ldots, m_r$, with $\deg m_i = d_i$. Then for each $n \in \ZZ$, $I_n = \sum m_i A_{n-d_i}$ where $I_n$ is the degree $n$ graded component of $I$. Since each graded component of $A$ is finitely generated as a left $\kk[z]$-module, so is $I_n$. If we clear denominators and use the fact that $\kk[z]$ is a PID, we deduce that $I_n$ is generated as a left $\kk[z]$-module by a single element $a_n x^n$ where $a_n \in \kk(z)$. Denote by $(a_n)$ the left $\kk[z]$-submodule of $\kk(z)$ generated by $a_n$. Then
\[ I = \bigoplus_{i\in \ZZ}(a_i)x^i.
\]
Because $I$ is a right $A$-submodule, we have for each $i \in \ZZ$
\begin{align*} &(a_i)x^i \cdot x \subseteq (a_{i+1}) x^{i+1} \quad \mbox { and } \\
&(a_{i+1})x^{i+1} \cdot y = (a_{i+1}) \sigma^{i}(f) x^{i} \subseteq (a_{i})x^{i}.
\end{align*}
Therefore, for each $i \in \ZZ$, we have $(a_i) \subseteq(a_{i+1})$ and $(a_{i+1} \sigma^i(f)) \subseteq (a_i)$. Define $c_i = a_i a_{i+1}^{-1}$. We then have $1 \mid c_i$ and $c_i \mid \sigma^{i}(f)$, so $c_i \in \kk[z]$. By multiplying by an appropriate element of $\kk$, we assume that $c_i$ is monic so
\[c_i \in \{ 1, \sigma^{i}(z) , \sigma^{i}(z+\m), \sigma^i(f)\}.
\]
\begin{definition}We call the elements of this sequence $\{c_i\}_{i \in \ZZ}$ the \emph{structure constants} of $I$. The lemma below shows that a finitely generated graded right $A$-submodule of $\QgrA$ is determined up to graded isomorphism by its structure constants.
\end{definition}

As an example, we compute the structure constants of the ring $A$.

\begin{example}\label{scA} For $n \in \ZZ$, $A_n$ is generated as a left $\kk[z]$-module by $x^n$ when $n \geq 0$ and $y^{-n}$ when $n < 0$. Also, for $n > 0$, $y^nx^n = \sigma^{-1}(f) \cdots \sigma^{-n}(f)$, so as a graded left $\kk[z]$-module,
\[ A = \bigoplus_{i \in \ZZ} (a_i)x^i \mbox{ with } a_i = \begin{cases}1, & i \geq 0, \\ \sigma^{-1}(f) \cdots \sigma^{i}(f), & i < 0. \end{cases}
\]
The structure constants $\{c_i\}$ of $A$ are therefore given by
\[ c_i = \begin{cases} 1 & i \geq 0, \\
\sigma^{i}(f) & i < 0.
\end{cases}
\]
\end{example}

\begin{lemma} \label{sciso} Let $I$ and $J$ be finitely generated graded submodules of $\QgrA$ with structure constants $\{c_i\}$ and $\{d_i\}$, respectively. Then $I \cong J$ as graded right $A$-modules if and only if $c_i = d_i$ for all $i \in \ZZ$.
\end{lemma}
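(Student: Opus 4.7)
The plan hinges on the observation that the left $\kk[z]$-module structure on any graded right $A$-module $M$ is determined by the grading and the right $A$-action via $p \cdot m = m \sigma^{-i}(p)$ for $m \in M_i$. Consequently, any graded right $A$-module map automatically respects the left $\kk[z]$-action, and each $I_i = (a_i) x^i$ is a free rank one left $\kk[z]$-module with generator $a_i x^i$ (since $a_i \in \kk(z)^\times$; note that $a_i, b_i$ are nonzero for all $i$, because $x$ is a unit in $\QgrAf$ so no graded component can vanish).

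For the ``if'' direction, I will assume $c_i = d_i$ for all $i$ and note that the relations $a_i = c_i a_{i+1}$ and $b_i = c_i b_{i+1}$ force the ratio $a_i b_i^{-1} \in \kk(z)^\times$ to be independent of $i$. Call this common value $\lambda$. Left multiplication by $\lambda^{-1}$ inside $\QgrAf$ is a right $A$-module endomorphism by associativity, and using the commutativity of $\kk(z) \subset \QgrAf$ it sends $(a_i) x^i = \kk[z] \cdot a_i x^i$ to $\kk[z] \cdot b_i x^i = (b_i) x^i$. This gives the desired graded isomorphism $I \to J$.

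For the converse, suppose $\phi : I \to J$ is a graded isomorphism. By the opening remark, each restriction $\phi_i : I_i \to J_i$ is an isomorphism of free rank one left $\kk[z]$-modules, hence multiplication by some unit of $\kk[z]$, i.e.\ by a scalar $u_i \in \kk^\times$; explicitly $\phi(a_i x^i) = u_i b_i x^i$. I then compute $\phi(a_i x^{i+1})$ two ways: using the right $A$-action, as $\phi(a_i x^i) \cdot x = u_i b_i x^{i+1}$; and using left $\kk[z]$-linearity together with the decomposition $a_i x^{i+1} = c_i \cdot (a_{i+1} x^{i+1})$, as $u_{i+1} c_i b_{i+1} x^{i+1}$. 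Equating and substituting $b_i = d_i b_{i+1}$ yields $u_i d_i = u_{i+1} c_i$ in $\kk[z]$. Since $c_i$ and $d_i$ are monic, matching degrees then leading coefficients forces $u_i = u_{i+1}$ and $c_i = d_i$.

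The only delicate point is the opening observation that a graded right $A$-module map is automatically left $\kk[z]$-linear; once this is in hand, the argument reduces to the brief two-way computation of $\phi$ on $a_i x^{i+1}$, so I anticipate no serious obstacle.
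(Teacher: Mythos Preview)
Your proof is correct and follows essentially the same approach as the paper's. The paper also proves the ``if'' direction via left multiplication by $g = a_0^{-1} b_0$ (your $\lambda^{-1}$), and for the converse performs the same two-way computation of $\varphi(a_i x^{i+1})$; the only cosmetic difference is that the paper moves $c_i$ to the right via $\sigma^{-(i+1)}$ and invokes right $A$-linearity directly, rather than first packaging this as left $\kk[z]$-linearity, and it handles the scalars by writing ``up to a scalar'' throughout instead of tracking your $u_i$ explicitly.
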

\begin{proof} As argued above there exist $\{a_i\}, \{b_i\} \subseteq \kk(z)$ such that
\[ I = \bigoplus_{i \in \ZZ} (a_i) x^i \quad \mbox{ and } \quad J = \bigoplus_{i \in \ZZ} (b_i)x^i.
\]
Then by definition, for each $ i \in \ZZ$, $c_i = a_i a_{i+1}^{-1}$ and $d_i = b_i b_{i+1}^{-1}$. Let $g = a_0^{-1}b_0 \in \kk(z)$. If $c_i = d_i$ for all $i \in \ZZ$, then $(a_i g) = (b_i)$ for all $i$. Hence, $I \cong J$ via left multiplication by $g$.

Conversely, suppose $\varphi: I \sra J$ is a graded isomorphism of $A$-modules. Since, for each $i \in \ZZ$, $\varphi: I_i \sra J_i$ is an isomorphism, we must have, up to a scalar in $\kk^\times$, $\varphi(a_ix^i) = b_i x^i$. Then, up to a scalar,
\begin{align*} b_ix^{i+1} &= \varphi(a_i x^i) x = \varphi (a_i x^{i+1}) = \varphi (a_{i+1}c_i x^{i+1}) = \varphi(a_{i+1} x^{i+1} \sigma^{-(i+1)}(c_i)) \\
&= \varphi(a_{i+1} x^{i+1}) \sigma^{-(i+1)}(c_i) = b_{i+1}x^{i+1} \sigma^{-(i+1)}(c_i) = b_{i+1} c_i x^{i+1},
\end{align*}
so up to a scalar $d_i = b_ib_{i+1}^{-1} = c_i$ for all $i$. Since we assumed that structure constants were monic, $c_i = d_i$ for all $i \in \ZZ$.
\end{proof}

\begin{lemma}\label{cfgsc} Suppose $I = \bigoplus_{i\in \ZZ} (b_i) x^i$ is a finitely generated graded right $A$-submodule of $\QgrA$ with structure constants $\{c_i\}$. Then for $n \gg 0$, $c_n = 1$ and $c_{-n} = \sigma^{-n}(f)$.
\end{lemma}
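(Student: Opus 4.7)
The plan is to exploit finite generation directly. Choose homogeneous generators $m_1, \dots, m_r$ of $I$ with $\deg m_i = d_i$, set $D = \max_i d_i$ and $D' = \min_i d_i$, and show that whenever $n$ is sufficiently far outside the window $[D', D]$, the structure constant $c_n$ is forced by the fact that $A_k$ is a cyclic free left $\kk[z]$-module for $|k|$ large (generated by $x^k$ on the positive side and $y^{-k}$ on the negative side, as in Example~\ref{scA}).

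First I would handle $n \gg 0$. When $n \geq D$, every exponent $n - d_i$ is nonnegative, so $A_{n-d_i} = \kk[z]\cdot x^{n-d_i}$ as a left $\kk[z]$-module, and right multiplication by $x$ sends $A_{n-d_i}$ onto $A_{n+1-d_i} = \kk[z]\cdot x^{n+1-d_i}$. Summing over the generators yields $I_n \cdot x = I_{n+1}$. Comparing with $I_n = (a_n)x^n$ and $I_{n+1} = (a_{n+1})x^{n+1}$ gives $(a_n) = (a_{n+1})$, and hence $c_n = a_n a_{n+1}^{-1} = 1$.

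For $n \ll 0$ (i.e.\ $-n$ very negative), the symmetric argument uses $y$ in place of $x$. Once $n \geq 1 - D'$, every exponent $-n+1-d_i$ is $\leq 0$, so $A_{-n-d_i}$ is the cyclic free left $\kk[z]$-module on $y^{n+d_i}$ and right multiplication by $y$ identifies $A_{-n+1-d_i}$ with $A_{-n-d_i}$. Summing gives $I_{-n+1}\cdot y = I_{-n}$. Translating into structure constants requires a short computation inside $\QgrA = \kk(z)[x,x^{-1};\sigma]$: since $y = x^{-1}f$ and $x^{-n}\varphi = \sigma^{-n}(\varphi)\, x^{-n}$, one gets $x^{-n+1}y = \sigma^{-n}(f)\, x^{-n}$. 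Hence $(a_{-n+1}\sigma^{-n}(f))x^{-n} = (a_{-n})x^{-n}$, and since structure constants are normalized to be monic and $\sigma^{-n}(f)$ is already monic, this forces $c_{-n} = \sigma^{-n}(f)$.

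I do not anticipate any real obstacle here; the argument is essentially bookkeeping. The only step worth care is the verification that $I_n \cdot x = I_{n+1}$ and $I_{-n+1}\cdot y = I_{-n}$ in the stated ranges, which is immediate from the cyclic-free description of $A_k$ for $k$ large in absolute value. Choosing $n \geq \max(D,\, 1 - D')$ makes both directions work simultaneously.
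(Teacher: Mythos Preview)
Your proposal is correct and takes essentially the same approach as the paper: use finite generation to conclude $I_n \cdot x = I_{n+1}$ for $n$ above all generator degrees and $I_n \cdot y = I_{n-1}$ for $n$ below, then read off the structure constants via the identity $x^{m}y = \sigma^{m-1}(f)x^{m-1}$. The only differences are cosmetic---you spell out the cyclic-free description of $A_k$ more explicitly and use $a_n$ rather than the $b_n$ in the lemma's statement---but nothing of substance distinguishes the two arguments.
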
 
\begin{proof} Since $I$ is finitely generated as an $A$-module, if $n \in \ZZ$ is greater than the highest degree of all generators, we have $I_n \cdot x = I_{n+1}$. Then $(b_n) = (b_{n+1})$ so $c_n = 1$. On the other hand, if $n$ is less than the least degree of all generators, then $I_{n} \cdot y = I_{n-1}$. That is, 
\[(b_{n-1})x^{n-1} = (b_n)x^ny = (b_n)x^{n-1} f = (b_n)\sigma^{n-1}(f) x^{n-1},
\]
so $c_{n-1} = \sigma^{n-1}(f)$. Hence, for $n \gg 0$, $c_n = 1$ and $c_{-n} = \sigma^{-n}(f)$.
\end{proof}

We remark that for any choice $\{c_i\}_{i \in \ZZ}$ satisfying (i) for each $n$, ${c_n \in \{ 1, \sigma^{n}(z) , \sigma^{n}(z+\m), \sigma^n(f)\}}$ and (ii) $c_n = 1$ and $c_{-n} = \sigma^{-n}(f)$ for $n \gg 0$, we can construct a module with structure constants $\{c_i\}$. Let $b_0 = 1$ and for all integers $i$, define $b_i$ such that $b_i b_{i+1}^{-1} = c_i$. Let $I = \bigoplus_{i \in \ZZ} (b_i)x^i$. Since $b_i x^i \cdot x \in (b_{i+1})x^{i+1}$ and $b_i x^i \cdot y \in (b_{i-1})x^{i-1}$, therefore $I$ is a graded submodule of $\QgrA$. Further, there exists $N \in \NN$ such that for all $n \geq N$, $c_n = 1$ and $c_{-n} = \sigma^{-n}(f)$. Thus, the elements $\{b_ix^i \mid -N \leq i \leq N \}$ generate $I$ as an $A$-module, so $I$ is a finitely generated graded right $A$-submodule of $\QgrA$.

Hence, isomorphism classes of finitely generated graded right $A$-submodules of $\QgrA$ are in bijection with such sequences of structure constants, $\{c_i\}$. One reason taking this point of view is useful is that we can now state properties of a graded submodule $I \subseteq \QgrA$ in terms of its structure constants and vice versa. First, we show that the simple factors of $I$ are determined by its structure constants. We have the two following lemmas, one in the case that $f$ has congruent or multiple roots ($\m \in \NN$) and one in the case that the roots of $f$ are distinct ($\m \in \kk \setminus \ZZ$).

\begin{lemma}\label{cssc} Let $\m \in \NN$. Let $I = \bigoplus_{i\in Z} (a_i)x^i$ be a finitely generated graded right $A$-submodule of $\QgrA$ with structure constants $\{c_i\}$. Then
\begin{enumerate} 
\item $I$ surjects onto $X\s{n}$ if and only if $c_{n-\m} \in \{ 1, \sigma^{n-\m}(z) \}$,
\item $I$ surjects onto $Y \s{n}$ if and only if $c_{n} \in \{ \sigma^{n}(z), \sigma^{n}(f) \}$,
\item If $\m > 0$, $I$ surjects onto $Z \s{n}$ if and only if $c_{n-\m} \in \{ \sigma^{n-\m}(z+\m), \sigma^{n-\m}(f)\}$ and $c_n \in \{1, \sigma^{n}(z+\m)\}$.
\end{enumerate}
\end{lemma}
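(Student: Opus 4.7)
My plan is to realize each surjection $\varphi\colon I \twoheadrightarrow S$ by its kernel $K = \ker \varphi$, which is a graded $A$-submodule of $I$ with $I/K \cong S$. Each graded component $I_m = (a_m) x^m$ is a free left $\kk[z]$-module of rank one, and by Lemma~\ref{csupport} every simple $S$ in the list is annihilated on the left by $(z+n)$, with each nonzero graded piece one-dimensional over $\kk$. Hence $K_m$ is forced to equal $I_m$ on degrees outside $\Supp S$, and to correspond to the ideal $(z+n)\kk[z]$ (under the identification $a_m g(z) x^m \leftrightarrow g(z)$) on degrees inside $\Supp S$. I then check when this candidate $K$ is in fact an $A$-submodule by verifying the containments $K_m \cdot x \subseteq K_{m+1}$ and $K_m \cdot y \subseteq K_{m-1}$ at each degree $m$.

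Under the identification $I_m \cong \kk[z]$, the inclusion $I_m \cdot x \subseteq I_{m+1}$ corresponds to the ideal $(c_m) \subseteq \kk[z]$, and $I_{m+1} \cdot y \subseteq I_m$ corresponds to $(\sigma^m(f)/c_m) \subseteq \kk[z]$ (a polynomial since $c_m \mid \sigma^m(f)$). On the interior of $\Supp S$ these containments are automatic, so nontrivial constraints arise only at the boundaries. For $Y\s{n}$ (supported in degrees $\geq n+1$), the condition at $m = n$ reduces to $(z+n) \mid c_n$; for $X\s{n}$ (supported in degrees $\leq n-\m$), the condition at $m = n-\m+1$ reduces to $(z+n) \mid \sigma^{n-\m}(f)/c_{n-\m}$; for $Z\s{n}$ (supported on $[n-\m+1, n]$ when $\m > 0$), both boundary conditions appear, namely $(z+n) \mid c_{n-\m}$ and $(z+n) \mid \sigma^n(f)/c_n$. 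Substituting each of the four allowed monic values $\{1, \sigma^i(z), \sigma^i(z+\m), \sigma^i(f)\}$ for $c_i$ into these divisibility conditions then recovers exactly the lists in the lemma.

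It remains to confirm that $I/K$ really is the simple $S$ in question rather than another module with the same left-$\kk[z]$-structure. For $Y\s{n}$ and $X\s{n}$ this is immediate: the bottom (respectively top) graded piece of $I/K$ is generated as an $A$-module by a class satisfying precisely the defining right-ideal relations of $Y\s{n}$ or $X\s{n}$, so by simplicity this cyclic image is all of $I/K$. For $Z\s{n}$, the quotient $I/K$ is a finite-dimensional graded module of length $\m$ supported in degrees $[n-\m+1, n]$; by Lemma~\ref{simples} the only graded simple whose support lies in this range is $Z\s{n}$ itself, so every composition factor of $I/K$ is isomorphic to $Z\s{n}$ and hence $I/K \cong Z\s{n}$. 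The main subtle point will be keeping the bookkeeping between the left $\kk[z]$-action and the right $A$-action straight, especially in rewriting $I_{m+1} \cdot y$ back inside $I_m$ via $x^{m+1} y = \sigma^m(f) x^m$; once these identifications are set up carefully, the remaining divisibility checks reduce to routine polynomial arithmetic in $\kk[z]$.
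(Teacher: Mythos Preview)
Your approach is correct and essentially the same as the paper's: both identify the unique candidate kernel $K$ graded-componentwise (equal to $I_m$ outside the degree support of $S$ and to $(z+n)I_m$ inside), then reduce the question of whether $K$ is an $A$-submodule to divisibility conditions on the relevant structure constants, and finally identify $I/K$ with $S$ via its support and graded dimensions. One small slip: for $Z\langle n\rangle$ you write that $I/K$ has ``length $\alpha$'' when you mean $\kk$-dimension $\alpha$; the length is $1$, which is exactly what makes the final identification $I/K \cong Z\langle n\rangle$ work (and the degree-support information you need is in Remark~\ref{degzero} rather than Lemma~\ref{simples}).
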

\begin{proof}
Suppose $I$ surjects onto $Z\s{n}$. Then there exists a graded submodule $J = \bigoplus_{i \in \ZZ} (b_i)x^i$ of $I$ such that
\[0 \ra J \ra I \ra Z\s{n} \ra 0
\]
is a short exact sequence. Let $\{ d_i \}$ be the structure constants of $J$.

Because $Z\s{n}_i = 0$ for all $i > n$ and all $i \leq n -\m $, we must have $b_i = a_i$ for all $i > n$ and all $i \leq n -\m $. Now by Lemma~\ref{csupport}, as a left $\kk[z]$-module, for all $n - \m < i \leq n$, $Z \s{n}_i \cong \kk[z]/(z+n)$. Hence, $b_i = (z+n)a_i$ for all $n -\m < i \leq n$. Therefore, for all $i \neq n, n-\m$, $c_i = d_i$. However, $(z+n) d_{n - \m } = c_{n- \m }$ and $d_{n} = (z+n)c_{n}$. Since we know $d_{n - \m } \in \{ 1, \sigma^{n-\m}(z), \sigma^{n-\m}(z+\m), \sigma^{n-\m}(f)\}$, this forces $c_{n - \m } \in \{\sigma^{n-\m }(z+ \m ), \sigma^{n- \m }(f)\}$. Similarly, $c_{n} \in \{ 1, \sigma^{n}(z + \m ) \}$.

Conversely, if $c_{n-\m} \in \{ \sigma^{n-\m}(z+\m), \sigma^{n-\m}(f)\}$ and $c_n \in \{1, \sigma^{n}(z+\m)\}$, then we can construct $J = \bigoplus_{i \in \ZZ} (b_i)\kk[z] \subseteq I$ by setting $b_i = a_i$ for all $i > n$ and $i \leq n - \m $ and $b_i = (z+n)a_i$ for all $n - \m < i \leq 0$. If we define $d_i = b_ib_{i+1}^{-1}$ to be monic (by multiplying by the appropriate scalar), then because $c_{n-\m} \in \{ \sigma^{n-\m}(z+\m), \sigma^{n-\m}(f)\}$ and $c_n \in \{1, \sigma^{n}(z+\m)\}$, therefore $d_{n - \m} \in \{1, \sigma^{n- \m}(z+\m ) \}$ and $d_n \in \{ \sigma^{n}(z) , \sigma^{n}(f)\}$. For all other integers $i$, $d_i = c_i$. Therefore, the $\{d_i\}$ are the structure constants of a finitely generated graded submodule of $\QgrA$ which is isomorphic to $J$. By \cite[Theorem 2.1]{bav}, $A$ is $1$-critical. Hence, the factor module $I/J$ has finite length. Also, $I/J$ is simply supported at $-n$, and is nonzero only in degrees $n - \m < i \leq 0$. Hence, $I/J \cong Z \s{n}$ so $I$ surjects onto $Z\s{n}$.

The cases of $I$ surjecting onto $X\s{n}$ or $Y\s{n}$ are similar but give a condition on just one structure constant each.
\end{proof}

\begin{lemma}\label{dssc} Let $\m \in \kk \setminus \ZZ$. Let $I = \bigoplus_{i\in Z} (a_i)x^i$ be a finitely generated graded right $A$-submodule of $\QgrA$ with structure constants $\{c_i\}$. Then
\begin{enumerate} 
\item $I$ surjects onto $X^0\s{n}$ if and only if $c_{n} \in \{ 1, \sigma^{n}(z+\m) \}$,
\item $I$ surjects onto $X^\m\s{n}$ if and only if $c_{n} \in \{ 1, \sigma^{n}(z) \}$,
\item $I$ surjects onto $Y^0 \s{n}$ if and only if $c_{n} \in \{ \sigma^{n}(z), \sigma^{n}(f) \}$,
\item $I$ surjects onto $Y^\m \s{n}$ if and only if $c_{n} \in \{ \sigma^{n}(z+ \m ), \sigma^{n}(f) \}$.
\end{enumerate}
\end{lemma}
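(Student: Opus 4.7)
The plan is to mimic the proof of Lemma~\ref{cssc} in the non-congruent root case, exploiting the fact that the relevant simple modules are geometrically simpler here. By Lemma~\ref{dsupport} and Remark~\ref{degzero}, each of $X^0\s{n}$, $X^\m\s{n}$, $Y^0\s{n}$, $Y^\m\s{n}$ is one-dimensional in every nonzero degree, occupies exactly one of the half-lines $\{i \leq n\}$ (for the $X$'s) or $\{i > n\}$ (for the $Y$'s), and is annihilated as a left $\kk[z]$-module by a single linear form: $(z+n)$ for the simples indexed by $0$, and $(z+n+\m)$ for those indexed by $\m$. In particular, since there is no finite-dimensional $Z$-simple in this case, each surjection $I \twoheadrightarrow S$ affects exactly one structure constant of $I$, namely $c_n$.

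I would carry out part (1) in detail, with the remaining cases being strictly analogous. Suppose $I$ surjects onto $X^0\s{n}$, with kernel $J = \bigoplus_{i \in \ZZ}(b_i)x^i$ having structure constants $\{d_i\}$. Because $X^0\s{n}_i = 0$ for $i > n$, one has $b_i = a_i$ there. Because $(X^0\s{n})_i \cong \kk[z]/(z+n)$ as a left $\kk[z]$-module for $i \leq n$, one has $b_i = (z+n) a_i$ (up to a scalar in $\kk^\times$) for $i \leq n$. Therefore $d_i = c_i$ for all $i \neq n$, while $d_n = b_n b_{n+1}^{-1} = (z+n) c_n$. Since $d_n$ must lie in the admissible set
\[ \{1, \sigma^n(z), \sigma^n(z+\m), \sigma^n(f)\} = \{1,\; z+n,\; z+n+\m,\; (z+n)(z+n+\m)\}, \]
the requirement that $c_n = d_n/(z+n)$ also lie in this set forces $d_n \in \{z+n,\; (z+n)(z+n+\m)\}$, giving $c_n \in \{1,\; z+n+\m\} = \{1, \sigma^n(z+\m)\}$, as claimed.

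For the converse, given such a $c_n$, I construct $J$ by the formulas above and verify that the resulting $\{d_i\}$ satisfies the admissibility conditions listed after Lemma~\ref{cfgsc}, which guarantees $J$ is a finitely generated graded right $A$-submodule of $\QgrA$. Since $A$ is $1$-critical by \cite[Theorem 2.1]{bav}, the quotient $I/J$ has finite length; inspection of its support (at $-n$) and its degree range (nonzero exactly for $i \leq n$), combined with Lemma~\ref{simples}, identify it as $X^0\s{n}$. Parts (2), (3), and (4) are handled identically: for $X^\m\s{n}$ replace $(z+n)$ by $(z+n+\m)$ throughout; for the two $Y$-modules exchange the half-lines, giving $d_n = c_n/(z+n)$ or $c_n/(z+n+\m)$ respectively, with the same divisibility analysis yielding $c_n \in \{\sigma^n(z), \sigma^n(f)\}$ and $c_n \in \{\sigma^n(z+\m), \sigma^n(f)\}$. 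No step presents a real obstacle; the whole argument reduces to a divisibility check, and the categorical complication that appeared in Lemma~\ref{cssc} (where $Z\s{n}$ shares support with $X\s{n}$ and $Y\s{n}$, constraining two structure constants at once) is absent here.
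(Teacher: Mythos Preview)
Your proposal is correct and follows essentially the same approach as the paper, which simply states that the result ``follows from essentially the same proof as Lemma~\ref{cssc}.'' You have accurately unpacked that argument in the non-congruent root case, correctly noting that the absence of a finite-dimensional simple $Z$ makes only the single structure constant $c_n$ relevant, and your divisibility analysis is sound.
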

\begin{proof}This follows from essentially the same proof as Lemma~\ref{cssc}.
\end{proof}

Observe that in the cases of distinct roots or a multiple root, whether a simple module is a factor of a finitely generated graded right $A$-submodule of $\QgrA$ or not depends on only a single structure constant. In the case of congruent roots, each simple factor of the form $Z\s{n}$ is determined by two different structure constants. Additionally, as a consequence of the constructions in Lemmas~\ref{cssc} and \ref{dssc}, we obtain the following two corollaries, the latter of which is an analogue of \cite[Lemma 4.11]{sierra}.

\begin{corollary} \label{homIS} Let $I$ be a finitely generated graded right $A$-submodule of $\QgrA$ and let $S$ be a simple graded $A$-module. Then for each each $n \in \ZZ$
\[ \dim_{\kk} \uHom_A(I, S)_n \leq 1.
\]
\end{corollary}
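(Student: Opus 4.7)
The plan is to identify $\uHom_A(I, S)_n$ with $\Hom_{\gr A}(I, S\s{n})$ and show that any two nonzero graded homomorphisms $I \to S\s{n}$ are scalar multiples of one another. Since $S\s{n}$ is simple, every nonzero $\varphi \colon I \to S\s{n}$ is surjective, so the key reduction is to prove that the kernel $K = \ker(\varphi)$ is forced by $I$, $S$, and $n$ alone, independent of the choice of $\varphi$.

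To establish this uniqueness, I would use the description from Section~\ref{sec:ideals}: write $I = \bigoplus_{i \in \ZZ}(a_i)x^i$, so each graded piece $I_i$ is cyclic as a left $\kk[z]$-module. By Remark~\ref{degzero}, every graded simple has $\dim_\kk (S\s{n})_i \leq 1$. In each degree $i$ where $(S\s{n})_i = 0$, necessarily $K_i = I_i$. In each degree $i$ where $(S\s{n})_i$ is one-dimensional, Lemmas~\ref{csupport} and \ref{dsupport} (and the analogous direct computation for $M_\lambda$) give an explicit $\lambda_i \in \kk$, depending only on $S$, $n$, and $i$, such that $(S\s{n})_i \cong \kk[z]/(z+\lambda_i)$ as a left $\kk[z]$-module. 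Since $I_i$ is cyclic over the PID $\kk[z]$, the unique $\kk[z]$-submodule of $I_i$ with quotient $\kk[z]/(z+\lambda_i)$ is $(z+\lambda_i)I_i$. Thus $K$ is forced degree by degree.

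Given this uniqueness, every nonzero morphism $\varphi \colon I \to S\s{n}$ factors through the fixed quotient $I/K$, inducing an isomorphism $I/K \overset{\sim}{\to} S\s{n}$. Hence $\Hom_{\gr A}(I, S\s{n})$, if nonzero, is a torsor under $\End_{\gr A}(S\s{n})$. Because any graded endomorphism of $S\s{n}$ is determined by its scalar action on a single nonzero one-dimensional graded component (and this scalar is preserved by $A$-linearity through the actions of $x, y, z$), graded Schur yields $\End_{\gr A}(S\s{n}) = \kk$. Combining the two steps gives $\dim_\kk \Hom_{\gr A}(I, S\s{n}) \leq 1$, which is exactly the statement.

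The main obstacle is the uniqueness of the kernel, but as sketched above this reduces cleanly to the elementary fact that a cyclic module over the PID $\kk[z]$ admits at most one quotient isomorphic to a prescribed simple $\kk[z]$-module. The role of the preceding structure-constant analysis is precisely to make each $I_i$ cyclic and to pin down $\lambda_i$ exactly, so no further case analysis on the three root cases (multiple, congruent, non-congruent) is needed beyond invoking Lemmas~\ref{csupport} and \ref{dsupport}.
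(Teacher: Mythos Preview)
Your proof is correct and follows essentially the same strategy as the paper: establish that any surjection $I \to S\s{n}$ has a uniquely determined kernel, then reduce to $\End_{\gr A}(S\s{n}) = \kk$. The only minor difference is that you prove kernel uniqueness by a uniform degree-by-degree argument using the left $\kk[z]$-module structure (Lemmas~\ref{csupport} and \ref{dsupport}), whereas the paper invokes the structure-constant constructions in Lemmas~\ref{cssc} and \ref{dssc} for the simples supported at $\ZZ \cup (\ZZ-\m)$ and treats $M_\lambda$ separately; your route is slightly more uniform but the content is the same.
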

\begin{proof}We claim that there is a unique kernel $J \subseteq I$ for any surjection $I \sra S$. If $S$ is supported at $\ZZ \cup (\ZZ-\m)$ then this follows from the construction in Lemmas~\ref{cssc} and \ref{dssc}.

If $S = M_\lambda$ for some $\lambda \in \kk$, $\lambda \notin \ZZ \cup \ZZ+\m$, then notice that we can construct $J = (z+\lambda)I$ such that $I/J \cong M_\lambda$. Further, since $M_\lambda$ is simply supported at $-\lambda$ with degree 1 in each graded component, $J$ is the unique kernel for any surjection $I \sra M_\lambda$.

Hence for any simple module $S$, if there is a surjection $I \sra S$, we have
\[ \uHom_A(I, S) \cong \uHom_A(I/J, S) \cong \uHom_A(S,S).
\]
Since all graded simple modules have $\kk$-dimension 0 or 1 in each graded component, we conclude that for each $n \in \ZZ$, $\dim_{\kk} \uHom_A(I, S)_n \leq 1$.
\end{proof}

\begin{corollary} \label{largedegree} Let $I$ be a finitely generated graded right $A$-submodule of $\QgrA$. If $\m \in \NN$, then for all $n \gg 0$, 
\[\Hom_{\gr A}(I, X \s{n}) = \Hom_{\gr A}(I,Y \s{-n}) = \kk.\]

If $\m \in \kk \setminus \ZZ$, then for all $n \gg 0$, 
\[\Hom_{\gr A}(I, X_0 \s{n}) = \Hom_{\gr A}(I,Y_0 \s{-n}) = \kk \mbox { and}\]
\[\Hom_{\gr A}(I, X_\m \s{n}) = \Hom_{\gr A}(I,Y_\m \s{-n})= \kk.\]
\end{corollary}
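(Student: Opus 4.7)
The plan is to combine Lemma~\ref{cfgsc}, which pins down the structure constants of $I$ at extreme indices, with the surjection criteria of Lemmas~\ref{cssc} and \ref{dssc} and the dimension bound of Corollary~\ref{homIS}. The strategy is this: for sufficiently large $n$, show that $I$ actually surjects onto the relevant simple module $S$, which provides a nonzero degree-$0$ element of $\Hom_{\gr A}(I, S)$; then invoke Corollary~\ref{homIS} to conclude that this $\Hom$-space has dimension exactly one.

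Concretely, by Lemma~\ref{cfgsc} there is an $N$ such that $c_n = 1$ and $c_{-n} = \sigma^{-n}(f)$ for all $n \geq N$. Suppose first that $\m \in \NN$ and take $n \geq N + \m$. Then $c_{n-\m} = 1 \in \{1,\sigma^{n-\m}(z)\}$, so Lemma~\ref{cssc}(1) furnishes a surjection $I \twoheadrightarrow X\s{n}$; similarly, $c_{-n} = \sigma^{-n}(f) \in \{\sigma^{-n}(z), \sigma^{-n}(f)\}$, so Lemma~\ref{cssc}(2) gives a surjection $I \twoheadrightarrow Y\s{-n}$. Next suppose $\m \in \kk \setminus \ZZ$ and take $n \geq N$. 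Then $c_n = 1$ belongs to both $\{1,\sigma^n(z+\m)\}$ and $\{1,\sigma^n(z)\}$, so Lemma~\ref{dssc}(1),(2) produce surjections $I \twoheadrightarrow X_0\s{n}$ and $I \twoheadrightarrow X_\m\s{n}$. Likewise $c_{-n} = \sigma^{-n}(f)$ lies in both $\{\sigma^{-n}(z),\sigma^{-n}(f)\}$ and $\{\sigma^{-n}(z+\m),\sigma^{-n}(f)\}$, so Lemma~\ref{dssc}(3),(4) produce surjections $I \twoheadrightarrow Y_0\s{-n}$ and $I \twoheadrightarrow Y_\m\s{-n}$.

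In each case, the exhibited surjection is a morphism in $\gr A$, i.e.\ a nonzero element of $\Hom_{\gr A}(I, S) = \uHom_A(I,S)_0$ for the appropriate simple $S$. Hence $\dim_\kk \Hom_{\gr A}(I,S) \geq 1$. On the other hand, Corollary~\ref{homIS} gives $\dim_\kk \uHom_A(I,S)_0 \leq 1$. Combining these two bounds yields $\Hom_{\gr A}(I,S) = \kk$, as desired.

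Since this is really just an assembly of three earlier results, there is no genuine obstacle. The only point requiring care is that the phrase ``for $n \gg 0$'' must be interpreted large enough that $n-\m$ also lies past the threshold produced by Lemma~\ref{cfgsc} (this matters in the congruent root case), and that the specific constant supplied by Lemma~\ref{cfgsc} at each extreme index falls into the designated set for every one of the four simple module types appearing in Lemma~\ref{dssc}.
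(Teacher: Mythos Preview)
Your proof is correct and follows exactly the same approach as the paper, which simply cites Lemmas~\ref{cfgsc}, \ref{cssc}, \ref{dssc}, and Corollary~\ref{homIS} without spelling out the details. You have supplied a careful expansion of that one-line argument, including the minor bookkeeping point about taking $n \geq N + \m$ in the $\m \in \NN$ case so that $c_{n-\m}$ is controlled.
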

\begin{proof} The result follows from Lemmas~\ref{cfgsc}, \ref{cssc}, \ref{dssc}, and Corollary~\ref{homIS}.
\end{proof}

\subsection{Finite length modules of $\gr A$}
\label{sec:finlen}
We now seek to understand the finite length modules of $\gr A(f)$. We will see that the extensions between graded simple modules are few: if $M$ and $M'$ are graded simple modules, then $\uExt^1_A(M,M')$ is either $0$ or $\kk$. For clarity, we consider the three cases (congruent, multiple, and non-congruent roots) separately.

\subsubsection{Multiple root}
Let $\m = 0$ so that $f = z^2$ and we are in the multiple root case. We record the $\Ext$ groups between simple modules.

\begin{lemma} \label{mexts}
\begin{enumerate} \item As graded vector spaces, $\uExt^1_A(X,Y) = \uExt^1_A(Y,X) = \kk$, concentrated in degree $0$.
\item As graded vector spaces, $\uExt^1_A(X,X) = \uExt^1_A(Y,Y) = \kk$, concentrated in degree $0$.
\item Let $\lambda, \mu \in \kk \setminus \ZZ$. If $\lambda \neq \mu$, then $\Ext^1_{\gr A}(M_\lambda, M_\mu) = 0$, but $\Ext_{\gr A}^1(M_\lambda, M_\lambda) = \kk$.
\item Let $\lambda \in \kk \setminus \ZZ $. Let $S \in \{X,Y\}$. Then $\uExt^1_A(M_\lambda, S) = \uExt^1_A(S, M_\lambda) = 0$.
\end{enumerate}
\end{lemma}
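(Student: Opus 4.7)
For parts (3) and (4), I exploit the left $\kk[z]$-action: on every graded component, $z$ acts as the scalar $-\lambda$ on $M_\lambda$, as $0$ on both $X$ and $Y$, and as $-\mu$ on $M_\mu$. Whenever the two modules appearing in a graded short exact sequence $0 \to N' \to E \to N \to 0$ have different such scalars, each graded component $E_d$ decomposes as a direct sum of left $z$-eigenspaces. The identities $xz = (z+1)x$ and $yz = (z-1)y$ show that the right actions of $x$ and $y$ preserve the left $\kk[z]$-eigenvalue on each graded piece, so the decomposition is $A$-stable and the extension splits. This handles $\Ext^1_{\gr A}(M_\lambda, M_\mu)$ for $\lambda \neq \mu$ in (3), as well as both $\uExt^1_A(M_\lambda, S)$ and $\uExt^1_A(S, M_\lambda)$ in (4).

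For the one non-vanishing case $\Ext^1_{\gr A}(M_\lambda, M_\lambda) = \kk$ of (3), I use that $A$ is a domain so that $0 \to A \xrightarrow{L_{z+\lambda}} A \to M_\lambda \to 0$, with first map left multiplication, is a projective resolution. Applying $\uHom_A(-, M_\lambda)$ gives $\uExt^1_A(M_\lambda, M_\lambda) \cong \coker(R_{z+\lambda} \colon M_\lambda \to M_\lambda)$. Right multiplication by $z$ acts on $(M_\lambda)_d$ as the scalar $d - \lambda$, so $R_{z+\lambda}$ acts as $d$; the cokernel is $\kk$ in degree $0$ and zero elsewhere.

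For parts (1) and (2), the eigenspace argument no longer applies since $X$ and $Y$ both have left $z$-eigenvalue $0$, so I build a projective presentation of $X = A/(x, z)A$. The map $\phi\colon A\s{-1} \oplus A \to A$, $(a, b) \mapsto xa + zb$, has image $(x, z)A$; I claim its kernel is the right $A$-submodule generated by $r_1 = (z-1, -x)$ (from $x(z-1) = zx$) and $r_2 = (y, -z)$ (from $xy = f = z^2$). Using the $\kk[z]$-normal forms $A_i = x^i\kk[z]$ and $A_{-i} = y^i\kk[z]$ for $i \geq 0$, the condition $xa + zb = 0$ forces $(a, b)$ to be a right $A$-multiple of $r_1$ when $\deg b > 0$ and of $r_2$ when $\deg b \leq 0$; this yields $P_2 \xrightarrow{\psi} P_1 \xrightarrow{\phi} A \to X \to 0$ with $P_1 = P_2 = A\s{-1} \oplus A$ and $\psi(c, d) = ((z-1)c + yd, -xc - zd)$.

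Applying $\uHom_A(-, N)$ for $N \in \{X, Y\}$ produces the complex $N \to N\s{-1} \oplus N \to N\s{-1} \oplus N$ with first map $n \mapsto (nx, nz)$ and second map $(m, n) \mapsto (m(z-1) - nx, my - nz)$; $\uExt^1_A(X, N)$ is its middle cohomology. A direct degree-by-degree calculation, using that right multiplication by $x, y, z$ on $X$ and $Y$ is given by explicit scalars depending polynomially on the degree, shows that $\uExt^1_A(X, X)$ and $\uExt^1_A(X, Y)$ are each $\kk$ concentrated in degree $0$, with representative non-split extensions $0 \to X \to A/xA \to X \to 0$ and $0 \to Y \to A/zA \to X \to 0$. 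Finally, $\uExt^1_A(Y, X)$ and $\uExt^1_A(Y, Y)$ follow from the identity $\uExt^1_A(\omega M, \omega N) \cong \omega\uExt^1_A(M, N)$ of \eqref{omegaext} together with $\omega X \cong Y\s{-1}$ and $\omega Y \cong X\s{-1}$ from Remark~\ref{degzero}. The main technical obstacle is verifying the kernel-generation claim for $\phi$ above.
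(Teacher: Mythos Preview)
Your argument is correct, but takes a different route from the paper in almost every part. For (3) and (4), the paper applies $\uHom_A(-,S')$ to the short resolutions $0 \to (z+\lambda)A \to A \to M_\lambda \to 0$ and $0 \to I_S \to A \to S \to 0$ and reads off the answer using Corollary~\ref{homIS}; your left-$z$-eigenspace splitting is more direct and avoids any appeal to the structure-constant machinery of \S\ref{sec:ideals}. For (1) and (2), the paper again stops at the one-step resolution $0 \to I_X \to A \to X \to 0$ and computes $\uHom_A(I_X,S')$ via Lemma~\ref{cssc} and Corollary~\ref{homIS}, comparing the structure constants of $I_X$ and $A$; you instead resolve one step further by frees and compute cohomology by hand. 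Your approach is self-contained but needs the syzygy verification you flag (which does go through, by the degree-by-degree check you outline); the paper's approach amortizes that work by reusing tools it has already built for later sections.

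One notational slip: under the paper's convention $M\langle i\rangle_j = M_{j-i}$, left multiplication by $x$ is a degree-$0$ map $A\langle 1\rangle \to A$, not $A\langle -1\rangle \to A$, so you should have $P_1 = P_2 = A\langle 1\rangle \oplus A$. Your Hom complex $N \to N\langle -1\rangle \oplus N \to N\langle -1\rangle \oplus N$ with the stated differentials is nonetheless the correct one for that corrected $P_1$, so the computation stands.
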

\begin{proof} Recall that $X = A/(x,z)A$ and $Y = A/(y,z-1)A \s{1}$. Let $I_X$ and $I_Y$ be the ideals of $A$ defining $X$ and $Y$ respectively, that is, $I_X = (x,z)A$ and $I_Y = (y,z-1)A $. Now for $S \in \{X,Y\}$, we have the exact sequence
\begin{equation} 0 \sra I_S \s{d} \sra A\s{d} \sra S \sra 0 \label{mSses}
\end{equation}
where if $S=X$ then $d= 0$ and if $S = Y$ then $d =1$. For any graded simple module $S'$, we then apply the functor $\uHom_A(-,S')$ to yield 
\begin{equation} 0 \sra \uHom_A(S,S') \sra \uHom_A(A, S') \s{-d} \sra \uHom_A(I_S, S') \s{-d} \sra \uExt_A^1(S, S') \sra 0. \label{mSles} 
\end{equation}
We know that $\uHom_A(S,S') = \kk$, concentrated in degree $0$ if and only if $S = S'$, otherwise $\uHom_A(S,S') = 0$. We also know that as a graded $\kk$-vector space, $\uHom_A(A,S')\s{-d} \cong S'\s{-d}$. Additionally, based on Lemma~\ref{cssc} and Corollary~\ref{homIS}, we can compute $\uHom_A(I_S, S') \s{-d}$. Hence, we will able to deduce $\uExt_A^1(S, S')$.

(1) Let $S = X$, $S' = Y$, and $d = 0$ in the exact sequence \eqref{mSles}. Notice that by the construction in Lemma~\ref{cssc}, $A$ and $I_X$ have the same structure constants except in degree $0$, where $A$ has structure constant 1 and $I_X$ has structure constant $z$. Hence, $\uHom_A(A,Y)$ and $\uHom_A(I_X, Y)$ differ only in degree $0$ where $\Hom_{\gr A}(A,Y) = 0$, $\Hom_{\gr A}(I_X, Y) = \kk$ so $\uExt_A^1(X,Y) = \kk$, concentrated in degree $0$. By applying the autoequivalence $\omega$ as in equation \eqref{omegaext}, we deduce $\uExt_A^1(Y,X) = \kk$.

(2) Let $S = S' = X$ and $d = 0$ in the exact sequence \eqref{mSles}. In this case, $\uHom_A(X,X) = \kk$. Again, by the construction in Lemma~\ref{cssc}, $A$ and $I_X$ have the same structure constants except in degree $0$, where $A$ has structure constant 1 and $I_X$ has structure constant $z$. Notice then that $\uHom_A(A,X)_n = \uHom_A(I_X,X)_n$ for every degree $n \in \ZZ$. By considering the exact sequence in each degree, we conclude $\uExt_A^1(X,X) = \kk$, concentrated in degree $0$. Applying $\omega$ yields the result $\uExt_A^1(Y,Y) = \kk$.

(3) Apply $\Hom_{\gr A}(-, M_\mu)$ to the short exact sequence
\begin{equation} \label{mMexact} 0 \sra (z+\lambda)A \sra A \sra M_\lambda \sra 0
\end{equation}
to yield
\begin{align*} 0 &\sra \Hom_{\gr A}(M_\lambda, M_\mu) \sra \Hom_{\gr A}(A, M_\mu) \sra \Hom_{\gr A}((z+\lambda)A, M_\mu) \\
&\sra \Ext_{\gr A}^1(M_\lambda, M_\mu) \sra 0.
\end{align*}
Since $\dim_{\kk} (M_\mu)_0 = 1$, $\Hom_{\gr A}(A, M_\mu) \cong \Hom_{\gr A}((z+\lambda)A, M_\mu) = \kk$. Hence, it follows that $\Ext_{\gr A}^1(M_\lambda, M_\mu) \cong \Hom_{\gr A}(M_\lambda, M_\mu)$ which is $\kk$ when $\lambda = \mu$ and $0$ otherwise.

(4) Let $\lambda \in \kk \setminus \ZZ$ and apply $\uHom_A(-, M_\lambda)$ to the short exact sequence \eqref{mSses}. This yields 
\[ 0 \sra \uHom_A(A, M_\lambda)\s{-d} \sra \uHom_A(I_S, M_\lambda)\s{-d} \sra \uExt_A^1(S, M_\lambda) \sra 0.
\]
Now by Corollary~\ref{homIS}, in every graded component
\[\uHom_A(A, M_\lambda) \cong \uHom_A(I_S, M_\lambda)\]
so $\uExt_A^1(S, M_\lambda) = 0$.

Now let $S \in \{X,Y\}$ and apply $\uHom_A(-,S)$ to the short exact sequence~\eqref{mMexact} yielding
\[ 0 \sra \uHom_A(A,S) \sra \uHom_A((z+\lambda)A, S) \sra \uExt_A^1(M_\lambda, S) \sra 0.
\]
But since $z+\lambda$ has degree $0$, each $\uHom_A((z+\lambda)A, S) \cong \uHom_A(A, S)$ so $\uExt_A^1(M_\lambda, S) = 0$. 
\end{proof}

This lemma allows us to characterize all indecomposable modules of length $2$ in $\gr A$. Since $\uExt_A^1(X,Y) = \kk$, there is a unique (up to isomorphism) nonsplit extension of $X$ by $Y$. We denote this module $E_{X,Y}$ and similarly we denote by $E_{Y,X}$, $E_{X,X}$ and $E_{Y,Y}$ the extensions of $Y$ by $X$, $X$ by $X$, and $Y$ by $Y$, respectively. We record these modules explicitly:
\begin{lemma}\label{mlengthtwo} Let $\m = 0$. The length two indecomposable modules of $\gr A$ whose simple factors are $X$ or $Y$ are precisely the modules
\begin{enumerate}
\item $E_{X,Y} \cong A/zA$;
\item $E_{Y,X} \cong A/(z-1)A \s {1}$;
\item $E_{X,X} \cong A/xA$;
\item $E_{Y,Y} \cong A/yA \s{1}$.
\end{enumerate}
\end{lemma}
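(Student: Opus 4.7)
By Lemma~\ref{mexts}, for each pair $S, S' \in \{X,Y\}$ we have $\uExt^1_A(S, S') = \kk$ concentrated in degree $0$, so there is, up to graded isomorphism, a \emph{unique} nonsplit graded extension of $S$ by $S'$. Hence it suffices, for each of the four modules $M$ listed, to (i) exhibit a short exact sequence $0 \to S' \to M \to S \to 0$ with the correct pair of simples, and (ii) verify that it does not split.

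For $M = A/zA$: the quotient map $A/zA \twoheadrightarrow A/(x,z)A = X$ has kernel equal to the image $xA/zA \cdot A$ of $xA$ in $A/zA$, generated by $\bar x$ in degree $1$. I would check that $\bar x \cdot y = \overline{xy} = \overline{z^2} = 0$ and $\bar x \cdot (z-1) = \overline{(z+1)x - x} = \overline{zx} = 0$ in $A/zA$, yielding a graded surjection $Y = A/(y,z-1)A\s{1} \twoheadrightarrow \bar x A \subseteq A/zA$. Lemma~\ref{csupport} forces $A/zA$ to be one-dimensional in each degree, and a direct degree-by-degree check shows the submodule $\bar x A$ is one-dimensional exactly in degrees $\geq 1$ and vanishes otherwise, matching $Y$; so the surjection is an isomorphism. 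Nonsplitness follows from the observation that $\bar 1 \cdot x = \bar x$ is nonzero in $A/zA$, linking the generator of the quotient $X$ to the submodule $Y$, which cannot occur in the direct sum $Y \oplus X$. The case $M = A/(z-1)A\s{1}$ is handled symmetrically, this time taking $Y$ as quotient and identifying $\overline{y}A \subseteq A/(z-1)A\s{1}$ with $X$ via the relations $\bar y \cdot x = \overline{(z-1)z} = 0$ and $\bar y \cdot z = \overline{(z-1)y} = 0$.

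For $M = A/xA$: the quotient map $A/xA \twoheadrightarrow A/(x,z)A = X$ has kernel equal to the image $\bar z A$ of $zA$, generated by $\bar z$ in degree $0$. Since $\bar z \cdot x = 0$ trivially and $\bar z \cdot z = \overline{z^2} = \overline{xy} = 0$ in $A/xA$, we get a graded surjection $X = A/(x,z)A \twoheadrightarrow \bar z A$. A dimension count (either from Lemma~\ref{csupport} applied to the structure constants of $A/xA$ as a graded submodule of itself, or directly: $(A/xA)_n = \kk[z]/(z^2)$ acting on a rank-one generator for $n \leq 0$) shows both sides are one-dimensional in degrees $\leq 0$ and vanish elsewhere, so $\bar z A \cong X$. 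For nonsplitness: in $X \oplus X$, the element $z \in A$ acts as zero on the degree-$0$ part, whereas in $A/xA$ we have $\bar 1 \cdot z = \bar z \neq 0$ in degree $0$. The case $M = A/yA\s{1}$ is identical after applying the autoequivalence $\omega$ of \eqref{omegaext}, which swaps $x \leftrightarrow y$ and sends shifts of $X$ to shifts of $Y$.

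The only mildly subtle point is the dimension count showing the candidate kernels/submodules really are isomorphic (not just quotients) to the target simples; this is routine given Lemma~\ref{csupport}. Everything else reduces to bookkeeping with the defining relations of $A(f)$ for $f = z^2$.
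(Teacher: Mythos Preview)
Your proposal is correct and follows essentially the same strategy as the paper: invoke Lemma~\ref{mexts} for uniqueness, exhibit each short exact sequence explicitly, and verify nonsplitness. The only tactical differences are that the paper checks injectivity of the map $Y \to (xA+zA)/zA$ via an annihilator computation (rather than your dimension count from Lemma~\ref{csupport}), and argues nonsplitness by showing that any nonzero element of non-positive degree generates $A/zA$, hence $\Hom_{\gr A}(X,A/zA)=0$ (rather than your action-of-$x$ and action-of-$z$ arguments).
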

\begin{proof}We record the nonsplit exact sequences. To show that $A/zA$ is an extension of $X$ by $Y$, we consider the natural quotient map $A/zA \sra A/(x,z)A = X$ whose kernel is the submodule $(xA + zA)/zA$. We can construct an isomorphism $Y \sra (xA+zA)/zA$ mapping $1$ to $x$. This map is clearly surjective, and is injective since $yA + (z-1)A$ is the right annihilator of $x$ in $A/zA$. Similar arguments apply in the other cases. All four nonsplit exact sequences are recorded below.
\[0 \ra Y \overset{x \cdot}{\ra} A /zA \ra X \ra 0
\]
\[ 0 \ra X \overset{y \cdot}{\ra} (A/(z-1)A) \s{1} \ra Y \ra 0
\]
\[ 0 \ra X \overset{z \cdot}{\ra} A/xA \ra X \ra 0
\]
\[ 0 \ra Y \overset{(z-1)\cdot}{\ra} (A/yA)\s{1} \ra Y \ra 0
\]

These short exact sequences do not split. As an example, in the first case, any nonzero element of non-positive degree generates $A/zA$ so there does not exist a nonzero map $X \sra A/zA$.
\end{proof}

\subsubsection{Congruent roots}
Let $\m \in \NNp$ so that we are in the congruent root case. 

\begin{lemma} \label{cexts}
\begin{enumerate} \item As graded vector spaces, 
\[\uExt^1_A(X,Z) = \uExt^1_A(Z,X) = \uExt^1_A(Y,Z) = \uExt^1_A(Z,Y) = \kk,\] concentrated in degree $0$.
\item Let $S \in \{X,Y,Z\}$. Then $\uExt^1_A(S,S) = 0$.
\item $\uExt^1_A(X,Y) = \uExt^1_A(Y,X) = 0$.
\item If $\lambda \neq \mu$, then $\Ext^1_{\gr A}(M_\lambda, M_\mu) = 0$, but $\Ext_{\gr A}^1(M_\lambda, M_\lambda) = \kk$.
\item Let $\lambda \in \kk \setminus \ZZ $. Let $S \in \{X,Y, Z \}$. Then $\uExt^1_A(M_\lambda, S) = \uExt^1_A(S, M_\lambda) = 0$.
\item \label{extZA} $\uExt^1_A(Z,A) =0$.
\end{enumerate}
\end{lemma}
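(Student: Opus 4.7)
The plan is to follow the template of the proof of Lemma~\ref{mexts}. For each pair of simples $(S, S')$ with $S \in \{X, Y, Z\}$, I would apply $\uHom_A(-, S')$ to the short exact sequence
\[
0 \sra I_S\s{d_S} \sra A\s{d_S} \sra S \sra 0,
\]
where $I_S$ is the appropriate defining right ideal and $d_X = -\m$, $d_Y = 1$, $d_Z = 0$, and read off $\uExt^1_A(S, S')$ from the resulting long exact sequence. Since $A/I_S$ is a shift of a single simple, Lemma~\ref{cssc} forces the structure constants of $I_S$ to be those of $A$ (from Example~\ref{scA}) modified in exactly one position when $S \in \{X, Y\}$ and in exactly two positions when $S = Z$. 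Lemma~\ref{cssc} and Corollary~\ref{homIS} then compute $\uHom_A(A\s{d_S}, S')$ and $\uHom_A(I_S\s{d_S}, S')$ as graded vector spaces of dimension at most one in each graded component; these two agree outside of the one or two degrees corresponding to the altered structure constants. Comparing them and subtracting $\uHom_A(S, S')$ yields parts (1)--(3); each nonzero Ext group comes out as $\kk$ concentrated in degree $0$.

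Parts (4) and (5) both exploit the hypothesis $\lambda \notin \ZZ \cup (\ZZ + \m)$. When $M_\lambda$ is the source, applying $\uHom_A(-, S')$ to $0 \to (z+\lambda)A \to A \to M_\lambda \to 0$ yields an isomorphism on Hom spaces in every degree because $z + \lambda$ acts invertibly on any $S'$ supported away from $-\lambda$. When $M_\lambda$ is the target, applying $\uHom_A(-, M_\lambda)$ to the sequence for $S$ yields an isomorphism for the analogous reason that the modifying generators of $I_S$ act invertibly on $M_\lambda$. Either way the Ext vanishes; the case $\mu = \lambda$ in (4) is the single exception, giving $\kk$. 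The autoequivalence $\omega$ together with \eqref{omegaext} and Remark~\ref{degzero} then cuts the work nearly in half by identifying $\uExt^1_A(Y, Z)$ with $\uExt^1_A(X, Z)$, $\uExt^1_A(Z, Y)$ with $\uExt^1_A(Z, X)$, and $\uExt^1_A(Y, Y)$ with $\uExt^1_A(X, X)$.

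The main obstacle is part (6), $\uExt^1_A(Z, A) = 0$, because $A$ is infinite-dimensional and Lemma~\ref{cssc} does not apply directly. Applying $\uHom_A(-, A)$ to $0 \to I_Z \to A \to Z \to 0$ and using $\uHom_A(Z, A) = 0$ (since $Z$ is torsion and $A$ is torsion-free) together with $\uExt^1_A(A, A) = 0$, the long exact sequence collapses to
\[
0 \sra A \sra \uHom_A(I_Z, A) \sra \uExt^1_A(Z, A) \sra 0.
\]
Since $A$ is an Ore domain, every graded map $I_Z \to A$ is given by left multiplication by a unique homogeneous element $q \in \QgrA$, so the task reduces to verifying that $\{ q \in \QgrA \st q y^\m, qx, qz \in A \} = A$. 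Writing $q = \sum_n g_n(z)\, x^n$ with $g_n \in \kk(z)$ and using $y^\m = x^{-\m} \prod_{i=0}^{\m-1}\sigma^i(f)$ in $\QgrA$, the three conditions $qy^\m, qx, qz \in A$ translate into three explicit divisibility conditions on each $g_n$ in terms of the coefficients $a_n$ of Example~\ref{scA}. The hard part will be to show that these three conditions jointly force $g_n \in a_n \kk[z]$ for every $n$; this should follow from the factorization $a_n = (z+n)(z+n+\m)\, a_{n+1}$ coming from $f = z(z+\m)$ together with an intersection computation in $\kk(z)$.
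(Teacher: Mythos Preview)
Your plan for parts (1)--(5) matches the paper's proof essentially line for line: apply $\uHom_A(-,S')$ to the defining short exact sequence for $S$, compare $\uHom_A(A,S')$ with $\uHom_A(I_S,S')$ via Lemma~\ref{cssc} and Corollary~\ref{homIS}, and use $\omega$ to halve the work.

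For part~(6) you take a genuinely different route. The paper argues by contradiction on the extension side: given a nonsplit $0 \to A \to I \to Z\s{n} \to 0$, it shows $A \subseteq I$ is essential, embeds $I$ in the injective hull $\QgrA$, and then reads off (via the kernel construction of Lemma~\ref{cssc}) that $A$ would need structure constants $d_{n-\m} \in \{1, \sigma^{n-\m}(z)\}$ and $d_n \in \{\sigma^n(z), \sigma^n(f)\}$, contradicting Example~\ref{scA}. Your approach instead computes $\uHom_A(I_Z, A)$ directly: identify each degree-$n$ map with left multiplication by some $g_n(z)x^n \in \QgrA$ and check that the conditions $q y^\m, qx, qz \in A$ force $g_n \in a_n\kk[z]$. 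This works; the key case is $-\m \le n \le -1$, where the condition $qy^\m \in A_{n-\m}$ alone already yields $g_n \in a_n\kk[z]$ after cancelling the common factors of $\prod_{j=n-\m}^{n-1}\sigma^j(f)$ against $a_{n-\m}$. The paper's argument is cleaner because it reuses the structure-constant machinery wholesale rather than unwinding a fresh divisibility computation, and it generalizes immediately to the analogous statement for any rank one projective in Corollary~\ref{cprojsc}. Your argument has the virtue of being self-contained and avoiding the essential-extension step, but it is more computational and does not feed as directly into the later projectivity criteria.
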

\begin{proof} We will use similar arguments as in the proof of Lemma~\ref{mexts}. Let $I_X = (x, z+\m )A $, $I_Y = (y, z-1)A $, and $I_Z = (y^\m , x, z)A$ be the right ideals of $A$ defining $X$, $Y$, and $Z$, respectively. Now for $S \in \{X,Y, Z\}$, we have the exact sequence
\begin{equation} 0 \sra I_S\s{d} \sra A\s{d} \sra S \sra 0 \label{cSses}
\end{equation}
where if $S=X$ then $d= -\m $, if $S = Y$ then $d =1$, and if $S = Z$ then $d = 0$. For any graded simple module $S'$, we then apply the functor $\uHom_A(-,S')$ to yield 
\begin{equation} 0 \sra \uHom_A(S,S') \sra \uHom_A(A, S') \s{-d} \sra \uHom_A(I_S, S')\s{-d} \sra \uExt_A^1(S, S') \sra 0. \label{cSles} 
\end{equation}
We will use Lemma~\ref{cssc} and Corollary~\ref{homIS} to compute $\uHom_A(I_S, S') \s{-d}$ from which we will deduce $\uExt_A^1(S, S')$.

(1) Let $S = X$ and $S' = Z$ in the exact sequence \eqref{cSles}. Since $\uHom_A(X,Z) = 0$, we need only determine in which degrees $ \uHom_A(A, Z)\s{\m }$ differs from $\uHom_A(I_X, Z) \s{\m }$. By the construction in Lemma~\ref{cssc}, observe that $A$ and $I_X$ have the same structure constants except in degree $0$ where $A$ has structure constant 1 and $I_X$ has structure constant $z + \m $. Again, by Lemma~\ref{cssc}, $I_X$ surjects onto exactly those shifts of $Z$ that $A$ does except $I_X$ additionally surjects onto $Z\s{\m }$. That is for each $n \in \ZZ \setminus \{0\}$,
\[ \uHom_A(A, Z)\s{\m }_n = \uHom_A(I_X, Z) \s{\m }_n,
\]
In degree $0$, we have that $\Hom_{\gr A}(A, Z)\s{\m } = 0$ and ${\Hom_{\gr A}(I_X, Z) \s{\m } = \kk}$. Hence, $\uExt_A^1(X,Z) = \kk$, concentrated in degree $0$. Applying the autoequivalence $\omega$ implies that $\uExt_A^1(Y,Z) = \kk$.

Now let $S = Z$ and $S' = X$ in the exact sequence \eqref{cSles}. By Lemma~\ref{cssc}, $A$ and $I_Z$ have the same structure constants except in degrees $0$ and $-\m $. In degree $0$, $A$ has structure constant 1 and $I_Z$ has structure constant $z$ whereas in degree $-\m $, $A$ has structure constant $\sigma^{-\m }(f)$ while $I_Z$ has structure constant $\sigma^{-\m }(z )$. Again, by Lemma~\ref{cssc}, $I_Z$ surjects onto exactly those shifts of $X$ that $A$ does except $I_X$ additionally surjects onto $X$. Hence, $\uExt_A^1(Z,X) = \kk$ and by applying the autoequivalence $\omega$, we also see that $\uExt_A^1(Z,Y) = \kk$.

Parts 2 and 3 follow by a similar argument. Parts 4 and 5 follow by the same argument as in Lemma~\ref{mexts}.

(6) Suppose for contradiction that for some $n \in \ZZ$, there exists a nonsplit extension of $Z\s{n}$ by $A$. That is, there exists a graded right $A$-module $I$ such that
\[0 \sra A \sra I \sra Z\s{n} \ra 0
\]
is a nonsplit exact sequence.

We claim that $I$ is isomorphic to a submodule of the graded quotient ring of $A$, so is isomorphic to a right ideal of $A$. We first show that $A \subseteq I$ is an essential extension of modules. Suppose for contradiction that $A \subseteq I$ is not essential so there exists $0 \subsetneq J \subseteq I$ such that $J \cap A = 0$. Observe that we have the inclusion
\[ \frac{J + A}{A} \subseteq \frac{I}{A} \cong Z\s{n}
\]
so $J+A/A$ is a submodule of $Z\s{n}$. Since $Z \s{n}$ is simple, and since $J \cap A = 0$, this implies that $J+A/A \cong Z\s{n}$ so we have the isomorphisms
\[ J \cong \frac{J}{J \cap A} \cong \frac{J+A}{A} \cong \frac{I}{A} \cong Z\s{n}.
\]
Tracing these canonical isomorphisms gives us a splitting $I/A \sra I$, which is a contradiction since $I$ is a nonsplit extension of $A$ by $Z \s{n}$.

Now since $A \sra I$ is an essential extension, we can embed in the injective hull of $A$, which is $\QgrA$, proving our claim. Let $\{d_i\}$ be the structure constants of $A$. By the construction in Lemma~\ref{cssc}, since $A$ is the kernel of the surjection $I \sra Z\s{n}$, therefore $d_{n-\m } \in \{1, \sigma^{n-\m }(z)\}$ and $d_n \in \{\sigma^{n}(z), \sigma^{n}(f)\}$. But by the computation in Example~\ref{scA}, this is impossible. Hence, there are no nonsplit extensions of $Z\s{n}$ by $A$ and $\uExt^1_A(Z,A) =0$. 
\end{proof}

The preceding lemma allows us to characterize all length two indecomposables in $\gr A$. There is a unique (up to isomorphism) nonsplit extension of $X$ by $Z$. We denote this module $E_{X,Z}$ and similarly we denote by $E_{Z,X}$, $E_{Y,Z}$ and $E_{Z,Y}$ the extensions of $Z$ by $X$, $Y$ by $Z$, and $Z$ by $Y$, respectively. We record these modules explicitly:
\begin{lemma}\label{clengthtwo} Let $\m \in \NNp$. The length two indecomposable modules of $\gr A$ whose simple factors are $X$, $Y$, or $Z$ are precisely the modules
\begin{enumerate}
\item $E_{Z,X} \cong A/(x,z)A$;
\item $E_{X,Z} \cong A/(x^{\m +1}, z+\m )A \s {-\m }$;
\item $E_{Z,Y} \cong A/(y, z+\m -1) \s{1-\m }$;
\item $E_{Y,Z} \cong A/(y^{\m +1}, z-1)A \s {1}$.
\end{enumerate}
\end{lemma}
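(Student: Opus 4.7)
By Lemma~\ref{cexts}, the only nonzero graded $\uExt^1$ groups among $\{X, Y, Z\}$ are the four one-dimensional groups $\uExt^1_A(X,Z)$, $\uExt^1_A(Z,X)$, $\uExt^1_A(Y,Z)$, $\uExt^1_A(Z,Y)$, each concentrated in degree $0$. Up to graded isomorphism there are therefore exactly four nonsplit length-two indecomposables with composition factors in $\{X, Y, Z\}$, and each is determined by the extension class it represents. The entire content of the lemma is thus that each of the four listed cyclic modules realizes the correct extension, so the proof reduces to four explicit verifications.

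For each candidate $M$, the plan is to exhibit a graded surjection $\pi \colon M \twoheadrightarrow S_1$ onto one of the simples and to identify its kernel with the other simple $S_2$. In cases (1), (2), and (4) the map $\pi$ comes from an obvious inclusion of right ideals: $(x,z)A \subseteq (y^\m, x, z)A$ yields $A/(x,z)A \twoheadrightarrow Z$; $(x^{\m+1}, z+\m)A \subseteq (x, z+\m)A$ yields $A/(x^{\m+1}, z+\m)A\s{-\m} \twoheadrightarrow X$; and $(y^{\m+1}, z-1)A \subseteq (y, z-1)A$ yields $A/(y^{\m+1}, z-1)A\s{1} \twoheadrightarrow Y$. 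In each instance the kernel is the cyclic submodule generated by the class of $y^\m$, $x$, or $y$ respectively, and a direct computation with the GWA relation $y^\m x = z(z-\m) y^{\m-1}$ (and its analogues) shows that the generator's annihilator matches the ideal presenting the other simple. Case (3), $E_{Z,Y} \cong A/(y, z+\m-1)A\s{1-\m}$, is more delicate because no such ideal inclusion is available; here I would define $\pi$ by sending the degree-$(1-\m)$ generator $\bar 1$ to the unique nonzero element of $Z_{1-\m}$, which is the class of $y^{\m-1}$, and verify well-definedness from the identities $y \cdot y^{\m-1} = y^\m \in (y^\m, x, z)A$ and $y^{\m-1}(z+\m-1) = z y^{\m-1} \in zA$, both of which vanish in $Z$.

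Nonsplitness is uniform: each candidate is visibly cyclic, whereas by Remark~\ref{degzero} the simples $X$, $Y$, $Z$ are nonzero on the mutually disjoint degree ranges $n \leq -\m$, $n > 0$, and $-\m < n \leq 0$ respectively. Hence for any two distinct simples $S_1, S_2 \in \{X, Y, Z\}$ the direct sum $S_1 \oplus S_2$ decomposes into $A$-submodules supported on disjoint degree intervals and cannot be generated by a single element, so no cyclic extension of one by the other can split. The main technical obstacle is case (3), where the left $\kk[z]$-action on a shifted module must be tracked carefully; one could streamline the argument by exploiting the autoequivalence $\omega$, which interchanges $X$ and $Y$ and preserves $Z$ up to shift, to deduce cases (3) and (4) formally from cases (1) and (2).
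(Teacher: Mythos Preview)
Your proposal is correct and follows the same route as the paper: for each listed module, exhibit a short exact sequence realizing it as the relevant extension, then verify nonsplitness, with Lemma~\ref{cexts} guaranteeing these four exhaust the possibilities. The paper carries out only case~(1) in detail and argues nonsplitness by showing $\Hom_{\gr A}(Z, E_{Z,X}) = 0$ directly; your uniform nonsplitness argument via cyclicity and disjoint degree supports, and your observation that case~(3) lacks a convenient ideal inclusion (or can alternatively be handled via $\omega$), are pleasant variations the paper does not spell out.
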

\begin{proof} We will show that $E_{Z,X}$ is a nonsplit extension of $Z$ by $X$. There is a natural projection
\begin{equation} \label{Zsurj}E_{Z,X} = \frac{A}{(x,z)A} \ra \frac{A}{(x,y^{\m }, z)A} = Z
\end{equation}
whose kernel is given by 
\[\frac{(x,y^{\m },z)A}{(x,z)A} \cong \frac{y^{\m }A}{y^{\m }A \cap (x,z)A}.
\]
It is easy to check that there is a well-defined homomorphism
\[ X = \frac{A}{(x,z+\m )A}\s{-\m } \ra \frac{y^{\m }A}{y^{\m }A \cap (x,z)A}
\]
mapping $1$ to $y^{\m }$. Since $X$ is simple and this map is a surjection, therefore the kernel of \eqref{Zsurj} is isomorphic to $X$.

To see that $E_{Z,X}$ is a nonsplit extension of $Z$ by $X$, we will show that $\Hom_{\gr A}(Z, E_{Z,X}) = 0$. If $\varphi$ is a homomorphism $Z \sra E_{Z,X}$, then $\varphi(1) = g$ for some $g \in \kk[z]$. Then $\varphi(y^{\m + 1 }) = gy^{\m + 1 }$. But since $y^{\m + 1 }=0$ in $Z$, therefore $g = 0$ in $E_{Z,X}$ so $\varphi$ is the zero map. The remaining computations are done analogously. By Lemma~\ref{cexts}, these modules represent all length two indecomposable modules of $\gr A$.
\end{proof}

We will also need some understanding of indecomposable modules of length 3. While we will not characterize all such modules, we will show, for various lists of simple modules, that there is a unique indecomposable module with those ordered Jordan-H\"{o}lder quotients. This will allow us to name these modules by specifying their Jordan-H\"{o}lder quotients, in order.

\begin{lemma}Let $\m \in \NNp$.
\begin{enumerate} 

\item $\displaystyle E_{X,Z,X} := \frac{A}{\left(x^{\m +1}, (z+\m )x, (z+\m )^2\right)A} \s{-\m }$ is the unique indecomposable module which is a nonsplit extension of $E_{X,Z}$ by $X$.
\item $\displaystyle E_{Y,Z,Y} := \frac{A}{\left(y^{\m +1}, (z-1)y, (z-1)^2\right)A} \s{1}$ is the unique indecomposable module which is a nonsplit extension of $E_{Y,Z}$ by $Y$.
\item $E_{Z,Y,X} := A/zA$ is the unique indecomposable module which is a nonsplit extension of $E_{Z,Y}$ by $X$ and a nonsplit extension of $E_{Z,X}$ by $Y$.
\end{enumerate}
\end{lemma}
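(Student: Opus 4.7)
The plan is, for each of the three candidate modules, to (a) exhibit explicitly a short exact sequence realizing it as the claimed extension, (b) verify nonsplitness so that the module is indecomposable, and (c) establish uniqueness by bounding the relevant graded $\uExt^1$ space in degree zero using the Ext computations between simples recorded in Lemma~\ref{cexts}.

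For part~(1), the natural projection
\[
\frac{A}{(x^{\m+1}, (z+\m)x, (z+\m)^2)A} \s{-\m} \twoheadrightarrow \frac{A}{(x^{\m+1}, z+\m)A} \s{-\m} = E_{X,Z}
\]
obtained by further killing $z+\m$ has kernel the cyclic submodule generated by the class of $z+\m$. Since this class is annihilated by both $x$ and $z+\m$ and lies in degree $-\m$, it generates a nonzero quotient of $X = (A/(x, z+\m)A)\s{-\m}$, which is simple, so the kernel is $X$. Nonsplitness follows because $E_{X,Z,X}$ is cyclic while the split extension $X \oplus E_{X,Z}$ is not: the annihilator of any generator of $E_{X,Z}$ is contained in the annihilator of a generator of $X$, so no single element of $X \oplus E_{X,Z}$ can generate both summands. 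For uniqueness, apply $\uHom_A(-, X)$ to $0 \ra Z \ra E_{X,Z} \ra X \ra 0$ and invoke Lemma~\ref{cexts}: from $\uExt^1_A(X, X) = 0$ and $\uExt^1_A(Z, X) = \kk$ concentrated in degree $0$, the long exact sequence yields an injection $\uExt^1_A(E_{X,Z}, X) \hookrightarrow \kk$. Combined with the nonzero class represented by $E_{X,Z,X}$, this forces $\uExt^1_A(E_{X,Z}, X)_0 = \kk$, and the $\kk^\times$-action of $\Aut(X)$ on the submodule identifies all nonzero scalar multiples, so the indecomposable extension is unique up to isomorphism.

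Part~(2) is obtained from part~(1) by applying the autoequivalence $\omega$. Using Remark~\ref{degzero}(2) and the isomorphisms~\eqref{omegaext}, $\omega$ interchanges $X\s{n}$ with $Y\s{\m-n-1}$ while fixing $Z$ up to shift, so it carries the unique nonsplit graded extension of $E_{X,Z}$ by $X$ to the unique nonsplit graded extension of $E_{Y,Z}$ by $Y$, which must therefore be $E_{Y,Z,Y}$ up to shift.

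For part~(3), realize $A/zA$ as an extension in two ways. The natural surjection $A/zA \twoheadrightarrow A/(x,z)A = E_{Z,X}$ has kernel the right ideal generated by the class of $x$; the map $A\s{1} \ra A/zA$ sending $1 \mapsto x$ factors through $Y = (A/(y, z-1)A)\s{1}$, since $xy = f \in zA$ and $x(z-1) = zx \in zA$, and induces a nonzero map from the simple $Y$ onto this kernel, hence an isomorphism. Symmetrically, using that $y^{\m}x = z(z-\m)y^{\m-1} \in zA$ and $y^{\m}(z+\m) = zy^{\m} \in zA$, the submodule generated by $\overline{y^{\m}}$ is isomorphic to $X$ and the corresponding quotient of $A/zA$ is $E_{Z,Y}$. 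Uniqueness of each realization follows by the same Ext argument as in part~(1): applying $\uHom_A(-, Y)$ to $0 \ra X \ra E_{Z,X} \ra Z \ra 0$ and $\uHom_A(-, X)$ to $0 \ra Y \ra E_{Z,Y} \ra Z \ra 0$, the Ext vanishings and one-dimensionalities from Lemma~\ref{cexts} bound each $\uExt^1$ space by $\kk$ in degree zero. The main obstacle throughout is keeping track of shifts so that each extension class lies in graded degree zero, but the grading data in Remark~\ref{degzero} make this unambiguous, after which the one-dimensionality from Lemma~\ref{cexts} immediately yields uniqueness.
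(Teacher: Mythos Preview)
Your argument is essentially correct and follows the same overall strategy as the paper, with a few minor variations and small gaps worth flagging.

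For the uniqueness in part~(1), your use of the injectivity $\uExt^1_A(E_{X,Z},X)\hookrightarrow\uExt^1_A(Z,X)$ coming from $\uExt^1_A(X,X)=0$ is slightly more economical than the paper's version, which additionally invokes $\pd X=1$ (hence $\uExt^2_A(X,X)=0$) to obtain an isomorphism; your bound plus the exhibited nonzero class already suffices.

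For nonsplitness in part~(1), the paper instead checks directly that no graded map $E_{X,Z}\to E_{X,Z,X}$ can be a section: if $\varphi(1)=g\in\kk[z]$ then well-definedness forces $(z+\m)\mid g$, whence $\varphi(x)=gx=0$. Your cyclicity argument is valid, but as written it only treats generators in degree $-\m$; to close it cleanly, observe that for any cyclic graded $A$-module $mA$ each graded piece $mA_{n}$ is a cyclic right $\kk[z]$-module, whereas $(X\oplus E_{X,Z})_{-\m}\cong\kk[z]/(z+\m)\oplus\kk[z]/(z+\m)$ is not.

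For part~(2), transporting everything by $\omega$ is a legitimate shortcut for existence and uniqueness, but it does not by itself verify that the explicit module $A/\bigl(y^{\m+1},(z-1)y,(z-1)^2\bigr)A\,\s{1}$ is the extension in question; you should either check this directly (exactly as in part~(1)) or note that $\omega$ carries the presentation of $E_{X,Z,X}$ to this one. The paper simply says the remaining parts follow by the same argument.

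For part~(3), you construct both short exact sequences correctly and the $\uExt^1$ bounds go through, but you do not explicitly check nonsplitness. This is immediate: $A/zA$ is cyclic, while in $Y\oplus E_{Z,X}$ the summands live in disjoint degree ranges, so no single homogeneous element can generate both (and symmetrically for $X\oplus E_{Z,Y}$).
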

\begin{proof} We will prove the first claim. The other two follow from similar arguments. There is a natural surjection
\[ E_{X,Z,X} = \frac{A}{(x^{\m +1}, (z+\m )x, (z+\m )^2)A} \s{-\m } \ra \frac{A}{(x^{\m + 1},z+\m )A} \s{-\m } = E_{X,Z}
\]
whose kernel is given by
\[ \frac{(x^{\m + 1}, z+\m )A}{(x^{\m +1}, (z+\m)x, (z+\m )^2)A}\s{-\m } \cong \frac{(z+\m )A}{(x^{\m +1}, (z+\m)x, (z+\m )^2)A} \s{-\m }.
\]
This kernel is isomorphic to $X$ via the homomorphism mapping $1$ to $z+ \m $. Therefore, $E_{X,Z,X}$ is an extension of $E_{X,Z}$ by $X$. 

To see that this is a nonsplit extension, suppose $\varphi \in \Hom_{\gr A}(E_{X,Z}, E_{X,Z,X})$. Then $\varphi(1) = g$ for some $g \in \kk[z]$. To ensure that $\varphi$ is well-defined, $\varphi(z+\m ) = 0$, so $z + \m \mid g$. But then $\varphi(x) = gx = 0$ so $\varphi$ is not an isomorphism onto its image.

Finally, we show that $E_{X,Z,X}$ is the unique module that is an extension of $E_{X,Z}$ by $X$. Apply $\uHom_A(-,X)$ to the exact sequence
\[ 0 \sra Z \sra E_{X,Z} \sra X \sra 0
\]
to obtain the sequence
\[ \cdots \sra \uExt_A^1(X,X) \sra \uExt_A^1(E_{X,Z}, X) \sra \uExt_A^1(Z,X) \sra \uExt_A^2(X,X) \sra \cdots.
\]
By \cite[Theorem 5]{bav}, $X$ has projective dimension $1$ so $\uExt_A^2(X,X) = 0$. Then by Lemma~\ref{cexts}, $\uExt_A^1(E_{X,Z},X) = \kk$, concentrated in degree $0$. So there is a unique extension of $E_{X,Z}$ by $X$, namely $E_{X,Z,X}$. 
\end{proof}

\subsubsection{Non-congruent roots}
Let $\m \in \kk \setminus \ZZ$ so that we are in the non-congruent root case. We again begin by careful analysis of the extensions between graded simple $A(f)$-modules.

\begin{lemma} \label{dexts}
\begin{enumerate}
\item As graded vector spaces, $\uExt^1_A(X_0, Y_0) = \uExt^1_A(Y_0, X_0) = \uExt^1_A(X_\m , Y_\m ) = \uExt^1_A(Y_\m , X_\m ) = \kk$, concentrated in degree $0$.
\item Let $S \in \{X_0,Y_0, X_\m , Y_\m \}$. Then $\uExt^1_A(S,S) = 0$.
\item Let $S_0 \in \{X_0, Y_0\}$ and $S_{\m } \in \{X_\m , Y_\m \}$. Then $\uExt^1_A(S_0,S_{\m } )= \uExt^1_A(S_{\m } ,S_0) = 0$.
\item If $\lambda \neq \mu $, then $\Ext^1_{\gr A}(M_\lambda, M_\mu) = 0$, but $\Ext^1_{\gr A}(M_\lambda, M_\lambda) = \kk$.
\item Let $\lambda \in \kk \setminus (\ZZ \cup \ZZ +\m )$. Let $S \in \{X_0,Y_0, X_\m , Y_\m \}$. Then $\uExt^1_A(M_\lambda, S) = \uExt^1_A(S, M_\lambda) = 0$.
\end{enumerate}
\end{lemma}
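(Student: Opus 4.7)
The plan is to follow the template used for the multiple-root and congruent-root cases in Lemmas~\ref{mexts} and \ref{cexts}. Each graded simple $S$ on the list $\{X_0, Y_0, X_\m, Y_\m\}$ has a presentation $S \cong A/I_S \s{d}$ with $I_S$ one of $(x,z)A$, $(y,z-1)A$, $(x,z+\m)A$, $(y,z+\m-1)A$ and $d \in \{0,1\}$. For each such $S$ I apply $\uHom_A(-,S')$ to the short exact sequence
\[ 0 \ra I_S \s{d} \ra A\s{d} \ra S \ra 0 \]
to obtain a long exact sequence analogous to \eqref{cSles}; from it I read off $\uExt^1_A(S,S')$ as the cokernel of the induced map $\uHom_A(A,S')\s{-d} \ra \uHom_A(I_S,S')\s{-d}$. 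By Corollary~\ref{homIS} each of these $\uHom$-spaces is at most one-dimensional in every graded component, so the computation reduces to comparing the structure constants of $A$ (given by Example~\ref{scA}) with those of $I_S$ and then reading off surjections via Lemma~\ref{dssc}.

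For parts (1) and (2), observe that $A$ and $I_S$ have identical structure constants except in a single degree, where $I_S$ has replaced a $1$ by either $\sigma^{d}(z)$, $\sigma^{d-1}(z)$, $\sigma^{d}(z+\m)$ or $\sigma^{d-1}(z+\m)$ (depending on which $S$). For (1), with $S=X_0$ and $S' = Y_0$: in degree $0$ the structure constant jumps from $1$ to $z$, which by Lemma~\ref{dssc}(3) adds exactly one new surjection, namely onto $Y_0$ itself. Hence $\uExt^1_A(X_0, Y_0) = \kk$, concentrated in degree $0$. Applying the autoequivalence $\omega$ via \eqref{omegaext} (using the action of $\omega$ on simples recorded in Remark~\ref{degzero}) transfers this to $\uExt^1_A(Y_0, X_0) = \kk$, and the two identities for $X_\m, Y_\m$ are obtained by the same argument with $I_{X_\m} = (x, z+\m)A$ and Lemma~\ref{dssc}(4). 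For (2), with $S = S'$, the same comparison of structure constants shows that the single new factor contributed by $I_S$ is a different simple than $S$ itself (for example, $I_{X_0}$ surjects onto $X_0\s{n}$ exactly for the same $n$ as $A$ does), so the map of $\uHom$-spaces is an isomorphism in every degree and $\uExt^1_A(S,S)=0$.

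Part (3) is the key place where the non-congruence of the roots enters. The point is that $z$ and $z+\m$ are coprime in $\kk[z]$, so the altered structure constant of $I_{S_0}$ (which is a divisor of $\sigma^{*}(z)$ or $\sigma^{*}(z-1)$) lies in a different conjugacy class of factors than the constants controlling surjections onto shifts of $S_\m$ (which by Lemma~\ref{dssc}(2) and (4) require a factor of $\sigma^{*}(z+\m)$). Consequently Lemma~\ref{dssc} yields exactly the same list of $S_\m$-surjections from $A$ and from $I_{S_0}$, so $\uHom_A(A,S_\m)_n = \uHom_A(I_{S_0}, S_\m)_n$ in every degree $n$, giving $\uExt^1_A(S_0, S_\m)=0$. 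The symmetric statement is identical. I expect this bookkeeping with structure constants to be the only delicate step; once the coprimality of $z$ and $z+\m$ is invoked, the vanishing is automatic.

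Parts (4) and (5) are formal and go through exactly as in Lemma~\ref{mexts}(3) and (4). For (4), apply $\Hom_{\gr A}(-,M_\mu)$ to $0 \ra (z+\lambda)A \ra A \ra M_\lambda \ra 0$; since $\dim_\kk (M_\mu)_0 = 1$ and $z+\lambda$ has degree zero, the first two $\Hom$-terms are both $\kk$ and the sequence identifies $\Ext^1_{\gr A}(M_\lambda, M_\mu)$ with $\Hom_{\gr A}(M_\lambda, M_\mu)$, which is $\kk$ when $\lambda = \mu$ and zero otherwise. For (5), applying $\uHom_A(-,M_\lambda)$ to the sequence for $S$ shows that $\uHom_A(A,M_\lambda) \cong \uHom_A(I_S, M_\lambda)$ in every degree (since neither the numerator nor the ideal surjects onto $M_\lambda$ by Lemma~\ref{dssc}, and $M_\lambda$ has one-dimensional graded components), hence $\uExt^1_A(S,M_\lambda)=0$; the reverse direction uses that $z+\lambda$ has degree zero, giving $\uHom_A(A,S) \cong \uHom_A((z+\lambda)A, S)$ so $\uExt^1_A(M_\lambda, S)=0$.
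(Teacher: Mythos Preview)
Your proposal is correct and follows essentially the same approach as the paper: set up the short exact sequence $0 \to I_S\s{d} \to A\s{d} \to S \to 0$, apply $\uHom_A(-,S')$, and read off $\uExt^1_A(S,S')$ by comparing the structure constants of $A$ and $I_S$ via Lemma~\ref{dssc} and Corollary~\ref{homIS}; parts (4) and (5) are deferred to the argument of Lemma~\ref{mexts}, exactly as the paper does.

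One small slip: by Remark~\ref{degzero}, $\omega$ interchanges the $0$-indexed simples with the $\m$-indexed simples (up to shift), so applying $\omega$ to $\uExt^1_A(X_0,Y_0)=\kk$ yields $\uExt^1_A(Y_\m,X_\m)=\kk$, not $\uExt^1_A(Y_0,X_0)$. This does not damage your argument, since you separately derive the $(Y_0,X_0)$ and $(X_\m,Y_\m)$ cases by the analogous direct computation, but the attribution of which identity comes from $\omega$ should be corrected.
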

\begin{proof} We use the same techniques as in our proofs of Lemmas~\ref{mexts} and \ref{cexts}. Let $I_{X_0} = (x,z)A$, $I_{Y_0} = (y,z-1)A$, $I_{X_\m } = (x, z+\m )A$, and $I_{X_\m } = (y, z+\m -1)A$ be the ideals of $A$ defining $X_0$, $Y_0$, $X_\m $ and $Y_\m $ respectively. Now for $S \in \{X_0,Y_0, X_\m , Y_\m \}$, we have the exact sequence
\begin{equation} 0 \sra I_S \s{d} \sra A\s{d} \sra S \sra 0 \label{dSses}
\end{equation}
where if $S=X_0$ or $X_\m $ then $d= 0$ and if $S = Y_0$ or $Y_{ \m }$ then $d =1$. For any graded simple module $S'$, we then apply the functor $\uHom_A(-,S')$ to yield 
\begin{equation} 0 \sra \uHom_A(S,S') \sra \uHom_A(A, S') \s{-d} \sra \uHom_A(I_S, S') \s{-d} \sra \uExt_A^1(S, S') \sra 0. \label{dSles}\end{equation}
Based on Lemma~\ref{dssc} and Corollary~\ref{homIS}, we can compute $\uHom_A(I_S, S') \s{-d}$. Hence, we will able to deduce $\uExt_A^1(S, S')$.

(1) Let $S = X_0$, $S' = Y_0$, and $d = 0$ in the exact sequence \eqref{dSles}. Now $\uHom_A(X_0,Y_0) = 0$ and $\uHom_A(A,Y_0) \cong Y_0$ as graded $\kk$-vector spaces, so to compute $\uExt_A^1(X_0, Y_0)$ we need to compute in which degrees $\uHom_A(I_{X_0}, Y_0)$ is nonzero. By the construction in Lemma~\ref{dssc}, $A$ and $I_{X_0}$ have the same structure constants except in degree $0$, where $A$ has the structure constant $1$ and $I_{X_0}$ has the structure constant $z$. Hence, by Lemma~\ref{dssc} and Corollary~\ref{homIS}, $A$ and $I_{X_0}$ surject onto exactly the same shifts of $Y_0$ except that $I_{X_0}$ surjects onto $Y_0$ while $A$ does not. Therefore, $\uExt_A^1(X_0,Y_0) = \kk$, concentrated in degree $0$. Applying the autoequivalence $\omega$ shows that $\uExt_A^1(Y_{\m },X_{\m }) = \kk$.

An analogous argument shows that $\uExt_A^1(Y_0,X_0) = \uExt_A^1(X_{\m },Y_{\m }) = \kk$, concentrated in degree $0$.

Parts 2 and 3 follow by similar arguments. Parts 4 and 5 follow by the same argument as in Lemma~\ref{mexts}.
\end{proof}

We now know that there is a unique extension of $X_0$ by $Y_0$, $Y_0$ by $X_0$, $X_\m $ by $Y_\m $ and $Y_\m $ by $X_\m $. We classify these extensions explicitly. We leave the proof to the reader, since it is similar to the other cases.

\begin{corollary} \label{dexactsqs} Let $\m \in \kk \setminus \ZZ$. The following nonsplit exact sequences in $\gr A$ represent all length two indecomposables of $\gr A$ whose simple factors are supported at $\ZZ \cup \ZZ - \m $:
\[ \label{dfirstexact}
0 \sra Y_0 \overset{x \cdot}{\ra} A/zA \ra X_0 \ra 0,
\]
\[
0 \ra X_0 \overset{y \cdot}{\ra} (A/(z-1)A) \s{1} \ra Y_0 \ra 0,
\]
\[
0 \ra Y_\m \overset{x \cdot}{\ra} A/(z+ \m )A \ra X_\m \ra 0,
\]
\[
\pushQED{\qed}
0 \ra X_\m \overset{y \cdot}{\ra} (A/(z+ \m -1)A) \s{1} \ra Y_\m \ra 0. \qedhere  \popQED
\]
\end{corollary}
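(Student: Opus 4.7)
The plan is to mirror the proof of Lemma~\ref{mlengthtwo}: since Lemma~\ref{dexts} tells us precisely which $\uExt^1$ groups between graded simples supported at $\ZZ\cup(\ZZ-\m)$ are nonzero, and in every such case the group is one-dimensional concentrated in degree $0$, it suffices to exhibit one nonsplit representative for each non-vanishing Ext group and check that its simple factors agree with the claimed ones.

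First I would enumerate the possible length-two indecomposables. By Lemma~\ref{dexts}, the only pairs $(S,S')$ of shifts of $\{X_0,Y_0,X_\m,Y_\m\}$ with $\uExt^1_A(S,S')\neq 0$ are the four pairs $(X_0,Y_0)$, $(Y_0,X_0)$, $(X_\m,Y_\m)$, $(Y_\m,X_\m)$ up to simultaneous shift, and in each case the Ext group is $\kk$ concentrated in degree $0$. Consequently there are exactly four isomorphism classes of length-two indecomposables with composition factors supported in $\ZZ\cup(\ZZ-\m)$ (up to shift), and it remains to identify each one as a cyclic module $A/I$.

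Next I would verify exactness of each of the four displayed sequences. For the first sequence, there is a natural quotient $A/zA \twoheadrightarrow A/(x,z)A = X_0$; its kernel is $(xA + zA)/zA$, which is generated in degree $1$ by the image of $x$. The $A$-module map $Y_0 = (A/(y,z-1)A)\s{1} \to (xA+zA)/zA$ sending $1\mapsto x$ is well-defined because $x\cdot y = f = z(z+\m)$ and $x(z-1)=z x$ both lie in $zA$, and it is surjective by construction. Injectivity follows because $Y_0$ is simple. The remaining three sequences follow by the same pattern, using in the third sequence that $A/(z+\m)A$ surjects onto $A/(x,z+\m)A=X_\m$ with kernel isomorphic to $Y_\m$, and the second and fourth sequences being obtained from the first and third by applying the autoequivalence $\omega$ and using Remark~\ref{degzero}.

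Finally I would check that none of the four sequences splits. As in Lemma~\ref{mlengthtwo}, this is done by a degree count: for instance, any graded map $X_0 \to A/zA$ would have to send the degree-$0$ generator of $X_0$ to a degree-$0$ element $\bar{g}\in A/zA$ with $\bar gx=0$ and $\bar gz=0$, but inspection of the left $\kk[z]$-module structure of $A/zA$ from Lemma~\ref{dsupport} shows the only such element is $0$. The analogous checks in the other three cases are identical. The only real obstacle is bookkeeping with shifts and the sign conventions of Remark~\ref{degzero}; no new ideas beyond those of Lemmas~\ref{mlengthtwo} and \ref{clengthtwo} are needed, which is why the author leaves the proof to the reader.
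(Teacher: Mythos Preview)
Your proposal is correct and follows exactly the approach the paper intends: the author explicitly writes ``We leave the proof to the reader, since it is similar to the other cases,'' referring to Lemmas~\ref{mlengthtwo} and \ref{clengthtwo}, and your argument mirrors those proofs step by step (enumerate the nonzero Ext groups via Lemma~\ref{dexts}, exhibit explicit cyclic modules realizing each extension, then verify nonsplitting). Your use of $\omega$ to halve the work is a small but legitimate shortcut the paper also employs elsewhere.
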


\subsection{Rank one projective modules}
\label{sec:proj}

\subsubsection{Structure constants and projectivity}

We now seek to understand the finitely generated graded right projective $A$-modules of rank one. Since every rank one graded projective $A$-module embeds in $\QgrA$, we may use the results of section~\ref{sec:ideals}. For a finitely generated rank one graded projective $A$-module $P$, if we have a graded embedding $P \subseteq \QgrA$, then there exists some $g \in \kk[z]$, such that $gP \subseteq A$. Since $gP \cong P$, we can view $P$ as a graded right ideal of $A$. We will see that the projectivity of a finitely generated right $A$-submodule of $\QgrA$ can be determined by its structure constants.

By Theorem~\ref{gldimA}, the global dimension of $A(f)$ depends on the roots of $f$. In the non-congruent root case ($\m \in \kk \setminus \ZZ$), $\gldim A(f) = 1$, i.e. $A(f)$ is hereditary. Hence, the isomorphism classes of right ideals of $A$ are the same as the isomorphism classes of rank one projective right $A$-modules. However, in the multiple root case ($\m = 0$) $\gldim A(f) = \infty$ and in the congruent root case ($\m \in \NNp$), $\gldim A(f) = 2$. In these cases, it takes some work to determine whether a finitely generated graded right $A$-submodule of $\QgrA$ is projective. Eventually we will give conditions on structure constants (or equivalently, conditions on simple factors) that determine the projectivity of a right ideal.

In the congruent root case, by \cite[Theorem 5]{bav}, the finite-dimensional simple module $Z$ and its shifts are the only simple modules with projective dimension $2$. We begin our study of the projectivity of a right ideal by seeing that an ideal $P$ is projective if and only if there is no extension of $Z$ by $P$.

\begin{lemma} \label{cprojtfae} Let $\m \in \NNp$ and let $P$ be a right ideal of $A$. Then the following are equivalent:
\begin{enumerate}[(i)]
\item \label{tfae1} $P$ is projective;
\item \label{tfae2} $\uExt_A^1(Z,P) = 0$;
\item \label{tfae3} $\uHom_A(Z, A/P) = 0$;
\item \label{tfae4} $A/P$ has projective dimension at most 1.
\end{enumerate}
\end{lemma}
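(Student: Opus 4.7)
The plan is to verify the four conditions are equivalent by doing the two easy pairs $(\ref{tfae1}) \Leftrightarrow (\ref{tfae4})$ and $(\ref{tfae2}) \Leftrightarrow (\ref{tfae3})$ first, and then closing the loop with $(\ref{tfae1}) \Rightarrow (\ref{tfae2})$ and $(\ref{tfae2}) \Rightarrow (\ref{tfae1})$, where the last implication carries the substantive content of the lemma.

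For $(\ref{tfae1}) \Leftrightarrow (\ref{tfae4})$: apply Schanuel's lemma to the short exact sequence $0 \to P \to A \to A/P \to 0$. If $P$ is projective, this sequence is itself a length-one projective resolution of $A/P$. Conversely, given any length-one projective resolution $0 \to Q' \to Q \to A/P \to 0$, Schanuel's lemma yields $P \oplus Q \cong A \oplus Q'$, so $P$ is a direct summand of a projective module and is therefore projective.

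For $(\ref{tfae2}) \Leftrightarrow (\ref{tfae3})$: apply $\uHom_A(Z, -)$ to the same short exact sequence. Since $A$ is a domain and $Z$ is annihilated on the right by nonzero elements of $\kk[z]$, any element of $A$ in the image of a graded map from a shift of $Z$ would itself have nonzero right annihilator in $A$, forcing it to be zero; thus $\uHom_A(Z, A) = 0$. Combined with $\uExt_A^1(Z, A) = 0$ from Lemma~\ref{cexts}(\ref{extZA}), the relevant portion of the long exact sequence collapses to the desired natural isomorphism $\uHom_A(Z, A/P) \cong \uExt_A^1(Z, P)$. For $(\ref{tfae1}) \Rightarrow (\ref{tfae2})$: a finitely generated graded projective $P$ is a direct summand of a finitely generated graded free module $F = \bigoplus_{i=1}^k A\s{d_i}$, so $\uExt_A^1(Z, P)$ is a direct summand of $\uExt_A^1(Z, F) \cong \bigoplus_i \uExt_A^1(Z, A)\s{d_i} = 0$, again by Lemma~\ref{cexts}(\ref{extZA}).

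The main obstacle is $(\ref{tfae2}) \Rightarrow (\ref{tfae1})$. First observe that since $A$ is projective in $0 \to P \to A \to A/P \to 0$ and $\gldim A = 2$, a Schanuel argument already forces $\pd P \leq 1$, so the task reduces to ruling out $\pd P = 1$ under the hypothesis that $\uExt_A^1(Z, P) = 0$, equivalently that no shift of $Z$ embeds in $A/P$. The approach I would take is to work through $A/P$ support point by support point: by Lemma~\ref{cfgsc} the structure constants of $P$ agree with those of $A$ outside a finite range, so $A/P$ is finite length and decomposes via its $\kk[z]$-support into components supported at finitely many points of $\Spec \kk[z]$. Components not supported at an integer have composition factors of the form $M_\lambda$, all of $\pd \leq 1$, so they contribute nothing to $\pd(A/P)$. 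A component supported at an integer point $-n$ has composition factors in $\{X\s{n}, Y\s{n}, Z\s{n}\}$, and the only simple of $\pd = 2$ is $Z\s{n}$; so if $P$ fails to be projective, some such component must contribute a $Z\s{n}$ in a way that forces $\pd(A/P) = 2$. The plan is then to use Lemma~\ref{cssc} to translate the event "$Z\s{n} \hookrightarrow A/P$" into the algebraic condition that the polynomial defects $g_i = b_i/a_i$ (between the structure constants $\{b_i\}$ of $P$ and $\{a_i\}$ of $A$) are all divisible by $z+n$ on the window $i \in (n-\m, n]$, and to argue — using the extension structure recorded in Lemmas~\ref{cexts} and \ref{clengthtwo} together with the fact that $A/P$ is cyclic — that non-projectivity of $P$ forces exactly such a window of defects to exist for some $n$. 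I expect this last translation, making the failure of projectivity of $P$ visible as a concrete $Z\s{n}$-submodule of $A/P$ via structure constants, to be the hardest part of the argument.
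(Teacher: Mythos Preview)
Your treatment of the three easy implications is correct and in fact your argument for $(\ref{tfae1}) \Rightarrow (\ref{tfae2})$ is cleaner than the paper's, which instead proves $(\ref{tfae1}) \Rightarrow (\ref{tfae3})$ by a contradiction involving $\uExt^2_A(Z,-)$. Your direct-summand-of-free argument is the more natural route once Lemma~\ref{cexts}(\ref{extZA}) is available.

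The genuine gap is the hard implication. You correctly identify that the content is: if no shift of $Z$ embeds in $A/P$, then $\pd(A/P) \leq 1$. Your proposed route---decompose $A/P$ by support, then translate via structure constants---is plausible but you do not actually carry out the crucial step. Knowing that a component at $-n$ has $\pd = 2$ tells you $Z\s{n}$ is a composition factor, but not that it is a \emph{submodule}; for instance $E_{Z,X}$ has $Z$ as a factor yet $\pd E_{Z,X} = 1$. The structure-constant translation you sketch is essentially what becomes Corollary~\ref{cprojsc}, which in the paper is \emph{derived from} this lemma rather than used to prove it, so that route requires independent work you have not supplied.

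The paper closes the loop by proving $(\ref{tfae3}) \Rightarrow (\ref{tfae4})$ via induction on the length of $A/P$, avoiding structure constants entirely. The inductive step: take a simple submodule $S$ of $A/P$ (necessarily not $Z$); if $(A/P)/S$ also has no $Z$-submodule, apply induction. If a copy of $Z\s{m}$ appears in $(A/P)/S$, then its preimage in $A/P$ is a nonsplit extension of $Z\s{m}$ by $S$, hence $T \cong E_{Z,X}\s{m}$ or $E_{Z,Y}\s{m}$ by Lemma~\ref{clengthtwo}, and $\pd T = 1$. The key computation is then that $\uExt^1_A(Z, E_{Z,X}) = 0$ (from the long exact sequence using $\uExt^1_A(Z,Z) = 0$ and $\uHom_A(Z,Z) \cong \uExt^1_A(Z,X) = \kk$), so $(A/P)/T$ again has no $Z$-submodule and induction finishes. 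This ``$Z$ cannot stack on top of $E_{Z,X}$'' fact is the missing ingredient in your outline; once you have it, either your support-decomposition approach or the paper's straight induction goes through.
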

\begin{proof} The equivalence of \eqref{tfae1} and \eqref{tfae4} is immediate. Now apply the functor $\uHom_A(Z, -)$ to the exact sequence 
\begin{equation} \label{cPsequence} 0 \ra P \ra A \ra A/P \ra 0
\end{equation}
to obtain
\[ 0 \ra \uHom_A(Z, A/P) \ra \uExt^1_A(Z, P) \ra 0,
\]
because by Lemma~\ref{cexts}, $\uExt^1_A(Z,A) = 0$. Hence, \eqref{tfae2} and \eqref{tfae3} are equivalent. 

We now prove that \eqref{tfae1} implies \eqref{tfae3}. Suppose for contradiction that $Z$ is a submodule of $A/P$ for a projective right ideal $P$. Then we have the exact sequence
\[ 0 \ra Z \ra A/P \ra C \ra 0,
\]
where $C$ is the cokernel of the morphism $Z \ra A/P$. Consider any right $A$-module $N$. By applying $\uHom_A(-,N)$ we have the long exact Ext sequence
\[ \cdots \ra \uExt_A^2(A/P,N) \ra \uExt_A^2(Z,N) \ra 0,
\]
since $A$ has global dimension 2. By the equivalence of \eqref{tfae1} and \eqref{tfae4}, $A/P$ has projective dimension 1, so $\uExt_A^2(A/P, N) = 0$ and hence $\uExt_A^2(Z,N)= 0$. Since this holds for all $N$, $Z$ has projective dimension at most 1. But this is a contradiction, for in \cite[Theorem 5]{bav}, Bavula shows that $Z$ has projective dimension 2.

Finally, we show that \eqref{tfae3} implies \eqref{tfae4}, completing the proof. We prove the statement by induction on the length of $A/P$. For simple modules, \cite[Theorem 5]{bav} shows that all simples except $Z$ have projective dimension 1. Now suppose for all modules $M$ of length at most $n$, $\uHom_A(Z,M) = 0$ implies $M$ has projective dimension 1. Suppose $A/P$ has length $n+1$ and $\uHom_A(Z, A/P) = 0$. Then $A/P$ has a submodule $S$ isomorphic to either $X\s{n}$, $Y\s{n}$ or $M_\lambda$ for some $\lambda \in \kk \setminus \ZZ$. The quotient $(A/P)/S$ has length $n$, so by the induction hypothesis, as long as $\uHom_A(Z, (A/P)/S) = 0$, both $S$ and $(A/P)/S$ have projective dimension 1 and thus $A/P$ will have projective dimension 1.

So suppose that $\uHom_A(Z, (A/P)/S) \neq 0$, that is, that $(A/P)/S$ has a submodule isomorphic to $Z \s{n}$ for some $n$. Since this $Z\s{n}$ is not a submodule of $A/P$, in fact $A/P$ must have a submodule that is a nontrivial extension of $Z\s{n}$ by $S$. By Lemma~\ref{clengthtwo}, the only such extensions are $E_{Z,X}\s{n}$ or $E_{Z,Y}\s{n}$. Call this submodule $T$. By \cite[Theorem 5]{bav}, $T$ has projective dimension 1, so as long as $\uHom_A(Z, (A/P)/T) = 0$, we are done by the induction hypothesis.

Suppose for contradiction that $\uHom_A(Z, (A/P)/T) \neq 0$. Arguing as above, $A/P$ must now have a submodule that is a nontrivial extension of $Z\s{m}$ by $T$ for some integer $m$. Now the exact sequence
\[ 0 \ra X \ra E_{Z,X} \ra Z \ra 0
\]
induces the long exact $\Ext$ sequence
\[ 0 \ra \uHom_A(Z,Z) \ra \uExt^1_A(Z, X) \ra \uExt^1_A(Z, E_{Z,X}) \ra 0,
\]
since $\uExt_A^1(Z,Z) = 0$ by Lemma~\ref{cexts}. Again, by Lemma~\ref{cexts}, $\uExt_A^1(Z,X) = \kk$ and since $\uHom_A(Z,Z) = \kk$, we conclude that $\uExt^1_A(Z,E_{Z,X}) = 0$. Hence, there is no nontrivial extension of $Z\s{m }$ by $E_{Z,X}\s{n}$, which is a contradiction. A similar contradiction arises if $T$ is a nontrivial extension of $Z$ by $E_{Z,Y}\s{n}$.
\end{proof}

\begin{corollary}\label{cprojsc} Let $\m \in \NNp$ and let $P$ be a graded right ideal of $A$ with structure constants $\{c_i\}$. Then $P$ is projective if and only if for each $n \in \ZZ$, it is not the case that both $c_{n-\m} \in \{1, \sigma^{n-\m}(z)\}$ and $c_{n} \in\{ \sigma^{n}(z), \sigma^{n}(f)\}$.
\end{corollary}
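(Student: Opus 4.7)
The plan is to reduce the projectivity of $P$ to a concrete criterion about submodules of $A/P$ via Lemma~\ref{cprojtfae}, and then carry out an explicit structure-constant computation in the spirit of Lemma~\ref{cssc}, but in the opposite direction (looking for $Z\s{n}$ as a submodule rather than as a quotient).

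By Lemma~\ref{cprojtfae}, $P$ is projective if and only if $\uHom_A(Z, A/P) = 0$; equivalently, no $Z\s{n}$ sits as a graded submodule of $A/P$. Such a submodule corresponds via the correspondence theorem to a graded right ideal $Q$ with $P \subseteq Q \subseteq A$ and $Q/P \cong Z\s{n}$. Writing $P = \bigoplus_i (a_i)x^i \subseteq \QgrA$ with structure constants $c_i = a_i a_{i+1}^{-1}$, and $Q = \bigoplus_i (b_i)x^i$, the requirement $Q/P \cong Z\s{n}$ (whose support and annihilator as a left $\kk[z]$-module are controlled by Remark~\ref{degzero} and Lemma~\ref{csupport}) forces $b_i = a_i$ for $i \notin (n-\m, n]$ and $a_i = (z+n)\,b_i$ (up to scalars in $\kk^\times$) for $n-\m < i \leq n$.

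From these formulas the structure constants $\{d_i\}$ of $Q$ satisfy $d_i = c_i$ for $i \notin \{n-\m, n\}$ while
\[
d_{n-\m} \;=\; \sigma^{n-\m}(z+\m)\, c_{n-\m}
\qquad\text{and}\qquad
d_n \;=\; \frac{c_n}{\sigma^n(z)},
\]
using $\sigma^{n-\m}(z+\m) = \sigma^n(z) = z+n$. A graded right $A$-submodule of $\QgrA$ is characterized by its $d_i$ being monic divisors of $\sigma^i(f) = \sigma^i(z)\,\sigma^i(z+\m)$, so the only nontrivial constraints are at $i = n-\m$ and $i = n$: the monic divisors of $\sigma^{n-\m}(f)$ that are multiples of $\sigma^{n-\m}(z+\m)$ are exactly $\sigma^{n-\m}(z+\m)$ and $\sigma^{n-\m}(f)$, forcing $c_{n-\m} \in \{1,\, \sigma^{n-\m}(z)\}$; and $c_n/\sigma^n(z) \in \kk[z]$ forces $c_n \in \{\sigma^n(z),\, \sigma^n(f)\}$. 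These are exactly the two conditions in the statement.

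To complete the proof, the two directions of the biconditional go as follows. If the structure-constant conditions hold at some $n$, I construct $Q$ in $\QgrA$ by the recipe above; the $d_i$ computation verifies that $Q$ is a finitely generated right $A$-submodule of $\QgrA$, and comparing total $\kk$-dimension and support against Lemma~\ref{simples} identifies $Q/P$ with $Z\s{n}$. The point I expect to be the main subtlety is showing that the extension $0 \to P \to Q \to Z\s{n} \to 0$ is non-split, but this follows cleanly because $Q \subseteq \QgrA$ is torsion-free as a left $\kk[z]$-module while $Z\s{n}$ is $(z+n)$-torsion, so $Z\s{n}$ cannot embed in $Q$. Hence $\uExt_A^1(Z\s{n}, P) \neq 0$, and Lemma~\ref{cprojtfae} then yields that $P$ is not projective. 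Conversely, if $P$ is not projective, Lemma~\ref{cprojtfae} produces $Z\s{n} \hookrightarrow A/P$, and its preimage in $A$ is automatically a $Q$ between $P$ and $A$ with $Q/P \cong Z\s{n}$; the same structure-constant computation applied to this concrete $Q$ reads off the required conditions on $\{c_i\}$.
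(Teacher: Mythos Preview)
Your proof is correct and follows essentially the same route as the paper. Both arguments reduce to Lemma~\ref{cprojtfae} and then perform the same structure-constant computation; the paper pre-scales $P$ to $(z+n)P$ before building the overmodule (so as to multiply rather than divide), while you work directly with $P$ and divide by $z+n$, but the two constructions are literally $(z+n)$-multiples of one another. One small advantage of your version: in the converse direction you invoke condition (iii) of Lemma~\ref{cprojtfae} and take the preimage of $Z\s{n}\hookrightarrow A/P$ inside $A$, which automatically yields $Q\subseteq A\subseteq\QgrA$; the paper instead invokes condition (ii), obtains an abstract nonsplit extension $I$, and must repeat the essential-extension argument from Lemma~\ref{cexts}(\ref{extZA}) to embed $I$ in $\QgrA$ before reading off the structure constants.
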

\begin{proof}Suppose $P = \bigoplus_{i \in \ZZ} (a_i)x^i$ is a projective right ideal of $A$ and suppose for contradiction that for some $n \in \ZZ$, both $c_{n-\m} \in \{1, \sigma^{n-\m}(z)\}$ and $c_{n} \in\{ \sigma^{n}(z), \sigma^{n}(f)\}$. Let 
\[P' = (z+n)P = \bigoplus_{i \in \ZZ} ((z+n)a_i) \kk[z]
\]
and note that $P'$ is graded isomorphic to $P$. We will construct an ideal $I$ of $A$ which is an extension of $Z\s{n}$ by $P'$. 

Let $I = \bigoplus_{i \in \ZZ} (b_i)x^i$ where 
\[ b_i = \begin{cases}(z+n)a_i, & i \leq n-\m \\ a_i, & n-\m+1 \leq i \leq n \\ (z+n)a_i, & i \geq n+1. \end{cases}
\]
This is a well-defined ideal of $A$ as long as $d_i = b_ib_{i+1}^{-1} \in \{ 1, \sigma^{i}(z) , {\sigma^{i}(z+\m)}, \sigma^i(f)\}$ for all $i\in \ZZ$. But for all $i \neq n, n-\m$, by construction $d_i = c_i$. Further, since $c_{n-\m} \in \{1, \sigma^{n-\m}(z)\}$ therefore $d_{n-\m} = (z+n)c_{n-\m} \in \{z+n, \sigma^{n-\m}(f)\}$. And finally $c_{n} \in\{ \sigma^{n}(z), \sigma^{n}(f)\}$ implies that $d_n = (z+n)^{-1}c_n \in \{1, \sigma^{n}(z+\m)\}$.

Now note that $P' \subseteq I$ and $I/P'$ has finite length, is simply supported at $-n$, and is nonzero precisely in degrees between $n-\m+1$ and $n$, inclusive. So $I/P' \cong Z \s{n}$ and $I$ is an extension of $Z\s{n}$ by $P'$. Clearly, such an extension is nontrivial, since $Z\s{n}$ is not a submodule of $A$. But since $P'$ is projective, this contradicts Lemma~\ref{cprojtfae}.

Conversely, suppose $P$ is not projective. By Lemma~\ref{cprojtfae}, there exists a nontrivial extension
\[ 0 \ra P \ra I \ra Z\s{n} \ra 0.
\]
By an argument identical to the one in part \ref{extZA} of Lemma~\ref{cexts}, $I$ is isomorphic to a submodule of $\QgrA$. By clearing denominators we may assume that $I$ is a right ideal of $A$, say $I = \bigoplus_{i \in \ZZ} (b_i)x^i$. But now as in the proof of Lemma~\ref{cssc}, we can construct $P$ as the kernel of the morphism $I \sra Z\s{n}$, and so conclude that $c_{n-\m} \in \{1, \sigma^{n-\m}(z)\}$ and $c_{n} \in\{ \sigma^{n}(z), \sigma^{n}(f)\}$.
\end{proof}

In the multiple root case ($\m = 0$), we get an analogue of Corollary~\ref{cprojsc}, though we use a different technique as there is no analogue of Lemma~\ref{cprojtfae}. Indeed, the following lemma is the same result as if we let $\m = 0$ in Corollary~\ref{cprojsc}.

\begin{lemma}\label{mprojsc}Let $\m = 0 $. Let $P$ be a graded right ideal of $A$ with structure constants $\{c_i\}$. Then $P$ is projective if and only if for each $n \in \ZZ$, $c_n \neq \sigma^n(z)$.
\end{lemma}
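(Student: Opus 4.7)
The plan for the forward direction is to adapt the proof of Corollary~\ref{cprojsc} by replacing the role of $Z\s{n}$ (which does not exist when $\m = 0$) with the nonsplit self-extension $E_{X,X}\s{n} \cong (A/xA)\s{n}$ supplied by Lemma~\ref{mlengthtwo}. Assume $P$ is projective and, for contradiction, that $c_n(P) = \sigma^n(z) = z+n$ for some $n \in \ZZ$. By Lemma~\ref{cssc} with $\m = 0$, there is a graded surjection $\psi\colon P \twoheadrightarrow X\s{n}$, and projectivity of $P$ lifts this along the quotient $E_{X,X}\s{n} \to X\s{n}$ to a map $\tilde\psi\colon P \to E_{X,X}\s{n}$. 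The length-two indecomposable $E_{X,X}\s{n}$ has precisely three graded submodules -- zero, its socle (which is killed by the quotient onto $X\s{n}$), and itself -- so the relation $\tilde\psi(P) + \mathrm{soc}(E_{X,X}\s{n}) = E_{X,X}\s{n}$ forces $\tilde\psi(P) = E_{X,X}\s{n}$; in particular, $\tilde\psi$ is surjective.

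The structure constants of $K := \ker\tilde\psi \subseteq P$, viewed as a finitely generated graded submodule of $\QgrAf$, then force a contradiction. Each graded component $(E_{X,X}\s{n})_j$ for $j \leq n$ is cyclic over $\kk[z]$ with annihilator $(z+n)^2$, and vanishes for $j > n$; hence the kernel of $P_j \twoheadrightarrow (E_{X,X}\s{n})_j$ is $(z+n)^2\, a_j(P)\, \kk[z]\, x^j$ for $j \leq n$ and equals $P_j$ otherwise. Therefore $c_n(K) = (z+n)^2 \cdot c_n(P) = (z+n)^3$, which is not among the admissible structure constants $\{1,\, \sigma^n(z),\, \sigma^n(f)\} = \{1,\, z+n,\, (z+n)^2\}$ identified in Section~\ref{sec:ideals} for $\m = 0$. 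No such $K$ can exist, so $P$ cannot have been projective. Note that the same lift construction applied in the complementary case $c_n(P) = 1$ only forces $c_n(K) = (z+n)^2$, which is admissible; this shows that the argument correctly isolates the forbidden value $c_n = \sigma^n(z)$.

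For the reverse direction the plan is to show that if $c_n(P) \in \{1, \sigma^n(f)\}$ for every $n$, then $A/(gP)$ has projective dimension at most one for some scaling $g \in \kk[z]$ which realizes $gP$ as a right ideal of $A$, from which projectivity of $P \cong gP$ follows. Lemma~\ref{cfgsc} ensures $A/(gP)$ is of finite length, and I expect to show it decomposes as a direct sum of indecomposables of the form $E_{X,X}\s{n}$ (at degrees where $c_n(P) = \sigma^n(f)$ but $c_n(A) = 1$), $E_{Y,Y}\s{n}$ (at degrees where the flip goes in the opposite direction), or $M_\lambda$ (coming from the zeros of $g$ outside $\ZZ$). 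Each of these is the quotient of $A$ by a principal ideal generated by a non-zero-divisor ($xA$, $yA$, or $(z+\lambda)A$), all of which are free as graded right $A$-modules, so each summand has projective dimension at most one. The model example to keep in mind is $P = xA \cap yA$, where the split short exact sequence $0 \to xA \cap yA \to xA \oplus yA \to A \to 0$ yields $A/(xA \cap yA) \cong E_{X,X} \oplus E_{Y,Y}\s{-1}$. The main obstacle will be rigorously establishing this direct-sum decomposition for an arbitrary admissible $P$; I expect to proceed by induction on the cardinality of the finite set $\{n : c_n(P) \neq c_n(A)\}$, peeling off one summand at a time via the appropriate surjection from $A/(gP)$ onto $E_{X,X}\s{n}$ or $E_{Y,Y}\s{n}$.
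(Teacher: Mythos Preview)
Your forward direction is correct and takes a genuinely different route from the paper. The paper proves the contrapositive by constructing a projective $Q \supseteq P$ (replacing each offending $c_n = \sigma^n(z)$ by $1$) and observing that $Q/P$ is a direct sum of distinct shifts of $Y$; since $\pd Y = \infty$ (invoking Bavula), $P$ cannot be projective. Your argument instead uses only the divisibility constraint $c_i \mid \sigma^i(f)$ together with the projectivity hypothesis: lifting $P \twoheadrightarrow X\s{n}$ along $E_{X,X}\s{n} \to X\s{n}$ forces the kernel to have degree-$n$ structure constant $(z+n)^3$, which is impossible. This is self-contained---no appeal to $\pd Y = \infty$ is needed---and is arguably more direct.

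Your reverse direction is only a plan, and it is more complicated than necessary. The direct-sum decomposition you anticipate does hold (the relevant $\Ext^1$ groups between pieces at distinct support points vanish by Lemma~\ref{mexts}), but you do not need it, and establishing it rigorously is the bulk of the work you have deferred. The paper sidesteps this by starting from $P_0 = A\s{N_1}$, with $N_1$ large enough that $c_n(P) = 1$ for all $n \geq N_1$, and building a descending chain $P_0 \supseteq P_1 \supseteq \cdots \supseteq P_N \cong P$ that changes one structure constant from $\sigma^{i_0}(f)$ to $1$ at each step; each successive quotient is $E_{Y,Y}\s{i_0}$, which has projective dimension $1$, so each $P_i$ stays projective. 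A filtration by pd-$1$ pieces suffices; no splitting is required, and only $E_{Y,Y}$ appears (neither $E_{X,X}$ nor any $M_\lambda$). Your scaling by $g$ is also superfluous, since the hypothesis already places $P$ inside $A$.
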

\begin{proof}First we show that if, for all $n \in \ZZ$, $c_n \neq \sigma^{n}(z)$, then $P$ is projective. Note that both $xA$ and $yA$ are projective, so $E_{X,X} = A/xA$ and $E_{Y,Y} = A/yA \s{1}$ both have projective dimension $1$. Hence, if $Q$ is any rank one projective that surjects onto $E_{X,X}$ or $E_{Y,Y}$, the kernel of this surjection will also be projective. Since $P$ is finitely generated, by Lemma~\ref{cfgsc}, there exist $N_1, N_2 \in \ZZ$ such that for all $n \geq N_1$, $c_n = 1$ and for all $n \leq N_2$, $c_{n} = \sigma^{n}(f)$. 

We will construct $P$ by constructing a finite sequence of projective modules $\{P_i\}$ until we arrive at a module with the same structure constants as $P$. Let $P_0 = A \s{N_1}$. Clearly $P_0$ is projective. If the structure constants of $P_0$ are $\{ d_i\}$ then since $P_0$ is a shift of $A$, we know that $d_n = 1$ for all $n \geq N_1$ and $d_n = \sigma^n(f)$ for all $n < N_1$. Let $i_0$ be the largest index where $d_{i_0}$ differs from $c_{i_0}$. Since $d_{i_0} = \sigma^{i_0}(f)$, then $c_{i_0} = 1$ (since we are assuming for all $n$ that $c_n \neq \sigma^{n}(z)$). We construct $P_1$ by letting $(P_1)_n = (P_0)_n$ for all $n \leq i_0$ and letting $(P_1)_n = (z+i_0)^2(P_0)_n$ for all $n > i_0$. Then $P_1$ is a submodule of $\QgrA$ with structure constants equal to those of $P_0$ except in degree $i_0$, where $P_1$ has structure constant $1$. Further, the quotient $P_0/P_1$ is supported at $i_0$ and has $\kk$-dimension two in each graded component of degree greater than $i_0$. Hence, $P_0/P_1$ is isomorphic to $E_{Y,Y} \s{i_0}$ or $Y\s{i_0} \oplus Y\s{i_0}$. There is a unique submodule of $P_0$ containing $P_1$ (the module where $P_0$ is multiplied by $z+i_0$ in all degrees greater than $i_0$), and therefore $P_0/P_1 \cong E_{Y,Y}\s{i_0}$. Therefore, $P_1$ is projective.

We continue this process for the finitely many indices where $c_i$ differs from $d_i$ until we reach a module $P_N$ such that $P_0/P_N$ has composition series consisting of only distinct shifts of $E_{Y,Y}$. Since $\pd E_{Y,Y} = 1$, therefore $P_0/P_N$ has projective dimension $1$ and so $P_N$ is projective. Further, since they have the same structure constants, $P_N \cong P$, so $P$ is projective.

Conversely, suppose there exist some indices $n$ such that $c_n = \sigma^n(z)$. Construct a module $Q$ with structure constants $\{d_i\}$ such that $Q$ has the same structure constants as $P$ except if $c_n = \sigma^n(z)$ then $d_n = 1$. By the above argument, $Q$ is projective. Now we construct a finite sequence $\{Q_i\}$ of submodules of $Q$ such that $Q_N \cong P$, and we can show $Q_N$ is not projective. Let $Q_0 = Q$. Let $i_0$ be the largest index where $Q_0$ differs from $P$. Construct $Q_1$ by letting $(Q_1)_n = (Q_0)_n$ for all $n \leq i_0$ and $(Q_1)_n = (z+i_0)(Q_0)_n$ for all $n > i_0$. Then $Q_1$ is a submodule of $\QgrA$ (with structure constants equal to those of $Q_0$ except in degree $i_0$, where $Q_1$ has structure constant $1$). The quotient $Q_0/Q_1$ is supported at $i_0$ and has $\kk$-dimension one in each graded component of degree greater than or equal to $i_0$. Hence, $Q_0/Q_1$ is isomorphic to $Y\s{i_0}$. 

Continue this process for the finitely many indices where $Q$ differs from $P$ to reach a $Q_N$ such that $Q_0/Q_N$ has composition series consisting only of distinct shifts of $Y$. Since there are no extensions between any distinct shifts of $Y$, $Q_0/Q_N$ must be a direct sum of shifts of $Y$, and since $Y$ has infinite projective dimension, so does $Q_0/Q_N$. But since $Q_0$ was projective, this proves that $Q_N$ is not projective. Further, $Q_N \cong P$, proving that $P$ is not projective.
\end{proof}

As a corollary, we see that a graded rank one projective module has a unique simple factor supported at $n$ for each $n \in \ZZ$. As a further corollary, we see that a rank one projective is determined by its integrally supported simple factors.

\begin{corollary}\label{projXYZ} Let $P$ be a rank one graded projective $A$-module. Let $n \in \ZZ$.
\begin{itemize}\item If $\m = 0$, then $P$ surjects onto exactly one of $X\s{n}$ and $Y\s{n}$.
\item If $\m \in \NNp$, then $P$ surjects onto exactly one of $X\s{n}$, $Y\s{n}$, and $Z\s{n}$.
\item If $\m \in \kk \setminus \ZZ$, then $P$ surjects onto exactly one of $X_0\s{n}$ and $Y_0\s{n}$. Likewise, P surjects onto exactly one of $X_{\m}\s{n}$ and $Y_{\m}\s{n}$.
\end{itemize}
\end{corollary}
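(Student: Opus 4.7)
The plan is to reduce everything to structure constants. Since every rank one graded projective $A$-module embeds in $\QgrA$ (after multiplication by a suitable element of $\kk[z]$), we may assume $P$ is a graded right ideal with structure constants $\{c_i\}$. Then Lemmas~\ref{cssc} and \ref{dssc} translate ``$P$ surjects onto a given simple'' into an easily checked membership condition on one or two $c_i$, and the projectivity criteria (Lemmas~\ref{mprojsc}, Corollary~\ref{cprojsc}, or the fact that $A$ is hereditary when $\m \in \kk\setminus \ZZ$) impose extra restrictions on which sequences $\{c_i\}$ can arise. The whole statement then becomes a finite case analysis on the allowed values of $c_n$ (and $c_{n-\m}$).

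For $\m\in\kk\setminus\ZZ$ the argument is pure bookkeeping. The constant $c_n$ takes one of the four values $1,\ \sigma^n(z),\ \sigma^n(z+\m),\ \sigma^n(f)$, and Lemma~\ref{dssc} says that $X_0\s{n}$ versus $Y_0\s{n}$ is detected by whether $\sigma^n(z)$ divides $c_n$, while $X_\m\s{n}$ versus $Y_\m\s{n}$ is detected by whether $\sigma^n(z+\m)$ divides $c_n$. Since $\sigma^n(z)$ and $\sigma^n(z+\m)$ are coprime, the divisibilities are independent and each produces exactly one simple; no projectivity restriction is needed because $A$ is hereditary.

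For $\m\in\NNp$ I would fix $n$ and set $\alpha=[\sigma^n(z)\mid c_{n-\m}]$ and $\beta=[\sigma^n(z)\mid c_n]$, each taking values $0$ or $1$. Lemma~\ref{cssc} says $X\s{n}$ is a factor iff $\alpha=0$; $Y\s{n}$ is a factor iff $\beta=1$; and $Z\s{n}$ is a factor iff $(\alpha,\beta)=(1,0)$. Corollary~\ref{cprojsc} forbids $(\alpha,\beta)=(0,1)$, so $(\alpha,\beta)\in\{(0,0),(1,0),(1,1)\}$, and these three cases correspond exactly to $X\s{n}$, $Z\s{n}$, and $Y\s{n}$ being the unique factor among the three. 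The case $\m=0$ is an even simpler version of the same argument: Lemma~\ref{mprojsc} forces $c_n\in\{1,\sigma^n(f)\}$, and Lemma~\ref{cssc} with $\m=0$ shows these two values correspond respectively to $X\s{n}$ and $Y\s{n}$ being the unique factor.

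There is no real obstacle in this proof; the content has all been set up in Lemmas~\ref{cssc}, \ref{dssc}, \ref{mprojsc} and Corollary~\ref{cprojsc}. The only thing to be careful about is ensuring that the sets of allowed structure constants in each lemma are paired up correctly with the projectivity conditions, in particular confirming (in the congruent root case) that the condition ruling out $(\alpha,\beta)=(0,1)$ is exactly what excludes both $X\s{n}$ and $Y\s{n}$ being simultaneous factors while still leaving at least one factor supported at $-n$.
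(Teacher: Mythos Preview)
Your proposal is correct and follows essentially the same approach as the paper: reduce to structure constants and invoke Lemmas~\ref{cssc}, \ref{dssc}, \ref{mprojsc} and Corollary~\ref{cprojsc}. Your $(\alpha,\beta)$ encoding in the congruent root case is a clean repackaging of exactly the same case analysis the paper carries out in words.
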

\begin{proof}If $\m = 0$ or $\m \in \kk \setminus \ZZ$, then this result follows immediately from Corollary~\ref{mprojsc}, Lemma~\ref{dssc}, and Lemma~\ref{cssc}.

If $\m \in \NNp$, then let $\{c_i\}$ be the structure constants of $P$. First, by Lemma~\ref{cssc} if $P$ surjects onto $X\s{n}$ or $Y\s{n}$, then $c_{n} \in \{\sigma^{n}(z), \sigma^n(f)\}$ or $c_{n-\m} \in \{1, \sigma^{n-\m}(z)\}$, so $P$ cannot surject onto $Z\s{n}$. Similarly, if $P$ surjects onto $Z\s{n}$, it cannot surject onto either $X\s{n}$ or $Y\s{n}$. And by Corollary~\ref{cprojsc}, it is not possible that $P$ surjects onto both $X\s{n}$ and $Y\s{n}$. Hence, $P$ surjects onto at most one of $X\s{n}$, $Y\s{n}$, or $Z\s{n}$.

Now we show that $P$ surjects onto exactly one of $X\s{n}$, $Y\s{n}$, or $Z\s{n}$. If $P$ does not surject onto $Y\s{n}$, $c_n \in \{1, \sigma^{n}(z+\m) \}$. If, additionally, $P$ does not surject onto $X\s{n}$ then $c_{n-\m} \in \{\sigma^{n-\m}(z+\m), \sigma^{n-\m}(f)\}$. In either case, by Lemma~\ref{cssc}, $P$ then surjects onto $Z\s{n}$.
\end{proof}

\begin{definition} For a rank one projective $P$ we have shown that for each $j \in \ZZ$, $P$ has a unique simple factor supported at $-j$. We use the notation $F_j(P)$ to refer to this simple factor. Similarly, if $\m \notin \ZZ$, for every $j \in \ZZ$, $P$ has a unique simple factor supported at $-j - \m $. We use the notation $F^{\m }_j(P)$ to refer to this simple factor.
\end{definition}

\begin{corollary}\label{projfactors}A rank one graded projective $A$-module is determined up to isomorphism by its simple factors which are supported at $\ZZ \cup \ZZ-\m$. 
\end{corollary}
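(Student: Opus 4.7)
The plan is to show that if $P$ is a rank one graded projective $A$-module, then the collection $\{F_n(P)\}_{n\in\ZZ}$ (together with $\{F^{\m}_n(P)\}_{n\in\ZZ}$ when $\m \notin \ZZ$) determines every structure constant of $P$; the corollary then follows from Lemma~\ref{sciso}. Fix an embedding $P \subseteq \QgrA$ and write $P = \bigoplus_{i \in \ZZ}(a_i)x^i$ with structure constants $c_i \in \{1, \sigma^i(z), \sigma^i(z+\m), \sigma^i(f)\}$. The guiding principle is that for projective $P$, Corollary~\ref{projXYZ} promotes the one-way surjectivity criteria of Lemmas~\ref{cssc} and~\ref{dssc} into biconditionals: from the identity of $F_n(P)$ I extract both a positive constraint on the relevant structure constants and the negations of the constraints attached to the other simple factors in the same fiber.

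In the non-congruent root case, Lemma~\ref{dssc} shows that $c_k$ is constrained only by $F_k(P)$ and $F^{\m}_k(P)$. The value of $F_k(P)$ restricts $c_k$ to either $\{1, \sigma^k(z+\m)\}$ or $\{\sigma^k(z), \sigma^k(f)\}$, while $F^{\m}_k(P)$ restricts $c_k$ to either $\{1, \sigma^k(z)\}$ or $\{\sigma^k(z+\m), \sigma^k(f)\}$; each of the four possible intersections is a singleton, so $c_k$ is pinned down. In the multiple root case, Lemma~\ref{mprojsc} eliminates the value $c_k = \sigma^k(z)$ outright, leaving $c_k \in \{1, \sigma^k(f)\}$, and Lemma~\ref{cssc} then distinguishes the two possibilities by whether $F_k(P) = X\s{k}$ or $Y\s{k}$.

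The congruent root case is the main obstacle, because each $c_k$ is constrained by both $F_k(P)$ and $F_{k+\m}(P)$ via Lemma~\ref{cssc}, giving nominally a $3 \times 3$ case analysis. I plan to first translate the trichotomy $F_n(P) \in \{X\s{n}, Y\s{n}, Z\s{n}\}$ into simultaneous constraints on the pair $(c_{n-\m}, c_n)$: for instance, $F_n(P) = X\s{n}$ forces $c_{n-\m} \in \{1, \sigma^{n-\m}(z)\}$ and, because $F_n(P) \neq Y\s{n}$, also $c_n \in \{1, \sigma^n(z+\m)\}$, while $F_n(P) = Z\s{n}$ or $Y\s{n}$ puts $c_{n-\m}$ in the complementary pair $\{\sigma^{n-\m}(z+\m), \sigma^{n-\m}(f)\}$. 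After re-indexing, $F_{k+\m}(P)$ restricts $c_k$ to $\{1, \sigma^k(z)\}$ (if $X\s{k+\m}$) or $\{\sigma^k(z+\m), \sigma^k(f)\}$ (if $Y\s{k+\m}$ or $Z\s{k+\m}$), while $F_k(P)$ restricts $c_k$ to $\{1, \sigma^k(z+\m)\}$ (if $X\s{k}$ or $Z\s{k}$) or $\{\sigma^k(z), \sigma^k(f)\}$ (if $Y\s{k}$). The nine cases thus collapse into four distinct intersections, each of which is a singleton in $\{1, \sigma^k(z), \sigma^k(z+\m), \sigma^k(f)\}$. This determines every $c_k$, and the corollary follows.
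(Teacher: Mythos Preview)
Your proposal is correct and follows essentially the same approach as the paper: in each case you recover the structure constants $\{c_i\}$ from the simple factors via Lemmas~\ref{cssc}, \ref{dssc}, \ref{mprojsc}, and Corollary~\ref{projXYZ}, then conclude by Lemma~\ref{sciso}. In the congruent root case the paper organizes the same $3\times 3$ analysis into a table indexed by $F_j(P)$ and $F_{j+\m}(P)$, arriving at the identical singleton values you compute.
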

\begin{proof} Let $P$ be a rank one graded projective $A$-module with structure constants $\{c_i\}$. In all cases, the integrally supported simple factors determine the $\{c_i\}$ and therefore determine $P$. In the non-congruent root case, the result follows from Lemma~\ref{dssc}. In the multiple root case, this follows from Lemma~\ref{cssc} and Corollary~\ref{projXYZ}. In particular, if $P$ has a factor of $X\s{n}$ then $c_n = 1$ and if $P$ has a factor of $Y\s{n}$ then $c_n = \sigma^{n}(f)$.

In the congruent root case, note that by Corollary~\ref{projXYZ}, for each $n \in \ZZ$, $P$ surjects onto exactly one of $X\s{n}$, $Y\s{n}$ or $Z\s{n}$. Now, for each $j \in \ZZ$, we can determine $c_j$ by using Lemma~\ref{cssc}. In particular, whether or not $X\s{j + \m }$ and $Y\s{j}$ are factors of $P$ completely determine $c_j$. If both $X\s{j + \m }$ and $Y\s{j}$ are factors, then $c_j = \sigma^{j}(z)$. If $X\s{j + \m }$ is a factor but $Y\s{j}$ is not then $c_j = 1$. If $X\s{j + \m }$ is not a factor but $Y\s{j}$ is, then $c_j = \sigma^{j}(f)$. And if neither $X\s{j +\m }$ nor $Y\s{j}$ are factors of $P$, then $c_j = \sigma^j(z+\m )$. We make this explicit in Table~\ref{tablestruc}, below.
\end{proof}

\begin{table}[h!]
\caption{The structure constants of a rank one projective module.}
\label{tablestruc}
\centering
\begin{tabular}{c| ccc}
 & $F_j(P) = X\s{j}$ & $F_j(P) = Y\s{j}$ & $F_j(P) = Z\s{j}$ \\ \hline
$F_{j+\m }(P) = X\s{j + \m }$ & $c_j = 1$ & $c_j = \sigma^{j}(z)$ & $c_j = 1$\\
$F_{j+\m }(P) = Y\s{j + \m }$ & $c_j = \sigma^j(z + \m )$ & $c_j = \sigma^{j}(f)$ & $c_j = \sigma^j(z+\m )$\\
$F_{j+\m }(P) = Z\s{j + \m }$ & $c_j = \sigma^j(z+\m )$ & $c_j = \sigma^{j}(f)$ & $c_j = \sigma^j(z+ \m )$\\
\end{tabular}
\end{table}

\begin{lemma} \label{anyproj}
\begin{enumerate} \item Let $\m \in \NN$. For each $n \in \ZZ$ choose $S_n \in \{X\s{n}, Y\s{n}, Z\s{n}\}$ such that for $n \gg 0$, $S_n = X\s{n}$ and $S_{-n} = Y\s{-n}$. Then there exists a rank one graded projective $P$ such that $F_n(P) = S_n$ for all $n$.

\item Let $\m = 0$. For each $n \in \ZZ$ choose $S_n \in \{X\s{n}, Y\s{n}\}$ such that for $n \gg 0$, $S_n = X\s{n}$ and $S_{-n} = Y\s{-n}$. Then there exists a rank one graded projective $P$ such that $F_n(P) = S_n$ for all $n$.
\end{enumerate}
\end{lemma}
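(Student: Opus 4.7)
The plan is to build $P$ as a finitely generated graded right $A$-submodule of $\QgrA$ by prescribing its structure constants, then verify (a) that these constants define a legitimate such submodule, (b) that $P$ is projective, and (c) that its integrally supported simple factors are exactly the given $\{S_n\}$.

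In the congruent root case $\m \in \NNp$, I would define $c_j$ from the pair $(F_j, F_{j+\m}) = (S_j, S_{j+\m})$ using the nine entries of Table~\ref{tablestruc}; each entry lies in the allowed set $\{1, \sigma^j(z), \sigma^j(z+\m), \sigma^j(f)\}$. For $n \gg 0$ the hypothesis forces both $S_n = X\s{n}$ and $S_{n+\m} = X\s{n+\m}$, so $c_n = 1$; symmetrically, for $-n \gg 0$ we obtain $c_{-n} = \sigma^{-n}(f)$. By the discussion following Lemma~\ref{cfgsc}, there then exists a finitely generated graded right $A$-submodule $P \subseteq \QgrA$ with precisely these structure constants. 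In the multiple root case $\m = 0$, the simpler recipe $c_n = 1$ if $S_n = X\s{n}$ and $c_n = \sigma^n(f)$ if $S_n = Y\s{n}$ does the same job, and the boundary conditions are immediate.

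Projectivity is the main thing to verify. For $\m = 0$, this is immediate from Lemma~\ref{mprojsc} since $c_n \neq \sigma^n(z)$ for every $n$. For $\m \in \NNp$, I invoke Corollary~\ref{cprojsc}: it must be shown that the pair $c_{n-\m} \in \{1, \sigma^{n-\m}(z)\}$ and $c_n \in \{\sigma^n(z), \sigma^n(f)\}$ never occurs simultaneously. Reading Table~\ref{tablestruc} with $j = n-\m$ shows the first condition is equivalent to $F_n(P) = S_n = X\s{n}$ (the column labelled $F_{j+\m}(P) = X\s{j+\m}$), whereas reading it with $j = n$ shows the second condition is equivalent to $F_n(P) = S_n = Y\s{n}$ (only those rows produce values $\sigma^n(z)$ or $\sigma^n(f)$). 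Since $S_n$ is a single simple module, both cannot hold at once, so the forbidden pattern never appears and $P$ is projective.

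Finally, that $F_n(P) = S_n$ for all $n$ is forced by the correspondence between structure constants and simple factors in Lemmas~\ref{cssc} and together with Corollary~\ref{projfactors}, which says a rank one projective is determined by its integrally supported simple factors. The main obstacle is really just the bookkeeping in the congruent root case, but the observation that the two conditions for non-projectivity translate exactly into $F_n(P) = X\s{n}$ and $F_n(P) = Y\s{n}$ respectively is what makes the construction work: at most one of these can hold, and neither holds when $F_n(P) = Z\s{n}$.
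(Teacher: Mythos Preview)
Your proof is correct and follows essentially the same approach as the paper: define the structure constants via Table~\ref{tablestruc}, invoke the remark after Lemma~\ref{cfgsc} to obtain a finitely generated submodule of $\QgrA$, and then verify projectivity by showing that the forbidden pattern in Corollary~\ref{cprojsc} would force $S_n$ to be simultaneously $X\s{n}$ and $Y\s{n}$. One cosmetic slip: in Table~\ref{tablestruc} the \emph{rows} are indexed by $F_{j+\m}(P)$ and the \emph{columns} by $F_j(P)$, so your references to ``column'' and ``row'' should be swapped, though this does not affect the argument.
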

\begin{proof} First, let $\m \in \NN$. We can construct a module $P \subseteq \QgrA$ that surjects onto $S_n$ for all $n$. We do this by specifying the structure constants $\{c_i\}$ of $P$: for each $n$, $c_n$ is determined by $S_n$ and $S_{n+\m }$ as described in Table~\ref{tablestruc}. By the remark following Lemma~\ref{cfgsc}, $P$ is a finitely generated graded right submodule of $\QgrA$.

By Corollary~\ref{cprojsc}, $P$ will be projective as long as, for each $n \in \ZZ$, it is not the case that both $c_{n} \in \{1, \sigma^{n}(z)\}$ and $c_{n+\m } \in\{ \sigma^{n+\m }(z), \sigma^{n+\m }(f)\}$. But if $c_n \in \{1, \sigma^{n}(z)\}$, then by the first row of the table, $P$ surjects onto $X\s{j+ \m }$. If $P$ surjects onto $X\s{j+\m }$, then by the first column of the table, we see that $c_{n+\m} \in \{1, \sigma^{n+\m}(z+\m )\}$. Therefore, in fact $P$ is projective. By construction, $F_n(P) = S_n$ for all $n \in \ZZ$.

The case $\m = 0$ is analogous. In this case, if $S_n = X\s{n}$, then let $c_n = 1$ and if $S_n = Y\s{n}$ then let $c_n = \sigma^{-n}(f)$.
\end{proof}

\subsubsection{Morphisms between rank one projectives}

In this section, we describe the morphisms between finitely generated rank one projective $A$-modules. We again make use of the fact that every right $A$-module is also a left $\kk[z]$-module. The fact that $\kk[z]$ is a PID leads to a nice characterization of the morphisms between rank one projectives. We first note that if $P$ and $Q$ are finitely generated rank one projectives, then any morphism $f \in \Hom_{\gr A}(P,Q)$ is either 0 or else an injection. This is because $A$ is 1-critical and therefore $P/\ker f$ is a finite length module, but $Q$ has no nonzero finite length submodules. Hence, either $\ker f = P$ or else $\ker f = 0$. We define a maximal embedding between right $A$-modules.

\begin{definition} Let $P$ and $Q$ be finitely generated right $A$-modules. An $A$-module homomorphism $f: P \sra Q$ is called a \textit{maximal embedding} if there does not exist an $A$-module homomorphism $g: P \sra Q$ such that $f(P) \subsetneq g(P)$.
\end{definition}

We will prove that in fact, if $P$ and $Q$ are finitely generated rank one projectives, there exists a unique (up to scalar) maximal embedding $P \sra Q$.

\begin{proposition}\label{projhom} Let $P$ and $Q$ be finitely generated graded rank one projective $A$-modules embedded in $\QgrA$. Then every homomorphism $P \sra Q$ is given by left multiplication by some element of $\kk(z)$ and as a left $\kk[z]$-module, $\Hom_{\gr A}(P,Q)$ is free of rank one.
\end{proposition}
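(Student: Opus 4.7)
The plan is to use the embeddings $P, Q \subseteq \QgrA = \kk(z)[x, x^{-1};\sigma]$ and the fact that the degree-zero part of $\QgrA$ is the commutative field $\kk(z)$. Write $P = \bigoplus_{n \in \ZZ}(a_n)x^n$ and $Q = \bigoplus_{n \in \ZZ}(b_n)x^n$ with structure constants $\{c_n\}$ and $\{c'_n\}$ respectively. Each graded component $P_n = (a_n)x^n$ is free of rank one as a left $\kk[z]$-module, generated by $a_n x^n$, and similarly for $Q_n$. Any graded degree-zero homomorphism $f \colon P \sra Q$ automatically respects the left $\kk[z]$-action in each degree, since for $m \in P_n$ and $\varphi \in \kk[z]$ the formula $\varphi \cdot m = m\sigma^{-n}(\varphi)$ together with right $A$-linearity of $f$ gives $f(\varphi \cdot m) = \varphi \cdot f(m)$. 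Hence $f$ is determined on each $P_n$ by a single scalar $h_n \in \kk[z]$ with $f(a_n x^n) = h_n b_n x^n$.

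Next I would exploit compatibility of $f$ with right multiplication by $x$. Using $a_n = c_n a_{n+1}$ and $b_n = c'_n b_{n+1}$, expanding $f(a_n x^n \cdot x)$ in two ways yields the recursion $c_n h_{n+1} = h_n c'_n$. A direct check then shows that $g := h_n b_n / a_n \in \kk(z)$ is independent of $n$, so $f$ agrees with left multiplication by $g$ on every component. Conversely, left multiplication by any $g \in \kk(z)$ satisfying $gP \subseteq Q$ is a graded degree-zero right $A$-module homomorphism from $P$ to $Q$, giving the identification
\[
\Hom_{\gr A}(P, Q) \ \cong\ \{g \in \kk(z) \mid gP \subseteq Q\} \ =\ \bigcap_{n \in \ZZ}\frac{b_n}{a_n}\kk[z]
\]
as left $\kk[z]$-submodules of $\kk(z)$ (the two natural $\kk[z]$-actions on $\Hom$ coincide because $\kk(z)$ is commutative and the twists $\sigma^{-n}$ cancel).

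Finally, Lemma~\ref{cfgsc} tells us that for $n \gg 0$ both $c_n = c'_n = 1$ while for $n \ll 0$ both equal $\sigma^n(f)$; in both regimes the ratio $b_n/a_n$ is eventually constant, so the intersection above reduces to a finite intersection of nonzero principal fractional $\kk[z]$-ideals. Since $\kk[z]$ is a PID, such an intersection is again a nonzero principal fractional ideal, hence free of rank one as a $\kk[z]$-module. The main subtle step is the recursion argument showing $h_n b_n/a_n$ is constant in $n$; once this is in hand, everything else is formal manipulation of fractional ideals in a PID.
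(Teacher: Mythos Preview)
Your argument is correct and follows essentially the same route as the paper: identify each degree component of a homomorphism as a left $\kk[z]$-map, use right $A$-linearity to show a single element $g\in\kk(z)$ works in all degrees, and then invoke that $\kk[z]$ is a PID to get rank one. The only cosmetic difference is that the paper pins down $g$ once from the degree-$0$ component and propagates via multiplication by $x^n$ (or $y^{-n}$), whereas you set up the degree-by-degree recursion $c_n h_{n+1}=h_n c'_n$; both arguments are equivalent.
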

\begin{proof}
Since $P$ and $Q$ are finitely generated graded submodules of $\QgrA$, we may multiply by some element of $\kk[z]$ and assume $P$ and $Q$ are right ideals of $A$. Let $f \in \Hom_{\gr A}(P,Q)$. Now since $P_0, Q_0 \subseteq A_0 = \kk[z]$, $P_0$ and $Q_0$ are actually left $\kk[z]$-submodules of $\kk[z]$ and hence ideals of $\kk[z]$. Because $\kk[z]$ is a PID, we can write $P_0 = (p_0)$ and $Q_0 = (q_0)$ for some $p_0, q_0 \in \kk[z]$. Let $f_0$ be the restriction of $f$ to $P_0$. Since $f$ is a right $A$-module homomorphism, it is also a left $\kk[z]$-module homomorphism. Therefore, $f_0$ is given by left multiplication by some $\varphi \in \kk(z)$ such that $(\varphi p_0) \subseteq (q_0)$ in $\kk[z]$.

We claim that $f_0$ determines $f$. Define $m_n = x^n$ for $n \geq 0$ and $m_n = y^n$ for $n < 0$. Now in each graded component, we have that $P_n = (p_n)m_n$ and $Q_n =(q_n)m_n$ where $p_n, q_n \in \kk[z]$. Let $f_n$ be the restriction of $f$ to $P_n$. Note that $(p_0) m_n$ is a nonzero submodule of $(p_n)m_n$, and
\[ f(p_0 m_n) = f(p_0)m_n = \varphi p_0 m_n,
\]
so on the submodule $(p_0)m_n \subseteq P_n$, $f$ is given by left multiplication by $\varphi$. Viewing $f_n$ as a left $\kk[z]$-module homomorphism, we conclude that $f_n$ is given by left multiplication by $\varphi$ for each $n$, so $f$ is simply left multiplication by $\varphi$. Conversely, left multiplication by $\varphi \in \kk(z)$ will be an element of $\Hom_{\gr A}(P,Q)$ if and only if $(\varphi p_n) \subseteq (q_n)$ for all $n$.

Now $\Hom_{\gr A}(P,Q)$ is a left $\kk[z]$-submodule of $\Hom_{\gr \kk[z]}(P,Q)$ which the above shows is isomorphic to some left $\kk[z]$-submodule of $\kk(z)$. Since in addition multiplication by $\varphi$ is a homomorphism only if $\varphi \in \kk[z]q_0/p_0$, we can clear denominators so that $(p_0)\Hom_{\gr A}(P,Q)$ is actually a left $\kk[z]$-submodule of $\kk[z]$. Since $\kk[z]$ is a PID, $(p_0) \Hom_{\gr A}(P,Q) = (g)$ for some $g \in \kk[z]$. Finally, we see that $\Hom_{\gr A}(P,Q)$ is generated as a left $\kk[z]$-module by $g/p_0 \in \kk(z)$. Let $\theta = g/p_0$. Then all homomorphisms $\Hom_{\gr A}(P,Q)$ are given by left multiplication by some $\kk[z]$-multiple of $\theta$. 
\end{proof}

The above proposition tells us that a $\Hom$ set between rank one projective $A$-modules is generated as a left $\kk[z]$-module by a unique (up to scalar) maximal embedding. Note, however, the element $\theta \in \kk(z)$ as in the previous lemma depends on a fixed embedding of $P$ and $Q$ in $A$. As an example, $zA \cong A$, but $\Hom_{\gr A}(zA,A)$ is generated as $\kk[z]$-module by left multiplication by $z^{-1}$ while $\Hom_{\gr A}(A,A)$ is generated by left multiplication by $1$. 

Given rank one projectives $P$ and $Q$ we would like to have canonical representations of $P$ and $Q$ as submodules of the graded quotient ring $\QgrA$. This would also give us a canonical representation of the maximal embedding $P \sra Q$. We will do this graded component by graded component. Suppose $P$ has structure constants $\{c_i\}$. We will give a canonical sequence $\{p_i\}$ such that 
\[P \cong \bigoplus_{i \in \ZZ} (p_i) x^i \subseteq Q_{\mathrm{gr}}(A).
\] By Lemma~\ref{cfgsc}, there exists an integer $N$ such that for all $n > N$, $c_n = 1$. Now recalling that $p_i = c_i p_{i+1}$, for any $n \in \ZZ$, let $p_n = \prod_{j \geq n} c_j$. Observe that $p_n \in \kk[z]$ since all but finitely many of the factors are $1$. 

\begin{definition} Given a graded rank one projective module $P$, the representation of $\bigoplus (p_i) x^i \subseteq \QgrA$ given above is called \emph{canonical representation of $P$}. We call $\bigoplus (p_i) x^i$ a \emph{canonical rank one projective module}.
\end{definition}
Now given canonical graded rank one projective modules $P = \bigoplus (p_i) x^i$ and $Q = \bigoplus (q_i)x^i$, we can compute the maximal embedding of $P$ into $Q$, which is unique if we require that $\theta$ be monic. 

\begin{lemma} \label{projmaxemb} Let $P$ and $Q$ be rank one graded projective $A$-modules with structure constants $\{c_i\}$ and $\{d_i\}$, respectively. As above, write 
\[ P = \bigoplus_{i \in \ZZ} (p_i) x^i = \bigoplus_{i \in \ZZ} \left( \prod_{j \geq i} c_j \right) x^i \mbox{ and } Q = \bigoplus_{i \in \ZZ}(q_i)x^i = \bigoplus_{i \in \ZZ} \left( \prod_{j \geq i} d_j \right) x^i.
\]
Then the maximal embedding $P \sra Q$ is given by multiplication by 
\[
\theta_{P,Q} = \lcm_{i \in \ZZ} \left( \frac{q_i}{\gcd(p_i, q_i)}\right)= \lcm_{i \in \ZZ} \left( \frac{\prod_{j \geq i}d_j}{\gcd\left( \prod_{j \geq i}c_j, \prod_{j \geq i} d_j\right)} \right)
\]
where the $\lcm$ is the unique monic least common multiple $\theta_{P,Q} \in \kk[z]$.
\end{lemma}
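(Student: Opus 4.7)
The plan is to combine Proposition~\ref{projhom} with a graded-component analysis of the divisibility condition that characterizes when multiplication by a rational function sends $P$ into $Q$. By Proposition~\ref{projhom}, every element of $\Hom_{\gr A}(P,Q)$ is left multiplication by some $\varphi \in \kk(z)$, and the whole $\Hom$-module is a free rank-one left $\kk[z]$-module. Consequently, its unique (up to scalar) monic generator $\theta$ realizes the maximal embedding, since any other homomorphism is of the form $h\theta$ for some $h \in \kk[z]$, giving image $h\theta P \subseteq \theta P$. So the task reduces to identifying the monic generator $\theta$ explicitly, and I will prove $\theta = \theta_{P,Q}$ as claimed.

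Using the canonical representations, the condition that multiplication by $\varphi \in \kk(z)$ is a homomorphism $P \to Q$ is $\varphi p_i x^i \in (q_i) x^i$ for every $i \in \ZZ$, that is, $q_i \mid \varphi p_i$ in $\kk[z]$ for each $i$, where the product is interpreted inside $\kk(z)$. Writing $\varphi = a/b$ in lowest terms with $b$ monic, the divisibility $q_i b \mid a p_i$ forces $b \mid p_i$ for every $i$ (since $\gcd(a,b)=1$); applying this for $i \gg 0$, where Lemma~\ref{cfgsc} gives $p_i = 1$, shows $b = 1$ and hence $\varphi \in \kk[z]$. Thus it suffices to find the monic polynomial $\theta$ of smallest degree with $q_i \mid \theta p_i$ for all $i$.

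For each $i$, writing $p_i = p_i' \gcd(p_i, q_i)$ and $q_i = q_i' \gcd(p_i, q_i)$ with $\gcd(p_i', q_i') = 1$, the condition $q_i \mid \theta p_i$ reduces to $q_i' \mid \theta$, i.e.\ $q_i/\gcd(p_i,q_i) \mid \theta$. Hence the set of admissible $\theta \in \kk[z]$ is exactly the set of common multiples of the family $\{q_i/\gcd(p_i,q_i)\}_{i \in \ZZ}$, and its unique monic element of least degree is their least common multiple, as long as this is a \emph{finite} lcm. For $i \gg 0$ we have $p_i = q_i = 1$, and for $i \ll 0$ we have $c_i = d_i = \sigma^i(f)$, so $p_{i-1} = \sigma^{i-1}(f) p_i$ and $q_{i-1} = \sigma^{i-1}(f) q_i$; since $\gcd(ap, aq) = a \gcd(p,q)$, the quotient $q_i/\gcd(p_i,q_i)$ stabilizes as $i \to -\infty$. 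Thus only finitely many of the $q_i/\gcd(p_i,q_i)$ differ from each other, making the indicated lcm a well-defined monic polynomial in $\kk[z]$.

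Assembling the pieces, $\theta_{P,Q}$ as defined is precisely the monic generator of $\Hom_{\gr A}(P,Q)$, so multiplication by $\theta_{P,Q}$ is the maximal embedding $P \hookrightarrow Q$. The only mildly delicate step will be the stabilization argument that certifies the lcm is finite; everything else is routine divisibility bookkeeping once the hom-set is translated into the structure-constant language via the canonical representations.
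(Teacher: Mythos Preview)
Your proposal is correct and follows essentially the same route as the paper's proof: invoke Proposition~\ref{projhom} to reduce to identifying the monic $\kk[z]$-generator of $\Hom_{\gr A}(P,Q)$, translate the containment $(\theta p_i)\subseteq(q_i)$ into the divisibility $q_i/\gcd(p_i,q_i)\mid\theta$, and then argue that the family of such quotients stabilizes for $|i|\gg 0$ (using Lemma~\ref{cfgsc}) so the $\lcm$ is finite. Your extra step writing $\varphi=a/b$ to deduce $\varphi\in\kk[z]$ is a slightly more explicit version of the paper's one-line observation that $p_i=q_i=1$ for $i\gg 0$ forces polynomiality.
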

\begin{proof} As we saw in Proposition~\ref{projhom}, the maximal embedding $P \sra Q$ is given by multiplication by some $\theta_{P,Q} \in \kk(z)$ such that $(\theta_{P,Q} p_i) \subseteq (q_i)$ for all $i \in \ZZ$. Because, for large enough $i$, $p_i = q_i = 1$, we see that $\theta_{P,Q} \in \kk[z]$. In fact, that $(\theta_{P,Q} p_i) \subseteq (q_i)$ implies that $\theta_{P,Q}$ must be a $\kk[z]$-multiple of $q_i/\gcd(p_i, q_i)$ for all $i$. The minimal such $\theta_{P,Q}$ is given by
\[\theta_{P,Q} = \lcm_{i \in \ZZ} \left( \frac{q_i}{\gcd(p_i, q_i)}\right)\]
Since the structure constants of a graded submodule of $\QgrA$ were defined to be monic, $\theta_{P,Q}$ is monic. Because there exists some $N \in \NN$ such that for all $n \geq N$, $c_n = d_n$ and $c_{-n} = d_{-n}$, therefore $q_i/\gcd(p_i,q_i) = q_j/\gcd(p_j,q_j)$ for all $i,j \geq N$ and all $i,j \leq -N$, so the least common multiple can be taken over finitely many indices. 
\end{proof}

\begin{corollary} \label{dmaxemb} Suppose $\m \in \kk \setminus \ZZ$ or $\m = 0$. Assume the hypotheses of Lemma~\ref{projmaxemb}. Then there exists an $N \in \ZZ$ such that
\[\theta_{P,Q} = \frac{q_N}{\gcd(p_N, q_N)}= \frac{\prod_{j \geq N}d_j}{\gcd\left( \prod_{j \geq N}c_j, \prod_{j \geq N} d_j\right)} = \frac{\prod_{j \geq N}d_j}{\prod_{j \geq N}\gcd\left( c_j, d_j\right)}.
\]
\end{corollary}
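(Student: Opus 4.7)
The plan is to leverage the key distinction between the excluded case (congruent roots) and the two cases at hand: when $\m = 0$ or $\m \in \kk \setminus \ZZ$, the linear factors appearing in different structure constants $c_j$ (and $d_j$) are pairwise coprime across distinct indices. Concretely, each $c_j$ is a product of factors drawn from $\{z+j,\, z+j+\m\}$ (with multiplicity at most two in the multiple root case), and the two assumptions on $\m$ exclude the possibility that $j + \m = k$ for distinct integers $j, k$, which is precisely the failure mode in the congruent case.

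First I would record this coprimality explicitly. Given indices $i \neq j$ in $\ZZ$, the linear factors dividing $c_i$ (namely some of $z+i, z+i+\m$) are coprime in $\kk[z]$ to those dividing $c_j$. This uses $\m = 0$ in one case and $\m \notin \ZZ$ in the other. Next, I would use this to conclude that for every $i \in \ZZ$,
\[ \gcd\!\left(\prod_{j \geq i} c_j,\ \prod_{j \geq i} d_j\right) \;=\; \prod_{j \geq i} \gcd(c_j, d_j), \]
because $\gcd$ of products distributes across factors when all cross-pair factors are coprime. This gives the third equality in the statement for every $i$, and in particular
\[ \frac{q_i}{\gcd(p_i, q_i)} \;=\; \prod_{j \geq i} \frac{d_j}{\gcd(c_j, d_j)}. \]

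Set $e_j := d_j / \gcd(c_j, d_j) \in \kk[z]$. By Lemma~\ref{cfgsc}, for all $j \gg 0$ we have $c_j = d_j = 1$, and for all $j \ll 0$ we have $c_j = d_j = \sigma^j(f)$; in either regime $e_j = 1$. Consequently only finitely many $e_j$ are nontrivial. Choose $N \in \ZZ$ small enough that $e_j = 1$ for all $j < N$. Then for every $i \leq N$, the product $\prod_{j \geq i} e_j$ equals $\prod_{j \geq N} e_j$, while for $i > N$ we have $\prod_{j \geq i} e_j \mid \prod_{j \geq N} e_j$. Therefore the least common multiple from Lemma~\ref{projmaxemb} collapses:
\[ \theta_{P,Q} \;=\; \lcm_{i \in \ZZ}\prod_{j \geq i} e_j \;=\; \prod_{j \geq N} e_j \;=\; \frac{q_N}{\gcd(p_N, q_N)}. \]
Combining this with the distributivity of $\gcd$ established above yields all three claimed equalities.

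The only real obstacle is the coprimality step, and it is genuinely elementary once one records why the congruent case is excluded: when $\m \in \NNp$, the shift $\sigma^{j-\m}(z+\m) = z + j$ equals $\sigma^j(z)$, so $c_{j-\m}$ and $c_j$ can share the factor $z+j$, breaking the distributivity of $\gcd$ over the product. Excluding that case is exactly what the hypothesis $\m = 0$ or $\m \notin \ZZ$ does, and the rest of the argument is purely a PID manipulation inside $\kk[z]$.
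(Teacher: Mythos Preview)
Your proof is correct and follows essentially the same approach as the paper: both arguments hinge on the observation that when $\m = 0$ or $\m \notin \ZZ$, the irreducible factors of $c_j$ and $d_j$ (namely $z+j$ and $z+j+\m$) are coprime across distinct indices $j$, so the $\gcd$ distributes over the product and the sequence $q_i/\gcd(p_i,q_i)$ stabilizes for sufficiently small $i$. Your presentation via $e_j = d_j/\gcd(c_j,d_j)$ is slightly more explicit than the paper's, but the substance is the same.
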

\begin{proof}
If $\m \in \kk \setminus \ZZ$ or $\m = 0$, we can compute the maximal embedding $P \sra Q$ without taking a least common multiple, as follows. The irreducible factors of $\theta_{P,Q}$ are all $\sigma^{i}(z)$ or $\sigma^{i}(z+\m )$ for some $i \in \ZZ$. If $\m \in \kk \setminus \ZZ$ or $\m = 0$, then $\sigma^{i}(z)$ and $\sigma^{i}(z+\m )$ can only appear as factors of the structure constant in degree $i$. Thus, if $i \neq j$, then $q_i$ shares no irreducible factors with $p_j$ or $q_j$. Hence, if $n < m$, then $q_m/\gcd (p_m,q_m)$ divides $q_n /\gcd (p_n,q_n)$. By Lemma~\ref{cfgsc}, there exists an $N \in \ZZ$ such that for all $i \leq N$, $p_i = q_i$. For this $N$, we can write
\[\theta_{P,Q} = \frac{q_N}{\gcd(p_N, q_N)}= \frac{\prod_{j \geq N}d_j}{\gcd\left( \prod_{j \geq N}c_j, \prod_{j \geq N} d_j\right)} = \frac{\prod_{j \geq N}d_j}{\prod_{j \geq N}\gcd\left( c_j, d_j\right)}. \qedhere
\] 
\end{proof}

We now show that the cokernel of a maximal embedding has a special structure.

\begin{lemma} \label{maxembfactor} Let $P$ and $Q$ be graded rank one projective $A$-modules and let $f: P \sra Q$ be a maximal embedding. Then the module $Q/f(P)$ is supported at $\ZZ \cup \ZZ-\m $.
\end{lemma}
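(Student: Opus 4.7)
My plan is to translate the statement directly into the structure-constant language from Section~\ref{sec:ideals} and read it off from the explicit formula in Lemma~\ref{projmaxemb}. Concretely, I will first use Proposition~\ref{projhom} to identify every element of $\Hom_{\gr A}(P,Q)$ with multiplication by an element of $\kk[z]\cdot\theta_{P,Q}$ after embedding $P = \bigoplus (p_i)x^i$ and $Q = \bigoplus (q_i)x^i$ canonically inside $\QgrA$, with $p_n = \prod_{j\ge n}c_j$ and $q_n=\prod_{j\ge n}d_j$ for the respective structure constants. A homomorphism given by multiplication by $h\theta_{P,Q}$ has image $h\theta_{P,Q}P$, and the inclusion $h\theta_{P,Q}P \subseteq \theta_{P,Q}P$ is strict whenever $h\in\kk[z]$ is non-constant (since then $hP\subsetneq P$ already as a left $\kk[z]$-module). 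Thus the maximality hypothesis on $f$ forces $f(P) = \theta_{P,Q}P$ up to a nonzero scalar.

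Once $f(P)$ is pinned down, I compute the cokernel degree by degree. The left $\kk[z]$-action on the graded piece $\kk(z)x^i\subseteq\QgrA$ is just ordinary multiplication by $\kk[z]$ inside $\kk(z)$: unwinding $p\cdot(mx^i)=mx^i\sigma^{-i}(p)$ and using the commutation $x^iz=\sigma^i(z)x^i$ gives $p\cdot (mx^i)=(pm)x^i$. Since $q_i$ divides $\theta_{P,Q}p_i$ because $f(P)\subseteq Q$, this yields
\[ (Q/f(P))_i \;\cong\; (q_i)/(\theta_{P,Q}p_i) \;\cong\; \kk[z]\bigl/\bigl(\theta_{P,Q}p_i/q_i\bigr)
\]
as left $\kk[z]$-modules. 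It then suffices to show that each polynomial $\theta_{P,Q}p_i/q_i$ has all roots in $\ZZ\cup(\ZZ-\m)$.

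This last claim is purely a bookkeeping observation about structure constants. Each $c_j$ and each $d_j$ lies in $\{1,\sigma^j(z),\sigma^j(z+\m),\sigma^j(f)\}$, whose irreducible factors in $\kk[z]$ are $z+j$ and $z+j+\m$; their roots are $-j\in\ZZ$ and $-j-\m\in\ZZ-\m$. Consequently the finite products $p_n$ and $q_n$ have all roots in $\ZZ\cup(\ZZ-\m)$, and the same holds for $\theta_{P,Q}$ by the explicit $\lcm$ formula in Lemma~\ref{projmaxemb}. Summing the degree-wise support bound then gives $\Supp(Q/f(P))\subseteq\ZZ\cup(\ZZ-\m)$, as required. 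The only mildly delicate point is correctly identifying the $\kk[z]$-action on $\kk(z)x^i$ with ordinary multiplication; the rest of the argument is uniform across the three cases $\m=0$, $\m\in\NNp$, and $\m\in\kk\setminus\ZZ$, and needs no separate treatment.
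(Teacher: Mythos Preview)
Your proof is correct, but it takes a different route from the paper's. The paper argues by contradiction at the level of composition factors: since $Q/f(P)$ has finite length, if some $M_\lambda$ with $\lambda\notin\ZZ\cup(\ZZ+\m)$ appeared as a subfactor, one would find $f(P)\subseteq Q_1\subseteq Q_2\subseteq Q$ with $Q_1=(z+\lambda)Q_2$, and then $(z+\lambda)^{-1}f(P)\subseteq Q_2\subseteq Q$ would be a strictly larger image, contradicting maximality. This uses only the definition of a maximal embedding and the explicit description of $M_\lambda$, and avoids invoking the canonical representation or the $\theta_{P,Q}$ formula.

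Your approach instead pins down $f(P)=\theta_{P,Q}P$ via Proposition~\ref{projhom} and Lemma~\ref{projmaxemb}, and then reads the support off degreewise from the factorization of $\theta_{P,Q}p_i/q_i$, whose irreducible factors are forced (by the allowed structure constants) to be among the $z+j$ and $z+j+\m$. This is more computational and leans on more of the machinery built in Section~\ref{sec:ideals}, but it has the advantage of giving an explicit description of each graded piece of the cokernel, which is closer in spirit to how you will later manipulate these modules. The paper's argument is shorter and more conceptual; yours is constructive. Both are fine.
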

\begin{proof} Let $N = Q/f(P)$. As $A$ has Krull dimension 1, $N$ has finite length. By Lemma~\ref{simples}, $N$ has a finite composition series whose factors are either supported at $\ZZ \cup \ZZ-\m $ or isomorphic to some $M_\lambda$ with $\lambda \not \in \ZZ \cup \ZZ-\m $. Suppose for contradiction that $M_\lambda$ is a subfactor of $N$. Then we have $f(P) \subseteq Q_1 \subseteq Q_2 \subseteq Q$ with $Q_2/Q_1 \cong M_\lambda$ and so $Q_1 = (z+\lambda)Q_2$. But now $(z+\lambda)^{-1}f(P) \subseteq (z+\lambda)^{-1}Q_1 = Q_2 \subseteq Q$, contradicting the maximality of $f$.
\end{proof}

Finally, we give necessary and sufficient conditions for a collection of projective objects in $\gr A$ to generate the category.
\begin{proposition}\label{projgen} A set of rank one graded projective $A$-modules $\PPPP = \{P_i\}_{i \in I}$ generates $\gr A$ if and only if for every graded simple module $M$ which is supported at $\ZZ \cup \ZZ-\m$, there exists a surjection to $M$ from a direct sum of modules in $\PPPP$.
\end{proposition}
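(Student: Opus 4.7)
The plan is to prove the nontrivial $(\Leftarrow)$ direction, since the $(\Rightarrow)$ direction is automatic: every integrally-supported simple module is itself an object of $\gr A(f)$. My approach is a standard trace argument, preceded by one extension step that upgrades the hypothesis from integrally-supported simples to all simples.

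\emph{Step 1: Extending the hypothesis.} The simples not covered by the hypothesis are the modules $M_\lambda = A(f)/(z+\lambda)A(f)$ with $\lambda \notin \ZZ \cup (\ZZ + \m)$. For any $P \in \PPPP$, I would first verify that left multiplication by $(z+\lambda)$ is a graded $A(f)$-module endomorphism of $P$ via the left $\kk[z]$-action $\varphi \cdot m = m\sigma^{-i}(\varphi)$ on $m \in P_i$; this reduces to checking the identities $a \cdot (z+\lambda - i - j) = (z+\lambda - i) \cdot a$ in $A(f)$ for $a \in A(f)_j$, which follow from the defining relations $xz = (z+1)x$ and $yz = (z-1)y$. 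Writing $P$ in canonical form $\bigoplus_i (p_i)x^i$, every irreducible factor of $p_i$ lies in $\{z + n,\, z + n + \m : n \in \ZZ\}$, whereas $(z+\lambda)$ has root $-\lambda \notin \ZZ \cup (\ZZ - \m)$, so $\gcd(p_i, z+\lambda) = 1$. Hence $(P/(z+\lambda)P)_i \cong \kk[z]/(z+\lambda) \cong \kk$ in every degree, and a direct calculation shows that $x$ and $y$ act by the nonzero scalars $c_i(-\lambda)$; the quotient is therefore a graded simple module supported at $-\lambda$ with $\kk$-dimension one in every degree, which by Lemma~\ref{simples} forces $P/(z+\lambda)P \cong M_\lambda$. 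Combined with the hypothesis, we now know every graded simple $A(f)$-module is a quotient of some $P \in \PPPP$.

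\emph{Step 2: The trace argument.} For $M \in \gr A(f)$, define $\tau_\PPPP(M) = \sum_{i \in I,\, f} \im f$, where the sum runs over all $f \in \Hom_{\gr A(f)}(P_i, M)$. Suppose for contradiction that $N := M/\tau_\PPPP(M) \neq 0$. Projectivity of each $P_i$ lifts any morphism $P_i \to N$ to $P_i \to M$, whose image lies in $\tau_\PPPP(M)$; the original morphism is therefore zero, giving $\Hom_{\gr A(f)}(P_i, N) = 0$ for every $i$. Since $N$ is finitely generated over the noetherian ring $A(f)$, a graded Zorn argument produces a simple quotient $S$ of $N$; projectivity again yields a surjection $0 = \Hom_{\gr A(f)}(P_i, N) \twoheadrightarrow \Hom_{\gr A(f)}(P_i, S)$, so $\Hom_{\gr A(f)}(P_i, S) = 0$ for all $i$. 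This contradicts Step 1, which guarantees that some $P_i \in \PPPP$ surjects onto $S$. Hence $\tau_\PPPP(M) = M$, and finite generation of $M$ allows us to select finitely many of the defining maps to assemble a surjection from a finite direct sum of $P_i$'s onto $M$.

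The main obstacle is Step 1: the hypothesis is silent about the non-integrally-supported simples $M_\lambda$, but the trace argument demands that every simple in $\gr A(f)$ receive a nonzero map from some $P \in \PPPP$. The key insight is that the left $\kk[z]$-action supplies ``central-like'' graded endomorphisms $P \xrightarrow{z+\lambda} P$ whose cokernels realize every such $M_\lambda$ as a uniform quotient of any rank one projective, bridging the gap between the integrally-supported simples addressed by the hypothesis and the remaining simples of $\gr A(f)$.
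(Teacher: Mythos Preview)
Your proof is correct but takes a genuinely different route from the paper.

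The paper never touches the simples $M_\lambda$ at all. Instead, it fixes one $P \in \PPPP$, takes a maximal embedding $\varphi: P \to A\langle n\rangle$, and invokes Lemma~\ref{maxembfactor} to guarantee that the finite-length cokernel $A\langle n\rangle/\varphi(P)$ is supported at $\ZZ \cup \ZZ - \m$. An induction on length then produces a surjection from a direct sum in $\PPPP$ onto this cokernel, and projectivity lets one lift and combine with $\varphi$ to hit all of $A\langle n\rangle$. Since the shifts of $A$ generate $\gr A$, this suffices.

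Your approach instead upgrades the hypothesis to cover \emph{all} simples by exhibiting $M_\lambda$ as $P/(z+\lambda)P$ for any $P \in \PPPP$, and then runs a standard trace argument. This is more portable---the trace argument is category-general and does not rely on the specific structure of maximal embeddings or on Lemma~\ref{maxembfactor}---but it costs you the explicit Step~1 computation. The paper's route, by contrast, explains structurally \emph{why} the non-integrally-supported simples are irrelevant: they simply never appear as composition factors of the cokernels one has to cover. Both arguments ultimately rest on the same ingredient, namely that $(z+\lambda)$ is coprime to every structure constant when $\lambda \notin \ZZ \cup (\ZZ+\m)$; the paper packages this as Lemma~\ref{maxembfactor}, while you unpack it directly.
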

\begin{proof}One direction is clear. Now suppose $\PPPP$ is a set of graded projective $A$-modules that generates all graded simple modules supported at $\ZZ \cup \ZZ-\m$. We will show that every shift of $A$ is the image of a surjection from a direct sum of modules in $\PPPP$, and so $\PPPP$ generates $\gr A$. Let $P \in \PPPP$ and choose a maximal embedding $\varphi: P \sra A \s{n}$. It suffices to construct a surjection ${\psi: \bigoplus_{j \in J} P_j \sra A\s{n}/P}$ for some $J\subseteq I$. This is because, by the projectivity of the $P_j$, there exists a lift $\overline{\psi}: \bigoplus_{j \in J} P_j \sra A\s{n}$ and because $\im \varphi + \im \overline{\psi} = A\s{n}$.

Since $A$ has Krull dimension 1, the quotient $A\s{n}/P$ has finite length. By Lemma~\ref{maxembfactor}, it is supported at $\ZZ \cup \ZZ-\m$. We induct on the length of $A\s{n}/P$. Now there exists some integrally supported simple module $M_0$ which fits into the exact sequence
\[ 0 \ra K_0 \ra A/P \ra M_0 \ra 0.
\]
Again, it suffices to give surjections onto $M_0$ and $K_0$. By hypothesis, $\PPPP$ generates $M_0$. By induction, $\PPPP$ generates $K_0$, completing the proof.
\end{proof}

\section{Functors defined on subcategories of projectives}
\label{sec:func}

The results of section~\ref{sec:proj} suggest that much information about the category $\gr A$ is contained in its rank one projective modules and the morphisms between them. We will therefore develop the machinery necessary to define a functor on $\gr A$ by first defining it on the full subcategory of direct sums of rank one projective modules. We remark that a dual statement to Lemma~\ref{projfunc} is stated in \cite[Proposition 3.1.1(3)]{vdb} (in terms of injective objects), but no detailed proof is given.

\begin{lemma}\label{projfunc} Let $\CCC$ be an abelian category with enough projectives. Let $\PPP$ be a full subcategory consisting of projective objects in $\CCC$ such that every object in $\CCC$ has a projective resolution by objects in $\PPP$. Given an additive functor $\FFF: \PPP \sra \CCC$, there is a unique (up to natural isomorphism of functors) extension of $\FFF$ to a functor $\tilde{\FFF}: \CCC \sra \CCC$ which is right exact. \pushQED{\qed} \qedhere \popQED
\end{lemma}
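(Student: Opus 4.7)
The plan is to mimic the standard construction by which a right exact functor is determined by its values on projectives (the same idea as constructing left derived functors, but in reverse). Given $M \in \CCC$, choose a projective presentation $P_1 \xrightarrow{d} P_0 \to M \to 0$ with $P_0, P_1 \in \PPP$, which exists by hypothesis. Define
\[
\tilde{\FFF}(M) := \coker\bigl( \FFF(P_1) \xrightarrow{\FFF(d)} \FFF(P_0)\bigr).
\]
For an object $P \in \PPP$, apply this construction to the trivial presentation $0 \to P \to P \to 0$ to see that $\tilde{\FFF}(P) \cong \FFF(P)$ naturally. To extend $\tilde{\FFF}$ to morphisms, given $\varphi: M \to N$ and chosen presentations for $M$ and $N$, use projectivity of $P_0$ and $P_1$ to lift $\varphi$ to a chain map between the two presentations; apply $\FFF$ and pass to cokernels to define $\tilde{\FFF}(\varphi)$.

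The bulk of the work is checking that everything is independent of the choices. First, for a fixed $M$, any two projective presentations by objects of $\PPP$ are related by a chain map (by projectivity) whose reverse exists as well; standard diagram chasing shows the induced maps on cokernels are mutually inverse isomorphisms, so $\tilde{\FFF}(M)$ is well-defined up to canonical isomorphism. Next, for a morphism $\varphi: M \to N$, any two chain-map lifts $(\varphi_0,\varphi_1)$ and $(\varphi_0',\varphi_1')$ differ by a chain homotopy $h: P_0 \to Q_1$ with $\varphi_0 - \varphi_0' = d_N h$; applying $\FFF$ and projecting to $\coker \FFF(d_N)$ kills the difference, so $\tilde{\FFF}(\varphi)$ is well-defined. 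Functoriality (preservation of identities and composition) then follows by choosing compatible lifts and again using the homotopy argument to absorb any discrepancy.

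For right exactness, suppose $0 \to M' \to M \to M'' \to 0$ is short exact. By the horseshoe lemma applied in $\PPP$ (which works because we can choose presentations of $M'$ and $M''$ in $\PPP$ and combine them into a presentation $P_1' \oplus P_1'' \to P_0' \oplus P_0'' \to M \to 0$ with entries in $\PPP$), we obtain a short exact sequence of two-term resolutions. Applying $\FFF$ preserves the direct-sum structure (since $\FFF$ is additive), and then taking cokernels is right exact; the snake lemma yields exactness of $\tilde{\FFF}(M') \to \tilde{\FFF}(M) \to \tilde{\FFF}(M'') \to 0$. Finally, for uniqueness, if $\GGG: \CCC \to \CCC$ is any right exact functor with $\GGG|_{\PPP} \cong \FFF$, then applying $\GGG$ to a presentation $P_1 \to P_0 \to M \to 0$ gives $\GGG(P_1) \to \GGG(P_0) \to \GGG(M) \to 0$ exact, so $\GGG(M) \cong \coker(\FFF(P_1) \to \FFF(P_0)) = \tilde{\FFF}(M)$; naturality of this isomorphism in $M$ follows from the same lifting argument used above.

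The main obstacle will be the careful bookkeeping in the well-definedness step: one must show that the isomorphism $\tilde{\FFF}(M) \cong \tilde{\FFF}(M)'$ arising from two different presentations is itself independent of the intermediate lifts, so that the whole construction can be made into an honest functor (not merely a functor on a category of chosen presentations). This is the standard homotopy-of-lifts argument, and once set up it makes both functoriality and uniqueness routine, so no new ideas beyond the usual derived-functor machinery are required.
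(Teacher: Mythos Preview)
Your proposal is correct and follows essentially the same approach as the paper: the paper defers the detailed proof to the author's thesis but sketches exactly the construction you give (fix a presentation $P_1 \to P_0 \to M \to 0$, define $\tilde{\FFF}(M) = \coker \FFF(d_0)$, use the trivial presentation for $P \in \PPP$, lift morphisms via projectivity, and invoke the homotopy-of-lifts argument for well-definedness). Your treatment of right exactness via the horseshoe lemma and uniqueness via applying a right exact $\GGG$ to a presentation is more explicit than what the paper records, but is the standard argument the paper has in mind.
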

The above lemma follows from standard homological arguments, and a detailed proof is given in the author's PhD thesis \cite{won}. We briefly review the construction of $\tilde{\FFF}$. For each $M\in \CCC$, fix a partial projective resolution
\[ P_1 \overset{d_0}{\ra} P_0 \ra M \ra 0
\]
where $P_0, P_1$ are objects in $\PPP$. Then apply $\FFF$ to yield
\[ \FFF(P_1) \overset{\FFF(d_0)}{\ra} \FFF(P_0) 
\]
and let $\tilde{\FFF}(M) = \coker \FFF(d_0)$ so there is a right exact sequence
\[ \FFF(P_1) \overset{\FFF(d_0)}{\ra} \FFF(P_0) \ra \tilde{\FFF}(M) \ra 0.
\]
For $P \in \PPP$, we always fix the resolution
\[  0 \ra P \ra P \ra 0
\]
so that $\tilde{\FFF}(P) = \FFF(P)$. 

Next we define $\tilde{\FFF}$ on morphisms. Let $f: M \sra N$ be a morphism and let $P_1 \sra P_0 \sra M$ and $Q_1 \sra Q_0 \sra N$ be the partial projective resolutions of $M$ and $N$, respectively. Because $P_0$ and $P_1$ are projective, there exist lifts of $f$, homomorphisms $h_0$ and $h_1$ such that
\begin{align}\label{chdiagram} \xymatrix{ 
P_1 \ar[d]_{h_1} \ar[r] & P_0 \ar[d]_{h_0} \ar[r] & M \ar[d]_{f} \ar[r] & 0 \\
Q_1 \ar[r] & Q_0 \ar[r] & N \ar[r] & 0 
}
\end{align}
commutes. Although $h_0$ and $h_1$ are not necessarily unique, they do induce unique maps on homology (i.e. any choices for $h_0, h_1$ are chain homotopic). Then applying $\FFF$ to this commutative diagram yields
\begin{align*}\xymatrix{ 
\FFF(P_1) \ar[d]_{\FFF(h_1)} \ar[r] & \FFF(P_0) \ar[d]_{\FFF(h_0)} \ar[r] & \tilde{\FFF}(M) \ar[r] & 0 \\
\FFF(Q_1) \ar[r] & \FFF(Q_0) \ar[r] & \tilde{\FFF}(N) \ar[r] & 0 
}
\end{align*}
which induces a unique map $\tilde{\FFF}(f): \tilde{\FFF}(M) \sra \tilde{\FFF}(N)$ such that
\begin{align*}\xymatrix{ 
\FFF(P_1) \ar[d]_{\FFF(h_1)} \ar[r] & \FFF(P_0) \ar[d]_{\FFF(h_0)} \ar[r] & \tilde{\FFF}(M) \ar[d]_{\tilde{\FFF}(f)} \ar[r] & 0 \\
\FFF(Q_1) \ar[r] & \FFF(Q_0) \ar[r] & \tilde{\FFF}(N) \ar[r] & 0 
}
\end{align*}
commutes. Further, since homotopic maps stay homotopic after applying a functor, and since any choices of $h_0, h_1$ were homotopic, therefore $\FFF(h_0)$ and $\FFF(h_1)$ are also unique up to homotopy. Hence, the induced map on zeroth homology, $\tilde{\FFF}(f)$ is well-defined.

As a corollary of the above lemma, we prove that we can check if two categories are equivalent by checking if they are equivalent on a full subcategory of projective objects.

\begin{corollary} \label{subequiv} Let $\CCC$ and $\CCC'$ be abelian categories with enough projectives. Let $\PPP$ (respectively $\PPP'$) be a full subcategory of projective objects such that every object of $\CCC$ (respectively $\CCC'$) has a projective resolution by objects of $\PPP$ (respectively $\PPP'$). Then if $\FFF: \PPP \sra \PPP'$ is an equivalence of categories, then there exists an equivalence of categories $\widetilde{\FFF}:\CCC \sra \CCC'$ which extends $\FFF$.
\end{corollary}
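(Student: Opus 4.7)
The plan is to apply Lemma~\ref{projfunc} twice and then deduce the equivalence via its uniqueness clause. Let $\GGG \colon \PPP' \sra \PPP$ be a quasi-inverse to the equivalence $\FFF \colon \PPP \sra \PPP'$. Although Lemma~\ref{projfunc} is stated with source and target both equal to $\CCC$, the same construction---cokernel of the functor applied to a chosen partial projective resolution---works verbatim when the target is an arbitrary abelian category. So, composing with the inclusions $\PPP' \hookrightarrow \CCC'$ and $\PPP \hookrightarrow \CCC$, I would extend $\FFF$ and $\GGG$ to right exact functors $\widetilde{\FFF} \colon \CCC \sra \CCC'$ and $\widetilde{\GGG} \colon \CCC' \sra \CCC$, each of which restricts to the original functor on $\PPP$ or $\PPP'$.

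I would then show $\widetilde{\GGG} \circ \widetilde{\FFF} \cong \Id_{\CCC}$, and symmetrically the other composite is naturally isomorphic to $\Id_{\CCC'}$. The composition of two right exact functors is right exact, so $\widetilde{\GGG} \circ \widetilde{\FFF}$ is a right exact functor $\CCC \sra \CCC$. Its restriction to $\PPP$ equals $\GGG \circ \FFF$, which by hypothesis is naturally isomorphic to $\Id_{\PPP}$. On the other hand, $\Id_{\CCC}$ is exact and its restriction to $\PPP$ is also $\Id_{\PPP}$. Hence both $\widetilde{\GGG} \circ \widetilde{\FFF}$ and $\Id_{\CCC}$ are right exact extensions of (functors naturally isomorphic to) the same additive functor on $\PPP$, so by Lemma~\ref{projfunc} they are naturally isomorphic. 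The analogous argument on the other side shows $\widetilde{\FFF} \circ \widetilde{\GGG} \cong \Id_{\CCC'}$, completing the proof that $\widetilde{\FFF}$ is an equivalence.

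The main technical point is to make precise the uniqueness statement in the form used above: any two right exact functors that agree \emph{up to natural isomorphism} on $\PPP$ must already be naturally isomorphic on all of $\CCC$. Lemma~\ref{projfunc} is phrased for a fixed $\FFF \colon \PPP \sra \CCC$, so one should observe that its construction depends on the restriction to $\PPP$ only up to natural isomorphism. Concretely, given $\HHH_1, \HHH_2 \colon \CCC \sra \CCC$ right exact with a natural isomorphism $\eta \colon \HHH_1|_{\PPP} \Rightarrow \HHH_2|_{\PPP}$, applying $\eta$ to the diagram \eqref{chdiagram} for each $M \in \CCC$ and passing to cokernels produces a natural isomorphism $\HHH_1 \cong \HHH_2$; this follows from the usual uniqueness-up-to-homotopy of lifts between projective resolutions. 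Once this sharpening of the uniqueness clause is in place, the rest of the argument is formal.
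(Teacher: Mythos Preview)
Your proposal is correct and follows essentially the same approach as the paper: extend $\FFF$ and its quasi-inverse $\GGG$ via Lemma~\ref{projfunc}, then invoke the uniqueness clause to identify $\widetilde{\GGG}\circ\widetilde{\FFF}$ with $\Id_{\CCC}$ (and symmetrically). In fact you are slightly more careful than the paper in two places---you note explicitly that the composite of right exact functors is right exact, and you spell out that the uniqueness in Lemma~\ref{projfunc} must be read as ``unique up to natural isomorphism among right exact functors whose restriction to $\PPP$ is naturally isomorphic to the given one''---both of which the paper uses tacitly.
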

\begin{proof} Suppose $\FFF: \PPP \sra \PPP'$ is an equivalence of categories with quasi-inverse $\GGG: \PPP' \sra \PPP$. We may regard $\FFF$ as a functor $\FFF: \PPP \sra \CCC'$ and $\GGG$ as a functor $\GGG: \PPP' \sra \CCC$. Now by Lemma~\ref{projfunc}, there exist functors $\widetilde{\FFF}: \CCC \sra \CCC'$ and $\widetilde{\GGG}: \CCC' \sra \CCC$ which extend the functors $\FFF$ and $\GGG$.

Now $\GGG\circ \FFF: \PPP \sra \PPP$ is naturally isomorphic to the identity functor $\Id_{\PPP}$. Consider the composition $\widetilde{\GGG} \circ \widetilde{\FFF}:\CCC \sra \CCC$. Then $\widetilde{\GGG} \circ \widetilde{\FFF}$ is an extension of $\GGG \circ \FFF$ to the category $\CCC$ so by Lemma~\ref{projfunc} is the unique such extension up to natural isomorphism. Since $\Id_{\CCC}$ is an extension of $\Id_{\PPP}$, we conclude that $\widetilde{\GGG} \circ \widetilde{\FFF} \cong \Id_{\CCC}$. Similarly, $\widetilde{\GGG} \circ \widetilde{\FFF} \cong \Id_{\CCC'}$ and hence, $\CCC \equiv \CCC'$.
\end{proof}

We have shown that in order to define a functor on a category, it suffices to construct a functor on ``enough" of the projective objects in that category. In the next section, we will construct autoequivalences of $\gr A$ in this way. We now show that, in fact, the subcategory consisting of only canonical rank one projective modules is big enough, as additive functors defined on these objects extend uniquely to direct sums.

\begin{lemma} \label{projfunc2}Let $\CCC$ be an abelian category. Let $\RRR$ be the full subcategory of $\gr A$ consisting of the canonical rank one projective modules and let $\PPP$ be the full subcategory of $\gr A$ consisting of all finite direct sums of canonical rank one projective modules. If $\FFF: \RRR \sra \CCC$ is an additive functor, then $\FFF$ extends to an additive functor $\widetilde{\FFF}: \PPP \sra \CCC$.
\end{lemma}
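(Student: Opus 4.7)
The plan is to carry out the standard ``additive closure'' construction: use a fixed biproduct decomposition on each object of $\PPP$ and extend $\FFF$ by applying it entry-wise to the matrix representations of morphisms. This relies only on the additivity of $\FFF$ and the universal property of biproducts.

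First I would choose, for each object $M$ of $\PPP$, a specific biproduct decomposition $M = \bigoplus_{i=1}^{n_M} P^M_i$ with each $P^M_i \in \RRR$, together with the associated split inclusions $\iota^M_i: P^M_i \sra M$ and projections $\pi^M_i: M \sra P^M_i$ satisfying $\pi^M_j \iota^M_i = \delta_{ij}$ and $\sum_i \iota^M_i \pi^M_i = \Id_M$. For an object $P$ that already lies in $\RRR$ I take the trivial one-summand decomposition, so that the extension will literally restrict to $\FFF$ on $\RRR$ rather than just up to natural isomorphism. Then define $\widetilde{\FFF}(M) = \bigoplus_i \FFF(P^M_i)$ in $\CCC$, which makes sense because $\CCC$ is abelian and in particular has finite biproducts.

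Next I would define $\widetilde{\FFF}$ on morphisms. Given $f: M \sra N$ in $\PPP$, the fixed decompositions let us extract the matrix entries $f_{ji} := \pi^N_j \circ f \circ \iota^M_i: P^M_i \sra P^N_j$, each of which is a morphism in $\RRR$ on which $\FFF$ is defined. Set $\widetilde{\FFF}(f)$ to be the unique morphism $\widetilde{\FFF}(M) \sra \widetilde{\FFF}(N)$ whose matrix with respect to the given biproduct decompositions is $(\FFF(f_{ji}))$. For morphisms internal to $\RRR$, the one-entry matrix recovers $\FFF$ exactly, so $\widetilde{\FFF}|_\RRR = \FFF$.

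Finally I would verify that $\widetilde{\FFF}$ is an additive functor. Preservation of identities is immediate since $\id_M$ has the identity matrix and $\FFF$ fixes identity morphisms. For composition $g\circ f$ with $g: N \sra L$, the biproduct identities give $(g \circ f)_{ki} = \sum_j g_{kj} \circ f_{ji}$; because $\FFF$ is additive and functorial on $\RRR$, we get $\FFF((g \circ f)_{ki}) = \sum_j \FFF(g_{kj}) \circ \FFF(f_{ji})$, which is exactly the $(k,i)$-entry of the product matrix defining $\widetilde{\FFF}(g) \circ \widetilde{\FFF}(f)$. Additivity $\widetilde{\FFF}(f+g) = \widetilde{\FFF}(f) + \widetilde{\FFF}(g)$ follows because the matrix of $f+g$ is the componentwise sum and $\FFF$ is additive on each Hom-set of $\RRR$.

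There is no serious obstacle, as this is a standard construction; the only mild subtlety is that the choice of decompositions is non-canonical, so strictly speaking $\widetilde{\FFF}$ depends on these choices, but any two such choices produce naturally isomorphic extensions via the canonical isomorphisms between different biproduct decompositions of the same object. This suffices for all subsequent applications, where we need only the existence of an additive extension.
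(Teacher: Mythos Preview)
Your proposal is correct and follows essentially the same approach as the paper: both fix a direct-sum decomposition of each object of $\PPP$, define $\widetilde{\FFF}$ on objects by applying $\FFF$ summand-wise, and on morphisms by applying $\FFF$ entry-wise to the matrix of components, then verify functoriality using additivity of $\FFF$. Your version is slightly more careful in discussing the non-canonicity of the chosen decompositions, but the content is the same standard additive-closure construction.
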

\begin{proof}We begin by defining $\widetilde{\FFF}$ on objects. For $P \in \PPP$, choose canonical rank one projective modules $P_i$ and write $P = \bigoplus_{i = 1}^n P_i$. Define $\widetilde{\FFF}(P) = \bigoplus_{i=1}^n \FFF (P_i)$.

We now define $\widetilde{\FFF}$ on morphisms. Let $P, Q, R \in \PPP$ and write $P = \bigoplus_{i = 1}^n P_i$, $Q = \bigoplus_{j = 1}^m Q_j$, and $R = \bigoplus_{k = 1}^r R_k$. First we observe that
\[ \Hom_{\gr A}(P,Q) = \Hom_{\gr A} \left( \bigoplus_{i = 1}^n P_i, \bigoplus_{j = 1}^m Q_j\right) \cong \bigoplus_{i = 1}^n  \bigoplus_{j = 1}^m \Hom_{\gr A} \left( P_i, Q_j\right)
\]
so we can represent $\varphi \in \Hom_{\gr A}(P,Q)$ as a sum $\varphi = \sum_{i=1}^n \sum_{j =1}^m \varphi_{i,j}$ where $\varphi_{i,j} \in \Hom_{\gr A}(P_i, Q_j)$. Define 
\[ \widetilde{\FFF}(\varphi) = \sum_{i =1}^n \sum_{j=1}^m \FFF(\varphi_{i,j}).
\]

Now we can write $\Id_P = \sum_{i=1}^n \Id_{P_i}$ and use the fact that $\FFF$ is a functor to deduce that 
\[\widetilde{\FFF}\left(\Id_P\right) = \sum_{i=1}^n \FFF\left(\Id_{P_i}\right) = \sum_{i=1}^n \Id_{\FFF(P_i)} = \Id_{\widetilde{\FFF}(P)}.
\]
We now check that $\widetilde{\FFF}$ preserves compositions. If $\psi \in \Hom_{\gr A}(Q,R)$, then we can write
\[ \psi \circ \varphi = \sum_{i=1}^n \sum_{k=1}^r (\psi \circ \varphi)_{i,k} = \sum_{i=1}^n \sum_{j=1}^m \sum_{k=1}^r \psi_{j,k} \circ \varphi_{i,j}
\]
so, since $\FFF$ is an additive functor,
\begin{align*} \widetilde{\FFF}(\psi \circ \varphi) &= \sum_{i=1}^n \sum_{k=1}^r \FFF\left((\psi \circ \varphi)_{i,k}\right) = \sum_{i=1}^n \sum_{j=1}^m \sum_{k=1}^r \FFF\left( \psi_{j,k} \circ \varphi_{i,j} \right) \\
&= \sum_{i=1}^n \sum_{j=1}^m \sum_{k=1}^r \FFF(\psi_{j,k}) \circ \FFF(\varphi_{i,j}) \\
&= \sum_{j=1}^m \sum_{k=1}^r \FFF( \psi_{j,k}) \circ \sum_{i=1}^n \sum_{j=1}^m \FFF(\varphi_{i,j}) = \widetilde{\FFF}(\psi) \circ \widetilde{\FFF}(\varphi).
\end{align*}
Hence, $\widetilde{\FFF}$ preserves compositions of morphisms and so $\widetilde{\FFF}: \PPP \sra \CCC$ is a functor extending $\FFF$.
\end{proof}

The remaining results in this section give the technical tools used to construct autoequivalences of $\gr A$ switching $X$ and $Y$.

\begin{lemma} \label{projfunc3} Let $\CCC$ be an abelian category with enough projectives, let $\PPP$ be a full subcategory of $\CCC$ consisting of projective objects, and let $\DDD \subseteq \CCC$ be a full subcategory which is closed under subobjects. Suppose that for every $M \in \PPP$, there exists a unique smallest subobject $N \subseteq M$ such that $M/N \in \DDD$. Write $N = \FFF(M)$. Then there is an additive functor $\FFF: \PPP \sra \CCC$ where for each projective $P$, $\FFF(P)$ is as defined above, with the action of $\FFF$ on morphisms being restriction.
\end{lemma}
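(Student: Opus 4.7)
The plan is to check three things, in order: first, that for any morphism $\varphi \colon M \sra M'$ in $\PPP$ the restriction of $\varphi$ to $\FFF(M)$ factors through $\FFF(M')$; second, that these restrictions assemble into a functor; third, that this functor is additive. The projectivity assumption on $\PPP$ is not actually used inside the argument; it is only needed for the statement to fit into the setup developed alongside Lemmas~\ref{projfunc} and \ref{projfunc2}.

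The main (and really the only nontrivial) step is the first one. Given $\varphi \colon M \sra M'$ with $M, M' \in \PPP$, I would form the composition
\[
M \overset{\varphi}{\ra} M' \ra M'/\FFF(M'),
\]
and let $K$ be its kernel. By the first isomorphism theorem $M/K$ embeds in $M'/\FFF(M')$, which lies in $\DDD$ by definition of $\FFF(M')$. Since $\DDD$ is closed under subobjects, $M/K \in \DDD$. But $\FFF(M)$ was chosen to be the \emph{smallest} subobject of $M$ with quotient in $\DDD$, so $\FFF(M) \subseteq K$, which is exactly the statement that $\varphi$ sends $\FFF(M)$ into $\FFF(M')$. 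Define $\FFF(\varphi)$ to be the restriction $\varphi|_{\FFF(M)} \colon \FFF(M) \sra \FFF(M')$.

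Once this is established, functoriality is essentially automatic: the identity morphism of $M$ restricts to the identity morphism of $\FFF(M)$, and if $\psi \colon M' \sra M''$ is another morphism in $\PPP$ then $(\psi \circ \varphi)|_{\FFF(M)} = \psi|_{\FFF(M')} \circ \varphi|_{\FFF(M)}$ because both sides agree on the subobject $\FFF(M) \hookrightarrow M$. For additivity, given $\varphi_1, \varphi_2 \colon M \sra M'$, one chases the definition of $\varphi_1 + \varphi_2$ through the biproduct description $\nabla_{M'} \circ (\varphi_1 \oplus \varphi_2) \circ \Delta_M$ and uses naturality of $\Delta$ and $\nabla$ with respect to the inclusions $\FFF(M) \hookrightarrow M$ and $\FFF(M') \hookrightarrow M'$ to conclude that $(\varphi_1+\varphi_2)|_{\FFF(M)} = \varphi_1|_{\FFF(M)} + \varphi_2|_{\FFF(M)}$.

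I expect no real obstacle beyond taking care with the first step; the subtle point is simply recognising that the minimality in the defining property of $\FFF(M)$ is the correct lever, applied to the subobject $\varphi^{-1}(\FFF(M'))$, rather than trying to argue directly about images.
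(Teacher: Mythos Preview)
Your proposal is correct and follows essentially the same approach as the paper: both arguments use that $\varphi^{-1}(\FFF(M'))$ (your $K$) is a subobject of $M$ with quotient in $\DDD$, then invoke minimality of $\FFF(M)$ to conclude $\FFF(M) \subseteq \varphi^{-1}(\FFF(M'))$. Your write-up is somewhat more careful about the functoriality and additivity checks than the paper, which simply declares them easy, and your remark that projectivity is not used in the argument itself is a correct observation.
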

\begin{proof} Let $P$ and $Q$ be projective objects and $f \in \Hom_{\CCC}(P,Q)$. If we can show that $f(\FFF(P)) \subseteq \FFF(Q)$ then $f|_{\FFF(P)}: \FFF(P) \sra \FFF(Q)$ is a well-defined restriction, and it is easy to see the functor $\FFF: \PPP \sra \CCC$ defined as above is additive.

Now $Q/\FFF(Q) \in \DDD$ by the definition of $\FFF(Q)$ and $P/f^{-1}(\FFF(Q))$ embeds into $Q/\FFF(Q)$. Since $\DDD$ is closed under subobjects, $P/f^{-1}(\FFF(Q)) \in \DDD$. Then $\FFF(P) \subseteq f^{-1}(\FFF(Q))$ because $\FFF(P)$ was defined to be the unique smallest $N \subseteq P$ such that $P/N \in \DDD$. Hence, $f(\FFF(P)) \subseteq \FFF(Q)$, as desired.
\end{proof}

The preceding lemma is the main tool we will use to construct our autoequivalence. We will construct a full subcategory $\DDD$ of $\gr A$ and a functor $\iota_0$ that maps a rank one projective module to the smallest kernel of morphisms to elements of $\DDD$.

\begin{proposition}\label{projfunc4} Let $\CCC$ be an abelian $\kk$-linear category. Let $\III = \{I_1, \ldots, I_n \}$ be a finite set of indecomposable objects in $\CCC$ and let $\DDD \subseteq \CCC$ be the full subcategory consisting of all finite direct sums of elements of $\III$, possibly with repeats. Suppose further for every $C \in \CCC$ and every $D \in \DDD$ that $\Hom_{\CCC}(C,D)$ is finite-dimensional and that $\DDD$ is closed under subobjects. Then for each $M \in \CCC$, there exists a unique smallest $N \subseteq M$ such that $M/N \in \DDD$.
\end{proposition}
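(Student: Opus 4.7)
The plan is to realize $N$ as the kernel of a universal morphism into $\DDD$ built from the indecomposables in $\III$. For each $k = 1, \ldots, n$, set $V_k = \Hom_\CCC(M, I_k)$, which is a finite-dimensional $\kk$-vector space by hypothesis; fix a $\kk$-basis $\varphi_{k,1}, \ldots, \varphi_{k, d_k}$ of $V_k$, and assemble these into a single morphism
\[ \Phi_k \colon M \ra I_k^{\oplus d_k} \]
whose $j$-th component is $\varphi_{k,j}$. Summing across $k$ gives
\[ \Phi \colon M \ra D_0 := \bigoplus_{k=1}^n I_k^{\oplus d_k}, \]
and $D_0 \in \DDD$ by the very definition of $\DDD$. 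I would then set $N = \ker \Phi$.

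First I would check that $M/N \in \DDD$: since $M/N$ is (isomorphic to) the image of $\Phi$ inside $D_0$, it is a subobject of an object in $\DDD$, and hence lies in $\DDD$ by the subobject-closure hypothesis. The substantive step is universality. Let $N' \subseteq M$ be any subobject with $M/N' \in \DDD$; using the definition of $\DDD$ write $M/N' \cong \bigoplus_{k=1}^n I_k^{\oplus e_k}$, and for each admissible $(k,i)$ let $\pi_{k,i} \colon M \ra I_k$ be the composition of the quotient $M \to M/N'$ with the $i$-th projection onto the $I_k$-summand. Each $\pi_{k,i}$ is an element of $V_k$, hence a $\kk$-linear combination of the basis $\varphi_{k,1}, \ldots, \varphi_{k, d_k}$, and therefore factors through $\Phi_k$ (explicitly, via the morphism $I_k^{\oplus d_k} \to I_k$ given by the corresponding linear combination of projections). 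In particular $\ker \pi_{k,i} \supseteq \ker \Phi_k \supseteq N$. Intersecting over all $(k,i)$ and using that $\ker(M \to M/N') = N'$ gives $N \subseteq N'$.

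Uniqueness of a smallest such $N$ then follows formally: two minimal subobjects with this property must be contained in each other. I do not expect a substantial obstacle here; the whole argument rests on the finite-dimensionality of $\Hom_\CCC(M, I_k)$, which is precisely what allows a potentially infinite collection of test maps $M \to I_k$ to be captured by the single universal morphism $\Phi_k$ into the finite sum $I_k^{\oplus d_k}$, keeping the target $D_0$ inside $\DDD$ rather than forcing infinite direct sums (which need not lie in $\DDD$).
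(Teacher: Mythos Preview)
Your proof is correct and follows essentially the same approach as the paper's own argument. The paper defines $J_i = \bigcap_{j} \ker \varphi_{i,j}$ for each $I_i$ and then $N = \bigcap_i J_i$, which is exactly your $\ker \Phi$; the verification that $N \subseteq N'$ for any $N'$ with $M/N' \in \DDD$ is likewise carried out by decomposing the quotient map into components landing in the $I_k$ and observing each lies in the span of the chosen basis, just as you do.
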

\begin{proof}
Let $M$ be an object in $\CCC$. By hypothesis, for each $1 \leq i \leq n$, $\Hom_{\CCC}(M, I_i)$ is finite-dimensional, say spanned by $\varphi_{i_1}, \ldots, \varphi_{i_d}$. Note that for $1 \leq j \leq d$, $M/\ker \varphi_{i_j} \subseteq I_i$ is an object in $\DDD$, because $\DDD$ is closed under subobjects.

Define $J_i = \cap_{j=1}^d \ker \varphi_{i_j}$. First, since $M/J_i \subseteq \oplus_{j=1}^d M/ \ker \varphi_{i_j}$ and $\DDD$ is closed under direct sums and subobjects, $M/J_i \in \DDD$. Further, for any $\psi \in \Hom_{\CCC}(M,I_i)$, $J_i \subseteq \ker \psi$. 

Now consider the intersection $N = \cap_{i=1}^n J_i \subseteq M$. Again, since each $M/J_i \in \DDD$, we have $M/N \in \DDD$. To show that $N$ is the unique smallest such object, let $L = \oplus_{j=1}^r I_{\alpha_j}$ be an object in $\DDD$ and let $\psi \in \Hom_{\CCC}(M, L)$. Because $\CCC$ is an abelian category, $ \Hom_{\CCC}(M, L) = \Hom_{\CCC}(M, \oplus_{j=1}^r I_{\alpha_j}) = \oplus_{j=1}^r \Hom_{\CCC}(M, I_{\alpha_j})$. But as $N = \cap_{i=1}^n \cap_{j=1}^d \ker \varphi_{i_j}$, therefore $N \subseteq \ker \psi$.
\end{proof}

\section{The Picard group of $\gr A$}
\label{sec:pic}

In this section, we determine the Picard group of $\gr A$. Let $\ZZ_{\fin}$ be the group of finite subsets of $\ZZ$ with operation $\oplus$ given by exclusive or. For the first Weyl algebra, $A_1$, in \cite[Corollary 5.11]{sierra} Sierra computed that 
\[ \Pic(\gr A_1) \cong \ZZ_\fin \rtimes D_\infty.
\] 
The subgroup of $\Pic(\gr A_1)$ isomorphic to $D_\infty$ is generated by the shift functor $\SSS_{A_1}$ and the autoequivalence reversing the graded structure of $A_1$. For quadratic polynomials $f$, $\gr A(f)$ is still equipped with both a shift functor $\SSS_{A(f)}$ as well as the grading-reversing autoequivalence $\omega$. We therefore expect $D_\infty$ to appear as a subgroup of $\Pic(\gr A(f))$. 

The subgroup of $\Pic(\gr A_1)$ isomorphic to $\ZZ_\fin$ is generated by autoequivalences that Sierra calls \emph{involutions} of $\gr A_1$. In section~\ref{sec:iotas}, we will construct analogous involutions of $\gr A(f)$, and so we will show that for any quadratic polynomial $f \in \kk[z]$, 
\[\Pic(\gr A(f)) \cong \ZZ_\fin \rtimes D_\infty \cong \Pic(\gr A_1).
\]

\subsection{The rigidity of $\gr A$} \label{rigid}

In this section, we will show that $\gr A$ exhibits the same sort of rigidity as $\gr A_1$. The general structure of our arguments parallel \cite[\S 5]{sierra}. In particular, we will first prove that an autoequivalence of $\gr A$ is determined by its action on only the simple modules supported at $\ZZ \cup \ZZ -\m $. We will then prove that any autoequivalence of $\gr A$ must permute the simple modules in a rigid way.

We begin by proving an analogue of \cite[Lemma 5.1]{sierra}. Recall from section~\ref{sec:graded}, that for a $\ZZ$-graded ring $R$, we defined the $\ZZ$-algebra associated to $R$
\[ \ol{R} = \bigoplus_{i,j \in \ZZ} \ol{R}_{i,j}
\]
where $\ol{R}_{i,j} = R_{j-i}$. Recall also that an automorphism $\gamma$ of $\ol{R}$ is called \emph{inner} if for all $m,n \in \ZZ$, there exist $g_m \in \ol{R}_{m,m}$ and $h_n \in \ol{R}_{n,n}$ such that for all $w \in \ol{R}_{m,n}$, $\gamma(w) = g_m w h_n$.

We will first study the automorphisms of $\ol{A}$. Following the notation of Sierra, define $m_{ij} \in \ol{A}_{ij} = A_{j-i}$ to be the canonical $\kk[z]$-module generator of $A_{j-i}$; that is, $m_{ij}$ is $x^{j-i}$ if $j \geq i$ and $y^{i-j}$ if $i > j$.
\begin{lemma}\label{inneraut}
Every automorphism of $\ol{A}$ of degree $0$ is inner.
\end{lemma}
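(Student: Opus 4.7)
The plan is to show that any degree-0 automorphism $\gamma$ of $\ol{A}$ fixes each diagonal subalgebra $\ol{A}_{i,i}$ pointwise and scales each $m_{ij}$ by a scalar $\lambda_{ij} \in \kk^\times$ satisfying the cocycle $\lambda_{ij}\lambda_{jk} = \lambda_{ik}$; the inner data can then be taken to be $g_i := \lambda_{i0}$ and $h_j := \lambda_{0j}$, both scalars.

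Because $\gamma$ has degree zero it preserves each $\ol{A}_{i,j}$. Restricted to $\ol{A}_{i,i} \cong \kk[z_i]$ it is a $\kk$-algebra automorphism, so $\gamma(z_i) = a_i z_i + b_i$ with $a_i \in \kk^\times$. Since $\ol{A}_{i,j}$ is a free rank-one left $\ol{A}_{i,i}$-module on $m_{ij}$, I would write $\gamma(m_{ij}) = r_{ij}(z_i)\, m_{ij}$. The map $p(z_i)\, m_{ij} \mapsto \gamma(p)(z_i)\, r_{ij}(z_i)\, m_{ij}$ must be surjective, and since $p \mapsto p(a_i z_i + b_i)$ is already a bijection of $\kk[z_i]$, this forces $r_{ij}$ to be a unit in $\kk[z_i]$, hence $r_{ij} = \lambda_{ij} \in \kk^\times$. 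Next, applying $\gamma$ to the fundamental commutation $m_{ij} z_j = (z_i + j-i)\, m_{ij}$ and cancelling $\lambda_{ij}$ forces $a_i = a_j =: a$ independent of $i$, and $b_j = b_i + (j-i)(1-a)$.

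The key step is to force $a = 1$ and $b_0 = 0$, using the quadratic shape of $f$. Consider the product relation $m_{0,n}\, m_{n,0} = \prod_{l=0}^{n-1} f(z_0 + l) = \prod_{l=0}^{n-1} (z_0+l)(z_0+l+\m)$; denote this polynomial by $P_n(z_0)$, a monic polynomial of degree $2n$ whose coefficient of $z_0^{2n-1}$ is $n(n-1) + n\m$. Applying $\gamma$ gives the identity $\lambda_{0,n}\lambda_{n,0}\, P_n(z_0) = P_n(a z_0 + b_0)$. Comparing leading terms yields $\lambda_{0,n}\lambda_{n,0} = a^{2n}$, and comparing coefficients of $z_0^{2n-1}$ yields (after dividing by $n$) the linear condition $2 b_0 = (a-1)(n - 1 + \m)$. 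Imposing this at $n = 1$ and $n = 2$ gives $(a-1) \cdot 1 = 0$, so $a = 1$ and hence $b_0 = 0$; thus $b_i = 0$ and $\gamma|_{\kk[z_i]}$ is the identity for every $i$. This is the main obstacle of the proof, where the quadratic nature of $f$ and the specific shift $\sigma(z) = z+1$ both enter.

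Finally, applying $\gamma$ to any product $m_{ij}\, m_{jk} = P_{i,j,k}(z_i)\, m_{ik}$ (nonzero since $A$ is a domain) and using $\gamma|_{\kk[z_i]} = \mathrm{id}$ yields the cocycle $\lambda_{ij}\lambda_{jk} = \lambda_{ik}$; together with $\lambda_{00} = 1$ from $\gamma(1_0) = 1_0$. Setting $g_i := \lambda_{i0} \in \kk^\times \subseteq \ol{A}_{i,i}$ and $h_j := \lambda_{0j} \in \kk^\times \subseteq \ol{A}_{j,j}$, I would verify on a typical element $w = p(z_i)\, m_{ij}$ that $g_i w h_j = \lambda_{i0} \lambda_{0j}\, p(z_i)\, m_{ij} = \lambda_{ij}\, p(z_i)\, m_{ij} = \gamma(w)$, using that $g_i, h_j$ are scalars and that $\gamma$ fixes $p(z_i)$. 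This exhibits $\gamma$ as inner and completes the proof.
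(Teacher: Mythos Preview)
Your proof is correct and follows essentially the same three-step structure as the paper's: show each $m_{ij}$ is scaled by a unit of $\kk[z]$ (hence a scalar), show $\gamma$ fixes each diagonal $\ol{A}_{i,i}$ pointwise, and then extract the cocycle $\lambda_{ij}\lambda_{jk}=\lambda_{ik}$ to exhibit $\gamma$ as inner. The only difference is in the computation forcing $a=1$, $b=0$: the paper works at a fixed $n$ using the single relation $m_{n,n+1}m_{n+1,n}=z(z+\m)$ together with the difference $m_{n,n+1}m_{n+1,n}-m_{n,n-1}m_{n-1,n}=(2z+\m-1)\cdot 1_n$, whereas you first link all diagonals via the commutation $m_{ij}z_j=(z_i+j-i)m_{ij}$ and then compare the $z^{2n-1}$ coefficients of $x^ny^n$ at $n=1,2$; both routes are short and yield the same conclusion.
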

\begin{proof} Let $\gamma$ be an automorphism of $\ol{A}$. Since $\gamma$ is an automorphism, for all $i,j \in \ZZ$ there is a unit $\zeta_{ij} \in \kk[z]$ such that $\gamma(m_{ij}) = \zeta_{ij}m_{ij}$. Thus, $\zeta_{ij} \in \kk^*$. Further, for all $n \in \ZZ$, we have $\zeta_{nn}=1$. Denote by $\gamma_n$ the restriction of $\gamma$ to $\ol{A}_{nn} = \kk[z]$. Since $\gamma_n$ is an automorphism of $\kk[z]$, we know it is of the form $z \mapsto a_nz + b_n$ for some $a_n,b_n \in \kk$.

Applying $\gamma$ to the identity
\[ m_{n,n+1}m_{n+1,n} = z(z+\m ) \cdot 1_n
\]
yields
\[ \zeta_{n,n+1}\zeta_{n+1,n} m_{n,n+1}m_{n+1,n} = [a_n^2 z^2 + a_n(2b_n + \m )z + b_n(b_n+\m )] \cdot1_n.
\]
Since, in addition
\[ \zeta_{n,n+1}\zeta_{n+1,n} m_{n,n+1}m_{n+1,n} = (\zeta_{n,n+1}\zeta_{n+1,n}z^2 + \zeta_{n,n+1}\zeta_{n+1,n}\m z) \cdot1_n,
\]
by comparing coefficients, either $b_n = 0$ or $b_n = -\m $. If $b_n = 0$, then $a_n = \zeta_{n,n+1}\zeta_{n+1,n} = a_n^2$. Since $\zeta_{n,n+1}\zeta_{n+1,n}$ is a unit, therefore $a_n = \zeta_{n,n+1}\zeta_{n+1,n} = 1$. On the other hand, if $b_n = -\m $, then $-a_n = \zeta_{n,n+1}\zeta_{n+1,n} = a_n^2$. In this case, $a_n = -1$. In either case, $\zeta_{n,n+1}\zeta_{n+1,n} = 1$.

Now, apply $\gamma$ to the identity
\begin{equation}\label{xy-yx} m_{n,n+1}m_{n+1,n} - m_{n,n-1}m_{n-1,n} = (2z + \m - 1) \cdot 1_n
\end{equation}
to obtain
\begin{equation}\label{gammaeq} m_{n,n+1}m_{n+1,n} - \zeta_{n,n-1}\zeta_{n-1,n}m_{n,n-1}m_{n-1,n} = (2a_nz + 2 b_n+ \m - 1) \cdot 1_n.
\end{equation}
Subtracting these equations yields
\[
(\zeta_{n,n-1}\zeta_{n-1,n} - 1) m_{n,n-1}m_{n-1,n} = (2z - 2 a_n z - 2b_n) \cdot 1_n
\]
and so
\[
(\zeta_{n,n-1}\zeta_{n-1,n} - 1)(z-1)(z+\m - 1)\cdot 1_n= (2z - 2 a_n z - 2b_n) \cdot 1_n.
\]
If $\zeta_{n,n-1}\zeta_{n-1,n} - 1 \neq 0$, then the left-hand side is quadratic in $z$, but the right-hand side is linear $z$. Hence, $\zeta_{n,n-1}\zeta_{n-1,n} = 1$. But now, comparing equations \eqref{xy-yx} and \eqref{gammaeq} means
\[ \gamma_n(z\cdot 1_n) = z \cdot 1_n.
\]
So for all $n$, $\gamma_n$ is the identity on $\bar{A}_{nn}$. Hence, for any $g \in \kk[z] = \bar{A}_{ii}$ and $i,j \in \ZZ$, we have
\[ \gamma( g \cdot m_{ij}) = \zeta_{ij} g \cdot m_{ij}.
\] 
Now, for all $i,j,l \in \ZZ$, $m_{ij}m_{jl} = h m_{il}$ for some $h \in \kk[z] = \bar{A}_{ii}$. By applying $\gamma$, we conclude $\zeta_{ij}\zeta_{jl} = \zeta_{il}$. So if $v \in \ol{A}_{ij}$, we have $\gamma(v) = \zeta_{ij} v = \zeta_{i0}v\zeta_{0j}$, so $\gamma$ is inner by \cite[Theorem 3.10]{sierra}.
\end{proof}

This technical result allows us to prove an analogue of \cite[Corollary 5.6]{sierra}. As in the case of $\gr A_1$, we can check if two autoequivalences of $\gr A$ are naturally isomorphic by checking on a relatively small set of simple modules. 

\begin{lemma} \label{autosimple} Let $\FFF$ and $\FFF'$ be autoequivalences of $\gr A$. Then $\FFF \cong \FFF'$ if and only if $\FFF(S) \cong \FFF'(S)$ for all simple modules $S$ supported at $\ZZ \cup \ZZ - \m$.
\end{lemma}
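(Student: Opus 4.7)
The forward direction is immediate, so the plan is to concentrate on the converse. Suppose $\FFF(S) \cong \FFF'(S)$ for every graded simple $A$-module $S$ supported on $\ZZ \cup (\ZZ - \m)$. I would set $\GGG := \FFF^{-1} \circ \FFF'$; this is an autoequivalence of $\gr A$ satisfying $\GGG(S) \cong S$ for every integrally supported simple (and, since $\GGG$ permutes the isomorphism classes of graded simples, also $\GGG^{-1}(S) \cong S$ for such $S$). It then suffices to show $\GGG \cong \Id_{\gr A}$.

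My strategy is to identify $\GGG$ as a twist functor in the sense of Theorem~\ref{zalgebra} and then invoke the rigidity of Theorem~\ref{twistfunctors}. Lemma~\ref{inneraut} has already verified the needed hypothesis, namely that every degree-zero automorphism of $\ol{A}$ is inner. So the whole argument reduces to proving
\[
\GGG(A\s{n}) \cong A\s{n} \quad \text{for every } n \in \ZZ,
\]
after which Theorem~\ref{zalgebra} realizes $\GGG$ as a twist functor and Theorem~\ref{twistfunctors} collapses it to the identity, giving $\FFF \cong \FFF'$.

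To establish this isomorphism, I would first note that $\GGG$ preserves projectivity and indecomposability, so $\GGG(A\s{n})$ is an indecomposable finitely generated graded projective. Assuming it is of rank one, Corollary~\ref{projfactors} reduces the problem to matching integrally supported simple factors. The matching comes from the $\Hom$-space computation
\[
\uHom_A\bigl(\GGG(A\s{n}), S\bigr)_j \;\cong\; \uHom_A\bigl(A\s{n}, \GGG^{-1}(S)\bigr)_j \;\cong\; \uHom_A\bigl(A\s{n}, S\bigr)_j
\]
for every integrally supported simple $S$ and every $j \in \ZZ$, using $\GGG^{-1}(S) \cong S$. Combined with Lemmas~\ref{cssc} and \ref{dssc}, which read off whether a graded rank-one submodule of $\QgrA$ surjects onto a given integrally supported simple in terms of the morphisms out of it, this forces $F_j(\GGG(A\s{n})) \cong F_j(A\s{n})$ for each $j$ (and likewise $F_j^{\m}(\GGG(A\s{n})) \cong F_j^{\m}(A\s{n})$ when $\m \notin \ZZ$), so Corollary~\ref{projfactors} delivers $\GGG(A\s{n}) \cong A\s{n}$.

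The hard part will be verifying that $\GGG$ actually preserves rank, i.e.\ that $\GGG(A\s{n})$ is genuinely a rank-one projective. In the non-congruent root case $A$ is hereditary by Theorem~\ref{gldimA}, and it is standard that every finitely generated indecomposable graded projective over a hereditary noetherian domain is rank one, so nothing more is needed. In the multiple and congruent root cases, I would argue more carefully: Corollary~\ref{homIS} bounds $\dim_{\kk}\uHom_A(P,S)_j \leq 1$ whenever $P$ is a rank-one graded submodule of $\QgrA$, and the displayed $\Hom$ identification forces the same bound on $\GGG(A\s{n})$ for every integrally supported simple $S$; combined with the fact that $\GGG(A\s{n})$ is indecomposable projective and that any higher-rank projective summand would (via Lemma~\ref{anyproj} and the classification in Corollary~\ref{projfactors}) enlarge some $\uHom(-,S)_j$ above $1$, this should pin $\GGG(A\s{n})$ down as rank one, completing the plan.
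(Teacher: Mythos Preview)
Your approach is essentially the paper's: set $\GGG = (\FFF')^{-1}\FFF$, use Corollary~\ref{projfactors} to obtain $\GGG(A\s{n}) \cong A\s{n}$, recognize $\GGG$ as a twist functor, and finish with Lemma~\ref{inneraut} plus Theorem~\ref{twistfunctors}. The paper is simply terser on the rank-one preservation you worry about in your last paragraph, asserting $\FFF(P)\cong\FFF'(P)$ for rank-one projectives directly from Corollary~\ref{projfactors}; your extra discussion there, while not fully rigorous as written, is not a departure in method.
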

\begin{proof}Suppose $\FFF(S) \cong \FFF'(S)$ for all simple modules $S$ supported at $\ZZ \cup \ZZ - \m $. By Corollary~\ref{projfactors}, since any rank one projective is determined by its simple factors supported at $\ZZ \cup \ZZ - \m $, we have that $\FFF(P) \cong \FFF'(P)$ for all rank one projectives $P$. In particular, for all integers $n$, $(\FFF')^{-1}\FFF(A \s{n}) \cong A\s{n}$. Hence, $(\FFF')^{-1}\FFF$ is a twist functor. Finally, by Lemma~\ref{inneraut} and Theorem~\ref{twistfunctors}, $(\FFF')^{-1}\FFF \cong \Id_{\gr A}$ and so $\FFF \cong \FFF'$.
\end{proof}

The proof of the preceding lemma also demonstrates what we noticed in section~\ref{sec:proj}---that the structure of $\gr A$ is largely determined by its subcategory of rank one projective modules. We now prove analogues of \cite[Theorem 5.3]{sierra}.

\begin{theorem} \label{cautoequiv} Let $\m \in \NN$ and let $\FFF$ be an autoequivalence of $\gr A$. Then there exist unique integers $a = \pm 1$ and $b$ such that for all $n \in \ZZ$
\[ \{ \FFF(X\s{n}) , \FFF(Y\s{n}) \} \cong \{ X\s{an+b}, Y \s{an+b}\},
\]
and for all $\lambda \in \kk \setminus \ZZ$,
\[ \FFF(M_\lambda) \cong M_{a\lambda + b}.
\]
If $\m > 0$, then additionally
\[ \FFF(Z \s{n}) \cong Z \s{an+b}.
\]
\end{theorem}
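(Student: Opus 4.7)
I would split the proof into three steps, mirroring Sierra's argument for $A_1$ in \cite[Theorem~5.3]{sierra}. First, extension data lets me show that $\FFF$ preserves the cluster structure on simples; second, the $\ZZ$-algebra $\ol{A}$ forces the resulting cluster permutation to be affine; third, the $M_\lambda$ statement follows by reducing to the identity via the subgroup $\langle \SSS, \omega\rangle$ of $\Pic(\gr A)$.

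For Step~1, I would attach to each simple $S$ its Ext-neighborhood
\[ N(S) = \{T \text{ simple} : \uExt_A^1(S,T) \neq 0 \text{ or } \uExt_A^1(T,S) \neq 0\}, \]
an intrinsic categorical invariant. By Lemmas~\ref{mexts} and~\ref{cexts}, $N(M_\lambda) = \{M_\lambda\}$, whereas for integrally-supported $S$, $N(S)$ is the whole cluster $C_n = \{X\s{n}, Y\s{n}\}$ (when $\m = 0$) or $C_n = \{X\s{n}, Y\s{n}, Z\s{n}\}$ (when $\m \in \NNp$). Hence $\FFF$ permutes clusters via some bijection $\pi: \ZZ \to \ZZ$ and independently permutes the $M_\lambda$; moreover in the congruent case, $Z\s{n}$ is intrinsically the unique vertex of $C_n$ adjacent to the other two in the Ext-graph, so $\FFF(Z\s{n}) \cong Z\s{\pi(n)}$. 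Let $\epsilon(n) \in \{0,1\}$ record whether $\FFF$ preserves or exchanges the $X,Y$ labels from $C_n$ to $C_{\pi(n)}$.

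For Step~2, I would transport the canonical diagram $\{A\s{n}\}_{n\in \ZZ}$ through $\FFF$ to obtain a diagram of rank one projectives $P_n := \FFF(A\s{n})$ whose simple-factor data is determined by $(\pi, \epsilon)$ via Corollary~\ref{projfactors}. By Theorem~\ref{zalgebra}, the $\ZZ$-algebra $\bigoplus \Hom_{\gr A}(P_i, P_j)$ must be isomorphic to $\ol{A}$. Using Proposition~\ref{projhom} and Lemma~\ref{projmaxemb}, I would describe every Hom-space between the $P_n$ and every composition of morphisms in terms of structure constants and their $\lcm/\gcd$ patterns; the key point is that $\ol{A}$'s composition incorporates the $\sigma$-twist coming from $xf = \sigma(f)x$ in $A$, and matching this with the untwisted composition law on $\{P_n\}$ (which, by Proposition~\ref{projhom}, is just multiplication in $\kk(z)$) forces a rigid relationship between the consecutive structure constants of $\{P_n\}$. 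Concretely this yields $\pi(n+1) - \pi(n) = a$ for some $a$ independent of $n$, necessarily $a = \pm 1$ by bijectivity of $\pi$, so $\pi(n) = an + b$ for some $b \in \ZZ$.

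For Step~3, I would choose $\GGG \in \langle \SSS, \omega\rangle \leq \Pic(\gr A)$ realizing $(\pi_\GGG, \epsilon_\GGG) = (\pi_\FFF, \epsilon_\FFF)$; explicitly, $\GGG = \SSS^b$ when $a = 1$ and $\GGG = \omega \SSS^{\m - b - 1}$ when $a = -1$, noting that $\omega$ swaps $X,Y$ within each cluster while $\SSS$ does not. Then $\GGG^{-1}\FFF$ fixes every integrally-supported simple up to isomorphism, so Lemma~\ref{autosimple} yields $\GGG^{-1}\FFF \cong \Id_{\gr A}$, hence $\FFF(M_\lambda) \cong \GGG(M_\lambda)$. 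A direct computation using $\SSS(M_\lambda) = M_{\lambda + 1}$ and $\omega(M_\lambda) = M_{\m - \lambda - 1}$ then yields $\GGG(M_\lambda) = M_{a\lambda + b}$, and uniqueness of $(a,b)$ is immediate from inspecting $\pi(0)$ and $\pi(1)$. I anticipate Step~2 to be the main obstacle: ruling out a non-affine $\pi$ with the correct asymptotics requires exploiting the full multiplicative structure of $\ol{A}$, not merely the additive data of graded $\Hom$ or $\Ext$ dimensions between simples, since those simpler invariants are not rigid enough to preclude finite-range reshufflings of cluster labels; only the $\sigma$-twist in the $\ZZ$-algebra composition provides the needed global rigidity.
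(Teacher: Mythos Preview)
Your Step~3 contains a genuine gap. You claim to choose $\GGG \in \langle \SSS, \omega\rangle$ with $(\pi_\GGG, \epsilon_\GGG) = (\pi_\FFF, \epsilon_\FFF)$, but every element of $\langle \SSS, \omega\rangle$ has \emph{constant} $\epsilon$: the shift functor preserves the $X/Y$ labels at every cluster, and $\omega$ swaps them at every cluster. By contrast, $\epsilon_\FFF$ need not be constant---indeed, the involutions $\iota_j$ constructed later in Proposition~\ref{ciota} have $\epsilon(j) = 1$ and $\epsilon(n) = 0$ for $n \neq j$. So in general no such $\GGG$ exists, and you cannot invoke Lemma~\ref{autosimple} to conclude $\GGG^{-1}\FFF \cong \Id_{\gr A}$. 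Without this, your argument produces no control over $\FFF(M_\lambda)$.

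The paper's proof avoids this by treating the integrally-supported simples and the $M_\lambda$ simultaneously rather than separately. After Step~1 (which you have correctly), one has a single bijection $g : \kk \to \kk$ governing both: $g$ restricted to $\ZZ$ is your $\pi$, and $g$ on $\kk \setminus \ZZ$ records the permutation of the $M_\lambda$. The key device is the functor $\FFF_0 = \FFF(- \otimes_{\kk[z]} A)_0$ on $\mod \kk[z]$, which induces a $\kk$-algebra endomorphism $\varphi$ of $\kk[z] \cong \End_{\kk[z]}(\kk[z])$. Tracking the short exact sequence $0 \to \kk[z] \xrightarrow{(z+\lambda)\cdot} \kk[z] \to \kk[z]/(z+\lambda) \to 0$ through $\FFF_0$ shows that $\varphi$ sends $z + \lambda$ to a unit multiple of $z + g(\lambda)$ for every $\lambda \in \kk$; hence $\varphi(z)$ is linear and $g$ is affine on all of $\kk$. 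The condition $g(\ZZ) = \ZZ$ then pins down $a = \pm 1$ and $b \in \ZZ$. This approach never needs to know $\epsilon$, and it makes your Step~2 (which you correctly flagged as the hard part) unnecessary: the affineness of $\pi$ falls out for free as the restriction of $g$ to $\ZZ$.
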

\begin{proof} The proof is essentially the same as Sierra's. First, let $\m = 0$. Since $\FFF$ is an autoequivalence, for any integer $n$ there exists an integer $n'$ such that $\FFF$ maps the pair $\{X\s{n}, Y\s{n}\}$ to the pair $\{X\s{n'}, Y\s{n'}\}$ since by Lemmas~\ref{mexts}, these are the only pairs of simple modules with both nonsplit self-extensions as well as nonsplit extensions by each other. Now let $\m > 0$. In this case, for any integer $n$ there exists an integer $n'$ such $\FFF(Z\s{n}) \cong Z\s{n'}$ since the shifts of $Z$ are the only simple modules that have projective dimension $2$. Then we also see that $\FFF$ maps the pair $\{X\s{n}, Y\s{n}\}$ to the pair $\{X\s{n'}, Y\s{n'}\}$ since these are the unique simple modules which have nonsplit extensions with $Z\s{n'}$.

For any $\m \in \ZZ$, for any $\lambda \in \kk \setminus \ZZ$, there exists a $\mu \in \kk \setminus \ZZ$ such that $\FFF(M_\lambda) \cong M_\mu$ since these are the only simple modules whose only nonsplit extensions are self-extensions. Altogether then, there exists a bijective function $g: \kk \sra \kk$ such that
\begin{enumerate}[(i)]
\item If $\lambda \in \ZZ$, then $g(\lambda) \in \ZZ$ and $\FFF(\{X \s{\lambda}, Y \s{\lambda}\}) \cong \{X \s{g(\lambda)}, Y \s{g(\lambda)}\}$ (and if $\m > 0$, $\FFF(Z \s{\lambda}) \cong Z \s{g(\lambda)}$).

\item If $\lambda \notin \ZZ$, then $g(\lambda) \notin \ZZ$ and $\FFF(M_\lambda) \cong M_{g(\lambda)}$.
\end{enumerate}

Now consider the functor $\FFF_0 = \FFF( - \otimes_{\kk[z]} A)_0: \mod \kk[z] \rightarrow \mod \kk[z]$. Notice that $\FFF_0(\kk[z]) \cong \kk[z]$. Further, for all $\lambda \in \kk$, we have that $\FFF_0(\kk[z]/(z+\lambda)) \cong \kk[z]/(z+ g(\lambda))$. If $\lambda \in \ZZ$, this follows from Lemma~\ref{csupport}, otherwise it follows from the definition of $M_\lambda$ and $M_{g(\lambda)}$. The functor $\FFF_0$ gives a $\kk$-algebra homomorphism $\varphi: \Hom_{\kk[z]}(\kk[z], \kk[z]) \sra \Hom_{\kk[z]}(\FFF_0 \kk[z], \FFF_0\kk[z])$. Identify $\kk[z]$ with $\Hom_{\kk[z]}(\kk[z], \kk[z])$, where $h \in \kk[z]$ corresponds to left multiplication by $h$. 

The functor $\FFF_0$ takes the short exact sequence

\[ 0 \sra \kk[z] \overset{(z+\lambda) \cdot}{\ra} \kk[z] \ra \kk[z]/(z+\lambda) \ra 0
\]
to
\[ 0 \sra \FFF_0\kk[z] \ra \FFF_0\kk[z] \ra \kk[z]/(z+g(\lambda)) \ra 0,
\]
so $\varphi$ maps multiplication by $z+\lambda$ to multiplication by $c(z+g(\lambda))$ for some $c \in \kk^*$. Therefore, $\varphi(z)$ must be linear in $z$, i.e. $\varphi(z) = \gamma z + \delta$ for some $\gamma, \delta \in \kk$. Then 
\[ \gamma z + \delta + \lambda = \varphi(z + \lambda) = c(z+g(\lambda))
\]
and so $g(\lambda) = (\lambda + \delta)/\gamma$. Since $g$ maps $\ZZ$ bijectively to $\ZZ$, we conclude that $\gamma = \pm 1$ and $\delta \in \ZZ$. Take $a = \gamma$ and $b = a \delta$.
\end{proof}

\begin{theorem} \label{dautoequiv} Let $\m \in \kk \setminus \ZZ$, and $\m \notin \ZZ + 1/2$. Let $\FFF$ be an autoequivalence of $\gr A$. Then exactly one of the following is true:
\begin{enumerate}
\item There exists a unique integer $b$ such that for all $n \in \ZZ$
\[ \{\FFF(X_0\s{n} ), \FFF(Y_0 \s{n}) \} \cong \{X_0\s{ n + b}, Y_0 \s{n+b} \},
\]
\[ \{\FFF(X_\m\s{n} ), \FFF(Y_\m \s{n}) \} \cong \{X_\m\s{ n + b}, Y_\m \s{n+b} \},
\]
and for all $\lambda \in \kk \setminus ( \ZZ \cup \ZZ + \m)$
\[ \FFF(M_\lambda) \cong M_{\lambda + b}.
\]

\item There exists a unique integer $b$ such that for all $n \in \ZZ$
\[ \{\FFF(X_0\s{n} ), \FFF(Y_0 \s{n}) \} \cong \{X_\m\s{-n + b}, Y_\m \s{-n+b} \} ,
\]
\[\{\FFF(X_\m\s{n} ), \FFF(Y_\m \s{n}) \} \cong \{X_0\s{-n + b}, Y_0 \s{-n+b} \},
\]
and for all $\lambda \in \kk \setminus ( \ZZ \cup \ZZ + \m)$
\[ \FFF(M_\lambda) \cong M_{-\lambda + \m + b}.
\]
\end{enumerate}
\end{theorem}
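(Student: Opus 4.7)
The plan is to follow the template of Theorem~\ref{cautoequiv}, with the new feature that integrally supported simple modules come in two families (supported at $\ZZ$ and at $\ZZ + \m$), so an autoequivalence may either preserve or swap them. I will produce a bijection $g : \kk \sra \kk$ encoding $\FFF$'s action on supports, show that $g$ is affine via Sierra's $\kk[z]$-level functor trick, and then pin down the coefficients of $g$ by a number-theoretic analysis that uses the hypothesis on $\m$.

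First, by Lemma~\ref{dexts}, the graded simples of $\gr A$ partition into three classes distinguishable by $\Ext$-data: the $0$-pairs $\{X_0\s{n}, Y_0\s{n}\}$, the $\m$-pairs $\{X_\m\s{n}, Y_\m\s{n}\}$ (a unique nonsplit extension between the members of each pair, no self-extensions), and the singletons $M_\lambda$ for $\lambda \not\in \ZZ \cup (\ZZ + \m)$ (nonsplit self-extensions only). Since $\FFF$ preserves these invariants, it must send pairs to pairs (possibly of either family) and $M_\lambda$ to $M_\mu$ for some $\mu$. By Lemma~\ref{dsupport} each pair or singleton is uniquely determined by its support in $\Spec \kk[z]$, so I define $g : \kk \sra \kk$ by the rule that any simple supported at $-\lambda$ is sent by $\FFF$ to a simple supported at $-g(\lambda)$. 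Then $g$ is a bijection of $\kk$ with $g(\ZZ \cup (\ZZ+\m)) = \ZZ \cup (\ZZ+\m)$.

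Next, mimicking Sierra, I consider the right-exact functor $\FFF_0 = \FFF(- \otimes_{\kk[z]} A)_0 : \mod \kk[z] \sra \mod \kk[z]$. Since $\FFF(A)$ is a rank one graded projective, its degree-zero component is a rank-one torsion-free $\kk[z]$-module, giving $\FFF_0(\kk[z]) \cong \kk[z]$. For each $\lambda \in \kk$, the degree-zero $\kk[z]$-module structure of $A/(z+\lambda)A$ and its $\FFF$-image can be read off using Lemma~\ref{dsupport}, giving $\FFF_0(\kk[z]/(z+\lambda)) \cong \kk[z]/(z + g(\lambda))$ uniformly in $\lambda$. Applying $\FFF_0$ to the short exact sequence $0 \sra \kk[z] \overset{(z+\lambda)\cdot}{\sra} \kk[z] \sra \kk[z]/(z+\lambda) \sra 0$, the induced $\kk$-algebra endomorphism of $\Hom_{\kk[z]}(\kk[z], \kk[z]) \cong \kk[z]$ sends $z + \lambda$ to a nonzero scalar multiple of $z + g(\lambda)$. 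This forces $g(\lambda) = \gamma\lambda + \delta$ for some $\gamma \in \kk^*$, $\delta \in \kk$.

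Finally, I would constrain $\gamma$ and $\delta$ using $g(\ZZ \cup (\ZZ + \m)) = \ZZ \cup (\ZZ + \m)$. From $g(0), g(1) \in \ZZ \cup (\ZZ + \m)$ we get $\gamma = g(1) - g(0) \in \ZZ \cup (\ZZ + \m) \cup (\ZZ - \m)$. If $\gamma \in (\ZZ + \m) \cup (\ZZ - \m)$, a routine case analysis on the sign and on $\delta$, varying $n$ in $g(n) \in \ZZ \cup (\ZZ + \m)$, eventually forces $2\m \in \ZZ \cup (\ZZ + \m)$, so $\m \in \tfrac{1}{2}\ZZ$, contradicting $\m \notin \ZZ \cup (\ZZ + \tfrac{1}{2})$. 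Thus $\gamma \in \ZZ$, and bijectivity of $g$ gives $\gamma = \pm 1$. Splitting on whether $\delta \in \ZZ$ or $\delta \in \ZZ + \m$ and performing the analogous analysis of $g(\m)$: the former forces $\gamma = 1$, yielding Case~(1) with $b = \delta$; the latter forces $\gamma = -1$, yielding Case~(2) with $b = \delta - \m$. Uniqueness of $b$ and mutual exclusivity of the cases are immediate since $\delta = g(0)$ lies in exactly one of $\ZZ$ or $\ZZ + \m$. The hard part is this final case analysis: the hypothesis $\m \notin \ZZ + \tfrac{1}{2}$ is used precisely to rule out the mixed possibility $\gamma \in (\ZZ + \m) \cup (\ZZ - \m)$, which would otherwise produce autoequivalences not captured by either of the two stated cases.
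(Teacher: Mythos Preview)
Your proposal is correct and follows essentially the same route as the paper: identify the bijection $g$ on supports via the $\Ext$-classes of simples, linearize $g$ using the functor $\FFF_0 = \FFF(-\otimes_{\kk[z]} A)_0$ on $\mod \kk[z]$, and then analyze which affine maps preserve $\ZZ \cup (\ZZ+\m)$. Your parametrization $g(\lambda)=\gamma\lambda+\delta$ differs only cosmetically from the paper's $g(\lambda)=(\lambda+\delta)/\gamma$, and you actually supply more detail than the paper does on why the slope must be $\pm 1$.

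One small correction to your closing commentary: the hypothesis $\m\notin\ZZ+\tfrac12$ is not used \emph{only} to exclude $\gamma\in(\ZZ+\m)\cup(\ZZ-\m)$. It is also needed in your final split, since $(\gamma,\delta)=(1,\delta)$ with $\delta\in\ZZ+\m$ would give $g(n+\m)\in\ZZ+2\m$, which lands in $\ZZ\cup(\ZZ+\m)$ precisely when $2\m\in\ZZ$; likewise for $(\gamma,\delta)=(-1,\delta)$ with $\delta\in\ZZ$. So the hypothesis does double duty: it forces $\gamma\in\ZZ$ and it pins each sign of $\gamma$ to a single coset for $\delta$. Your ``analogous analysis of $g(\m)$'' already carries this out correctly; only the summary sentence overstates where the hypothesis enters.
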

\begin{proof}This proof is quite similar to the previous one, although the details are slightly messier.

From Lemma~\ref{simples} and Lemma~\ref{dexts}, for any $\lambda \in \kk \setminus (\ZZ \cup \ZZ + \m )$, there exists a $\mu \in \kk \setminus (\ZZ \cup \ZZ + \m )$ such that $\FFF(M_\lambda) \cong M_\mu$, since these are the only simple modules with nonsplit self extensions. Further, by Lemma~\ref{dexts}, for all $n \in \ZZ$ $\FFF$ must map the pair $\{X_0\s{n}, Y_0 \s{n}\}$ to either a pair $\{X_0\s{n'}, Y_0 \s{n'}\}$ or a pair $\{X_\m\s{n'}, Y_\m \s{n'}\}$ for some integer $n'$, since these pairs form the only nonsplit extensions of two nonisomorphic simples. Likewise for the pair $\{X_\m\s{n}, Y_\m \s{n}\}$. That is, there is a bijection $g: \kk \rightarrow \kk$ such that
\begin{enumerate}[(1)]
\item If $\lambda \in \ZZ$, then either
\begin{itemize} 
\item $g(\lambda) \in \ZZ$ and $\FFF(\{X_0 \s{\lambda}, Y_0 \s{\lambda}\}) \cong \{X_0 \s{g(\lambda)}, Y_0 \s{g(\lambda)}\}$ or else
\item $g(\lambda) \in \ZZ +\m$ and $\FFF(\{X_0 \s{\lambda}, Y_0 \s{\lambda}\}) \cong \{X_\m \s{g(\lambda) - \m}, Y_\m \s{g(\lambda) - \m}\}$.
\end{itemize}

\item If $\lambda \in \ZZ + \m$, then either
\begin{itemize}
\item $g(\lambda) \in \ZZ +\m$ and $\FFF(\{X_\m \s{\lambda - \m}, Y_\m \s{\lambda - \m}\}) \cong \{X_\m \s{g(\lambda) - \m}, Y_\m \s{g(\lambda) - \m}\}$ or else 
\item$g(\lambda) \in \ZZ$ and $\FFF(\{X_\m \s{\lambda - \m}, Y_\m \s{\lambda - \m}\}) \cong \{X_0 \s{g(\lambda)}, Y_0 \s{g(\lambda)}\}$.
\end{itemize}

\item If $\lambda \notin (\ZZ \cup \ZZ + \m)$, then $g(\lambda) \notin (\ZZ \cup \ZZ + \m)$ and $\FFF(M_\lambda) \cong M_{g(\lambda)}$.
\end{enumerate}

As in the proof of Lemma~\ref{cautoequiv}, consider the functor $\FFF_0 = \FFF( - \otimes_{\kk[z]} A)_0: \mod \kk[z] \rightarrow \mod \kk[z]$. Just as in the previous proof, there exist $\delta, \gamma \in \kk$ such that $g(\lambda) = (\lambda + \delta)/\gamma$. Since $g$ maps $\ZZ \cup \ZZ + \m$ bijectively to itself, we conclude that $\gamma = \pm 1$. Since we assumed $\m \notin \ZZ +1/2$, if $\gamma = 1$ then $\delta \in \ZZ$, and if $\gamma = -1$ then $\delta \in \ZZ - \m$. 

If $\gamma = 1$, let $b = \delta \in \ZZ$. In this case,
\begin{align*}\{\FFF(X_0\s{n} ), \FFF(Y_0 \s{n}) \} &\cong \{X_0\s{ n + b}, Y_0 \s{n+b} \} \mbox{ and } \\ \{\FFF(X_\m\s{n} ), \FFF(Y_\m \s{n}) \} &\cong \{X_\m\s{ n + b}, Y_\m \s{n+b} \}.
\end{align*}

If $\gamma = -1$, let $b = - \m - \delta$. In this case,
\begin{align*}\{\FFF(X_0\s{n} ), \FFF(Y_0 \s{n}) \} &\cong \{X_\m\s{-n + b}, Y_\m \s{-n+b} \} \mbox{ and }\\
 \{\FFF(X_\m\s{n} ), \FFF(Y_\m \s{n}) \} &\cong \{X_0\s{-n + b}, Y_0 \s{-n+b} \}. \qedhere
\end{align*}
\end{proof}

\begin{definition}[{\cite[Definition 5.4]{sierra}}] \label{ranksign} If $\FFF$ is an autoequivalence of $\gr A$, we call the integer $b$ above the \emph{rank} of $\FFF$. The integer $a$ above is called the \emph{sign} of $\FFF$. If $a = 1$, then we say $\FFF$ is \emph{even} and if $a = -1$, we say $\FFF$ is \emph{odd}. If $\FFF$ is even and has rank $0$, we say that $\FFF$ is \emph{numerically trivial}.
\end{definition}

\begin{example} As was the case for autoequivalences of $\gr A_1$, $\SSS_A^n$ is an even autoequivalence of rank $n$ and $\omega$ is an odd autoequivalence of rank $-1$. 
\end{example}

Notice that if $\m \in \ZZ + 1/2$, then there are potentially extra symmetries of $\gr A$. In the proof of Theorem~\ref{dautoequiv}, the function $g: \kk \sra \kk$ could be of the form $g(\lambda) = \lambda + 1/2$, as this maps $\ZZ + \m $ bijectively to itself if $\m \in \ZZ +1/2$. We show that in fact, in this case, there is an autoequivalence which acts in this way.

\begin{figure}[h!]
\centerline{
	\begin{tikzpicture}[ scale=1.6]	
		\draw[thick,<-](-3.9,0)--(-3.1,0);
		\fill[black](-3,-.1) circle (1pt);
		\fill[black](-3,.1) circle (1pt);
		\node at (-3,-.1)[label = below: $-3$]{};
		\draw[thick](-2.9,0)--(-2.6,0);
				\fill[black](-2.5,-.1) circle (1pt);
		\fill[black](-2.5,.1) circle (1pt);
		\node at (-2.5,.1)[label = above: $-5/2$]{};
		\draw[thick](-2.4,0)--(-2.1,0);
		\fill[black](-2,-.1) circle (1pt);
		\fill[black](-2,.1) circle (1pt);
		\node at (-2,-.1)[label = below: $-2$]{};
		\draw[thick](-1.9,0)--(-1.6,0);
				\fill[black](-1.5,-.1) circle (1pt);
		\fill[black](-1.5,.1) circle (1pt);
		\node at (-1.5,.1)[label = above: $-3/2$]{};
		\draw[thick](-1.4,0)--(-1.1,0);
		\fill[black](-1,-.1) circle (1pt);
		\fill[black](-1,.1) circle (1pt);
		\node at (-1,-.1)[label = below:$-1$]{};
		\draw[thick](-.9,0)--(-.6,0);
			\fill[black](-.5,-.1) circle (1pt);
		\fill[black](-.5,.1) circle (1pt);
		\node at (-.5,.1)[label = above:$-1/2$]{};
		\draw[thick](-.4,0)--(-.1,0);
		\fill[black](0,-.1) circle (1pt);
		\fill[black](0,.1) circle (1pt);
		\node at (0,-.1)[label = below: $0$]{};
		\draw[thick](.1,0)--(.4,0);
			\fill[black](.5,-.1) circle (1pt);
		\fill[black](0.5,.1) circle (1pt);
		\node at (0.5,.1)[label = above: $1/2 $]{};
		\draw[thick](.6,0)--(.9,0);
		\fill[black](1,-.1) circle (1pt);
		\fill[black](1,.1) circle (1pt);
		\node at (1,-.1)[label = below: $1$]{};
			\draw[thick](1.1,0)--(1.4,0);		
		\fill[black](1.5,-.1) circle (1pt);
		\fill[black](1.5,.1) circle (1pt);
		\node at (1.5,.1)[label = above: $3/2$]{};
		\draw[thick](1.6,0)--(1.9,0);		
		\fill[black](2,-.1) circle (1pt);
		\fill[black](2,.1) circle (1pt);
		\node at (2,-.1)[label = below: $2$]{};
		\draw[thick,](2.1,0)--(2.4,0);	
			\fill[black](2.5,-.1) circle (1pt);
		\fill[black](2.5,.1) circle (1pt);
		\node at (2.5,.1)[label = above: $5/2$]{};
		\draw[thick,](2.6,0)--(2.9,0);		
		\fill[black](3,-.1) circle (1pt);
		\fill[black](3,.1) circle (1pt);
		\node at (3,-.1)[label = below: $3$]{};
			\draw[thick,](3.1,0)--(3.4,0);	
		\fill[black](3.5,-.1) circle (1pt);
		\fill[black](3.5,.1) circle (1pt);
		\node at (3.5,.1)[label = above: $7/2$]{};
		\draw[thick,->](3.6,0)--(3.9,0);		
	\end{tikzpicture}
	}
\caption{The simple modules of $\gr A(f)$ when $\m = 1/2$.}
\label{alphahalf}
\end{figure}

\begin{proposition} \label{halfshift} Let $\m \in \ZZ + 1/2$. There exists an autoequivalence $\FFF$ of $\gr A$ such that for all $n \in \ZZ$,
\begin{align*} &\FFF(X_0\s{n}) \cong X_{\m } \s{ n +1/2 - \m }, & &\FFF(Y_0 \s{n}) \cong Y_{\m } \s{n + 1/2 - \m },\\
&\FFF(X_{\m }\s{n} ) \cong X_0 \s{ n + 1/2 + \m }, & &\FFF(Y_{\m } \s{n}) \cong Y_0 \s{n+ 1/2 + \m},
\end{align*}
and for all $\lambda \in \kk \setminus ( \ZZ \cup \ZZ + 1/2)$
\[ \FFF(M_\lambda) \cong M_{\lambda + 1/2}.
\]
\end{proposition}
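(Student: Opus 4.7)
The plan is to construct $\FFF$ by first describing its action on the full subcategory $\RRR$ of canonical rank one projective $A$-modules, and then extending to all of $\gr A$ using the machinery of Section~\ref{sec:func}. Write $\m = k + 1/2$ with $k \in \ZZ$, so that the degree shifts $1/2 - \m = -k$ and $1/2 + \m = k+1$ appearing in the statement are integers, and the half-step on supports corresponds to a non-local reindexing of structure constants.

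For the object map, given a canonical rank one projective $P$ with structure constants $\{c_i\}$, the prescribed action on simples together with Lemma~\ref{dssc} and Corollary~\ref{projfactors} dictates the structure constants $\{c'_i\}$ of $\FFF(P)$: namely, $c'_i$ is read off from the pair $\bigl(F_{i+k}(P),\, F^\m_{i-k-1}(P)\bigr)$. Concretely, factoring $c_j = c_j^{(0)}\, c_j^{(\m)}$ with $c_j^{(0)} \in \{1, \sigma^j(z)\}$ and $c_j^{(\m)} \in \{1, \sigma^j(z+\m)\}$, the rule is that the ``$(z+i)$-factor'' of $c'_i$ is determined by $c_{i-k-1}^{(\m)}$ and the ``$(z+i+\m)$-factor'' of $c'_i$ is determined by $c_{i+k}^{(0)}$. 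A direct check shows $\{c'_i\}$ satisfies the asymptotic conditions of Lemma~\ref{cfgsc}, so $\FFF(P)$ is a well-defined finitely generated graded submodule of $\QgrAf$; projectivity is automatic by Theorem~\ref{gldimA}, since $A(f)$ is hereditary in the non-congruent root case.

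After defining $\FFF$ on morphisms in $\RRR$ and verifying functoriality, I would use Lemma~\ref{projfunc2} to extend $\FFF$ to the subcategory of finite direct sums of canonical rank one projectives. Proposition~\ref{projgen} ensures that every object of $\gr A$ admits a partial projective resolution by such direct sums, so Lemma~\ref{projfunc} yields an extension to a right-exact additive functor on all of $\gr A$. Applying the analogous construction with the roles of $k$ and $-k-1$ swapped produces a quasi-inverse, so Corollary~\ref{subequiv} implies that $\FFF$ is an autoequivalence. To verify the action on simples, I would apply $\FFF$ to the short exact sequences $0 \to I_S \to A\langle d \rangle \to S \to 0$ used in the proof of Lemma~\ref{dssc} for each simple $S$ supported at $\ZZ \cup (\ZZ + \m)$; for $M_\lambda$, applying $\FFF$ to $0 \to (z+\lambda)A \to A \to M_\lambda \to 0$ shows that the half-step reindexing translates directly into $\lambda \mapsto \lambda + 1/2$.

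The hard part will be defining $\FFF$ on morphisms of $\RRR$ in a functorial way. By Proposition~\ref{projhom} a morphism $\phi \colon P \to Q$ is multiplication by a unique (up to scalar) $\theta_\phi \in \kk(z)$, and one must produce a corresponding $\theta_{\FFF(\phi)}$ such that multiplication by it sends $\FFF(P)$ into $\FFF(Q)$ and composition is respected. Because the reindexing rule is non-local --- each $c'_i$ depends on both $c_{i+k}$ and $c_{i-k-1}$ --- the canonical embeddings of $\FFF(P)$ and $\FFF(Q)$ into $\QgrAf$ are shifted asymmetrically relative to those of $P$ and $Q$, and it is here that one must exploit the numerological coincidence $\m \in \ZZ + 1/2$ (which makes the two asymmetric reindexings compatible) to balance the two halves.
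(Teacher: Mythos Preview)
Your object-level construction on $\RRR$ is correct and matches the paper's exactly (your $c_j^{(0)}$, $c_j^{(\m)}$ are the paper's $a_j$, $b_j$, and your indices $i+k$, $i-k-1$ are the paper's $n+\m-1/2$, $n-\m-1/2$). The extension machinery via Lemmas~\ref{projfunc2} and~\ref{projfunc} and Corollary~\ref{subequiv} is also the right route.

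The gap is exactly where you flag it: you have not defined $\FFF$ on morphisms, and you are overestimating the difficulty. The key observation you are missing is that your ``non-local reindexing'' is, at the level of irreducible factors, nothing more than the automorphism $\sigma^{1/2}\colon z \mapsto z + 1/2$ of $\kk[z]$. Indeed, $\sigma^{1/2}\bigl(\sigma^{i+k}(z)\bigr) = z + i + k + 1/2 = z + i + \m = \sigma^i(z+\m)$ and $\sigma^{1/2}\bigl(\sigma^{i-k-1}(z+\m)\bigr) = z + i - k - 1 + \m + 1/2 = z + i = \sigma^i(z)$, so the multiset of irreducible factors of $\{c'_i\}$ is precisely $\sigma^{1/2}$ applied to the multiset of irreducible factors of $\{c_i\}$. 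By Corollary~\ref{dmaxemb}, the maximal embedding $\theta_{P,Q}$ is a ratio of products of these factors, whence $\theta_{\FFF(P),\FFF(Q)} = \sigma^{1/2}(\theta_{P,Q})$. Now define $\FFF$ on a morphism given by multiplication by $\varphi\,\theta_{P,Q}$ (with $\varphi \in \kk[z]$) to be multiplication by $\sigma^{1/2}(\varphi\,\theta_{P,Q})$. Functoriality is then immediate, since $\sigma^{1/2}$ is a ring automorphism; there is no balancing to do.

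This also streamlines your verification on simples: applying $\FFF$ to $0 \to (z+\lambda)A \to A \to A/(z+\lambda)A \to 0$ turns multiplication by $z+\lambda$ into multiplication by $z+\lambda+1/2$, so the support shifts by $1/2$ as claimed; and since $\FFF$ sends the inclusion (multiplication by $1$) in $0 \to (xA+zA)\langle n\rangle \to A\langle n\rangle \to X_0\langle n\rangle \to 0$ to multiplication by $1$, the cokernel $\FFF(X_0\langle n\rangle)$ vanishes in large degree, forcing it to be $X_\m\langle n+1/2-\m\rangle$ rather than $Y_\m\langle n+1/2-\m\rangle$.
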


\begin{proof} We will construct an autoequivalence $\FFF$ that translates Figure~\ref{alphahalf} by 1/2. By Lemmas~\ref{projfunc2} and \ref{projfunc}, it suffices to define $\FFF$ on the full subcategory $\RRR$ of $\gr A$ consisting of the canonical rank one projective right $A$-modules. Let $P$ be a rank one canonical projective module with structure constants $\{c_i\}$. By the work in section~\ref{sec:ideals}, we know we can write each structure constant as $c_i = a_i b_i$ where $a_i \in \{1, \sigma^{i}(z)\}$ and $b_i \in \{1, \sigma^{i}(z+\m ) \}$. Let $\sigma^{1/2}$ be the automorphism of $\kk[z]$ with $\sigma^{1/2}(z) = z + 1/2$. 

Define $\FFF(P)$ to be the canonical rank one projective module whose structure constants $\{c'_i\}$ are defined as follows. If $a_{n + \m - 1/2} = \sigma^{n + \m - 1/2}(z)$, then $c'_n$ has a factor of $\sigma^{n}(z + \m )$. If $b_{n-\m - 1/2} = \sigma^{n - \m - 1/2}(z + \m )$, then $c'_n$ has a factor of $\sigma^{n}(z)$. Overall, the irreducible factors of the $\{c_i'\}$ are given by $\{\sigma^{1/2}(a_i)\}$ and $\{\sigma^{1/2}(b_i)\}$, where the factors appear in the structure constant of the appropriate degree. Since, for all $n$, $c'_n \in \{1, \sigma^{n}(z), \sigma^{n}(z+1/2), \sigma^{n}(f)\}$, and for $n \gg 0$, $c_n' = 1$ and $c_{-n}' = \sigma^{-n}(f)$, these structure constants define a canonical rank one projective module. 

Let $P$ and $Q$ be canonical rank one projective modules with structure constants $\{c_i\}$ and $\{d_i\}$, respectively. Let the structure constants of $\FFF(P)$ and $\FFF(Q)$ be $\{c_i'\}$ and $\{d_i'\}$ respectively. By Lemma~\ref{projmaxemb} and Corollary~\ref{dmaxemb}, there is an $N \in \ZZ$ such that $\uHom_A(P,Q)$ and $\uHom_A(\FFF(P), \FFF(Q))$ are generated as a $\kk[z]$-module by multiplication by 
\begin{align*}&\theta_{P,Q} = \frac{\prod_{j \geq N}d_j}{\prod_{j \geq N}\gcd\left( c_j, d_j\right)} \quad \mbox{and}\\
& \theta_{\FFF(P), \FFF(Q)} = \frac{\prod_{j \geq N}d'_j}{\prod_{j \geq N}\gcd\left( c'_j, d'_j\right)} ,
\end{align*}
respectively. Notice that by the way we defined the structure constants of $\FFF(P)$ and $\FFF(Q)$, 
\[\theta_{\FFF(P), \FFF(Q)} = \sigma^{1/2}\left(\theta_{P,Q}\right).\]
Let $g \in \uHom_A(P,Q)$, so $g = \varphi \theta_{P,Q}$ for some $\varphi \in \kk[z]$. Define $\FFF(g)$ to be left multiplication by $\sigma^{1/2}(\varphi \theta_{P,Q}) = \sigma^{1/2}(\varphi)\theta_{\FFF(P),\FFF(Q)} $.

Since every morphism in $\RRR$ is given by left multiplication by an element of $\kk[z]$ and $\FFF$ acts on morphisms by applying $\sigma^{1/2}$ to this element, clearly $\FFF$ is functorial. Since the identity morphism is just multiplication by 1, we have $\FFF(\Id_P) = \Id_{\FFF(P)}$. Hence, $\FFF$ is a functor. It is also easy to see that $\FFF$ is essentially surjective on canonical rank one projective modules. Given a rank one projective module $P'$, we reverse the structure constant construction to construct a canonical rank one projective $P$ such that $\FFF(P) \cong P'$. Since $\sigma^{1/2}$ is an automorphism of $\kk[z]$, it gives an isomorphism
\[ \uHom_A(P,Q) = \theta_{P,Q} \kk[z] \cong \sigma^{1/2}(\theta_{P,Q} \kk[z]) = \uHom_A\left(\FFF(P), \FFF(Q)\right).
\]
Hence, $\FFF$ is full and faithful and so is an autoequivalence of $\RRR$, which extends uniquely to an autoequivalence of $\gr A$ by Corollary~\ref{subequiv} and Lemma~\ref{projfunc2}.

We need only show that $\FFF$ has the claimed action on simple modules. For each $\lambda \in \kk$, $\FFF$ maps the exact sequence
\[ 0 \ra A \overset{(z+\lambda) \cdot}{\ra} A \ra A/(z+\lambda)A \ra 0
\]
to the exact sequence
\[ 0 \ra \FFF(A) \overset{(z+\lambda + 1/2) \cdot}{\ra} \FFF(A) \ra \FFF(A/(z+\lambda)A) \ra 0.
\]
Hence, $\FFF(A/(z+\lambda)A)$ is supported at $- (\lambda+1/2)$ so if $\lambda \notin \ZZ +1/2$, $\FFF(M_\lambda) \cong M_{\lambda + 1/2}$. The pair of simple modules $\{X_0\s{n}, Y_0\s{n}\}$ which are supported at $-n$ must be mapped to the pair $\{X_{\m }\s{n +1/2 - \m }, Y_{\m }\s{n + 1/2 - \m }\}$, as these are the simples supported at $-(n+1/2)$. Similarly, the pair $\{X_{\m }\s{n}, Y_{\m } \s{n}\}$ must map to the pair $\{X_{0} \s{n - 1/2 + \m }, Y_{0} \s{n - 1/2 + \m } \}$.

Now consider the short exact sequence
\[ 0 \ra (xA + zA)\s{n} \overset{1 \cdot}{\ra} A\s{n} \sra X_0\s{n} \ra 0. \label{Xsesii}
\]
Using the construction in Lemmas~\ref{cssc} and \ref{dssc}, we can explicitly compute the structure constants of $xA + zA$. Since in each graded component of degree $i \leq 0$, $(xA+zA)_i = z(A)_i$ and for $i> 0$, $(xA+zA)_i = A_i$, therefore $xA+zA$ has structure constants which are the same as that of $A$, except in degree $0$ where $xA+zA$ has a structure constant of $z$. Multiplying structure constants to compute the canonical representation of $xA + zA$, we observe that $xA + zA$ is itself a canonical rank one projective module. Since maximal embeddings of canonical rank one projectives are given by multiplication by elements in $\kk[z]$, the inclusion $xA + zA \sra A$ in \eqref{Xsesii} is a maximal embedding. 

Since $\FFF$ of multiplication by 1 is again given by multiplication by 1, $\FFF$ maps \eqref{Xsesii} to the exact sequence
\[0 \ra \FFF((xA + zA) \s{n}) \overset{1 \cdot}{\ra} \FFF(A \s{n}) \ra \FFF(X_0 \s{n}) \ra 0.
\] 
Hence, $\FFF(X_0 \s{n})$ is zero in sufficiently large degree, so $\FFF(X_0\s{n}) \cong X_{\m }\s{n + 1/2 - \m }$. This then implies $\FFF(Y_0 \s{n}) \cong Y_{\m }\s{n+1/2 - \m }$. A similar computation for $X_{\m }\s{n}$ completes the proof.
\end{proof}

\begin{remark}The autoequivalence constructed in Proposition~\ref{halfshift} translates the picture of the simple modules by $1/2$. Since the square of this autoequivalence is isomorphic to $\SSS_A$, we call it $\SSS^{1/2}$.
\end{remark}

For completeness, we can extend Definition~\ref{ranksign} to the case $\m \in \ZZ + 1/2$ in a natural way. If $\FFF$ is an autoequivalence of $\gr A(f)$, and for all $\beta \in \kk$, $\FFF$ maps the simples supported at $\beta$ to simples supported at $\beta + n/2$ for some $n \in \ZZ$, we say that $\FFF$ is \emph{even} and has \emph{rank} $n/2$. If $\FFF$ maps the simples supported at $\beta$ to simples supported at $-\beta + n/2$ for some $n \in \ZZ$, we say that $\FFF$ is \emph{odd} and has \emph{rank} $n/2$. If $\FFF$ is even and has rank 0 then we say $\FFF$ is \emph{numerically trivial}. The autoequivalence $\SSS^{1/2}$ is even and has rank $1/2$. 

\begin{corollary} \label{cpiccor} Let $\Pic_0(\gr A)$ be the subgroup of $\Pic(\gr A)$ of numerically trivial autoequivalences. Then $\Pic(\gr A) \cong \Pic_0( \gr A) \rtimes D_\infty$.
\end{corollary}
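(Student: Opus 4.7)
The plan is to construct a split short exact sequence of groups
\[1 \to \Pic_0(\gr A) \to \Pic(\gr A) \xrightarrow{\rho} D_\infty \to 1,\]
from which the semidirect product decomposition follows by standard group theory.

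First I would define $\rho$ using Theorems~\ref{cautoequiv} and~\ref{dautoequiv} (together with Proposition~\ref{halfshift} in the $\m \in \ZZ + 1/2$ case). These results attach to each autoequivalence $\FFF$ a well-defined rank $b_\FFF$ and sign $a_\FFF$ depending only on the natural isomorphism class of $\FFF$, since rank and sign are determined by the way $\FFF$ permutes the graded simple modules. Identifying $D_\infty$ with $\ZZ \rtimes \ZZ/2\ZZ$, where $\ZZ/2$ acts by negation, set $\rho([\FFF]) = (b_\FFF, a_\FFF)$. Tracking how supports of simples transform under composition yields $(b_{\FFF\GGG}, a_{\FFF\GGG}) = (b_\FFF + a_\FFF b_\GGG,\; a_\FFF a_\GGG)$, which is precisely the multiplication rule in $D_\infty$, so $\rho$ is a group homomorphism. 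Its kernel is $\Pic_0(\gr A)$ by Definition~\ref{ranksign}.

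Next I would exhibit a splitting $s: D_\infty \to \Pic(\gr A)$ by sending the translation generator of $D_\infty$ to $[\SSS_A]$ and the order-two generator to $[\omega]$. Both classes lie in $\Pic(\gr A)$ with the correct rank and sign data: $\SSS_A$ is even of rank $1$, and $\omega$ is odd. To verify $s$ respects the defining relations of $D_\infty$, note that the ring automorphism $\omega$ of $A$ sending $x \leftrightarrow y$ and $z \mapsto 1 - \m - z$ squares to the identity on $A$, so $\omega^2 = \Id_{\gr A}$ on the nose. For the braid relation, a direct check on graded components gives
\[(\omega M\s{1})_n = (M\s{1})_{-n} = M_{-n-1} = (\omega M)_{n+1} = ((\omega M)\s{-1})_n,\]
so $\omega \SSS_A \cong \SSS_A^{-1} \omega$ as functors. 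Hence $s$ is a well-defined group homomorphism with $\rho \circ s = \id_{D_\infty}$.

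Finally, the existence of $s$ splits the short exact sequence and produces the semidirect product decomposition $\Pic(\gr A) \cong \Pic_0(\gr A) \rtimes D_\infty$, where $D_\infty$ acts on $\Pic_0(\gr A)$ by conjugation through $s$. The only step with real content is verifying the composition formula for rank and sign that identifies the image group with $D_\infty$; everything else is formal once the rigidity theorems of Section~\ref{rigid} are in place.
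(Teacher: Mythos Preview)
Your approach is correct and essentially the same as the paper's: the paper argues the internal semidirect product directly (showing $\Pic_0(\gr A)$ is normal, $\langle \omega, \SSS_A\rangle \cong D_\infty$, the intersection is trivial, and the two generate everything via Theorems~\ref{cautoequiv} and~\ref{dautoequiv}), while you package the same content as a split short exact sequence with $\rho = (\text{rank},\text{sign})$. One small point to make explicit: when $\m \in \ZZ + 1/2$ the image of $\rho$ is $\tfrac{1}{2}\ZZ \rtimes \ZZ/2\ZZ$, so your section $s$ must send the translation generator to $[\SSS^{1/2}]$ rather than $[\SSS_A]$; you cite Proposition~\ref{halfshift} for $\rho$ but should also invoke it for the splitting, exactly as the paper does.
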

\begin{proof} By Lemma~\ref{autosimple}, each autoequivalence in $\Pic(\gr A)$ is determined by its action on the simple modules supported at $\ZZ \cup \ZZ - \m$. Let $\m \not \in \ZZ + 1/2$. By checking their action on the simple modules supported at $\ZZ \cup \ZZ - \m$, we observe that $\omega \SSS_A \cong \SSS_A^{-1} \omega$ and so the subgroup
\[ \langle \omega, \SSS_A \rangle \subseteq \Pic(\gr A)
\]
is isomorphic to $D_\infty$. 

Again, by considering the action of numerically trivial autoequivalences on the simple modules, we observe that $\Pic_0(\gr A)$ is a normal subgroup of $\Pic(\gr A)$. If $\FFF$ is a numerically trivial autoequivalence in $\langle \omega, \SSS_A \rangle $, then we can write $\FFF = \SSS_A^{i} \omega^j$ for some $i, j \in \ZZ$. Since $\FFF$ is numerically trivial and $\omega^2 = \Id_{\gr A}$, in fact $\FFF = \SSS_A^{i}$ which then implies $i = 0$ so $\FFF = \Id_{\gr A}$. Therefore, 
\[ \Pic_0(\gr A) \cap \langle \omega, \SSS_A \rangle = \{ \Id_{\gr A}\}.
\]

By Theorems~\ref{cautoequiv} and \ref{dautoequiv}, any autoequivalence can be written as the product of an autoequivalence in $\langle \omega, \SSS_A \rangle$ and a numerically trivial autoequivalence. Therefore, 
\[\Pic(\gr A) \cong \Pic_0(\gr A) \rtimes D_\infty,\]
as desired.

In the case $\m \in \ZZ + 1/2$, $\Pic(\gr A)$ contains a subgroup isomorphic to $D_\infty$ generated by the autoequivalences $\SSS^{1/2}$ and $\omega$. This is a finer copy of $D_\infty$, containing $\langle \omega, \SSS_A\rangle$ as a subgroup. The remainder of the proof is identical to the previous case.
\end{proof}

\subsection{Involutions of $\gr A$} \label{sec:iotas}

Having shown that $\gr A(f)$ has the same rigidity as $\gr A_1$, we now show that $\Pic_0(\gr A(f))$ is also isomorphic to $\Pic_0(\gr A_1)$. We construct autoequivalences which are analogous to the involutions $\iota_j$ of Sierra.

\begin{proposition} \label{ciota}Let $\m \in \NN$. Then for any $j \in \ZZ$, there is a numerically trivial autoequivalence $\iota_j$ of $\gr A$ such that $\iota_j(X \s{j}) \cong Y \s{j}$, $\iota_j(Y\s{j}) \cong X \s{j}$, and $\iota_j(S) \cong S$ for all other simple modules $S$. For any $i, j \in \ZZ$, $\SSS_A^i \iota_j \cong \iota_{i+j}\SSS_A^i$, and $\iota_j^2 \cong \Id_{\gr A}$.
\end{proposition}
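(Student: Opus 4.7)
The plan is to construct $\iota_j$ by mimicking Sierra's strategy for $\gr A_1$, leveraging the framework developed in Section~\ref{sec:func}. By Corollary~\ref{subequiv} together with Lemma~\ref{projfunc2}, it suffices to build $\iota_j$ as an autoequivalence of the full subcategory $\RRR \subseteq \gr A$ consisting of canonical rank one projective modules; it will then extend uniquely (up to natural isomorphism) to an autoequivalence of $\gr A$.

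On objects, given a canonical rank one projective $P$ with structure constants $\{c_i\}$, I define $\iota_j(P)$ to be the canonical rank one projective whose structure constants $\{c_i'\}$ satisfy $c_i' = c_i$ for $i \notin \{j, j-\m\}$ and, at the indices $j$ and $j - \m$, the pair $(c_{j-\m}', c_j')$ is obtained from $(c_{j-\m}, c_j)$ by multiplying both entries by $(z+j)$ when $F_j(P) = X\s{j}$, by dividing both entries by $(z+j)$ when $F_j(P) = Y\s{j}$, and by leaving them unchanged when $F_j(P) = Z\s{j}$. (When $\m = 0$ the two modifications coincide and amount to multiplying or dividing $c_j$ by $(z+j)^2$.) A direct check against Table~\ref{tablestruc} shows that $\{c_i'\}$ is a valid sequence of structure constants, that the resulting module is projective by Corollary~\ref{cprojsc} (or Lemma~\ref{mprojsc} when $\m = 0$), and that the integrally supported simple factors of $\iota_j(P)$ agree with those of $P$ except that $F_j$ is swapped between $X\s{j}$ and $Y\s{j}$ (or fixed when it equals $Z\s{j}$).

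For morphisms, recall from Proposition~\ref{projhom} that each $\uHom_A(P,Q)$ is a free left $\kk[z]$-module of rank one generated by multiplication by the maximal embedding $\theta_{P,Q}$ computed in Lemma~\ref{projmaxemb}. Comparing the structure-constant data of $P,Q$ with that of $\iota_j P, \iota_j Q$ shows that $\theta_{\iota_j P, \iota_j Q}$ differs from $\theta_{P,Q}$ by multiplication by a specific integer power of $(z+j)$ determined by the pair $(F_j(P), F_j(Q))$; this yields a canonical $\kk[z]$-linear isomorphism $\uHom_A(P,Q) \to \uHom_A(\iota_j P, \iota_j Q)$, which defines the action of $\iota_j$ on morphisms. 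Preservation of identities is clear, and with $\iota_j$ in hand on $\RRR$ it extends via Lemma~\ref{projfunc2} and Corollary~\ref{subequiv} to an autoequivalence of $\gr A$.

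The remaining properties can then all be verified on the integrally supported simples by Lemma~\ref{autosimple}. To compute $\iota_j(S)$ for such a simple $S$ supported at $-n$, I present $S$ as the cokernel of a maximal embedding between canonical rank one projectives chosen via Lemma~\ref{anyproj} to have prescribed simple factors, and I apply $\iota_j$ to this presentation. The swap is self-inverse at the level of structure constants (multiplying by $(z+j)$ and then dividing is the identity), giving $\iota_j^2 \cong \Id_{\gr A}$; numerical triviality is immediate from the action on simples; and the commutation $\SSS_A^i \iota_j \cong \iota_{i+j} \SSS_A^i$ follows because both functors agree on each $X\s{n}, Y\s{n}, Z\s{n}, M_\lambda$. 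The main technical obstacle is verifying that the definition of $\iota_j$ on morphisms respects composition---equivalently, that the $(z+j)$-correction factor introduced in passing from $\theta_{P,Q}$ to $\theta_{\iota_j P, \iota_j Q}$ behaves multiplicatively in $(P,Q,R)$. This reduces to a case analysis over the possible values of $F_j(P), F_j(Q), F_j(R)$ in $\{X\s{j}, Y\s{j}, Z\s{j}\}$, using the explicit form of $\theta_{P,Q}$ given by Lemma~\ref{projmaxemb}.
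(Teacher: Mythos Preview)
Your approach is correct in outline but takes a genuinely different route from the paper. The paper avoids the case analysis you flag as the main obstacle by invoking Lemma~\ref{projfunc3} together with Proposition~\ref{projfunc4}: it fixes a small subcategory $\DDD$ (for $\m>0$, the objects $\{0,X,Y,Z,E_{Z,X},E_{Z,Y},E_{X,Z},E_{Y,Z},E_{Z,Y,X},E_{X,Z,X},E_{Y,Z,Y}\}$; for $\m=0$, the objects $\{0,X,Y,E_{X,X},E_{Y,Y}\}$), defines $\iota_0 P$ to be the \emph{unique smallest submodule} $N\subseteq P$ with $P/N\in\DDD$, and lets $\iota_0$ act on morphisms by restriction. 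Functoriality is then free from Lemma~\ref{projfunc3}, and the explicit structure-constant description you wrote down is recovered a posteriori by computing the kernels of the surjections $P\twoheadrightarrow E_{X,Z,X}$, $P\twoheadrightarrow E_{Y,Z,Y}$, or $P\twoheadrightarrow E_{Z,Y,X}$. The involution property is seen concretely as $\iota_0^2 P = z^2 P$.

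Your approach instead defines $\iota_j$ directly on structure constants and on maximal embeddings, in the style of the construction of $\SSS^{1/2}$ in Proposition~\ref{halfshift}. This works, but the burden shifts to checking the cocycle condition $k(P,Q)+k(Q,R)=k(P,R)$ for the $(z+j)$-exponent, which is exactly the case analysis you mention over $F_j(P),F_j(Q),F_j(R)\in\{X\s{j},Y\s{j},Z\s{j}\}$. Note also that in the congruent root case you must use the full $\lcm$ formula of Lemma~\ref{projmaxemb} rather than Corollary~\ref{dmaxemb}, so verifying that $\theta_{\iota_j P,\iota_j Q}$ and $\theta_{P,Q}$ differ only by a power of $(z+j)$ requires a little care (though it is true, since the two modules have identical structure constants away from degrees $j$ and $j-\m$, and the changes there involve only the factor $z+j$). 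The paper's kernel construction buys you functoriality without this bookkeeping; your construction buys you a more explicit description of $\iota_j$ on $\Hom$-sets from the outset.
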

\begin{proof}First, suppose $\m \in \NNp$. It will suffice to construct $\iota_0$, for if $\iota_0$ exists then we may define $\iota_j = \SSS_A^j \iota_0 \SSS_A^{-j}$. Let $\PPP$ be the full subcategory of $\gr A$ whose objects are direct sums of canonical rank one projective modules and let $\RRR$ be the full subcategory of $\gr A$ whose objects are the canonical rank one projective modules. Note that since every finitely generated graded right $A$-module has a projective resolution by objects in $\PPP$, by Lemma~\ref{projfunc}, we can construct $\iota_0$ by defining it on $\PPP$, then extend to $\gr A$. By Lemma~\ref{projfunc2}, it suffices to define $\iota_0$ only on $\RRR$.

Let 
\[S = \{0, X, Y, Z, E_{Z,X}, E_{Z,Y}, E_{X,Z}, E_{Y,Z}, E_{Z,Y,X}, E_{X,Z,X}, E_{Y,Z,Y}\}\]
 and let $\DDD$ be the full subcategory of $\gr A$ whose objects are exactly the elements of $S$. It is clear that $\DDD$ is closed under subobjects. Let $P$ be a graded rank one projective module. By Corollary~\ref{projXYZ}, $P$ surjects onto exactly one of $X$, $Y$, and $Z$, a module that we call $F_0(P)$. Suppose $F_0(P) = X$. Since $P$ is projective, the surjection $f_0: P \sra X$ lifts to a morphism $f_1: P \sra E_{X,Z}$. Since $f_1$ is a lift of a surjection to $X$, $f_1$ is surjective, as $E_{X,Z}$ has no subobject isomorphic to $X$. Again, since $P$ is projective, $f_1$ lifts to a surjection $f_2: P \sra E_{X,Z,X}$, as $E_{X,Z,X}$ has no subobject isomorphic to $E_{X,Z}$. 

Given the structure constants $\{c_i\}$ of $P$, we can in fact construct the submodule of $P$ that is the kernel of $f_2$. We construct this submodule in three steps. First, we construct $K_0$, the kernel of $f_0$, which is unique by Corollary~\ref{homIS}. Following the construction in Lemma~\ref{cssc}, $K_0$ is the submodule of $P$ that has structure constants equal to $c_i$ for all $i \in \ZZ$ except when $ i= - \m $, where $K_0$ has structure constant $z c_{- \m }$. We then construct $\ker f_1 = K_1$ as a submodule of $K_0$. Note that since $(P/K_1) / (K_0/K_1) \cong P/K_0$, we must have that $K_1$ is the unique submodule of $K_0$ which is the kernel of the surjection $K_0 \sra Z$. Again, by the construction in Lemma~\ref{cssc}, $K_1$ has structure constants $c_i$ for all $i \in \ZZ$ except when or $i=0$ where $K_1$ has structure constant $zc_0$. Finally, we can construct $\ker f_2 = K_2$ as a submodule of $K_1$, by constructing the unique submodule such that $K_1/K_2 \cong X$. Observe that $K_2$ has structure constants $c_i$ for all $i \in \ZZ$, except when $i = - \m$, where $K_2$ has structure constant $z c_{- \m }$ and when $i= 0$, where $K_2$ has structure constant $z c_0$.

Similarly, if $F_0(P) = Y$, there is a unique submodule of $P$ which is the kernel of a surjection $P \sra E_{Y,Z,Y}$. If $F_0(P) = Z$, then $\Hom_{\gr A}(P,E_{Z,Y,X}) = \kk$ so there is a unique submodule which is the kernel of a surjection $P \sra E_{Z,Y,X}$. In any case, there exists a unique smallest submodule $N$ of $P$ such that $P/N \in \DDD$. Define $\iota_0 P = N$. By Lemma~\ref{projfunc3}, $\iota_0$ gives an additive functor $\RRR \sra \gr A$ such that $\iota_0$ acts on morphisms by restriction. By using Lemma~\ref{projfunc2} and Proposition~\ref{projfunc}, we extend $\iota_0$ to a functor $\iota_0: \gr A \sra \gr A$.

We now show that $\iota_0$ has the claimed properties. Suppose $P$ has structure constants $\{c_i\}$. Above, we computed the structure constants, $\{d_i\}$ of $\iota_0 P$. By Lemma~\ref{cssc} and Corollary~\ref{cprojsc} if $F_0(P) = X$, then $c_{-\m } \in \{1, \sigma^{-\m }(z)\}$ and $c_0 \in \{1, z+\m \}$. We showed that $d_{-\m } = z c_{-\m }$ and $d_0 = z c_0 \in \{z, f\}$. For all $i \neq 0, -\m $, we saw that $c_i = d_i$. Hence, we can find $\iota_0(P)$ explicitly as a submodule of $P$ as follows: 
\[ (\iota_0 P)_i = \begin{cases} z^2 P_i & \mbox{ if } i \leq -\m \\
 zP_i & \mbox{ if } -\m < i \leq 0 \\
P_i & \mbox{ if } i >0.
\end{cases}
\]
Similarly, if $F_0(P) = Y$, then $c_0 \in \{z,f\}$ and $c_{-\m } \in \{ \sigma^{-\m }(z+\m ), \sigma^{-\m }(f)\}$ and $d_0 = z^{-1}c_0$ and $d_{- \m } = z^{-1} c_{-\m }$. We can explicitly construct $\iota_0P$ as follows: 
\[(\iota_0 P)_i = \begin{cases} P_i & \mbox{ if } i \leq -\m \\
 zP_i & \mbox{ if } -\m < i \leq 0 \\
z^2P_i & \mbox{ if } i >0.
\end{cases}
\]
Finally, if $F_0(P) = Z$, then $P$ surjects onto $E_{Z,Y,X} = A/zA$, and $(\iota_0 P)_i = z P_i$ for all $i \in \ZZ$, so $c_i = d_i$ for all $i \in \ZZ$.

We can describe the action of $\iota_0$ on $P$ purely in terms of its structure constants, as follows. If both $c_0$ and $c_{-\m }$ can be multiplied by $z$ (i.e. for $i = 0, -\m $, $zc_i \in \{ 1, \sigma^{i}(z), \sigma^{i}(z+\m ), \sigma^{i}(f)\}$) then $\iota_0 P$ has $d_0 = z c_0$ and $d_{-\m } = zc_{-\m }$. Likewise, if both $c_0$ and $c_{-\m }$ can be divided by $z$, then $\iota_0 P$ has $d_0 = z^{-1}c_0$ and $d_{-\m } = z^{-1}c_{-\m }$. Otherwise, $d_0 = c_0$ and $d_{-\m } = c_{-\m }$. Observe that by Lemma~\ref{cssc}, if $F_0(P) = X$ then $F_0(\iota_0 P) = Y$ and vice versa. Hence, by repeating the above process once by taking the kernel to $E_{X,Z,X}$ and next the kernel to $E_{Y,Z,Y}$ we compute that $\iota_0^2 P = z^2P$. 

So for any rank one projective, $P$, $\iota_0^2P = z^2P \cong P$. Additionally, if $P'$ is another rank one projective, Proposition~\ref{projhom} tells us that $\Hom_{\gr A}(z^2P, z^2P') = \Hom_{\gr A}(P, P')$ is given by left multiplication by a $\kk[z]$-multiple of some $\theta \in \kk(z)$. Since $\iota_0$ is defined on morphisms to be restriction, this shows that $\iota_0^2$ also gives an isomorphism $\Hom_{\gr A}(P, P') \cong \Hom_{\gr A}(\iota_0^2 P , \iota_0^2 P')$. Since $\iota_0$ is additive, it preserves finite direct sums, so for a direct sum of rank one projectives, $\iota_0^2$ is given by multiplication by $z^2$ in each component. Hence, $\iota_0^2$ is naturally isomorphic to the identity functor on the full subcategory of finite direct sums of rank one projectives. Extending to all of $\gr A$, this shows that $\iota_0$ is an autoequivalence of $\gr A$ (with quasi-inverse $\iota_0$).

Now, because for any rank one graded projective module $P$, the structure constants of $\iota_0 P$ differ from those of $P$ only in degrees $0$ and $-\m $, $\iota_0 P$ has the same integrally supported simple factors as $P$ except possibly $X$, $X\s{\m }$, $Y$, $Y \s{-\m }$, $Z$, and $Z \s{\pm \m}$. But if $F_{\m }(P) = X \s{\m }$, then by Table~\ref{tablestruc}, $c_0 \in \{1, z\}$. By the construction of $\iota_0 P$ (multiplying $c_0$ by $1$, $z$ or $z^{-1}$ to compute $d_0$), this means $d_0 \in \{1,z\}$ as well, so $F_{\m }(\iota_0 P) = X\s{\m }$. Similarly, if $F_{-\m }(P) = Y\s{-\m}$ then $F_{-\m }(\iota_0 P) = Y\s{-\m }$ as well. If $F_0(P) = Z$, then we observed above that $\iota P \cong P$ so $F_0(\iota_0 P) = Z$. If $F_{\m }(P) = Z\s{\m }$ then $c_0 \in \{z+\m , f\}$. Again, since $\iota_0 P$ acts on structure constants only multiplying by $1$, $z$, or $z^{-1}$, this means $d_0 \in \{z+\m , f\}$. Since $\iota_0 P$ does not affect the structure constant in degree $\m $, by Lemma~\ref{cssc}, $F_{\m }(\iota_0 P) = Z\s{\m }$. Similarly, if $F_{-\m }(P) = Z\s{-\m }$, then $F_{-\m }(\iota_0 P) = Z\s{-\m }$. Altogether, all of the integrally supported simple factors of $\iota_0 P$ are the same as those of $P$, except that if $F_0(P) = X$ then $F_0(\iota_0 P) = Y$ and vice versa.

We will now show that $\iota_0$ fixes all simples modules other than $X$ and $Y$. Let $S \not \in \{X, Y\}$ be an integrally supported simple module. Suppose for contradiction that $\iota_0 S \not \cong S$. By Lemma~\ref{anyproj}, we can construct a canonical rank one projective module $P$ such that $S$ is a factor of $P$ but $\iota_0 S$ is not. Note that $\iota_0 S$ is a factor of $\iota_0 P$. But by the discussion above, $P$ and $\iota_0 P$ have the same integrally supported simple factors except possibly $X$ and $Y$, so $S$ is also a factor of $\iota_0 P$. Applying $\iota_0$ again, we conclude that both $\iota_0 S$ and $\iota_0^2 S$ are factors of $\iota_0^2 P$, but since $\iota_0^2 \cong \Id_{\gr A}$, this is a contradiction, as $\iota_0 S$ is not a factor of $P$. Hence, $\iota_0$ fixes all integrally supported simple modules other than $X$ and $Y$.

Since $\iota_0$ fixes all integrally supported simple modules other than $X$ and $Y$, $\iota_0$ is numerically trivial. By Theorem~\ref{cautoequiv}, $\iota_0 M_\lambda \cong M_\lambda$ for all $\lambda \in \kk \setminus \ZZ$. Further, for a projective module $P$, if $F_0(P) = X$ then $F_0(\iota_0 P) = Y$. Therefore, $\iota_0 X \cong Y$.

For the case that $\m = 0$, we use the same argument, letting $\DDD$ be the full subcategory of $\gr A$ whose objects are in the set $S = \{0, X, Y, E_{X,X}, E_{Y,Y}\}$. In this case, for a rank one projective $P$, we can again explicitly construct $\iota_0 P \subseteq P$. If $F_0(P) = X$, then
\[ (\iota_0 P)_i = \begin{cases} z^2 P_i & \mbox{ if } i \leq 0 \\
P_i & \mbox{ if } i >0
\end{cases}
\]
and if $F_0(P) = Y$, then 
\[(\iota_0 P)_i = \begin{cases} P_i & \mbox{ if } i \leq 0 \\
z^2P_i & \mbox{ if } i >0.
\end{cases}
\]
Let $\{d_i\}$ be the structure constants of $\iota_0 P$. In this case, if $c_0 = 1$ then $d_0 = z^2$. If $c_0 = z^2$ then $d_0 = 1$. The remainder of the proof is analogous to the previous case.
\end{proof}

In the case that $\m \in \NN$, we have thus constructed analogues of Sierra's autoequivalences $\iota_j$ \cite[Proposition 5.7]{sierra}. These autoequivalences, which we also call $\iota_j$ share many of the same properties as Sierra's. First notice that since we defined $\iota_j = \SSS^j_A \iota_0 \SSS^{-j}_A$, we construct $\iota_j$ by shifting all the modules of $S$ by $j$ and repeating the same construction. This also that means that for a rank one graded projective module $P$, $\iota_j^2 P = (z+j)^2 P$, by the same argument as in the previous proof. Also, reviewing the construction above, it is clear that for any integers $i$ and $j$, $\iota_i \iota_j = \iota_j \iota_i$ and so the subgroup of $\Pic(\gr A)$ generated by the $\{\iota_j\}$ is isomorphic to $(\ZZ/2\ZZ)^{(\ZZ)}$. 

We identify $(\ZZ/2\ZZ)^{(\ZZ)}$ with finite subsets of the integers, $\ZZ_\fin$ with operation given by exclusive or. We often denote the singleton set $\{n\} \in \ZZ_\fin$ as simply $n$. For each $J \in \ZZ_\fin$, we define the autoequivalence
\[ \iota_J = \prod_{j \in J} \iota_j.
\]
As we noted in the previous proof, $\iota_j$ has quasi-inverse $\iota_j$ so $\iota_J$ has quasi-inverse $\iota_J$. For completeness, define $\iota_\emptyset = \Id_{\gr A}$.

Having constructed involutions in the case $\m \in \NN$, we now turn our attention to the case $\m \in \kk \setminus \ZZ$. In this case we will also be able to construct generalizations of Sierra's autoequivalences.

\begin{proposition}\label{diota}Let $\m \in \kk \setminus \ZZ$. Then for any $j \in \ZZ$, there is a numerically trivial autoequivalence $\iota_{(j, \emptyset)}$ of $\gr A$ such that $\iota_{(j, \emptyset)}(X_0 \s{j}) \cong Y_0 \s{j}$, $\iota_{(j, \emptyset)}(Y_0 \s{j}) \cong X_0\s{j}$, and $\iota_{(j, \emptyset)}(S) \cong S$ for all other simple modules $S$. For any $i,j \in \ZZ$, $\SSS_A^i \iota_{(j, \emptyset)} \cong \iota_{(i+j, \emptyset)}\SSS_A^i$, and $(\iota_{(j, \emptyset)})^2 \cong \Id_{\gr A}$.

Similarly, for any $j \in \ZZ$, there is a numerically trivial autoequivalence $\iota_{(\emptyset, j)}$ of $\gr A$ such that $\iota_{(\emptyset, j)}(X_\m \s{j}) \cong Y_\m \s{j}$, $\iota_{(\emptyset, j)}(Y_\m \s{j}) \cong X_\m\s{j}$, and $\iota_{(\emptyset, j)}(S) \cong S$ for all other simple modules $S$. For any $i,j \in \ZZ$, $\SSS_A^i \iota_{(\emptyset, j)} \cong \iota_{(\emptyset, i+j)}\SSS_A^i$, and $(\iota_{(\emptyset, j)})^2 \cong \Id_{\gr A}$.
\end{proposition}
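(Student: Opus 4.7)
The plan is to mimic the construction of the involutions from Proposition~\ref{ciota}, taking advantage of significant simplifications: by Theorem~\ref{gldimA}, $A = A(f)$ is hereditary when $\m \in \kk \setminus \ZZ$, so no analogues of the length-three modules $E_{X,Z,X}$ are required. It suffices to construct $\iota_{(0,\emptyset)}$; one then sets $\iota_{(j,\emptyset)} = \SSS_A^j \iota_{(0,\emptyset)} \SSS_A^{-j}$, which automatically yields the commutation $\SSS_A^i \iota_{(j,\emptyset)} \cong \iota_{(i+j,\emptyset)} \SSS_A^i$, and $\iota_{(j,\emptyset)}^2 \cong \Id_{\gr A}$ reduces to $\iota_{(0,\emptyset)}^2 \cong \Id_{\gr A}$. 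The functors $\iota_{(\emptyset,j)}$ are constructed symmetrically, swapping the roles of the roots $0$ and $-\m$ of $f$.

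By Lemmas~\ref{projfunc} and~\ref{projfunc2}, it suffices to define $\iota_{(0,\emptyset)}$ on the full subcategory $\RRR$ of canonical rank one projective modules. Let $\DDD \subseteq \gr A$ be the full subcategory whose objects are the finite direct sums (with repeats) of the simples $X_0$ and $Y_0$. Because $X_0$ and $Y_0$ are nonisomorphic simples with disjoint supports, $\DDD$ is closed under subobjects in $\gr A$. Corollary~\ref{homIS} supplies the finiteness hypothesis of Proposition~\ref{projfunc4}, so for every $P \in \RRR$ there is a unique smallest submodule $N \subseteq P$ with $P/N \in \DDD$. I set $\iota_{(0,\emptyset)}(P) = N$, and Lemma~\ref{projfunc3} promotes this to an additive functor $\RRR \to \gr A$ whose action on morphisms is restriction.

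By Corollary~\ref{projXYZ}, a rank one graded projective $P$ surjects onto exactly one of $X_0$ or $Y_0$, so $N$ is simply the kernel of that unique (up to scalar) surjection. Translating through Lemma~\ref{dssc}, if $P$ has structure constants $\{c_i\}$ then $\iota_{(0,\emptyset)}(P)$ has structure constants equal to $c_i$ for $i \neq 0$, while $c_0$ is multiplied by $z$ (when $F_0(P) = X_0$, i.e.\ $c_0 \in \{1, z+\m\}$) or divided by $z$ (when $F_0(P) = Y_0$, i.e.\ $c_0 \in \{z, f\}$). Case-checking the four possible values of $c_0$ shows that $\iota_{(0,\emptyset)}$ swaps $F_0(P)$ between $X_0$ and $Y_0$ while leaving $F^\m_0(P)$ and every other integrally supported simple factor of $P$ unchanged; a direct calculation then gives $\iota_{(0,\emptyset)}^2(P) = zP \cong P$. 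Corollary~\ref{subequiv} extends $\iota_{(0,\emptyset)}$ to an autoequivalence of $\gr A$, with itself as quasi-inverse.

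The main obstacle is verifying that $\iota_{(0,\emptyset)}$ fixes every simple module other than $X_0$ and $Y_0$. For any integrally supported simple $S \notin \{X_0, Y_0\}$, I argue by contradiction: if $\iota_{(0,\emptyset)}(S) \not\cong S$, Lemma~\ref{anyproj} produces a rank one projective $P$ having $S$ as a simple factor but not $\iota_{(0,\emptyset)}(S)$. The structure-constant computation above shows that $P$ and $\iota_{(0,\emptyset)}(P)$ share the same integrally supported simple factors outside the pair $\{X_0, Y_0\}$, so $S$ remains a factor of $\iota_{(0,\emptyset)}(P)$; hence $\iota_{(0,\emptyset)}(S)$ is a factor of $\iota_{(0,\emptyset)}^2(P) \cong P$, contradicting the choice of $P$. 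This shows $\iota_{(0,\emptyset)}$ is numerically trivial, and Theorem~\ref{dautoequiv} then forces $\iota_{(0,\emptyset)}(M_\lambda) \cong M_\lambda$ for every $\lambda \in \kk \setminus (\ZZ \cup \ZZ+\m)$, finishing the verification. The construction of $\iota_{(\emptyset,0)}$ is identical with $\DDD$ replaced by the full subcategory of direct sums of $X_\m$ and $Y_\m$, the only change being that the structure constant $c_0$ is multiplied or divided by $z+\m$ rather than $z$.
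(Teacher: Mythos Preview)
Your proof is correct and follows essentially the same route as the paper's: define the functor on canonical rank one projectives via Lemma~\ref{projfunc3} using the subcategory generated by a single pair of simples, compute the effect on structure constants, observe that the square is multiplication by a linear polynomial, and extend. The paper happens to write out $\iota_{(\emptyset,0)}$ rather than $\iota_{(0,\emptyset)}$ and is terser about the action on simples, but the argument is the same.

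Two small citation issues worth noting: Lemma~\ref{anyproj} is stated only for $\m \in \NN$, so for the contradiction step you should instead observe directly that when $\m \in \kk \setminus \ZZ$ the ring is hereditary and any choice of structure constants yields a projective (this is the remark after Lemma~\ref{cfgsc} together with Theorem~\ref{gldimA}); and Theorem~\ref{dautoequiv} as stated excludes $\m \in \ZZ + 1/2$, but the affine map $g$ appearing in its proof is still forced to be the identity once you know the integrally supported simples are fixed, so the conclusion $\iota_{(0,\emptyset)}(M_\lambda) \cong M_\lambda$ holds in that case as well.
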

\begin{proof}The construction is similar to that in the proof of Proposition~\ref{ciota}. We construct $\iota_{(\emptyset, 0)}$ and define $\iota_{(\emptyset, j)} = \SSS_A^{j} \iota_{(\emptyset, 0)} \SSS_A^{-j}$. Let $\RRR$ be the full subcategory of $\gr A$ whose objects are the canonical rank one projectives. We define $\iota_{(\emptyset, 0)}$ on $\RRR$, then use Lemmas~\ref{projfunc2} and \ref{projfunc} to extend to a functor defined on all of $\gr A$. The construction of $\iota_{(0,\emptyset)}$ is completely analogous.

Let $S = \{0, X_{\m }, Y_{\m }\}$ and let $\DDD$ be the full subcategory of $\gr A$ whose objects are the elements of $S$. Clearly $\DDD$ is closed under subobjects. Let $P$ be a graded rank one projective module. By Corollary~\ref{projXYZ}, $P$ surjects onto exactly one of $X_{\m }$ and $Y_{\m }$, a module that we called $F_0^{\m }(P)$. Hence, there exists a unique smallest submodule $N \subseteq P$ such that $P/N \in S$. Let $N = \iota_{(\emptyset, 0)} P$. By Lemma~\ref{projfunc3}, $\iota_{(\emptyset, 0)}$ gives an additive functor $\RRR \sra \gr A$ whose action on morphisms is given by restriction.

Focusing now on structure constants, let $\{c_i\}$ and $\{d_i\}$ be the structure constants for $P$ and $\iota_{(\emptyset, 0)} P$, respectively. We can compute $d_0$ by constructing the unique kernel to the surjection $P \sra F_0^{\m }(P)$. If $F_0^{\m }(P) = X_{\m }$, then
\[ (\iota_{(\emptyset, 0)} P)_i = \begin{cases} (z+ \m ) P_i & \mbox{ if } i \leq 0 \\
P_i & \mbox{ if } i >0
\end{cases}
\]
and if $F_0^{\m }(P) = Y$, then 
\[(\iota_{(\emptyset, 0)} P)_i = \begin{cases} P_i & \mbox{ if } i \leq 0 \\
(z+\m )P_i & \mbox{ if } i >0.
\end{cases}
\]
In particular, If $c_0 \in \{1, z \}$ then $d_0 = (z+\m ) c_0$ and if $c_0\in \{z+\m , f\}$, then $d_0 = (z+\m )^{-1} c_0$. By repeating this construction, we see that $\iota_{(\emptyset,0)}^2 P = (z+\m ) P$ and so $\gr A$ is an autoequivalence of $\gr A$ with quasi-inverse $\iota_{(\emptyset,0 )}$.

Because for any rank one graded projective module $P$ the structure constants for $\iota_{(\emptyset,0)} P$ differ from those of $P$ only in degree $0$, where they differ only by a factor of $z+\m $, by Lemma~\ref{dssc}, $\iota_{(\emptyset,0)} P$ has the same integrally supported simple factors as $P$ except if $F_0^{\m }(P) = X_\m $ then $F_0^{\m }(\iota_{(\emptyset,0)} P) = Y_\m $ and vice versa. Again, examining the action of $\iota_{(\emptyset,0)}$ over all rank one projectives $P$, we deduce that $\iota_{(\emptyset,0)} P$ has the claimed action on simple modules, fixing all but $X_{\m }$ and $Y_{\m }$.
\end{proof}

We have therefore also constructed analogues of Sierra's $\iota_j$ in the case that $\m \in \kk \setminus \ZZ$. The subscript on the $\iota$ keeps track of which of the simples modules is being permuted: the first coordinate corresponds to the shifts of $X_0$ and $Y_0$ while the second coordinate corresponds to the shifts of $X_{\m }$ and $Y_{\m }$. Observe that in this case the subgroup of $\Pic(\gr A)$ generated by the $\{\iota_{(j, \emptyset)}, \iota_{(\emptyset, j)} \}$ is isomorphic to the direct product $\ZZ_\fin \times \ZZ_\fin$. For every $J, J' \in \ZZ_\fin \times \ZZ_\fin$ we define
\[ \iota_{(J,J')} = \prod_{j \in J} \iota_{(j,\emptyset)} \prod_{j \in J'} \iota_{(\emptyset, j)}.
\]
For completeness, define $\iota_{(\emptyset, \emptyset)} = \Id_{\gr A}$.

Finally, we are able to determine $\Pic_0(\gr A)$ and therefore $\Pic(\gr A)$.

\begin{lemma} \label{pic0dihedral} Let $\m \in \NN$. Then the map
\begin{align*}\Phi: \ZZ_\fin &\sra \Pic_0\left(\gr A(f)\right) \\
J &\mapsto \iota_J
\end{align*}
is a group isomorphism.

Let $\m \in \kk \setminus \ZZ$. Then the map
\begin{align*}
\Psi: \ZZ_\fin \times \ZZ_\fin &\sra \Pic_0\left(\gr A(f)\right) \\
(J, J') &\mapsto \iota_{(J,J')}
\end{align*}
is a group isomorphism.
\end{lemma}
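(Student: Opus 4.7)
The plan is to verify three things for each map: (a) it is a group homomorphism, (b) it is injective, and (c) it is surjective. Homomorphism is essentially built in: from the constructions in Propositions~\ref{ciota} and \ref{diota}, the autoequivalences $\iota_j$ (respectively $\iota_{(j,\emptyset)}$ and $\iota_{(\emptyset,j)}$) are numerically trivial, pairwise commute, and each squares to $\Id_{\gr A}$, so $\Phi$ and $\Psi$ are well-defined group homomorphisms into $\Pic_0(\gr A(f))$.

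For injectivity, I would apply Lemma~\ref{autosimple}. Suppose $\iota_J \cong \Id_{\gr A}$ for some $J \in \ZZ_\fin$. Since $\iota_j$ swaps $X\s{j}$ and $Y\s{j}$ and fixes every other simple supported at $\ZZ \cup \ZZ-\m$, and different $\iota_j$'s interact with disjoint pairs of simples, $\iota_J$ swaps $X\s{j}$ with $Y\s{j}$ precisely for $j \in J$ and fixes all other simples. The identity does not swap any such pair, so $J = \emptyset$. The same reasoning works for $\Psi$, using that $\iota_{(J,J')}$ swaps $X_0\s{j} \leftrightarrow Y_0\s{j}$ for $j \in J$ and $X_\m\s{j} \leftrightarrow Y_\m\s{j}$ for $j \in J'$.

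For surjectivity, let $\FFF \in \Pic_0(\gr A)$. By Theorem~\ref{cautoequiv} (resp.~Theorem~\ref{dautoequiv}), numerical triviality forces $\FFF$ to preserve each pair $\{X\s{n}, Y\s{n}\}$ and fix every $M_\lambda$ and every $Z\s{n}$ (resp.~to preserve each pair $\{X_0\s{n}, Y_0\s{n}\}$ and $\{X_\m\s{n}, Y_\m\s{n}\}$ and fix every $M_\lambda$); in the case $\m \in \ZZ + 1/2$ one argues the same way using that $\SSS^{1/2}$ has rank $1/2$ and is not numerically trivial. Define $J = \{n \in \ZZ : \FFF(X\s{n}) \cong Y\s{n}\}$ (and analogously $J'$ in the non-congruent case). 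The main step is to show $J$ (and $J'$) is finite. For this, apply $\FFF$ to $A$: since $\FFF(A)$ is a rank one graded projective, Lemma~\ref{cfgsc} (read through the dictionary in Corollary~\ref{projfactors} and Table~\ref{tablestruc}) forces $F_n(\FFF(A)) = X\s{n}$ for $n \gg 0$ and $F_n(\FFF(A)) = Y\s{n}$ for $n \ll 0$, matching the integrally supported simple factors of $A$ itself (computed in Example~\ref{scA}). Consequently $\FFF$ can swap $X\s{n} \leftrightarrow Y\s{n}$ for only finitely many $n$, so $J \in \ZZ_\fin$. Then $\iota_J$ and $\FFF$ agree on every simple module supported at $\ZZ \cup \ZZ-\m$, and Lemma~\ref{autosimple} yields $\FFF \cong \iota_J$.

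The main obstacle will be the finiteness of $J$, since the rigidity theorems by themselves only say $\FFF$ preserves each pair, not that it fixes cofinitely many of them. The argument above ties this tameness at infinity to the finite-generation constraint on structure constants of $\FFF(A)$; once this bridge is in place, everything else is a routine verification via Lemma~\ref{autosimple}.
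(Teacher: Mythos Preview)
Your proposal is correct and follows essentially the same approach as the paper. The only cosmetic differences are that the paper applies $\FFF$ to a specific projective $P$ built via Lemma~\ref{anyproj} (rather than to $A$ directly) and invokes Corollary~\ref{largedegree} (rather than Lemma~\ref{cfgsc} plus Table~\ref{tablestruc}) to obtain the asymptotic agreement of simple factors; these are equivalent routes to the same finiteness of $J$.
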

\begin{proof}First let $\m \in \NN$. For any $i \in \ZZ$, we saw that $\iota_i^2 \cong \Id_{\gr A}$. Hence, $\iota_J \iota_{J'} \cong \iota_{J \oplus J'}$ so $\Phi$ is a group homomorphism. It is clear that $\Phi$ is injective. To show surjectivity, suppose $\FFF \in \Pic_0(\gr A)$. Since $\FFF$ is numerically trivial, by Theorem~\ref{cautoequiv}, $\FFF$ fixes all shifts of $Z$.

By Lemma~\ref{anyproj}, we can construct a canonical rank one projective $P$ such that $F_n(P) = X\s{n}$ for all $n \geq 0$ and $F_n(P) = Y\s{n}$ for all $n < 0$. Note that since $\FFF$ is numerically trivial, for any $j \in \ZZ$, $\FFF(F_j(P)) \cong F_j(\FFF(P))$. Now by Corollary~\ref{largedegree}, $\FFF(F_j(P))$ can only differ from $F_j(\FFF(P))$ for finitely many $j$. Let $J$ be precisely those indices at which they differ. By Lemma~\ref{autosimple}, $\iota_J \cong \FFF$, so $\Phi$ is surjective.

The case $\m \in \kk \setminus \ZZ$ follows from the same argument, doubling the number of indices where necessary.
\end{proof}

\begin{theorem}\label{picthm} Let $f \in \kk[z]$ be quadratic. Then
\[ \Pic(\gr A(f)) \cong \ZZ_\fin \rtimes D_\infty.
\]
\end{theorem}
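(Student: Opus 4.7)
The plan is to assemble the theorem directly from the two structural results already proved in this section. By Corollary~\ref{cpiccor}, we have $\Pic(\gr A(f)) \cong \Pic_0(\gr A(f)) \rtimes D_\infty$ (with the $D_\infty$ generated by $\omega$ and $\SSS_A$, or by $\omega$ and $\SSS^{1/2}$ in the exceptional case $\m \in \ZZ + 1/2$). By Lemma~\ref{pic0dihedral}, $\Pic_0(\gr A(f))$ is isomorphic to $\ZZ_\fin$ when $\m \in \NN$ and to $\ZZ_\fin \times \ZZ_\fin$ when $\m \in \kk \setminus \ZZ$. So the entire content of the theorem is the observation that each of these is isomorphic as an abstract group to $\ZZ_\fin$.

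For this, I would simply note that $\ZZ_\fin$ is (as an abstract group) the direct sum $\bigoplus_{n \in \ZZ} \ZZ/2\ZZ$ of countably many copies of $\ZZ/2\ZZ$, and therefore $\ZZ_\fin \times \ZZ_\fin \cong \bigoplus_{n \in \ZZ} \ZZ/2\ZZ \oplus \bigoplus_{n \in \ZZ} \ZZ/2\ZZ \cong \bigoplus_{n \in \ZZ} \ZZ/2\ZZ \cong \ZZ_\fin$ via any bijection $\ZZ \sqcup \ZZ \sra \ZZ$. So in all cases $\Pic_0(\gr A(f)) \cong \ZZ_\fin$, and combining with Corollary~\ref{cpiccor} gives the result.

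The only subtlety worth flagging is the case $\m \in \ZZ + 1/2$, where the copy of $D_\infty$ inside $\Pic(\gr A)$ that is picked out by Corollary~\ref{cpiccor} is the finer one $\langle \omega, \SSS^{1/2}\rangle$ rather than $\langle \omega, \SSS_A\rangle$; but this does not affect the statement, since Corollary~\ref{cpiccor} already provides the semidirect product decomposition in this case as well. There is no real obstacle: the theorem is a packaging of Corollary~\ref{cpiccor} and Lemma~\ref{pic0dihedral} together with the trivial group-theoretic observation that a finite or countable direct product of copies of $\ZZ_\fin$ is again isomorphic to $\ZZ_\fin$.
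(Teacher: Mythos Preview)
Your proposal is correct and takes essentially the same approach as the paper: the paper's proof simply cites Corollary~\ref{cpiccor}, Lemma~\ref{pic0dihedral}, and the fact that $\ZZ_\fin \times \ZZ_\fin \cong \ZZ_\fin$. Your version is slightly more detailed (you spell out why $\ZZ_\fin \times \ZZ_\fin \cong \ZZ_\fin$ and flag the $\m \in \ZZ + 1/2$ case explicitly), but the argument is identical.
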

\begin{proof} This follows from Corollary~\ref{cpiccor}, Lemma~\ref{pic0dihedral}, and the fact that
\[ \ZZ_\fin \times \ZZ_\fin \cong \ZZ_\fin. \qedhere
\]
\end{proof}

Finally, we are interested in when the collections of modules $\{\iota_J A \mid J \in \ZZ_\fin\}$ (in the case $\m \in \NN$) or $\{ \iota_{(J,J')} \mid (J,J') \in \ZZ_\fin \times \ZZ_\fin \}$ (in the case $\m \in \kk \setminus \ZZ$) generate $\gr A(f)$. In the cases of a multiple root or non-congruent roots, then just as in the case for the first Weyl algebra, these collections of modules generate $\gr A(f)$. However, in the case of a congruent root, we see that $\{\iota_J A \mid J \in \ZZ_\fin\}$ does not.

\begin{lemma}\label{iotasgen} Let $f = z(z+\m )$.
\begin{enumerate}\item If $\m = 0$, then $\{\iota_J A \mid J \in \ZZ_\fin\}$ generates $\gr A(f)$.
\item If $\m \in \kk \setminus \ZZ$, then $\{ \iota_{(J, J')} A \mid (J, J') \in \ZZ_\fin \times \ZZ_\fin\}$ generates $\gr A(f)$.
\item If $\m \in \NNp$, then $\{\iota_J A \mid J \in \ZZ_\fin\}$ does not generate $\gr A(f)$.
\end{enumerate}
\end{lemma}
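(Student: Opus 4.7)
All three parts follow from Proposition~\ref{projgen}, so the plan is to determine, for each relevant $J$, which integrally supported simple modules admit a surjection from $\iota_J A$ (or $\iota_{(J,J')} A$). The starting ingredient is the computation of the top factors $F_j(A)$ (and $F_j^\m(A)$ where applicable) using the structure constants $c_i$ of $A$ from Example~\ref{scA} --- namely $c_i = 1$ for $i \geq 0$ and $c_i = \sigma^i(f)$ for $i < 0$ --- combined with Lemmas~\ref{cssc} and \ref{dssc}.

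For case (1) ($\m = 0$), this computation gives $F_j(A) = X\s{j}$ for $j \geq 0$ and $F_j(A) = Y\s{j}$ for $j < 0$. Since by Proposition~\ref{ciota} each $\iota_j$ swaps $X\s{j}$ and $Y\s{j}$ while fixing every other simple, $\iota_J A$ simply swaps the top factors of $A$ at precisely the indices $j \in J$. Thus for every $n \in \ZZ$ and every $S \in \{X\s{n}, Y\s{n}\}$, taking $J = \emptyset$ or $J = \{n\}$ (depending on the sign of $n$ and the choice of $S$) produces a surjection $\iota_J A \twoheadrightarrow S$, and Proposition~\ref{projgen} yields generation. Case (2) ($\m \in \kk \setminus \ZZ$) is entirely analogous: Lemma~\ref{dssc} gives $F_j(A) = X_0\s{j}$, $F_j^\m(A) = X_\m\s{j}$ for $j \geq 0$ and $F_j(A) = Y_0\s{j}$, $F_j^\m(A) = Y_\m\s{j}$ for $j < 0$, and Proposition~\ref{diota} shows that $\iota_{(j,\emptyset)}$ and $\iota_{(\emptyset,j)}$ act independently on the two pairs. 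Hence a suitable choice of $(J, J')$ produces a surjection from $\iota_{(J,J')} A$ onto any chosen integrally supported simple, and Proposition~\ref{projgen} again applies.

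For case (3) ($\m \in \NNp$), the obstruction comes from the finite-dimensional simple $Z$. The same structure-constant computation --- now using the full statement of Lemma~\ref{cssc}, which records that a surjection to $Z\s{j}$ imposes conditions on both $c_j$ and $c_{j-\m}$ --- gives $F_j(A) = X\s{j}$ for $j \geq \m$, $F_j(A) = Y\s{j}$ for $j < 0$, and $F_j(A) = Z\s{j}$ for $0 \leq j < \m$. The main observation, which is the real content of the argument, is that no $\iota_J A$ can surject onto $Z\s{k}$ for $k \notin [0,\m)$: by Proposition~\ref{ciota}, $\iota_J$ fixes every shift of $Z$, so a surjection $\iota_J A \twoheadrightarrow Z\s{k}$ would yield, upon applying the quasi-inverse $\iota_J$, a surjection $A \twoheadrightarrow \iota_J(Z\s{k}) \cong Z\s{k}$, forcing $0 \leq k < \m$. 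Hence no direct sum of modules from $\{\iota_J A \mid J \in \ZZ_\fin\}$ surjects onto $Z\s{\m}$, and the criterion of Proposition~\ref{projgen} fails, so this collection does not generate $\gr A(f)$.
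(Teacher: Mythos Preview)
Your proof is correct and follows essentially the same approach as the paper: all three cases are handled via the criterion of Proposition~\ref{projgen}, together with the computation of the simple factors of $A$ from Example~\ref{scA} and Lemmas~\ref{cssc}--\ref{dssc}, and the action of the involutions on simples from Propositions~\ref{ciota} and~\ref{diota}. The only notable difference is in case~(3): the paper simply asserts (appealing implicitly to the structure-constant analysis inside the proof of Proposition~\ref{ciota}) that the shifts of $Z$ occurring as factors of $\iota_J A$ are exactly those occurring as factors of $A$, and then exhibits $Z\s{-1}$ as an unreachable simple; you instead give a clean categorical argument---apply the autoequivalence $\iota_J$ to a hypothetical surjection $\iota_J A \twoheadrightarrow Z\s{k}$, using $\iota_J^2 \cong \Id_{\gr A}$ and $\iota_J(Z\s{k}) \cong Z\s{k}$---and choose $Z\s{\m}$ as the obstruction. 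Both arguments are equally valid; yours has the mild advantage of not needing to unwind the structure-constant bookkeeping from Proposition~\ref{ciota}.
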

\begin{proof} In the first two cases, this follows from Proposition~\ref{projgen}, Proposition~\ref{ciota}, and Proposition~\ref{diota}. In the case that $\m \in \NNp$, for all $J \in \ZZ_\fin$, the shifts of $Z$ that are factors $\iota_J(A)$ are exactly those that are factors of $A$. Namely, $\Hom_{\gr A}(\iota_J(A), Z\s{n}) = 0$ for $n < 0$ and $n \geq \m $. Hence, no $\iota_J(A)$ has a surjection to $Z\s{-1}$, and so $\{\iota_J(A) \mid J \in \ZZ_\fin\}$ does not generate $\gr A(f)$.
\end{proof}

\begin{lemma} \label{cgrAorbits} Let $\m \in \NNp$. The action of $\Pic_0(\gr A)$ on the set of graded rank one projective modules has infinitely many orbits, one for each finite subset of $\ZZ$.
\end{lemma}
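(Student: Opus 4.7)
The plan is to exhibit an explicit $\Pic_0(\gr A)$-invariant on rank one graded projectives that puts orbits in bijection with $\ZZ_\fin$. Given a rank one graded projective $P$, Corollary~\ref{projXYZ} tells us that for each $n \in \ZZ$ the module $P$ surjects onto exactly one of $X\s{n}$, $Y\s{n}$, $Z\s{n}$, namely the unique $F_n(P)$. I would define
\[ Z(P) = \{ n \in \ZZ \mid F_n(P) \cong Z\s{n}\} \]
and observe, using Corollary~\ref{largedegree} (which forces $F_n(P) \cong X\s{n}$ for $n \gg 0$ and $F_n(P) \cong Y\s{n}$ for $n \ll 0$), that $Z(P)$ is automatically a finite subset of $\ZZ$.

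The next step is to show $Z(-)$ is a complete invariant of the $\Pic_0(\gr A)$-orbit. By Lemma~\ref{pic0dihedral}, $\Pic_0(\gr A)$ is generated by the involutions $\iota_j$, and by Proposition~\ref{ciota} each $\iota_j$ fixes every shift of $Z$. Hence $Z(\iota_J P) = Z(P)$ for every $J \in \ZZ_\fin$, which gives one direction. Conversely, suppose $Z(P) = Z(P')$. For every $n \notin Z(P)$, both $F_n(P)$ and $F_n(P')$ lie in $\{X\s{n}, Y\s{n}\}$, and by Corollary~\ref{largedegree} they agree for $|n|$ sufficiently large. Thus $J := \{ n \in \ZZ \mid F_n(P) \not\cong F_n(P')\}$ is a finite subset of $\ZZ$, and applying $\iota_J$ to $P$ yields a rank one projective $\iota_J P$ whose integrally supported simple factors agree with those of $P'$. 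Corollary~\ref{projfactors} then gives $\iota_J P \cong P'$.

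Finally, every $J_Z \in \ZZ_\fin$ is realized as $Z(P)$ for some rank one projective $P$: apply Lemma~\ref{anyproj} with the choice $S_n = Z\s{n}$ for $n \in J_Z$, together with $S_n = X\s{n}$ for $n \geq 0$ with $n \notin J_Z$ and $S_n = Y\s{n}$ for $n < 0$ with $n \notin J_Z$. Since $J_Z$ is finite, these satisfy the tail hypotheses of Lemma~\ref{anyproj}, and the resulting $P$ has $Z(P) = J_Z$. The map sending the orbit of $P$ to $Z(P)$ is therefore a bijection between $\Pic_0(\gr A)$-orbits and $\ZZ_\fin$, which in particular gives infinitely many orbits. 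No step in this argument is genuinely subtle; the substantive content is already carried by the classification of rank one projectives via their integrally supported simple factors (Corollary~\ref{projfactors}), the large-degree stability of those factors (Corollary~\ref{largedegree}), and the explicit action of the $\iota_j$ from Proposition~\ref{ciota}.
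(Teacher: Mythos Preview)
Your proof is correct and follows the same strategy as the paper: the invariant $Z(P)$ (the set of $n$ with $F_n(P)\cong Z\s{n}$) is preserved by $\Pic_0(\gr A)$, the involutions $\iota_J$ act transitively on projectives with the same $Z$-set via Corollary~\ref{projfactors}, and Lemma~\ref{anyproj} realizes every finite subset. The only cosmetic difference is that the paper invokes Theorem~\ref{cautoequiv} directly to see that every numerically trivial autoequivalence fixes the $Z\s{n}$, whereas you route this through Lemma~\ref{pic0dihedral} and Proposition~\ref{ciota}; both are valid.
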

\begin{proof} By Lemma~\ref{anyproj}, if for each $n$ we choose $S_n \in \{X\s{n}, Y\s{n}, Z\s{n}\}$ such that for $n \gg 0$, $S_n = X\s{n}$ and $S_{-n} = Y\s{-n}$, then there exists a rank one projective whose integrally supported simple factors are precisely the $S_n$. By Theorem~\ref{cautoequiv}, an autoequivalence $\FFF \in \Pic_0(\gr A)$ will have $\FFF(Z \s{n}) \cong Z\s{n}$. Also, by Proposition~\ref{ciota}, there exist numerically trivial autoequivalences permuting $X\s{n}$ and $Y\s{n}$ for any $n$. Hence, for each finite subset $J$ of $\ZZ$ there is an orbit consisting of all rank one graded projective modules that surject onto exactly $Z\s{j}$ for each $j \in J$.
\end{proof}

\begin{remark}In the case $\m = 0$ or $\m \in \kk \setminus \ZZ$, it is easily checked that the action of $\Pic_0(\gr A)$ is transitive on the set of graded rank one projective modules.
\end{remark}

\bibliography{refs}{}
\bibliographystyle{amsalpha}

\end{document}